\definecolor{blue}{RGB}{0,0,0} 
\numberwithin{equation}{section}
\newcommand{\cV}{\mathcal{V}}
\newcommand{\cU}{\mathcal{U}}
\newcommand{\norm}[1] {\left \| #1 \right \|}
\newcommand{\inclu}[0] {\ar@{^{(}->}}
\newcommand{\gph}{{\rm gph}\,}
\newcommand{\dist}{{\rm dist}}
\newcommand{\cA}{\mathcal{A}}
\newcommand{\EE}{\mathbb{E}}
\newcommand{\lip}{\mathrm{lip}}
\newcommand{\RR}{\mathbb{R}}
\newcommand{\minimize}{\mathrm{minimize}}
\newcommand{\cX}{\mathcal{X}}
\newcommand{\cM}{\mathcal{M}}
\newcommand{\range}{\mathrm{range}}
\newcommand{\bdry}{\mathrm{bdry}\,}
\newcommand{\conv}{\mathrm{conv}}
\newcommand{\abs}[1]{\left| #1 \right|}
\newcommand{\argmin}{\operatornamewithlimits{argmin}}
\newcommand{\NN}{\mathbb{N}}
\newcommand{\dotp}[1]{\left\langle #1\right\rangle}
\newtheorem{thm}{Theorem}[section]
\newtheorem{definition}[thm]{Definition}
\newtheorem{proposition}[thm]{Proposition}
\newtheorem{lem}[thm]{Lemma}
\newtheorem{cor}[thm]{Corollary}
\newtheorem{assumption}{Assumption}
\newtheorem{remark}{Remark}
\newcommand{\paren}[1]{ \left( #1 \right) }
\theoremstyle{remark}
\numberwithin{equation}{section}
\newenvironment{claim}[1]{\par\noindent\underline{Claim:}\space#1}{}
\newenvironment{claimproof}[1]{\par\noindent\underline{Proof:}\space#1}{\hfill $\blacksquare$}
\newcommand{\scc}{\gamma}
\newcommand{\closedball}{\overline B}
\newcommand{\ver}{C_{(a)}}
\newcommand{\lipg}{\beta}
\newcommand{\lipf}{L}
\newcommand{\sharpc}{\mu}
\newcommand{\shape}{C_{\cM}}
\newcommand{\eps}{\epsilon}
\newcommand{\innerg}{t}
\newcommand{\opt}{f^\ast}
\newcommand{\algname}{\mathtt{NTDescent}} 
\newcommand{\T}{\top}
\newcommand{\tgoldstein}{\mathtt{TDescent}}
\newcommand{\ngoldstein}{\mathtt{NDescent}}
\newcommand{\linesearch}{\mathtt{linesearch}}
\newcommand{\polyak}{\texttt{PolyakSGM}}
\newcommand{\tangent}{T_{\cM}}
\newcommand{\normal}{N_{\cM}}
\newcommand{\minnorm}{\mathtt{MinNorm}}
\newcommand{\Lin}{\mathrm{lin}}
\newcommand{\ri}{\mathrm{ri}\,}
\newcommand{\gridsize}{G}
\newcommand{\deltaA}{\delta_{\mathrm{A}}}
\newcommand{\deltaGKL}{\delta_{\mathrm{GI}}}
\newcommand{\deltagrid}{\delta_{\mathrm{Grid}}}
\newcommand{\deltaND}{\delta_{\mathtt{ND}}}
\newcommand{\deltaLS}{\delta_{\mathtt{LS}}}
\newcommand{\deltaNCVX}{\delta_{\mathtt{NTD}}}
\newcommand{\diam}{\mathrm{diam}}
\newcommand{\cfive}{\dtwo}
\newcommand{\csix}{C_4}
\newcommand{\cseven}{C_5}
\newcommand{\cnine}{C_6}
\newcommand{\slb}{s_{\mathrm{lb}}}
\newcommand{\sscale}{c_0}
\newcommand{\widthforbig}{.37}
\newcommand{\cfour}{C_3}
\newcommand{\dtwo}{D_2}
\newcommand{\done}{D_1}
\newcommand{\ceight}{\done^{-1}} 
\newcommand{\cone}{C_1}
\newcommand{\ctwo}{C_2}
\begin{document}
	
\title{A {\color{blue} local} nearly linearly convergent first-order method for nonsmooth functions with quadratic growth}

	\author{Damek Davis\thanks{School of Operations Research and Information Engineering, Cornell University. Ithaca, NY 14850, USA;
\texttt{people.orie.cornell.edu/dsd95/}. Research of Davis supported by an Alfred P. Sloan research fellowship and NSF DMS award 2047637.}  \qquad Liwei Jiang\thanks{School of Operations Research and Information Engineering, Cornell University. Ithaca, NY 14850, USA;
	\texttt{orie.cornell.edu/research/grad-students/liwei-jiang}}}	
	\date{}
	\maketitle

\begin{abstract}
Classical results show that gradient descent converges linearly to minimizers of smooth strongly convex functions. A natural question is whether there exists a locally nearly linearly convergent method for nonsmooth functions with quadratic growth. This work designs such a method for a wide class of nonsmooth and nonconvex locally Lipschitz functions, including max-of-smooth, Shapiro's decomposable class, and generic semialgebraic functions. The algorithm is parameter-free and derives from Goldstein's conceptual subgradient method.
\end{abstract}



\section{Introduction}

Slow {sublinear} convergence of first-order methods in nonsmooth optimization is often illustrated with the following simple strongly convex function:
\begin{align}\label{eq:lbexample}
f(x) = \max_{1 \leq i \leq m} x_i + \frac{1}{2}\|x\|^2 \qquad \text{for some $m \leq d$ and all $x \in \RR^d$}.
\end{align}
For example, consider the subgradient method applied to $f$, which generates iterates $x_k.$
Since $f$ is strongly convex, classical results dictate that $f(x_k) - \inf f = O(k^{-1})$. 
On the other hand, under proper initialization and an adversarial first-order oracle, there is a matching lower bound for the first $m$ iterations: $f(x_k) - \inf f \geq (2k)^{-1} $ for all $k \leq m$; see~\cite{intro_lect,bubeck2015convex}.
Beyond the subgradient method, the lower bound also holds for any algorithm  whose $k$th iterate lies within the linear span of the initial iterate and past $k-1$ computed subgradients.
Thus, one must make more than $m$ first-order \emph{oracle calls} to $f$, i.e., function and subgradient evaluations, before possibly seeing improved convergence behavior.

While such methods make little progress when $k \leq m$, 
this behavior may or may not continue for $k \gg m$. 
On one extreme, the subgradient method continues to converge slowly even when equipped with the popular Polyak stepsize (\texttt{PolyakSGM})~\cite{Polyak69}; \textcolor{blue}{see dashed lines in Figure~\ref{fig: lowerbound}}.
On the opposite extreme, more sophisticated algorithms such as the center of gravity method or the ellipsoid method converge linearly, but their complexity scales with the dimension of the problem, a necessary consequence of the linear rate of convergence; see the discussion in~\cite[Chapter 2]{bubeck2015convex}.

  \begin{figure}[H]
 	\centering
 	\begin{subfigure}[b]{0.45\textwidth}
 		\centering
 		\includegraphics[width=\textwidth]{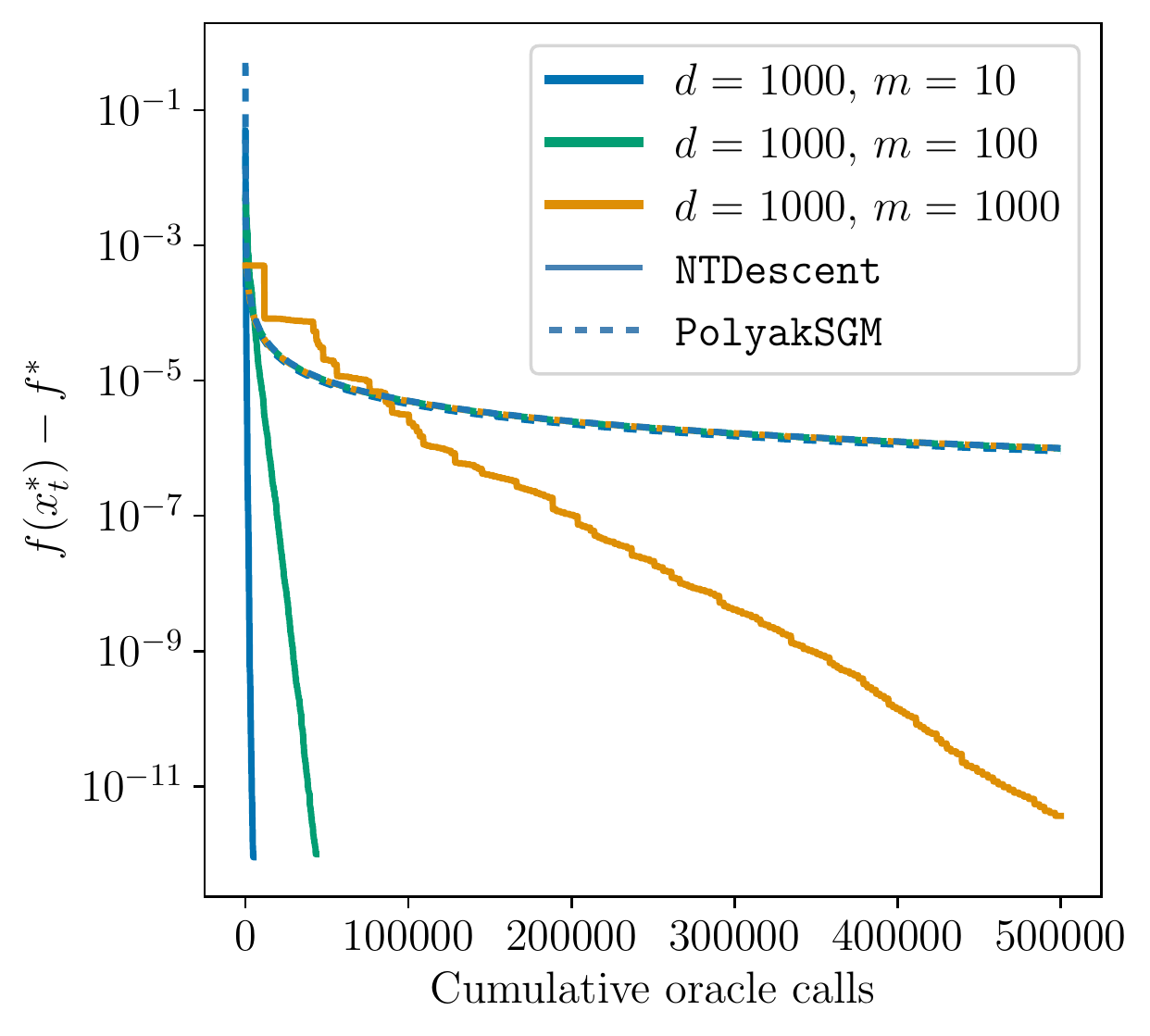}
 		\caption{}
 		\label{fig:lba}
 	\end{subfigure}
 	\hfill
 	\begin{subfigure}[b]{0.45\textwidth}
 		\centering
 		\includegraphics[width=1\textwidth]{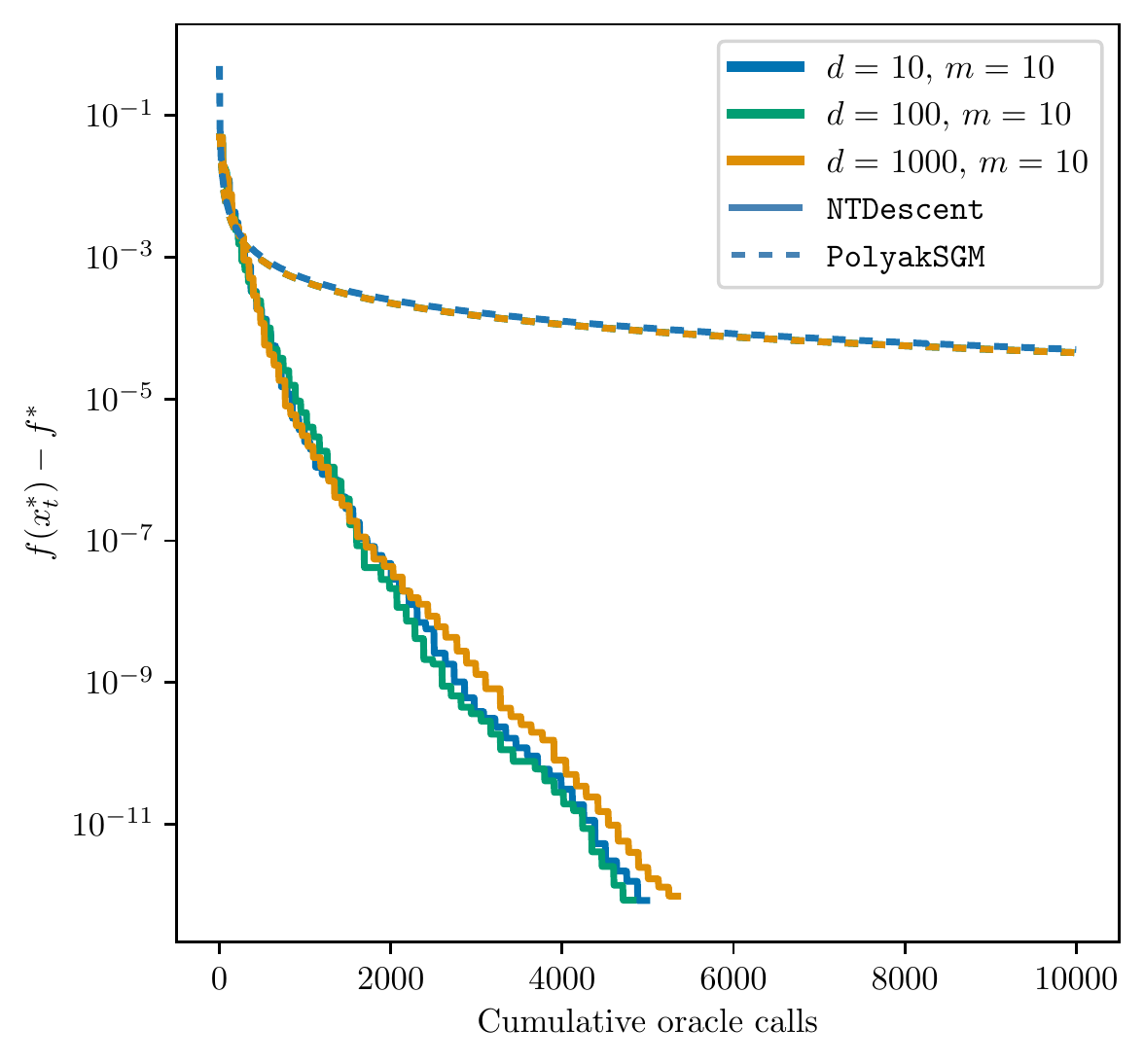}
 		\caption{}
 		\label{fig:lbb}
 	\end{subfigure}
 	\caption{Comparison of $\algname$ with $\polyak$ on~\eqref{eq:lbexample}. Left: we fix $d$ and vary $m$; Right: we fix $m$ and vary $d$. For both algorithms, the value $f(x_t^*)$ denotes the best function value seen after $t$ oracle evaluations.}\label{fig: lowerbound} 
 \end{figure}

A natural question is whether there exists a first-order method whose behavior lies in between these two extremes, at least for nonsmooth functions $f$ satisfying regularity conditions at local minimizers. 
Regularity conditions often take the form of growth -- linear or quadratic -- away from minimizers. 
Well-known results show that subgradient methods converge linearly on nonsmooth functions with linear (also called \emph{sharp}) growth~\cite{Polyak69}. 
On the other hand, in smooth convex optimization, quadratic growth entails linear convergence of gradient methods.
However, to the best of our knowledge, no parallel result for nonsmooth functions with quadratic growth exists. 
Thus, in this work, we ask
\begin{quote}
is there a locally nearly linearly convergent method for nonsmooth functions with quadratic growth whose rate of convergence and region of rapid local convergence solely depends on $f$?
\end{quote}
Let us explain the qualifiers ``nearly" and ``solely depends on $f$."
First, the qualifier ``nearly" signifies that the method locally achieves a function gap of size $\varepsilon$ using at most, say, $O(C_f \log^3(1/\varepsilon))$ first-order oracle evaluations of $f$, where $C_f$ depends on $f$.
Second, the qualifier ``solely depends on the function," signifies  that $C_f$ and the size of the region of local convergence do not depend on the dimension of the problem, but instead depend only on the function $f$ through intrinsic quantities, such as Lipschitz and quadratic growth constants.

In this work, we positively answer the above question for a class of nonsmooth optimization problems with quadratic growth.
The method we develop is called \emph{Normal Tangent Descent} ($\algname$). 
We formally describe $\algname$ in Section~\ref{sec:algdescription}.
For now, we illustrate the performance of $\algname$ on $f$ from~\eqref{eq:lbexample} in Figure~\ref{fig: lowerbound}.
In both plots, we see $\algname$ improves on the performance of $\polyak$, measured in terms of {oracle calls}. 
This is a fair basis for comparison since both $\polyak$ and $\algname$ perform a similar amount of computation per oracle call.
Figure~\ref{fig:lbb} also shows that the performance of $\algname$ is dimension independent.
We highlight that this performance was achieved without any tuning of parameters for $\algname$. 
Indeed, our main theoretical guarantees for $\algname$ (Theorem~\ref{thm:maintheoremsemi}) do not require the user to set any parameters.

The problem class on which $\algname$ succeeds consists of locally Lipschitz nonsmooth functions with quadratic growth and  a certain \emph{smooth substructure} at local minimizers. 
Importantly, we do not assume the problems under consideration are convex, though convexity entails improved guarantees. 
Two example classes with such smooth substructure include (i) ``generic" semialgebraic functions and (ii) properly $C^p$ decomposable loss functions satisfying strict complementarity and quadratic growth conditions~\cite{shapiroreducible}. 
A semialgebraic function is one whose graph is the finite union of intersections of polynomial inequalities. 
Semialgebraic functions (more generally \emph{tame}~\cite{tame_opt} functions) 
model most problems of interest in applications.
If $f$ is semialgebraic, for a full Lebesgue measure set of $w \in \RR^d$, we will use show that the tilted function $f_w \colon x \mapsto f(x) + w^\T x$ has quadratic growth and the desired smooth substructure at each local minimizer, explaining the qualifier ``generic." 
We mention that this fact essentially follows from combining results of~\cite{drusvyatskiy2016generic,davis2021subgradient}. 
On the other hand, a properly $C^p$ decomposable function is one that decomposes near local minimizers as a composition of a positively homogeneous convex function with a smooth mapping that maps the minimizer to the origin.
Decomposable functions appear often in practice, e.g., in eigenvalue and data fitting problems. 
An important subclass of decomposable functions consists of so-called ``max-of-smooth" functions, which are the maximum of finitely many smooth functions that satisfy certain regularity conditions at minimizers, e.g., $f$ in~\eqref{eq:lbexample}.

The precise smooth substructure used in this work was recently identified in~\cite{davis2021subgradient}, where it was shown to be available in decomposable and generic semialgebraic problems.
Since it is available in many problems of interest, throughout this introduction we call this the combination of quadratic growth and smooth substructure \emph{typical structure} and call functions possessing this combined structure \emph{typical.}
We present the formal structure in Section~\ref{sec:mainassumption}. 
At the heart of this structure is a distinguished smooth manifold $\cM$ -- called the \emph{active manifold} --  containing a local minimizer of interest. 
We formally define the active manifold concept in Definition~\ref{defn:ident_man}, 
but at a high level, the two crucial characteristics are that 
(i) along the manifold, the function $f$ is smooth
and (ii) normal to the manifold, the function grows sharply. 
For example, Figure~\ref{fig:vufunction} depicts the nonsmooth function $f(u, v) = u^2 + |v|$ for which the $u$-axis plays the role of $\cM$.
In Section~\ref{sec:simpleexample} we will examine this function and explain how we use its typical structure in $\algname$.
This example also has the smooth substructure developed in several seminal works in the optimization literature, including those found in work on identifiable surfaces \cite{wright1993identifiable}, partly smooth functions \cite{lewis2002active}, $\mathcal{VU}$-structures \cite{lemarecha2000,vuoriginal}, and minimal identifiable sets \cite{drusvyatskiy2014optimality}. 
However, crucial to the analysis of $\algname$ are two further properties introduced in~\cite{davis2021subgradient}, called \emph{strong $(a)$-regularity} and \emph{$(b_{\leq})$-regularity}. 
Strong $(a)$-regularity roughly states that the function is smooth in tangent directions to the manifold up to an error term which is linear in the distance to the manifold.
On the other hand, $(b_{\leq})$-regularity is a one-sided uniform semismoothness~\cite{Mifflin77} property that holds automatically when $f$ is (weakly) convex. 
Both properties hold for the two variable example in Figure~\ref{fig:vufunction} and for the function in~\eqref{eq:lbexample}, where the active manifold is the subspace in which the first $m$ variables take on the same value: $\cM = \{ x \in \RR^d \colon x_1 = x_2 = \ldots x_m\}$.

\begin{figure}[H]
	\centering
\centering\includegraphics[width=.35\textwidth]{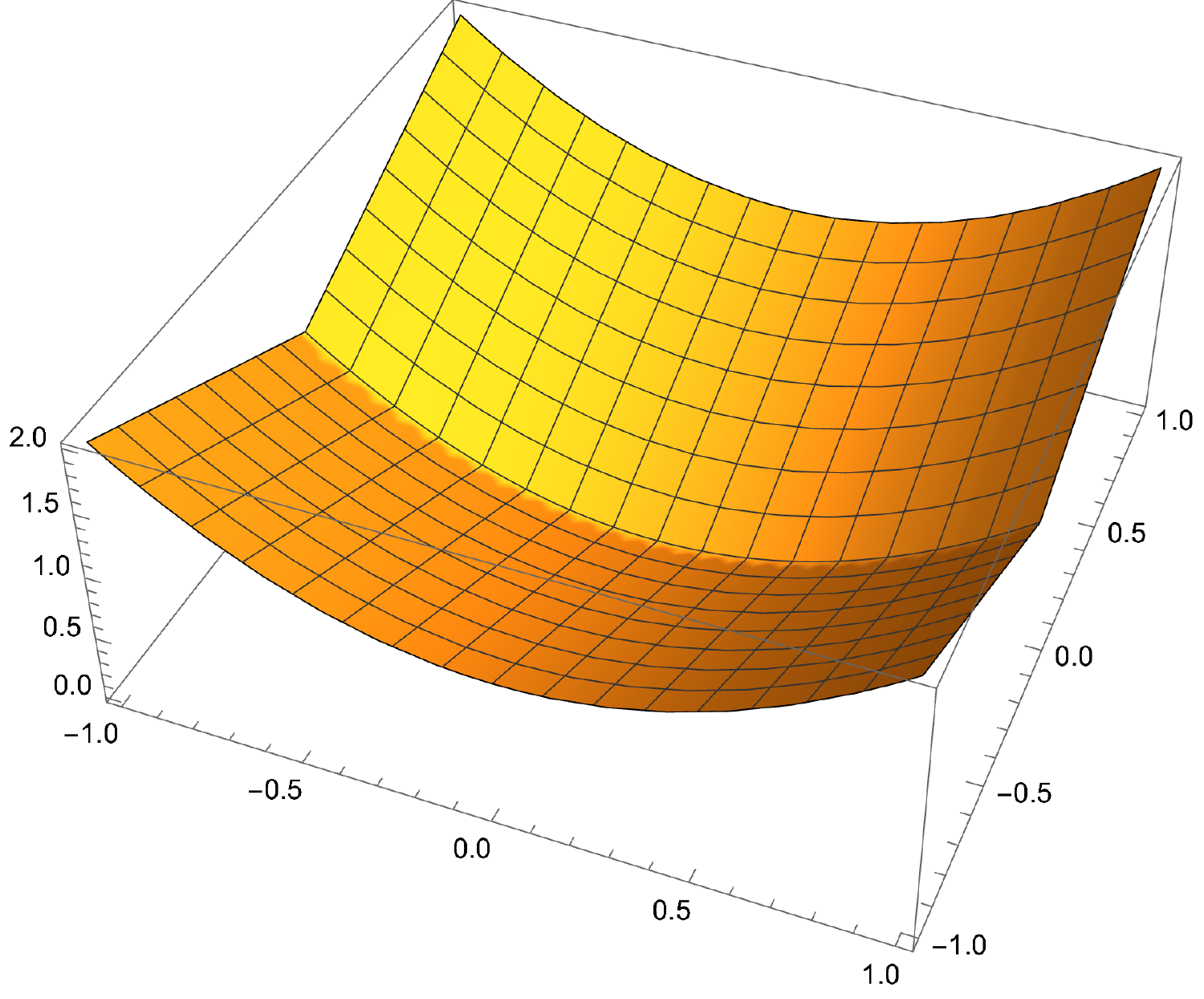}
\caption{The function $f(u,v) = u^2 + |v|$ has typical structure.}\label{fig:vufunction}
\end{figure}

Before turning to the description of $\algname$, we point out that similar smooth substructure has been used in the analysis first-order methods in nonsmooth optimization, most famously for functions with $\mathcal{VU}$-structure \cite{lemarecha2000,vuoriginal} and more recently for max-of-smooth functions.\footnote{Though they also benefit from smooth substructure, \emph{proximal-methods} do not fall within the oracle model of first-order methods considered in this work. Thus, we omit them from our discussion.}
For $\cV\cU$ functions, so-called ``bundle-methods,"~\cite{wolfe1975method,lemarechal1975extension} which possess an inner-outer loop structure, have been shown to converge superlinearly with respect to the number of outer-loop steps~\cite{vuoriginal}; see also the survey~\cite{oliveira2014bundle}.
These methods have excellent empirical performance, but a complete account of their inner-loop complexity remains elusive.
On the other hand, in a recent work, 
Han and Lewis proposed a first-order method -- Survey Descent -- that converges linearly on certain strongly convex max-of-smooth objectives, stepping beyond the classical smooth setting~\cite{han2021survey}. 
The method shows favorable performance beyond the max-of-smooth class, e.g., on certain eigenvalue optimization problems, but no theoretical justification for this success is available.
We discuss Survey Descent in more detail in Section~\ref{sec:maxofsmoothsection}.
We now motivate $\algname$.

\subsection{Motivation: Goldstein's conceptual subgradient method}

To motivate $\algname$ and the role of smooth substructure, let us set 
the stage: consider the nonsmooth optimization problem:
\begin{align*}
\minimize_{x \in \RR^d} \; f(x), 
\end{align*}
where $f \colon \RR^d \rightarrow \RR$ is a locally Lipschitz function, which is not necessarily convex. 
The algorithm developed in this work assumes \emph{first-order oracle access} to $f$~\cite{intro_lect,bubeck2015convex,complexity}.
In particular, at every $x\in \RR^d$ we must be able to evaluate $f(x)$ and retrieve an element of the \emph{Clarke subdifferential} $\partial f(x)$.
Informally, the Clarke subdifferential is comprised of convex combinations of limits of gradients taken at nearby points; a formal definition appears in Section~\ref{sec:notation}.
The Clarke subdifferential reduces to the familiar objects in classical settings.
For example, when $f$ is $C^1$, the Clarke subdifferential reduces to the singleton mapping $\{\nabla f \}$. In addition, when $f$ is convex, the Clarke subdifferential reduces to the subdifferential in the sense of convex analysis.

The starting point of this work is the classical conceptual subgradient method of Goldstein~\cite{goldstein1977optimization}.
The core object in this method is the {Goldstein subdifferential}:
\begin{align}\label{eq:goldsteinsubgradientdef}
\partial_{\sigma} f(x) := \conv \left(\bigcup_{y \in \overline B_{\sigma}( x)} \partial f(y)\right) \qquad \text{ for all $x \in \RR^d$ and $\sigma > 0$.}
\end{align}
This subdifferential is simply the convex hull of all Clarke subgradients of $f$ taken at points inside  the ball of radius $\sigma.$
Its importance arises from the following descent property proved in~\cite{goldstein1977optimization}: fix $\sigma > 0$ and $x \in \RR^d$ and let $w$ denote the minimal norm element of $\partial_{\sigma} f(x)$. 
Then
\begin{align}\label{eq:descent}
f\left(x - \sigma \frac{w}{\|w\|}\right) \leq f(x) - \sigma \|w\| \qquad\text{ if $w \neq 0$.}
\end{align}
This property motivates Goldstein's conceptual subgradient method, which simply iterates:
\begin{align}\label{eq:goldsteinsgm}
x_{k+1} = x_k - \sigma \frac{w_k}{\|w_k\|} \qquad \text{ where } \qquad w_k = \argmin_{w \in \partial_{\sigma} f(x_k)} \|w\|.
\end{align}
This algorithm is remarkable since it is provably a descent method for any Lipschitz function and even converges at a sublinear rate.
Indeed, a quick appeal to~\eqref{eq:descent} yields
$$
\min_{k = 0, \ldots, K-1}\|w_k\| \leq \varepsilon  \qquad\text{holds when} \qquad K \geq \frac{f(x_0) - \min f}{\sigma \varepsilon}.
$$
While this exact variant of the Goldstein method is not necessarily implementable, recent work has devised approximate versions of the method that have similar sublinear convergence properties~\cite{zhang2020complexity,davis2021gradientsampling}.

The algorithm introduced in this work approximately implements the method~\eqref{eq:goldsteinsgm}.
The goal of this work is to prove that the method is locally nearly linearly convergent on typical nonsmooth functions.
To develop such a method, we must resolve two issues for this problem class.
First, we must develop rapidly convergent algorithms that approximately compute the minimal norm element of the Goldstein subdifferential.
Second, we must devise an appropriate regularity property that ensures the proposed method converges nearly linearly. 
We will discuss both of these properties in turn, beginning with a regularity property {\color{blue} that relates the decrement in~\eqref{eq:descent} to the function gap.}

\subsection{Linear convergence via a gradient inequality}\label{sec:introgkl}

{\color{blue} Observe that if the bound}
$$\sigma\|w_k\| \geq \eta (f(x_k) - \min f)$$ 
holds for some $\eta > 0$ and all $k > 0$, then the Goldstein method~\eqref{eq:goldsteinsgm} converges linearly to a minimizer of $f$.
A potential issue with this inequality is that the vector $w_k$ is zero whenever $\sigma$ is larger than the distance of $x_k$ to the nearest critical point of $f$; thus the algorithm may stall whenever $x_k$ is near enough to a minimizer. 
This suggests a simple relaxation of the property that allows $\sigma$ to depend on $x_k$.

Indeed, we will provide conditions under which the following bound holds near a local minimizer $\bar x$ of $f$: 
there exists a constant $\eta > 0$ and a function $\sigma \colon \RR^d\rightarrow \RR_{+}$ such that for all $x$ near $\bar x$, we have
\begin{align}\label{eq:introgkl}
\sigma(x) \dist(0, \partial_{\sigma(x)} f(x)) \geq \eta (f(x) - f(\bar x)).
\end{align}
{\color{blue} throughout, we will refer to this bound as} {\color{blue} a \emph{gradient inequality}},  
due to its similarity to the Kurdyka-{\L}ojasiewicz (KL) gradient inequality~\cite{doi:10.1137/060670080}.
The KL inequality requires that a suitable nonlinear reparameterization $\psi \colon \RR \rightarrow \RR$ of the function gap is bounded by the minimal norm Clarke subgradient for all $x$ near $\bar x$:
$$
\dist(0,  \partial f(x)) \geq \psi(f(x) - f(\bar x)).
$$
In recent years, the KL inequality has played a key role in establishing convergence and rates of convergence for proximal methods in nonsmooth optimization and in continuous time analogs of the subgradient method; see e.g.,~\cite{attouch2010proximal,attouch2013convergence,bolte2014proximal,xu2013block,doi:10.1137/060670080}. 

{\color{blue} To illustrate, let us specialize to the semialgebraic setting, where the desingularization function $\psi$ is known to take the form $\psi(r) = r^{\theta}$ for $\theta \in [0, 1)$. 
The work~\cite[Theorem 2]{attouch2009convergence} initiated the study of convergence of proximal methods in this setting, showing that the proximal point method asymptotically converges to its limit point, which is critical but not necessarily optimal. 
The method convergence in finitely many steps when $\theta = 0$, locally converges linearly when $\theta \in (0, 1/2]$, and locally converges at the rate $k^{\frac{-(1-\theta)}{2\theta - 1}}$ when $\theta \in (1/2, 1)$. Further works such as~\cite{bolte2014proximal,attouch2010proximal} generalized the techniques to related proximal methods. 
Passing to continuous time, one is interested in the convergence of the trajectory of subdifferential inclusion satisfying $\dot x(t) \in -\partial f(x(t))$ at almost every $t$. Here, the rates of convergence exactly parallel those in the proximal methods as shown in~\cite[Theorem 4.7]{loja}.\footnote{{\color{blue}These rates were shown only for ``lower-$C^2$" semialgebraic losses, but extend to locally Lipschitz semialgebraic functions via the semialgebraic ``chain rule" proved in~\cite{davis2020stochastictame}.}}
In contrast to the proximal and continuous-time settings, we do not know whether the KL inequality alone allows one to design a locally linearly convergent discrete-time subgradient method, except in the setting where $\theta = 0$ (i.e., $f$ is \emph{sharp}) and $f$ is convex~\cite{Polyak69} or \emph{weakly convex}~\cite{davis2018subgradientweaklyconvex}; weakly convex functions form a broad class of nonconvex functions that includes all compositions of Lipschitz convex functions with smooth mappings. 
When $\theta > 0$, to the best of our knowledge, the best rate proved in the literature for any subgradient type method is $k^{\frac{-(1-\theta)}{2\theta}}$~\cite{johnstone2020faster}; this result is only known to hold for convex functions.\footnote{{\color{blue}The results stated in~\cite{johnstone2020faster} pertain to functions with Hölder growth; thus, to prove the results stated in the paragraph, we must use the following known fact: functions satisfying the KL inequality with exponent $\theta$ have Hölder growth with exponent $1/(1-\theta)$, which follows from the proof of~\cite[Theorem 3.7]{drusvyatskiy2021nonsmooth}.}}
}

A well-known property of the KL inequality is its prevalence: it holds at each critical point of an arbitrary lower-semicontinuous semialgebraic function $f$~\cite{doi:10.1137/060670080}.
We will show that the {\color{blue} gradient inequality}~\eqref{eq:introgkl} is also prevalent in the sense that it holds for the aforementioned  problems with typical structure.
In this way, the conceptual method~\eqref{eq:goldsteinsgm} with varying $\sigma_k := \sigma(x_k)$ will locally converge linearly on such problems.
The reader may wonder whether we can or must find the precise value $\sigma(x_k)$.
We will show that for typical problems, an appropriate $\sigma_k$ may be found through a line search procedure.

\subsection{Approximately implementing Goldstein's method}
The gradient inequality ensures that the conceptual Goldstein method converges linearly, provided the stepsize $\sigma$ is chosen adaptively.
To move beyond the conceptual setting, we must develop strategies for approximating the minimal norm element of $\partial_{\sigma}f(x)$ for $\sigma > 0$ and $x \in \RR^d$. 
Let us suppose we have such a method and denote it by $\minnorm(x, \sigma)$. 
Then the method of this work simply iterates:
\begin{align}\label{eq:approximateminnorm}
x_{k+1} = x_k - \sigma_k \frac{w_k}{\|w_k\|} \qquad\text{ and } \qquad  w_k = \minnorm(x_k, \sigma_k)
\end{align}
for an appropriate sequence $\sigma_k >0$.
We will discuss and develop two different implementations of $\minnorm(x, \sigma)$ in this work.
Given $x \in \RR^d$ and $\sigma > 0$, both methods iteratively construct a sequence of Clarke subgradients $ g_0, \ldots,  g_{T-1}$ taken at points in the ball $\overline B_{\sigma} (x)$ and then output a ``small" convex combination $w \in \conv\{ g_0, \ldots,  g_{T-1}\}$, which satisfies the descent condition 
\begin{align}\label{eq:descentintroapprox}
f\left(x - \sigma \frac{w}{\|w\|}\right) \leq f(x) - \frac{\sigma}{8} \|w\|.
\end{align}
The oracle complexity of $\minnorm(x, \sigma)$ is then $T$ function/subgradient evaluations, and we hope to ensure that $T$ is relatively small, for example, a constant or at most 
$$
T = O\left(\log\left(\Delta_{x, \sigma}^{-1}\right)\right) \qquad \text{ where $\Delta_{x,\sigma} := \dist(0, \partial_{\sigma} f(x)).$}
$$
Provided that $T$ is on this order, that $f$ satisfies the gradient inequality~\eqref{eq:introgkl}, and that $\sigma_k$ is chosen appropriately, the iterate $x_k$ will satisfy $f(x_k) - f(\bar x) \leq \varepsilon$ after at most $O(\log^2(1/\varepsilon))$ iterations, a nearly linear rate of convergence. 
This complexity ignores the cost of choosing an appropriate stepsize $\sigma_k$, but we will show that in typical problems we can find appropriate $\sigma_k$ with at most $O(\log(1/\varepsilon))$ function/subgradient evaluations.

We are aware of two $\minnorm$ type methods in the literature, but their complexity is either too large or is useful only in low dimensions problems.
For example, the works~\cite{zhang2020complexity,davis2021gradientsampling} introduced such a method for general locally Lipschitz functions. However, the complexity of the method is $T = O(1/\Delta_{x, \sigma})$ -- too large for our purposes.
On the other hand, the work~\cite{davis2021gradientsampling} also introduced a method tailored to low-dimensional weakly convex functions.
However, the method is based on cutting plane techniques, so its complexity scales linearly with dimension: $T = O(d\log(1/\Delta_{x, \sigma}))$.

{\color{blue}While existing $\minnorm$ methods are slow for general Lipschitz functions, we show that the aforementioned typical structure allows us to develop $\minnorm$ methods that accelerate in a neighborhood of the minimizer. 
Our approach is based on a decomposition of a neighborhood of the minimizer into two regions: one where the method of \cite{zhang2020complexity,davis2021gradientsampling} is applicable, and another region where a novel $\minnorm$ method may be applied.
}

\subsubsection{The normal and tangent regions}\label{sec:normal_tangent_regions}
{\color{blue}
In this work, we use the active manifold $\cM$ to split the space of $(x, \sigma)$ for $x$ nearby the minimizer $\bar x \in \cM$ into two sets where fast $\minnorm$ methods are available. 
We call the first set the \emph{normal region.} This region consists of points whose normal distance $\dist(x, \cM)$ is larger than a multiple of the squared tangential distance $\|P_{\cM}(x) - \bar x\|^2$, together with stepsizes $\sigma$ proportional to a multiple of the normal distance:
$$
\begin{cases}
\frac{a_1}{2} \dist(x, \cM) \leq \sigma \leq a_1\dist(x, \cM) ;\\
a_2^2\|P_{\cM}(x) - \bar x\|^2 \leq \dist(x, \cM),
\end{cases}
$$
for problem dependent constants $a_1, a_2 \in (0, 1)$; see Theorem~\ref{thm:gkl} for more details.
We will show that in this region, we have $\Delta_{x, \sigma} = \Omega(1)$, so the $\minnorm$ method of~\cite{zhang2020complexity,davis2021gradientsampling} terminates with descent in finitely many steps. 

On the other hand, we call the second set the \emph{tangent region}. This set consists of points whose squared tangential distance is larger than a multiple of the normal distance, together with stepsizes $\sigma$ proportional to a multiple of the tangential distance:
$$
\begin{cases}
\frac{a_2}{2}\|P_{\cM}(x) - \bar x\|  \leq \sigma \leq a_2\|P_{\cM}(x) - \bar x\| ;\\
\frac{\dist(x, \cM)}{\sigma} \leq2a_2\|P_{\cM}(x) - \bar x\|,
\end{cases}
$$
where $a_1$ and $a_2$ are as in the normal region. 
For this region, we will propose a new $\minnorm$ method, which terminates rapidly. 
We note that in both cases we provide a range of valid $\sigma$, rather than a single value since we aim to estimate $\sigma$ with a line search.

\subsubsection{A simple example} \label{sec:simpleexample}
Before describing the methods in detail, let us illustrate the regions and the  principles of the methods on the following simple function of two variables $f(u, v) = u^2 + |v|$, which has a unique minimizer at $\bar x= (0, 0)$.} 
Here, the $u$-axis is the active manifold $\cM$. Along the manifold, $f$ is smooth and grows quadratically, while off of the manifold, $f$ grows sharply; see Figure~\ref{fig:vufunction} {\color{blue}for a plot of the function and see Figure~\ref{fig:normal_and_tangent_regions} for the $x$-component of the normal and tangent regions for $f$ (with $a_2 = 1/8$).
The manifold $\cM$ induces a decomposition of $f$ into smooth $f_{\cU}(u,v) = u^2$ and nonsmooth $f_{\cV}(u, v) = |v|$ components. In particular, with variable $x = (u, v)$, we have 
\begin{align}
f(x) = f_\cU(x) + f_{\cV}(x) = \|P_{\cM}(x) - \bar x\|^2 + \dist(x, \cM). \label{eq:normaltangent_decomp_f}
\end{align}

From this decomposition, we see $f_\cV$ is dominant in the normal region, while $f_{\cU}$ is dominant in the tangent region.
Likewise, as we will argue momentarily, the minimal norm Goldstein subgradient $w_{\sigma} \in \partial_{\sigma} f(x)$ satisfies $\|w_{\sigma}\| \geq \|\nabla f_{\cV}(x)\|$ in the normal region, while $\|w_{\sigma}\| = \Omega(\|\nabla f_{\cU}(x)\|)$ in the tangent region.
This has several consequences. First, in the normal region, the $\minnorm$ method of~\cite{zhang2020complexity,davis2021gradientsampling} will terminate in finitely many steps, due to the lower bound $\|w_{\sigma}\| \geq 1$.
On the other hand, in the tangent region, $\|w_{\sigma}\|$ can be much smaller, so we must introduce a new method to generate descent.
Finally, assuming these approximations are accurate, the gradient inequality~\eqref{eq:approximateminnorm} quickly follows:
in the normal region, we have 
$$
\sigma \|w_{\sigma}\| = \Omega(\dist(x, \cM)) =  \Omega(f(x)),
$$
while in the tangent region, we have 
$$
\sigma \|w_{\sigma}\| = \Omega(\|P_{\cM}(x) - \bar x\|\|\nabla f_{\cU}(x)\|) = \Omega( \|P_{\cM}(x) - \bar x\|^2) = \Omega(f(x)).
$$
Though it follows from immediate calculations in this example, in the more general setting, the following consequence of quadratic growth will be crucial in establishing a similar bound: $\|\nabla f_{\cU}(x)\| = \Theta(\|P_{\cM}(x) - \bar x\|)$.
}

\begin{figure}[H]
     \centering
     \begin{subfigure}[b]{0.45\textwidth}
         \centering
         \includegraphics[width=\textwidth]{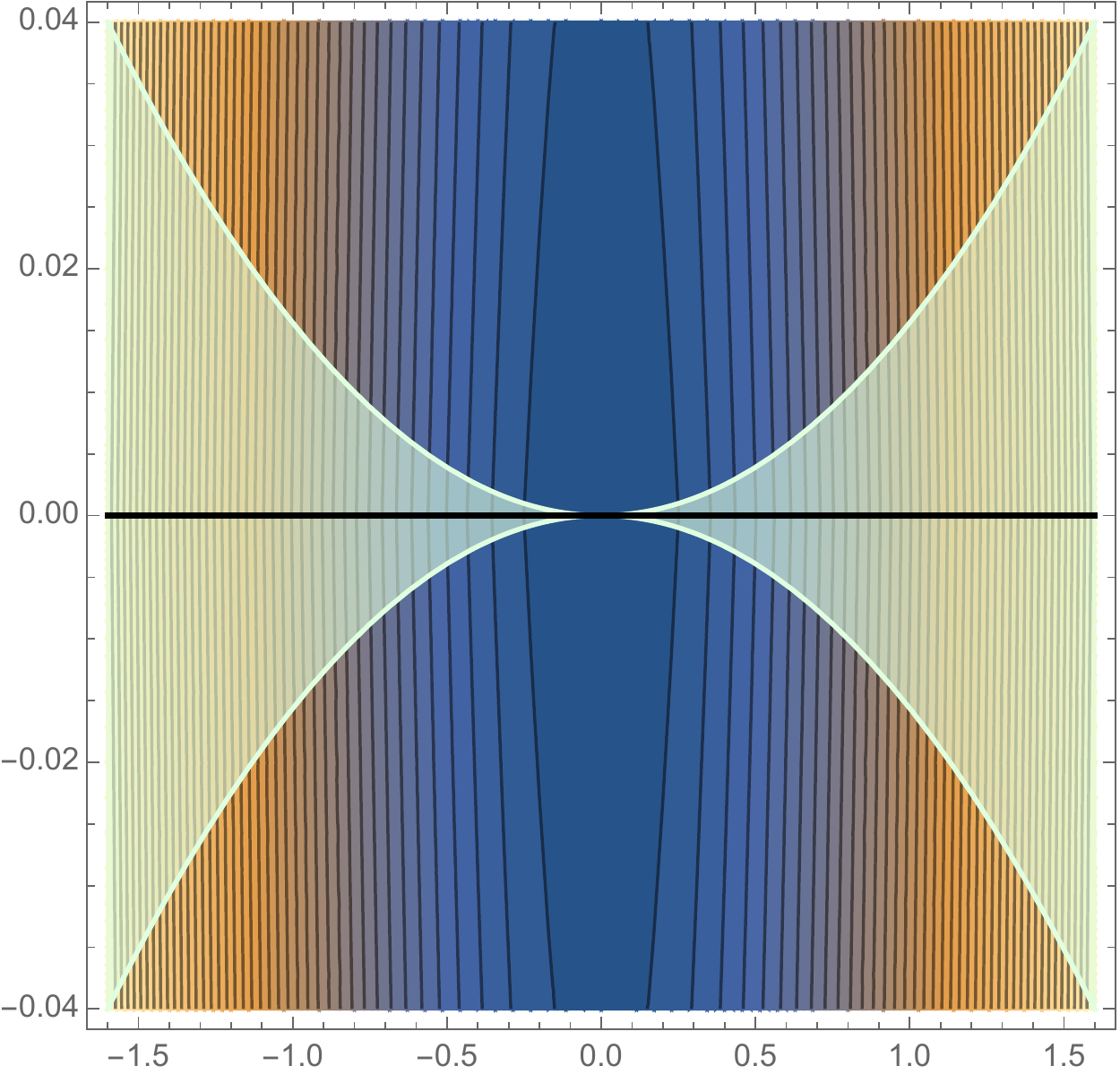}
     \end{subfigure}
     \hfill
     \begin{subfigure}[b]{0.45\textwidth}
         \centering
         \includegraphics[width=\textwidth]{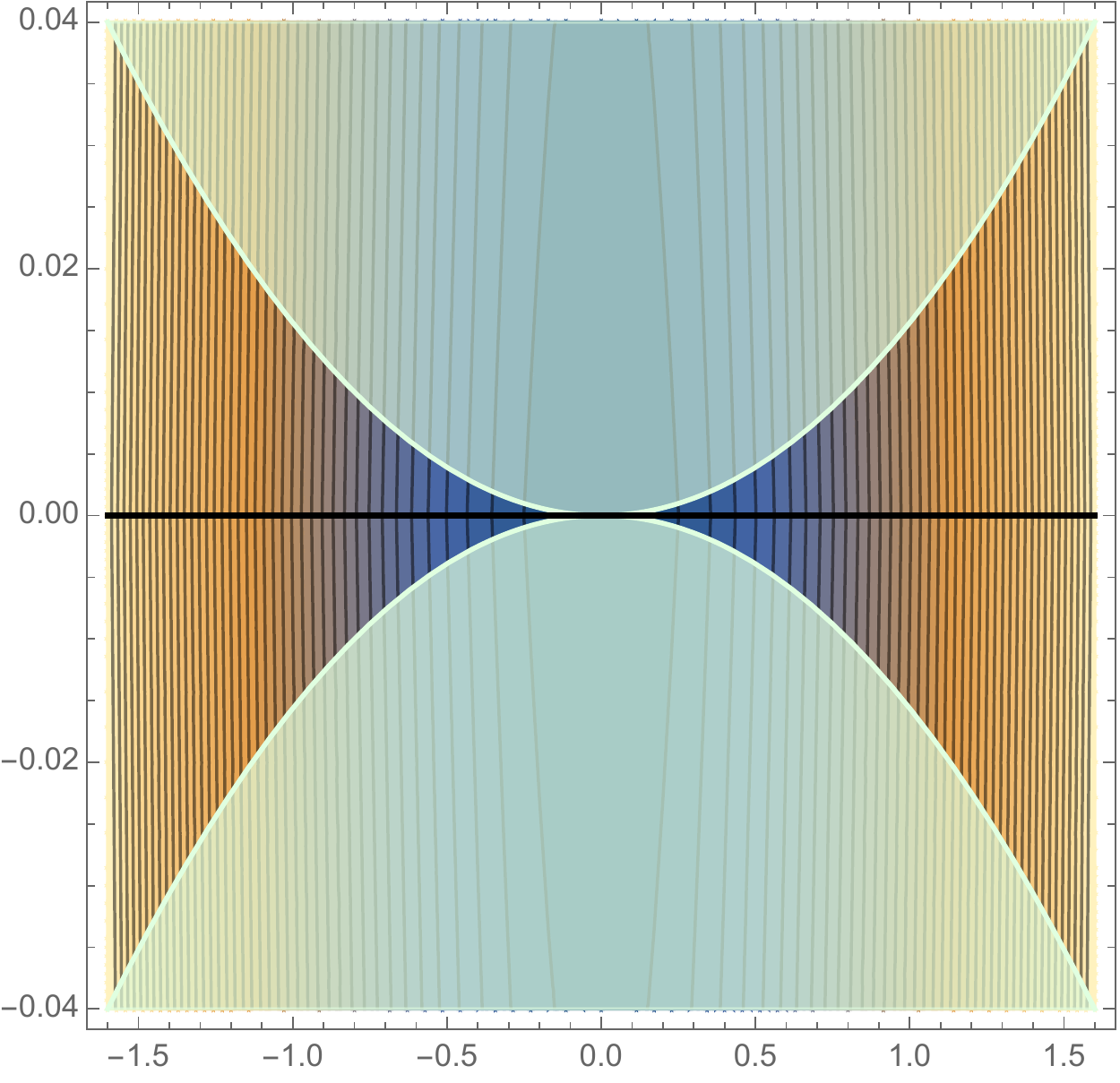}
     \end{subfigure}
        \caption{{\color{blue}Contour plots for $f(u, v) = u^2 + |v|$ together with $\cM$, shown in black. The $x$-component of the tangent (left) and normal (right) regions are overlaid in light green.}}
        \label{fig:normal_and_tangent_regions}
\end{figure}

{\color{blue} 
Now, to lower bound $\|w_{\sigma}\|$ we use the following fact: $\nabla f_{\cU}(u, v)$ is tangent to $\cM$, while $\nabla f_{\cV}(u, v)$ is normal to $\cM$ when $v \neq 0$.
Thus, to lower bound $\|w_{\sigma}\|$ in the normal region, we will simply lower bound the size of the normal component of $w_{\sigma}$. 
Indeed, since $\sigma < \dist(x, \cM)$, all points $x' \in B_{\sigma}(x)$ are on the same side of $\cM$. Therefore, the normal component of $w_{\sigma}$ is an average of \emph{identical} gradients $\nabla f_{\cV}(x')= \nabla f_{\cV}(x)$. 
Likewise, in the tangent region, we lower bound the tangent component of $w_{\sigma}$. Indeed, since $\sigma < \|\bar x - P_{\cM}( x)\|/8$, the projection onto $\cM$ of all points $x' \in B_{\sigma}( x)$ are on the same side of the origin. Thus, the tangent component of $w_{\sigma}$ is an average of nearly identical gradients $\nabla f_{\cU}(x') \approx \nabla f_{\cU}(x)$, yielding the lower bound. We prove a more general form of both of these lower bounds Lemma~\ref{lem:lbnormal} and Lemma~\ref{lem:lowerboundgoldstein}, which follow from similar principles.

Turning to algorithms, we have so far noted that the $\minnorm$ method of~\cite{zhang2020complexity,davis2021gradientsampling} may be used in the normal region. 
In the tangent region, we are unsure how to design a method that can quickly recover $w_\sigma$.
Instead of searching for $w_\sigma$ directly, we take a slightly different perspective in the tangent region: we seek a vector $g \in \partial_{\sigma} f(x)$ with ``small" normal component, meaning: 
$$
\|P_{N}(g)\| = O(\|\nabla f_{\cU}(x)\|^2)
$$
where $N$ is the normal space to $\cM$. 
Intuitively, when $g$ has small normal component, the nonsmooth part $f_{\cV}$ minimally changes along a gradient step. On the other hand, if $g$ is sufficiently correlated with $\nabla f_{\cU}$, the smooth part $f_\cU$ decreases at an appropriate rate; we prove this in a more general setting in Lemma~\ref{lem: decentwithsmallnormal}.   

Why might one expect such a $g$ to be available in the tangent region? 
The reason is that the gradient of the smooth component is itself a Goldstein subgradient.
Indeed, for points near the origin and in the tangent region, the tangential distance is much larger than the normal distance. Thus, the reflection of any point $(u, v)$ across the manifold $\cM$ is contained in $B_{\sigma}(x)$, 
which immediately implies gradient of the smooth component is an element of Goldstein subdifferential:
\begin{align}
\nabla f_{\cU}(u, v) = \frac{1}{2}\nabla f(u, v) + \frac{1}{2} \nabla f(u, -v) \in \partial f_{\sigma}(x). \label{eq:exact_reflection}
\end{align}
While the inclusion~\eqref{eq:exact_reflection} illustrates one way to construct such a $g$, we cannot hope for perfect symmetry in general problems. 

Instead, a central insight of this work is that a similar approximate reflection exists in problems with typical structure. 
To illustrate, consider Figure~\ref{fig:flipgradient}. 
This figure depicts a point $x$ in the tangent region together with the result of a  
normalized gradient step:
$$
x_+ := x - \sigma \frac{\nabla f(x)}{\|\nabla f(x)\|}.
$$
As can be seen from the figure, $x_+$ is an approximate reflection of $x$ across the $u$-axis, 
which ``flips the sign" of the nonsmooth component of $\nabla f$: $\nabla f_{\cV}(x) = -\nabla f_{\cV}(x_+)$.
Thus, in this setting, one may ``cancel out" the nonsmooth component by a simple averaging: 
$$
\nabla f_{\cU}(x) \approx \frac{1}{2}\nabla f(x) + \frac{1}{2}\nabla f(x_+).
$$
While seemingly crude, we will show this strategy generalizes to typical functions.
An important distinction with the general setting is that a single averaging step alone will no longer suffice. 
Nevertheless, we show that by iterating this process, we can geometrically shrink the normal component of the Goldstein gradient, eventually yielding descent.
}

\begin{figure}[H]
     \centering
     \begin{subfigure}[b]{0.45\textwidth}
         \centering
         \includegraphics[width=\textwidth]{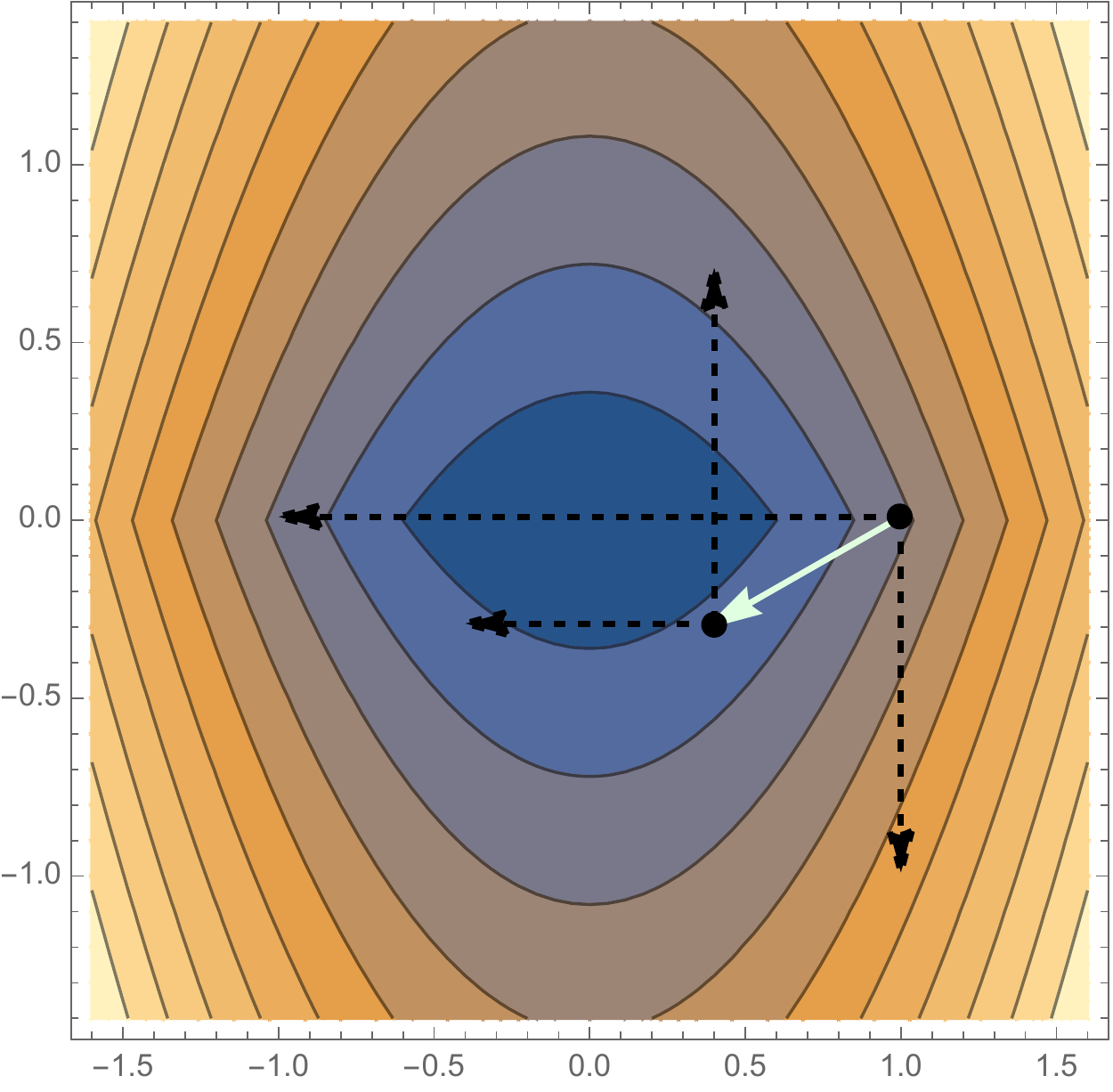}
     \end{subfigure}
     \hfill
     \begin{subfigure}[b]{0.45\textwidth}
         \centering
         \includegraphics[width=\textwidth]{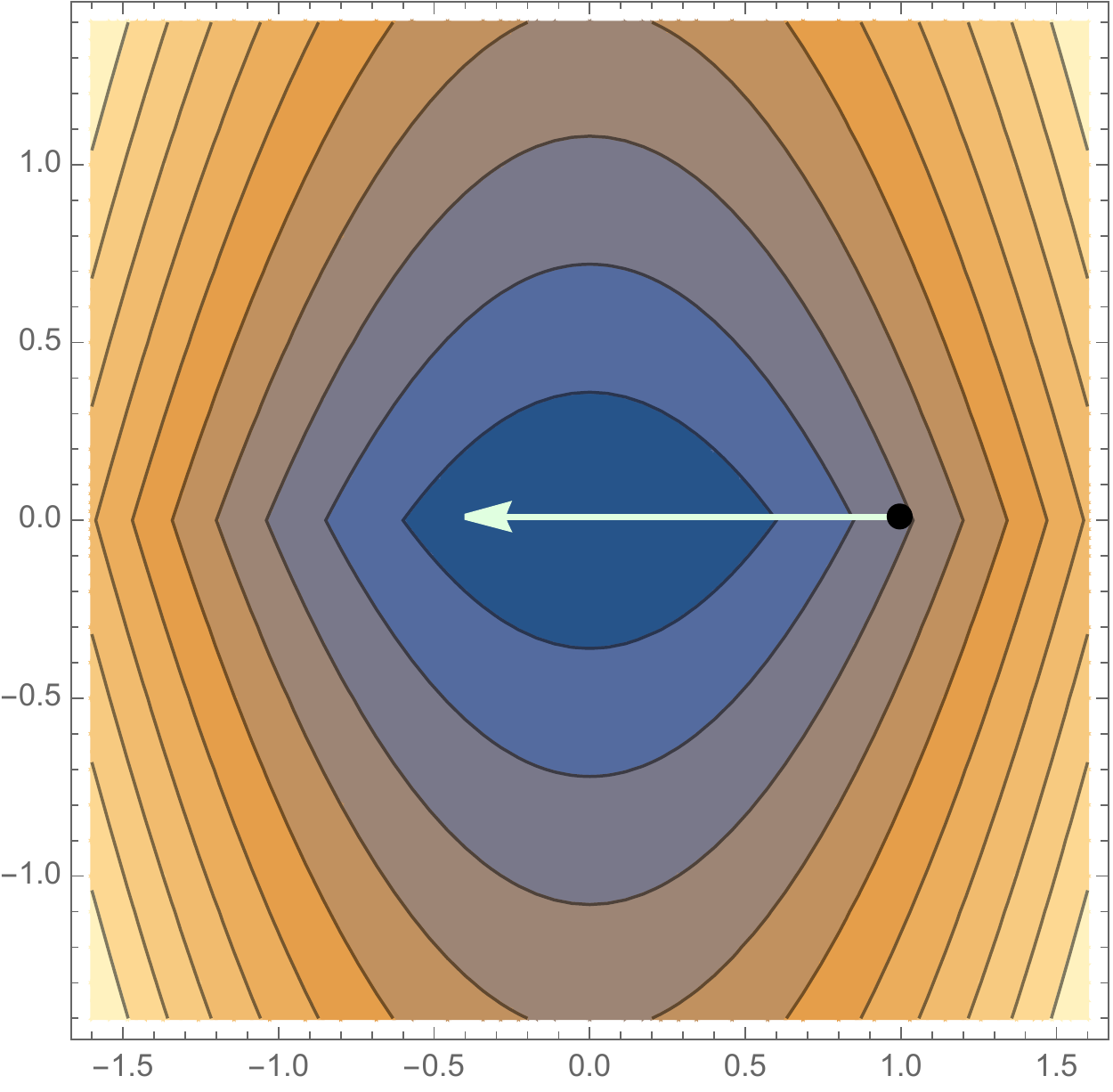}
     \end{subfigure}
        \caption{Contour plots for $f(u, v) = u^2 + |v|$. Left: The point $x = (1, .1)$ together with the approximate reflection $x_+ = x - .3\frac{\nabla f(x)}{\|\nabla f(x)\|}$ across the $u$ axis. The solid light green arrow is parallel to the negative gradient direction $-\nabla f(x)$. The dashed arrows denote the orthogonal decomposition of $-\nabla f(x)$, respectively $-\nabla f(x_+)$, into the vectors $-\nabla f_{\cU}(x)$ and $-\nabla f_{\cV}(x)$, respectively $-\nabla f_{\cU}(x_+)$ and $-\nabla f_{\cV}(x_+)$. From the plot, we see $\nabla f_{\cV}(x) = -\nabla f_{\cV}(x_+)$. Right: The point $x$ with estimate $-\frac{1}{2}(\nabla f(x) + \nabla f(x_+))$ of the vector $-\nabla f_{\cU}(x)$.}
        \label{fig:flipgradient}
\end{figure}

\subsubsection{Two $\minnorm$ methods: $\ngoldstein$ and $\tgoldstein$}

To generalize the strategy outlined in the previous section, we will prove that the minimal norm Goldstein subgradients of typical problems similarly split into tangent and normal components just as in Section~\ref{sec:simpleexample}.
Then, we introduce two $\minnorm$ type methods for ``normal" and ``tangent" steps.

For $(x, \sigma)$ in the normal region, we use a small modification of the $\minnorm$ type method of~\cite{davis2021gradientsampling}. 
We call this method \emph{Normal Descent} ($\ngoldstein$) and describe it in Algorithm~\ref{alg:ngoldstein}. 
As in the simple example above, we will show that $\ngoldstein$ must terminate with an approximately minimal norm Goldstein subgradient in finitely many steps, provided $\sigma$ lies within an appropriate range. We will show that this subgradient is a descent direction satisfying~\eqref{eq:descentintroapprox}.

\begin{algorithm}[H]
	\caption{$\ngoldstein(x, g, \sigma, T)$}
	\label{alg:ngoldstein}
	\begin{algorithmic}[1]
		\State {\bfseries Set}   $g_0 = g$ and $t= 0$.
		 \While {$T-1\geq \innerg$, $\|g_{\innerg}\| >0$,  and $\frac{\sigma}{8} \|g_\innerg\|\geq f(x)-f\left(x-\sigma \frac{g_\innerg}{\|g_\innerg\|}\right)$}
		 \State \label{alg:ngoldsteinperturb} Choose any $r$ satisfying $ 0<r< \sigma\|g_t\|$.  
		\State Sample $\zeta_\innerg ~\textrm{uniformly~from}~\mathbb{B}_{r}(g_\innerg)$.
		\State Choose $y_\innerg$ uniformly at random in the segment $\left[x, x-\sigma \frac{\zeta_\innerg}{\|\zeta_\innerg\|}\right]$.\; 
		\State  Choose $\hat g_\innerg \in \partial f(y_t)$.\;		
		\State $g_{\innerg+1}=\argmin_{z\in [g_\innerg,\hat g_t]} \|z\|_2$.\;
		\State $t = t+1$.
		\EndWhile
		\State \Return $g_{t}$.
	\end{algorithmic}
\end{algorithm}

{\color{blue} 
We illustrate the principle behind $\ngoldstein$ as follows. 
Suppose we are given a vector $g \in \partial_{\sigma} f(x)$ not satisfying the descent condition, i.e., with $u := \frac{g}{\|g\|}$, we have
$$
f\left(x - \sigma u\right) - f(x) \geq -\frac{\|g\|}{8}.
$$
Then by Lebourg mean value theorem~\cite[Theorem 2.4]{clarke2008nonsmooth} (provided that $f$ is differentiable along the line segment between $[x, x']$, which can be ensured by adding a small perturbation to $g$; we ignore this in our discussion), we may assume that 
$$
f\left(x - \sigma u\right) - f(x) = \sigma \int_0^1 -\left\langle \nabla f\left(x - \sigma tu\right), u\right\rangle dt = -\sigma\dotp{v, u},
$$
where $v := \int_{0}^1\nabla f\left(x - t u\right)dt \in \partial_{\sigma}f(x)$. Consequently, $\dotp{v, g} \leq \|g\|^2/8$.
While it is not possible to compute $v$, we can compute a \emph{random} element  of the Goldstein subdifferential, satisfying the same inequality in expectation. Indeed, 
defining $v' = \nabla f(y)$ where $y$ is uniformly sampled from the line segment $[x, x - \sigma u]$ (with end points $x$ and $x-\sigma u$), we have $\dotp{\EE_{y}[v'], g} \leq \|g\|^2/8$.
Based on this bound, a quick calculation shows that the minimal norm element $g_+$ of the line segment $[g, v']$ satisfies the bound 
$$
\EE_y\|g_+\|^2 \leq \|g\|^2 - \frac{\|g\|^4}{16L^2}.
$$
Moreover $g_+ \in \partial_{\sigma} f(x)$.
Thus, repeating this process yields a decreasing sequence of Goldstein subgradients which tend to zero as long as the descent condition is not met. In general, the norms of the subgradients generated by this process decay at a rate of $1/k$. 
However, we will prove that $\dist(0, \partial_{\sigma} f(x))$ is bounded below by a fixed constant when $(x, \sigma)$ is in the normal region described in Section~\ref{sec:normal_tangent_regions}.
Consequently, the loop must exist in finite time with descent (with high probability), for otherwise we will have found a subgradient norm strictly smaller than $\dist(0, \partial_{\sigma} f(x))$; see Proposition~\ref{lemma:normal}. The reader interested in the formal calculations may consult~\cite{zhang2020complexity,davis2021gradientsampling}.}

On the other hand, for $(x,\sigma)$ in the tangent region, we develop a new $\minnorm$ type method, which likewise relies on an approximate reflection property. 
We call this method \emph{Tangent Descent} ($\tgoldstein$) and present it in Algorithm~\ref{alg:tgoldstein}.
Given an input point $x$, stepsize $\sigma > 0$, and initial subgradient $g_0 \in \partial f(x)$,  $\tgoldstein$ repeats the following steps
\begin{align*}
\text{Choose: }& \hat g_k \in \partial f\left(x - \sigma \frac{g_k}{\|g_k\|}\right);\\
\text{Update: } &g_{k+1} = \argmin_{g \in [g_{k}, \hat g_k]} \|g\|,
\end{align*}
until it achieves descent $f(x - \sigma \frac{g_k}{\|g_k\|}) \leq f(x) - \frac{\sigma}{8}\|g_k\|$ or runs over budget.

\begin{algorithm}[H]
	\caption{$\tgoldstein(x, g, \sigma,T)$}
	\label{alg:tgoldstein}
	\begin{algorithmic}[1]
		\State {\bfseries Set}   $g_0 = g$ and $t=0$.
		 \While {$T-1\geq \innerg$, $\|g_{t}\| >0$,  and $\frac{\sigma}{8} \|g_t\|_2\geq f(x)-f\left(x-\sigma \frac{g_t}{\|g_t\|}\right)$}
		\State  Choose $\hat g_\innerg \in \partial f(x- \sigma \frac{g_\innerg}{\|g_\innerg\|})$.
		\State $g_{t+1} =  \argmin_{z \in [g_t,\hat g_t]} \|z\|$ .
		\State $t = t+1$.
		\EndWhile
		\State \Return $g_{t}$.
	\end{algorithmic}
\end{algorithm}

The motivation for this method is that for typical problems the step $x - \sigma \frac{g_k}{\|g_k\|}$ is locally an approximate reflection across $\cM$ that  ``flips" the normal component of the Goldstein subgradient. Indeed, let $y := P_{\cM}(x)$ denote the projection of $x$ onto $\cM$ and let $N := \normal(y)$ denote the normal space to $\cM$ at $y$. Then we will prove that for all $k$, we have
$$
\dotp{P_{N} g_k, \hat g_k} \leq - C\|P_{N} g_k\| + O(\|y - \bar x\|^2),
$$
for some $C > 0$, provided $\sigma$ lies within an appropriate range.
This inequality ensures that each step of the $\tgoldstein$ geometrically decreases the ``normal component" of $g_k$, {\color{blue} until we arrive at a Goldstein subgradient with normal component on the order of $O(\|y - \bar x\|^2)$; see Section~\ref{sec:approximate_reflection}.
Moreover, given $g \in \partial_{\sigma} f(x)$ satisfying 
$$
\|P_N(g)\| \leq \cfour\|y - \bar x\|^2
$$
for a particular problem dependent constant $\cfour > 0$, we will prove the descent condition
$$
f\left(x - \sigma \frac{g}{\|g\|}\right) \leq f(x) - \frac{\sigma\|g\|}{8}
$$
holds; see Lemma~\ref{lem: decentwithsmallnormal}. 
Combining these two facts shows that $\tgoldstein$ will rapidly terminate with descent.
}

\subsection{The $\algname$ algorithm}\label{sec:algdescription}

We call the main algorithm of this work \emph{Normal Tangent Descent} ($\algname$) and present it in Algorithm~\ref{alg:mainalg}. 
At a high level the method is an approximate implementation of Goldstein's conceptual subgradient method as in~\eqref{eq:approximateminnorm}, using $\ngoldstein$ and $\tgoldstein$ as $\minnorm$ type methods.
As input it takes three parameters: an initial point $x$; a sequence of grid-sizes $\{\gridsize_k\}$ for the line search on $\sigma$; and a sequence of budgets $\{T_k\}$ for the $\minnorm$ type methods $\ngoldstein$ and $\tgoldstein$. 
Later we will show that the user may simply set $T_k = G_k = k+1$ for all $k \geq 0$.

\begin{algorithm}[H]
	\caption{$\linesearch(x, g, s, \gridsize, T)$}
	\label{alg:linesearch}
	\begin{algorithmic}[1]
	\State {\bfseries Set}  $v_{0} = g$.
	\For {$i = 0, \ldots, \gridsize-1$}\label{line:linesearchbegin}
		\State $\sigma_i = 2^{-(\gridsize - i )}$.
		\State \textcolor{blue}{$u_{i} =  \tgoldstein(x, v_{i}, \sigma_i,T)$.} \label{line:argmin} 
		\State \textcolor{blue}{$v_{i + 1} =  \ngoldstein(x, u_{i}, \sigma_i,T)$}.
	\EndFor\label{line:linesearchend}
	\State $\tilde x := \argmin \{f(x') \colon x' \in \{x\} \cup \{x - \sigma_i \frac{v_{i+1}}{\|v_{i+1}\|} \colon \sigma_i \leq \frac{\|v_{i+1}\|}{s}, i = 0, \ldots, \gridsize-1\} \}$.\label{line:trustregion} 
	\State \Return $\tilde x$.
	\end{algorithmic}
\end{algorithm}

\begin{algorithm}[H]
	\caption{$\algname(x, g,  {\color{blue}\sscale}, \{\gridsize_k\}, \{T_k\})$}
	\label{alg:mainalg}
	\begin{algorithmic}[1]
	\Require s$g \neq 0$, {\color{blue}$\sscale \in (0, 1]$}
	\State {\bfseries Set}   $x_0 = x$ and $g_0 = g$.
	\For{$k = 0, 1, \ldots$}	
	\State $x_{k+1} = \linesearch(x_k, g_k, {\color{blue}\max\{\|g_k\|, \sscale\|g_0\|\}}, \gridsize_k, T_k)$. 
	\State {Choose} $g_{k+1} \in \partial f(x_{k+1})$.
	\EndFor
	\end{algorithmic}
\end{algorithm}

The workhorse of $\algname$ is the line search procedure in Algorithm~\ref{alg:linesearch} ($\linesearch$).
Let us briefly comment on the structure of this method.
Lines~\ref{line:linesearchbegin} through~\ref{line:linesearchend} of Algorithm~\ref{alg:linesearch}  implement a line search on $\sigma$.
Line~\ref{line:trustregion} chooses the Goldstein subgradient that provides the most descent while enforcing the trust-region constraint  $\sigma_i \leq \frac{\|v_{i+1}\|}{s}$.
Line~\ref{line:trustregion} also ensures the $\algname$ is a descent method.
Within the line search procedure, we evaluate $\tgoldstein$ and $\ngoldstein$ a total of $\gridsize$ times each.
Not all of the calls to $\tgoldstein$ and $\ngoldstein$ will succeed with descent within the allotted budget $T$, but we will show that for typical problems, at least one will generate sufficient descent provided $x_k$ is close enough to a local minimizer and $T$ is sufficiently large.
{\color{blue}The reason at least one will succeed with descent is that given any $x$ sufficiently near the solution and parameters $G$ and $T$ sufficiently large, $\linesearch$ will find a $\sigma$ such that $(x, \sigma)$ is in either the normal or tangent region described in Section~\ref{sec:normal_tangent_regions}.}
The line search allows the possibility that $\sigma$ is as large as $1/2$, which might force $x_{k+1}$ to leave the region surrounding the minimizer $\bar x$. 
This concern is what motivates the somewhat unusual structure of the line search method wherein the $\minnorm$-type methods are nested. 
Indeed, on the one hand, the nesting ensures the norms of the Goldstein subgradients $\|v_{i+1}\|$ are decaying as $\sigma_i$ increases. 
On the other hand, the trust region constraint ensures that $\sigma_i$ is not chosen too large, {\color{blue} which we need for two technical reasons in our analysis: (i) it prevents $x_{k+1}$ from leaving a small neighborhood around the minimizer where our regularity assumptions hold; (ii) we can only ensure $\tgoldstein$ terminates quickly when $\sigma \leq \delta_{\text{Grid}}$, for a certain radius $\delta_{\text{Grid}}$ defined in Lemma~\ref{lem: approximatereflection2}, which may be substantially smaller than $1/2$.

Computationally, it may at first seem desirable to drop the trust region constraint. 
Figure~\ref{fig: lowerboundtrust} shows this may not be the case.
We suspect the reason is two-fold: First, the trust region constraint allows us to cut off a range of $\sigma$ from our search, which might otherwise waste oracle calls; indeed, since $\|v_{i+1}\|$ is nonincreasing in $i$, and $\sigma_i$ is increasing, once the trust region is violated, it will be violated for all larger $i$.
Second, although we may take longer steps by disabling the trust region constraint, the amount of descent we expect is on the order of $\Omega(\sigma_i \|v_{i+1}\|)$. Thus, since the norms $\|v_{i+1}\|$ are nonincreasing, larger stepsizes $\sigma_i$ do not necessarily translate to larger descent.
}

{\color{blue} Finally, we comment on our motivation for choosing the scaling $s_k = \max\{\|g_k\|, \sscale\|g_0\|\}$  in the trust region constraint.
First, note that it is possible to prove, using identical techniques, that the $\algname$ converges when one replaces $s_k$ by any positive sequence that is bounded from above and below by positive constants. 
For our particular choice of $s_k$, the term $\sscale\|g_0\|$ ensures the sequence is bounded below, while the local Lipschitz continuity of $f$ ensures that $s_k$ is bounded above.
Second, we wish for the trust region constraint to be unaffected by rescalings of $f$. 
Our particular choice of $s_k$ guarantees scaling invariance, since the subgradients of $af$ are simply the subgradients of $f$ scaled by $a$ for any positive constant $a$.
One might introduce other schemes for choosing $s$, but we did not explore such strategies.
Finally, we found that performance of $\algname$ is relatively insensitive to the choice of $\sscale > 0$, and any $\sscale 
\in \{10^{-i} \colon i = 0, 2, 4, 6\}$ yielded adequate performance; see Figures~\ref{fig:3e} and~\ref{fig:3f}. 
}

 \begin{figure}[H]
 	\centering
 	\begin{subfigure}[b]{0.45\textwidth}
 		\centering
 		\includegraphics[width=\textwidth]{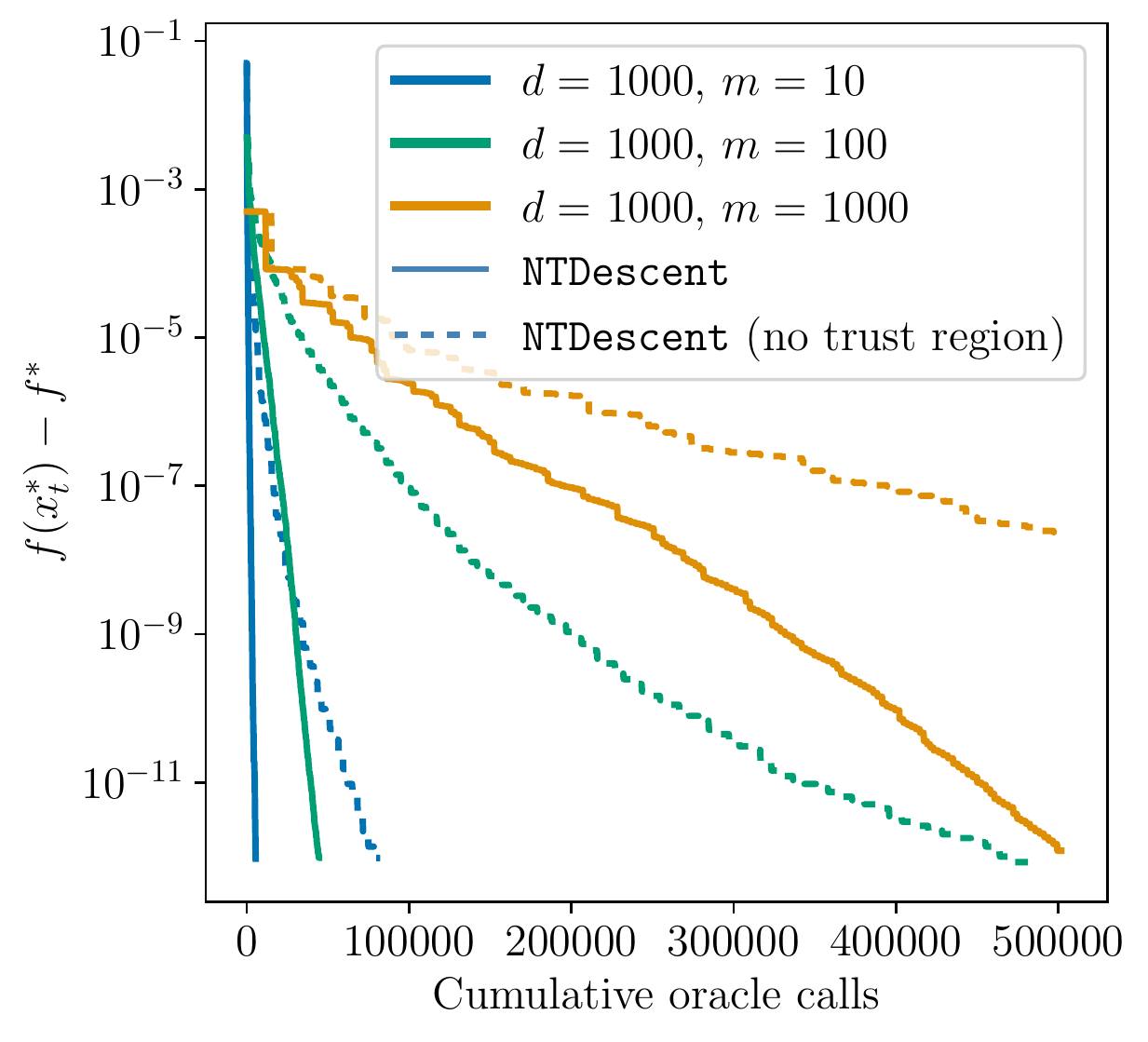}
 		\caption{}
 		\label{fig:lbasec1.4}
 	\end{subfigure}
 	\hfill
 	\begin{subfigure}[b]{0.45\textwidth}
 		\centering
 		\includegraphics[width=1\textwidth]{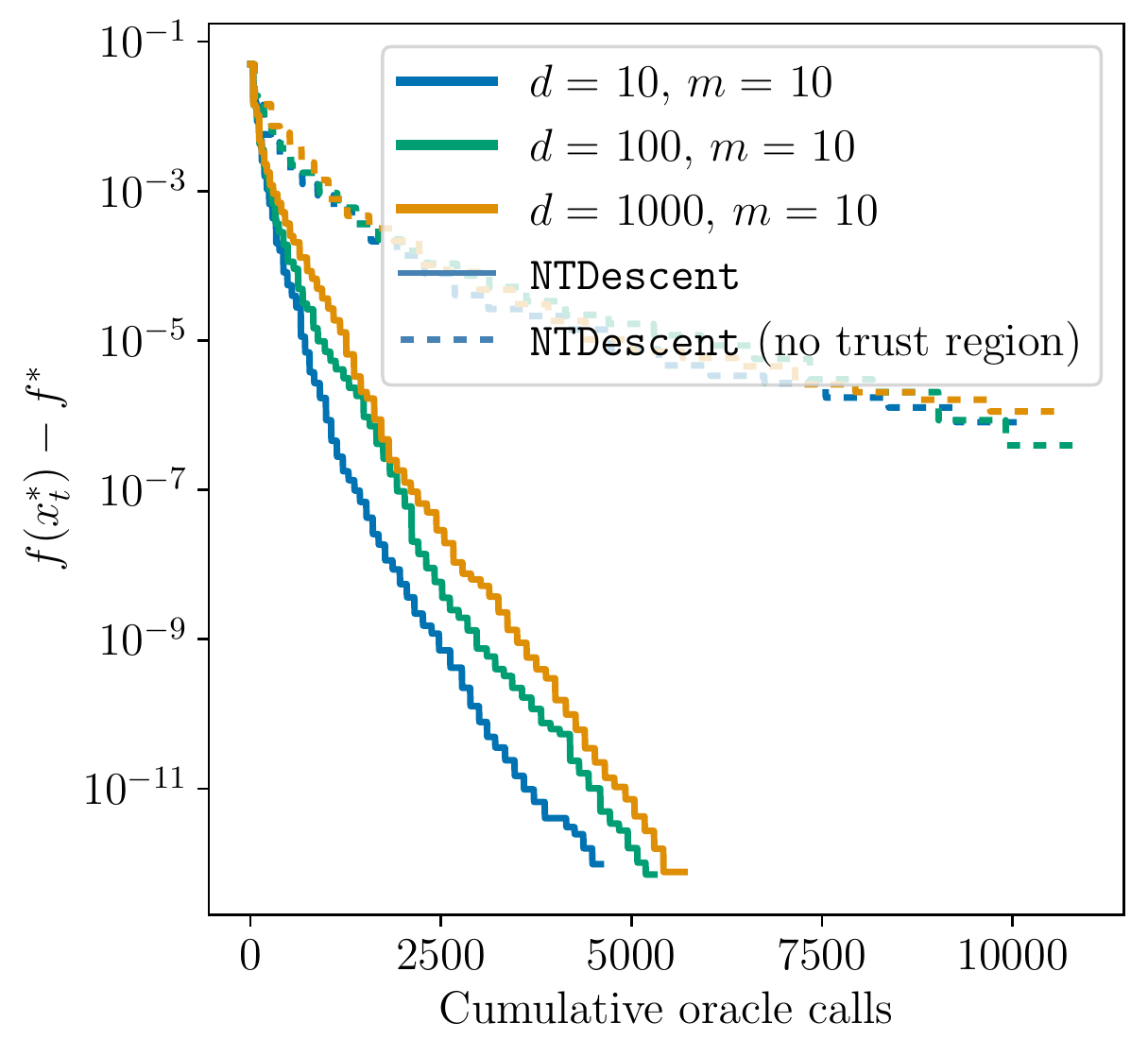}
 		\caption{}
 		\label{fig:lbbsec1.4}
 	\end{subfigure}	
 	\caption{{\color{blue}Comparison of $\algname$ on Problem~\eqref{eq:lbexample} with the trust region constraint in Line~\ref{line:linesearchend} of Algorithm~\ref{alg:linesearch}  removed. Left: we fix $d$ and vary $m$; Right: we fix $m$ and vary $d$. We invite the reader to compare these plots with Figure~\ref{fig: lowerbound}.}}\label{fig: lowerboundtrust} 
 \end{figure}

\subsection{Main convergence guarantees for $\algname$}

The main contribution of this work is a local, nearly linear convergence rate for $\algname$. 
The local rate holds under a key structural assumption -- Assumption~\ref{assumption:mainfinal} -- 
which formalizes the concept of typical structure and mirrors the structure of the simple function considered in Section~\ref{sec:simpleexample}.
While we formally describe  Assumption~\ref{assumption:mainfinal} in Section~\ref{sec:mainassumption}, for now, we mention that it holds for max-of-smooth and properly $C^p$ decomposable functions, provided the local minimizer $\bar x$ is a strong local minimizer that satisfies a strict complementarity condition;
this class includes the max-of-smooth setting considered in~\cite{han2021survey}.
Assumption~\ref{assumption:mainfinal} also holds for generic linear tilts of semialgebraic functions: if $f$ is semialgebraic, then for a full Lebesgue measure set of $w \in \RR^d$, Assumption~\ref{assumption:mainfinal} holds at every local minimizer $\bar x$ of the tilted function $f_w \colon x \mapsto f(x) + w^\T x$. We now present the theorem.

\begin{thm}[Main convergence theorem]\label{thm:maintheoremsemi}
Let $f \colon \RR^d \rightarrow \RR$ satisfy Assumption~\ref{assumption:mainfinal} at a local minimizer $\bar x \in \RR^d$.
Fix {\color{blue} scalar $\sscale \in  (0, 1]$}, budget $\{T_k\}$ and grid size $\{G_k\}$ sequences satisfying $$\min\{T_k, G_k\} \geq k+1 \qquad \text{ for all $k \geq 0$.}$$
Suppose that for initial point $x_0 \in \RR^d$, there exists a subgradient $g_0 \in \partial f(x_0)$ such that $g_0 \neq 0$.
Consider iterates $\{x_k\}$ generated by  $\algname(x_0, g_0, {\color{blue} \sscale}, \{G_k\},\{T_k\})$.
For any $q, k_0, C > 0$,  let $E_{k_0,q,C}$ denote the event:
$$
f(x_k) - f(\bar x) \leq \max\{(f(x_{k_0}) - f(\bar x))q^{k - k_0}, Cq^{k}\}  \text{ for all $k \geq k_0$.}
$$
Then there exists $q \in (0, 1)$, $C, C' > 0$, and a neighborhood $U$ of $\bar x$ depending solely on $f$ such that   for any failure probability $p \in (0, 1)$ and all $k_0 \geq C'\max\{\log(1/p), 1\}$, we have 
$$
P(E_{k_0,q,C} \mid x_{k_0} \in U) \geq 1- p,
$$
{\color{blue}provided $P(x_{k_0} \in U) > 0$}. Moreover, if $f$ is convex, we have 
$$
P(E_{k_0,q,C}) \geq 1- p.
$$
\end{thm}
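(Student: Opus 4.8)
The plan is to reduce the theorem to a single \emph{one-step descent guarantee} on a fixed neighborhood $U$ of $\bar x$, and then propagate it by induction while summing the failure probabilities of the randomized inner loop $\ngoldstein$. The central step is to establish the following: there exist a neighborhood $U$ (determined by the Lipschitz, quadratic-growth and manifold constants of $f$, hence solely by $f$), a rate $q \in (0,1)$, a constant $\kappa > 0$, and a per-call failure bound $\lambda \in (0,1)$, such that whenever $x_k \in U$, $f(x_k) - f(\bar x) \geq \kappa q^{k+1}$, and $\min\{T_k, G_k\} \geq k+1$, one call $x_{k+1} = \linesearch(x_k, g_k, s_k, G_k, T_k)$ returns, with probability at least $1 - \lambda^{k+1}$, a point satisfying $f(x_{k+1}) - f(\bar x) \leq q\,(f(x_k) - f(\bar x))$ and $x_{k+1} \in U$. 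Assembling this is where essentially all the work lies. First I would invoke Theorem~\ref{thm:gkl} to locate, inside the dyadic grid $\{2^{-(G_k - i)}\}$, an index $i^\ast$ whose stepsize $\sigma_{i^\ast}$ falls in either the normal or the tangent region of Section~\ref{sec:normal_tangent_regions}; this is exactly where $G_k \gtrsim k$ is needed, since the admissible scale shrinks like $\sqrt{f(x_k) - f(\bar x)}$ and must still exceed the smallest grid value $2^{-G_k}$. On a normal-region $\sigma_{i^\ast}$, the lower bound $\dist(0, \partial_{\sigma_{i^\ast}} f(x_k)) = \Omega(1)$ of Lemma~\ref{lem:lbnormal} and the block-amplification argument behind Proposition~\ref{lemma:normal} give that $\ngoldstein$ achieves descent within budget with failure probability geometric in $T_k$; on a tangent-region $\sigma_{i^\ast}$, the approximate reflection of Lemma~\ref{lem: approximatereflection2} shrinks the normal component of the $\tgoldstein$ iterates to order $\cfour\|P_{\cM}(x_k) - \bar x\|^2$ in $O(\log(1/(f(x_k) - f(\bar x)))) = O(k)$ deterministic steps, after which Lemma~\ref{lem: decentwithsmallnormal} yields descent. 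Finally Theorem~\ref{thm:gkl} converts the raw decrease $\tfrac{\sigma_{i^\ast}}{8}\|v_{i^\ast+1}\|$ into the multiplicative factor $1 - \eta/8$, the trust-region scale $s_k$ (bounded above and below) is checked to admit $i^\ast$, and a quadratic-growth bound keeps $x_{k+1}$ in $U$. Choosing $q = \max\{1 - \eta/8,\, e^{-1/c}\}$, where $c$ is the inner-loop complexity constant, reconciles the descent factor with the budget.

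Next I would run the induction. Writing $A := f(x_{k_0}) - f(\bar x)$ and $C := \kappa$, I maintain the invariant that $x_k \in U$ and $f(x_k) - f(\bar x) \leq \max\{A q^{k - k_0},\, C q^k\}$. The floor $C q^k$ is precisely the resolution below which the linearly growing budget can no longer be certified sufficient. Whenever the gap exceeds $C q^{k+1}$ the one-step guarantee applies, and multiplicative descent scales both terms of the maximum by $q$, preserving the invariant; whenever the gap already lies below $C q^{k+1}$, the fact that $\linesearch$ returns $x_k$ itself when no descent direction is found keeps the gap below $C q^{k+1}$, again inside the invariant. This dichotomy is exactly why the conclusion is phrased with a maximum rather than as a pure geometric bound.

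For the randomness, observe that on the event that every inner $\ngoldstein$ call invoked at an iteration $k \geq k_0$ with $x_k \in U$ and gap above the floor succeeds, the induction proceeds unobstructed and $E_{k_0,q,C}$ holds. Because the randomness at step $k$ is drawn afresh, the conditional failure probability given the history is at most $\lambda^{k+1}$, so a union bound gives total failure probability at most $\sum_{k \geq k_0} \lambda^{k+1} \leq \lambda^{k_0}/(1 - \lambda)$; requiring this to be at most $p$ yields precisely the threshold $k_0 \geq C' \log(1/p)$ after absorbing constants, proving $P(E_{k_0,q,C} \mid x_{k_0} \in U) \geq 1 - p$. For the convex case I would remove the conditioning by exploiting that convexity together with quadratic growth makes $\linesearch$ a globally convergent descent method whose sublinear global rate drives $f(x_k) - f(\bar x)$ below the threshold defining $U$ within a number of steps bounded by a constant depending only on $f$; enlarging $C'$ to absorb this burn-in then forces $x_{k_0} \in U$ for all $k_0 \geq C'\max\{\log(1/p), 1\}$, so the unconditional bound follows from the conditional one.

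The main obstacle I anticipate is the delicate three-way matching inside the one-step guarantee: the inner budget needed to certify descent grows like $\log(1/(f(x_k) - f(\bar x)))$, the gap decays geometrically so this need grows linearly in $k$, and the supplied budget $T_k = k+1$ also grows linearly, so the argument closes only if $q$ is taken close enough to $1$ that $\log(1/q)$ falls below $1/c$, while simultaneously $q$ must exceed the descent factor $1 - \eta/8$ and $G_k$ must be fine enough to resolve the correct $\sigma$ scale. Verifying that a single $q \in (0,1)$ and a single $f$-dependent neighborhood $U$ can be chosen to meet all these competing constraints uniformly in $k$, and that the per-step failure probabilities remain geometric (hence summable) along the run, is the crux; in particular, confirming the geometric-in-$T$ failure bound for $\ngoldstein$ in the normal region, rather than the naive $O(1/T)$ Markov bound, is what makes the $\log(1/p)$ dependence of $k_0$ possible.
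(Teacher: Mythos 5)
Your overall architecture coincides with the paper's: the gradient inequality of Theorem~\ref{thm:gkl} locates a grid stepsize in the normal or tangent region; Propositions~\ref{lemma:normal} and~\ref{lem:goldsteinterminatedescent} certify that $\ngoldstein$ and $\tgoldstein$ terminate with descent there (the former with failure probability geometric in $T_k$, which is indeed what makes the $\log(1/p)$ threshold possible); the induction maintains the invariant $f(x_k)-f(\bar x)\le\max\{(f(x_{k_0})-f(\bar x))q^{k-k_0},\,Cq^{k}\}$ with the floor $Cq^k$ absorbing the finite budget resolution; a union bound over the summable per-step failures gives $k_0\gtrsim C'\log(1/p)$; and the convex case is reduced to the conditional one via the global sublinear rate. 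All of this mirrors Proposition~\ref{prop:onestep} and Theorems~\ref{thm:maintheorem} and~\ref{thm:mainconvexsetting}.

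The genuine gap is your claim that ``a quadratic-growth bound keeps $x_{k+1}$ in $U$.'' Quadratic growth is guaranteed only on $\overline B_{2\deltaA}(\bar x)$ (Proposition~\ref{prop:proof:consequencesofA}), while $\linesearch$ evaluates candidate steps with $\sigma_i$ as large as $1/2$, and the trust-region test $\sigma_i\le\|v_{i+1}\|/s$ by itself cannot exclude them: $\|v_{i+1}\|$ can be of order $L$ while $s\le L$, so a step of length $1/2$ may pass the test, land outside the region where any of the regularity holds, and---since $f$ is nonconvex---may have smaller function value there, so monotonicity of $f(x_k)$ says nothing about $\|x_{k+1}-\bar x\|$. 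Appealing to quadratic growth at $x_{k+1}$ is circular: it presupposes that $x_{k+1}$ already lies in the ball where growth holds. This is exactly the issue the paper spends dedicated machinery on, which your proposal omits: Lemma~\ref{lem: approximatereflection2} and Lemma~\ref{lem:smallsubgeneral} show that for moderate-to-large $\sigma$ the $\tgoldstein$ output has norm at most of order $\|x-\bar x\|+(f(x)-f(\bar x))/\sigma$ plus an exponentially small term; since $\|v_{i+1}\|$ is nonincreasing in $i$ while $\sigma_i$ increases, the trust-region constraint is then violated at all large $i$, which yields the step-length bound in Item 2 of Proposition~\ref{prop:onestep}; and summing these geometrically decaying step lengths (the sequence $b_k$ in the proof of Theorem~\ref{thm:maintheorem}) is what actually keeps the iterates inside $B_{\deltaLS/2}(\bar x)$. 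Incidentally, you cite Lemma~\ref{lem: approximatereflection2} for the tangent-region contraction, but that role is played by Lemmas~\ref{lem: approximatereflection} and~\ref{lem: normalshrinklinearly}; Lemma~\ref{lem: approximatereflection2} exists precisely for the long-step/trust-region argument your proof lacks. Without this component the spatial half of your induction invariant ($x_k\in U$) cannot be propagated, so the one-step guarantee on which everything rests is unproven.
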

The theorem, which is justified in Theorems~\ref{thm:maintheorem} and~\ref{thm:mainconvexsetting}, bounds the function gap and distance by a quantity that geometrically decays in $k$. 
Let us examine the local complexity. 
Recall that each outer iteration of $\algname$ requires at most $2T_kG_k$ first-order oracle evaluations. 
Thus, if $T_k = G_k = k+1$ for all $k \geq 0$, the total number of oracle evaluations of $K$ steps of $\algname$ is at most $O(K^3)$. 
In other words, the local complexity of achieving an $\varepsilon$ optimal solution is $O(\log^3(1/\varepsilon))$ for all sufficiently small $\varepsilon > 0$, {\color{blue} where the big-$O$ notation hides terms depending on the local conditioning of $f$; see Lemma~\ref{lem:contractionlowerbound}.}
Therefore theorem establishes a local nearly linear rate of convergence for $\algname$.

\subsection{Outline}

The outline of this paper is as follows. 
In Section~\ref{sec:notation} we present notation and basic constructions.
This section describes a key structure -- the active manifold -- and cannot be skipped. 
In Section~\ref{sec:sublinear}, we present the sublinear convergence guarantees, which will be useful in the convex setting. 
This section also introduces key properties of the $\ngoldstein$ method, which will be used later in the work.
In Section~\ref{sec:mainassumption}, we introduce our main structural assumption -- Assumption~\ref{assumption:mainfinal} -- and show that it is satisfied for the generic semialgebraic and decomposable problem classes.
In Section~\ref{sec:GKLinequality}, we show that Assumption~\ref{assumption:mainfinal} implies the gradient inequality~\eqref{eq:introgkl}.
In Section~\ref{sec:minnorm} we show that the $\tgoldstein$ and $\ngoldstein$ methods terminate rapidly under appropriate conditions.
In Section~\ref{sec:locallinear}, we use the gradient inequality~\eqref{eq:introgkl} and Assumption~\ref{assumption:mainfinal} to prove that $\algname$ 
locally nearly linearly converges. 
Finally, in Section~\ref{sec:numerical} we provide a brief numerical illustration.



\subsection{Notation and basic constructions}\label{sec:notation}

We use standard  convex analysis notation as set out in the monographs~\cite{RW98,clarke2008nonsmooth}.
Throughout,  $\RR^d$ denotes a $d$-dimensional Euclidean space with the inner product
$\dotp{\cdot,\cdot}$ and the induced norm $\norm{x} = \sqrt{\dotp{x,x}}$. We denote the open ball of
radius $\varepsilon>0$ around a point $x\in \RR^d$ by the symbol $B_{\varepsilon}(x)$. We use the symbol $\closedball$ to denote the closed unit ball at the origin.
For any set $\cX\subseteq\RR^d$, the  {\em distance function} and the {\em projection map} are defined by
\begin{equation*}
\dist(x,\cX):=\inf_{y\in \cX} \|y-x\|\qquad \textrm{and}\qquad
P_\cX(x):=\argmin_{y\in \cX} \|y-x\|,
\end{equation*}
respectively. {\color{blue} Note that the function $\dist(\cdot, \cX)$ is $1$-Lipschitz for any set $\cX$.} 
For any set $\cX \subseteq \RR^d$, all $\bar x \in \cX$, all $x \in \RR^d$, and all $y \in P_{\cX}(x)$, we have 
$$
\|y - \bar x\| \leq 2\|x - \bar x\|.
$$
{\color{blue} We denote the diameter of a set $\cX$ by 
$$
\diam(\cX) = \sup_{x,y \in \cX} \|x - y\|.
$$}
We call a function $h\colon \RR^d \rightarrow \RR$ \emph{sublinear} if its epigraph is a closed convex cone, and in that case we define
$$
\text{Lin}(h) := \{ x\in \RR^d\colon  h(x) = - h(-x)\}
$$
to be its \emph{lineality space.} 
Given a mapping $F \colon \RR^d \rightarrow \RR^m$ and a point $\bar x \in \RR^d$, we define 
$$
\lip_{F}(\bar x) := \limsup_{\substack{x, x' \rightarrow \bar x \\ x \neq x'}} \frac{\|F(x) - F(x')\|}{\|x - x'\|}.
$$
Given a mapping $F \colon \RR^d \rightarrow \RR^{m \times n}$ into the space of $m \times n$ matrices and a point $x \in \RR^d$ then we define 
$$
\lip^{\text{op}}_{F}(\bar x) := \limsup_{\substack{x, x' \rightarrow \bar x \\ x \neq x'}} \frac{\|F(x) - F(x')\|_{\text{op}}}{\|x - x'\|}, 
$$
where $\|\cdot\|_{\mathrm{op}}$ denotes the operator norm defined on $\RR^{m \times n}$.

\paragraph{Semialgebraicity.} We call a set $\cX \subseteq \RR^d$ \emph{semialgebraic} if it is the union of finitely many sets defined by finitely many polynomial inequalities. Likewise, we call a function $f \colon \RR^d \rightarrow \RR$ semialgebraic if its graph $\gph(f) = \{(x, f(x)) \colon x\in \RR^d\}$ is semialgebraic. 

\paragraph {Subdifferentials.}
Consider a locally Lipschitz function $f\colon\RR^d\to\RR^d$ and a point $x \in \RR^d$. 
The Clarke subdifferential is the convex hull of limits of gradients evaluated at nearby points of differentiability:
$$
\partial f(x) = \text{conv} \left\{ \lim_{i \rightarrow \infty} \nabla f(x_i) \colon x_i \stackrel{\Omega}{\rightarrow} x\right\},
$$
where $\Omega \subseteq \RR^d$ is the set of points at which $f$ is differentiable (recall Radamacher's theorem). 
\textcolor{blue}{If $f$ is $L$-Lipschitz on a neighborhood $U$, then 
\begin{align*}
\text{for all $x \in U$ and $v \in \partial f(x)$, we have $\|v\| \leq L$.}
\end{align*}
This fact will be used throughout the paper.} A point $\bar x$ satisfying $0 \in \partial f(x)$ is said to be critical for $f$.
The Goldstein subdifferential, which appears in~\eqref{eq:goldsteinsubgradientdef}, will be a central object throughout. 
An important fact is that $\partial_{\sigma} f(x)$ is a closed convex set for any $x\in \RR^d$ and $\sigma > 0$.

\paragraph{Manifolds.}
We will need a few basic results about smooth manifolds, which can be found in
the references~\cite{boumal2020introduction,lee2013smooth}. A set $\cM \subseteq
\RR^d$ is called a $C^p$-smooth manifold around $\bar x$ (with $p \geq 1$) if there exists a
natural number $m$, an open neighborhood $U$ of $\bar x$,  and a $C^p$ smooth mapping
$F \colon U \rightarrow \RR^m$ such that the Jacobian $\nabla F(x)$ is surjective
and $\cM \cap U = F^{-1}(0)$. The tangent and normal spaces to $\cM$ at
$x \in \cM$ near $\bar x$ are defined to be $T_\cM(x) = \ker(\nabla F(x))$ and $N_{\cM}(x) =
T_{\cM}(x)^\perp = \range(\nabla F(x)^\ast)$, respectively. 
If $\cM$ is a $C^2$-smooth manifold around a point $\bar x$, then there exists $C > 0$ such that $y - x \in T_{\cM}(x) + C\|y - x\|^2 \closedball$ for all $x, y \in \cM$ near $\bar x$. 
We also have that $x - P_{\cM}(x) \in \normal(P_{\cM}(x))$ for all $x$ near $\bar x$.
Moreover, the projection mapping $P_{\cM} \colon \RR^d \rightarrow \RR$ is $C^{p-1}$ smooth on a neighborhood of $\bar x$ and satisfies $\nabla P_{\cM}(x) = P_{\tangent(x)}$ for all $x \in \cM$ near $\bar x$.  

\paragraph{Covariant gradients and smooth extension} Let $\cM\subset \RR^d$ be a $C^p$-manifold around a point $x$ for some $p\geq 1$. Then a function $f\colon \cM\to\RR$ is called $C^q$-smooth (with $q \geq 1$) around the point $x$ if there exists a $C^q$ function $\hat f\colon U\to \RR$ defined on an open neighborhood $U$ of $x$ and that agrees with $f$ on $U\cap \cM$. 
In that case, the projection of $\nabla \hat f(x)$ onto $\tangent(x)$ is independent of the choice of $\hat f$.
We call this projection the {\em covariant gradient of $f$ at $x$} and denote it by
$$\nabla_{\cM} f(x):= P_{\tangent(x)}(\nabla \hat f(x) ).$$
For example, the smooth extension 
$$
f_{\cM} := f \circ P_{\cM}
$$ of $f$ is $C^{\min\{p-1, q\}}$ smooth on a neighborhood of $x$ and agrees with $f$ along $\cM$.
Thus, we will use the identification: $\nabla_{\cM} f(x):= \nabla f_{\cM}(x).$

\paragraph{Active Manifolds.}

In this work, we will assume the local minimizer of interest lies on an \emph{active manifold.} 
Informally, an active manifold is a smooth manifold along which the function varies smoothly and off of which the function varies sharply. 
We adopt the formal model of activity explicitly used in~\cite{drusvyatskiy2014optimality}.
Related models exist, e.g., 
identifiable surfaces \cite{wright1993identifiable}, manifolds of partial smoothness \cite{lewis2002active}, $\mathcal{VU}$-structures \cite{lemarecha2000,vuoriginal}, and $g\circ F$ decomposable functions \cite{shapiroreducible}. 

\begin{definition}[Active manifold]\label{defn:ident_man}{\rm 
Consider a function $f\colon\RR^d\to\RR$ 
	and 
fix a set $\mathcal{M} \subseteq \RR^d$ 
	containing 
	a point $\bar x$ satisfying $0\in \partial f(\bar x)$. 
Then $\mathcal{M}$ 
	is called an
{\em  active} $C^p${\em-manifold around} $\bar x$  
	if 
		there exists 
	a neighborhood $U$ of $\bar x$ such that the following are true:
\begin{itemize}
\item {\bf (smoothness)}  
The set $\mathcal{M}$ 
	is 
a $C^p$-smooth manifold near $\bar x$ and  the restriction 
	of 	$f$ 
	to $\mathcal{M}$ 
	is $C^p$-smooth near $\bar x$.
\item {\bf (sharpness)} 
The lower bound holds:
$$\inf \{\|v\|: v\in \partial f(x),~x\in U\setminus \cM\}>0.$$
\end{itemize}}
\end{definition}

In the introduction, we provided two examples of functions that admit an active manifold at their minimizers. 
For example, function  $f(u, v) = u^2 + |v|$ admits the active manifold $\cM := \{(u, 0) \colon u \in \RR\}$ around the origin. On the other hand, the function $f$ from~\eqref{eq:lbexample} admits the active manifold $\cM = \{ x \in \RR^d \colon x_1 = x_2 = \ldots x_m\}$ around the origin.
{\color{blue} To draw a distinction with partial smoothness property of~\cite{lewis2002active}, the function $f(x,y) = \max\{x,0\} + y^2$ is partly smooth along the $x$ axis, but the $x$-axis is not an active manifold for $f$ around the origin; indeed, $f$ does not satisfy the sharpness condition at the origin.}
We now turn to sublinear convergence guarantees.



\section{Global sublinear convergence of $\algname$}\label{sec:sublinear}

The main goal of this work is to show that $\algname$ locally converges nearly linearly for ``typical" nonsmooth optimization problems.
A natural question is whether $\algname$ also possesses global nonasymptotic convergence guarantees. 
In this section, we prove two such guarantees: 
First, for arbitrary Lipschitz functions, we analyze the rate at which $\dist(0, \partial_{\sigma_i} f(x_k))$ tends to zero as a function of $k$.
Second, for convex Lipschitz functions, we analyze the rate at which $f(x_k)$ tends to $\inf f$.

{\color{blue} In the proofs of this section, the $\tgoldstein$ loop is ignored as we can only prove it terminates with descent near the minimizer. Instead, the global convergence guarantees follow from the properties of $\ngoldstein$. Thus, our analysis essentially follows that of \cite{davis2021gradientsampling}, where a nearly identical $\minnorm$ method was introduced. The main difference between the $\ngoldstein$ and the method of \cite{davis2021gradientsampling} lies in the perturbation radius in Line~\ref{alg:ngoldsteinperturb} of Algorithm~\ref{alg:ngoldstein}: while the radius of $\ngoldstein$ can be computed with access only to $\sigma\|g_t\|$, the radius in \cite{davis2021gradientsampling} requires knowledge of the Lipschitz constant of $f$, which we do not assume. Finally, we mention that \cite{davis2021gradientsampling} did not consider rates of convergence for convex problems.
}

Before stating the main result, we recall three key Lemmas, which underlie the proof. 
The first lemma shows that the vectors $u_i$ and $v_i$ generated by  $\linesearch$ are Goldstein subgradients of decreasing norm. 
\begin{lem}[Properties of $\linesearch$]\label{lem:linesearchproperties}
Let $f \colon \RR^d \rightarrow \RR$ be a locally Lipschitz function. 
Fix $x \in \RR^d$, subgradient $g\in \partial f(x)$, budget $T$, and  grid size $\gridsize$.
Let $u_i$ and $v_i$ be generated by $\linesearch(x, g, \gridsize, T)$. 
Then 
\begin{align}
u_i, v_{i+1} \in \partial_{\sigma_i} f(x)
\qquad \text{ and } \qquad \|v_{i+1}\| \leq \|u_i\| \leq \|v_{i}\|  \label{eq:decreasingnormsub}
\end{align}
for all $i = 0, \ldots, G-1$.
\end{lem}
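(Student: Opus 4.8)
The plan is to prove both assertions in~\eqref{eq:decreasingnormsub} by induction on the line-search index $i$, with the engine being a single invariant that governs one run of either $\minnorm$-type subroutine. Concretely, I would first record the following observation about a generic run of $\tgoldstein(x, h, \sigma, T)$ or $\ngoldstein(x, h, \sigma, T)$: \emph{if the initial iterate satisfies $h \in \partial_\sigma f(x)$, then every iterate $g_t$ produced by the loop lies in $\partial_\sigma f(x)$ and satisfies $\|g_{t+1}\| \leq \|g_t\|$; in particular the returned vector lies in $\partial_\sigma f(x)$ and has norm at most $\|h\|$.} Since the returned vector is always one of the loop iterates regardless of why the while loop terminates, this invariant suffices.

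To prove the invariant, I would argue by induction on the inner counter $t$. The base case is $g_0 = h \in \partial_\sigma f(x)$ by hypothesis. For the inductive step, the crucial point is that the freshly sampled subgradient $\hat g_t$ is always a Clarke subgradient at a point of the closed ball $\closedball_\sigma(x)$: in $\tgoldstein$ the evaluation point $x - \sigma g_t/\|g_t\|$ lies on the sphere of radius $\sigma$, while in $\ngoldstein$ the point $y_t$ lies on the segment $[x, x - \sigma \zeta_t/\|\zeta_t\|]$, whose two endpoints are in $\closedball_\sigma(x)$, so $y_t \in \closedball_\sigma(x)$ by convexity of the ball. Hence $\hat g_t \in \partial_\sigma f(x)$ directly from the definition~\eqref{eq:goldsteinsubgradientdef}. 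Since $\partial_\sigma f(x)$ is closed and convex and already contains $g_t$, the whole segment $[g_t, \hat g_t]$ lies in $\partial_\sigma f(x)$, and therefore so does its minimal-norm element $g_{t+1}$. The norm bound $\|g_{t+1}\| \leq \|g_t\|$ is immediate because $g_t$ itself belongs to the segment over which the minimum is taken.

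With this invariant in hand, I would run an outer induction on $i$. The base case uses $v_0 = g \in \partial f(x) \subseteq \partial_{\sigma_0} f(x)$. For the inductive step, the only bookkeeping subtlety is a scale mismatch: the vector $v_i$ was produced at scale $\sigma_{i-1}$, whereas it is now fed into the subroutines at scale $\sigma_i$. Since $\sigma_i = 2^{-(\gridsize-i)}$ is increasing in $i$, we have $\sigma_{i-1} \leq \sigma_i$, and the balls nest, so $\partial_{\sigma_{i-1}} f(x) \subseteq \partial_{\sigma_i} f(x)$; thus the inductive hypothesis $v_i \in \partial_{\sigma_{i-1}} f(x)$ upgrades to $v_i \in \partial_{\sigma_i} f(x)$. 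Applying the invariant to $u_i = \tgoldstein(x, v_i, \sigma_i, T)$ gives $u_i \in \partial_{\sigma_i} f(x)$ with $\|u_i\| \leq \|v_i\|$, and applying it again to $v_{i+1} = \ngoldstein(x, u_i, \sigma_i, T)$ gives $v_{i+1} \in \partial_{\sigma_i} f(x)$ with $\|v_{i+1}\| \leq \|u_i\|$. Chaining the two norm bounds yields $\|v_{i+1}\| \leq \|u_i\| \leq \|v_i\|$, completing the induction.

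The argument is essentially routine, and I do not expect a genuine obstacle. The only points requiring care are purely structural: verifying that every evaluation point stays inside the closed ball $\closedball_\sigma(x)$ (so that newly computed Clarke subgradients are legitimate Goldstein subgradients), and tracking the nesting of Goldstein subdifferentials across the increasing stepsize grid so that $v_i$ remains a valid input at the larger scale $\sigma_i$. Both reduce to convexity of the ball and monotonicity of $\sigma_i$, together with the closedness and convexity of $\partial_\sigma f(x)$ recorded in Section~\ref{sec:notation}.
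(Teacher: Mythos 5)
Your proof is correct and follows essentially the same route as the paper's: an induction whose engine is the observation that both subroutines only query Clarke subgradients at points of $\closedball_{\sigma}(x)$, that $\partial_{\sigma}f(x)$ is convex, and that the $\argmin$ over a segment containing the previous iterate can only decrease the norm. The only difference is one of thoroughness — the paper proves the base case and declares the induction "straightforward," whereas you spell out the inductive step, including the nesting $\partial_{\sigma_{i-1}}f(x)\subseteq\partial_{\sigma_i}f(x)$ coming from the monotonicity of the grid $\sigma_i$, which is precisely the detail the paper omits.
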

\begin{proof}
The proof follows by induction. 
We prove the base case only, since the induction is straightforward.
First note that the inclusion $v_0 \in \partial f(x)$ implies that $u_0 \in \partial_{\sigma_0} f(x)$, since $\tgoldstein$ constructs $u_0$ as a convex combinations of subgradients evaluated in the ball $\overline B_{\sigma_0}(\bar x)$. 
Likewise, due to the $\argmin$ operation on line~\ref{line:argmin} of Algorithm~\ref{alg:tgoldstein}, the subgradients generated by $\tgoldstein$ are decreasing in norm. Consequently, we have $\|u_0\| \leq \|v_0\|$.
A similar argument shows that $v_1 \in \partial_{\sigma_0} f(x)$ and $\|v_1\| \leq \|u_0\|$. This completes the proof.
\end{proof}

The next lemma shows that when $f$ is convex, the minimal norm Goldstein subgradient may be used to bound the function values.
We place the proof in Appendix~\ref{sec:prooflem:goldsteinsubgradientinequality}, since it follows from a standard argument.
\begin{lem}[Subgradient inequality]\label{lem:goldsteinsubgradientinequality}
Suppose that $f \colon \RR^d \rightarrow \RR$ is a continuous convex function.
Let $x, y \in \RR^d$. 
Let $L$ denote a Lipschitz constant for $f$ on the ball $B_{2\sigma}(x)$.
Then 
$$
 f(x) - f(y) \leq \|x - y\| \dist(0, \partial_{\sigma}f(x))  + 2\sigma L.
$$
\end{lem}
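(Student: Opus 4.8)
The plan is to relate the function gap $f(x) - f(y)$ to the Goldstein subdifferential $\partial_\sigma f(x)$ by exploiting both convexity and the averaging structure of the Goldstein subdifferential. The key idea is that every Clarke subgradient $v \in \partial f(x)$ satisfies the convexity inequality $f(y) \geq f(x) + \langle v, y - x \rangle$, and the Goldstein subdifferential is a convex hull of such subgradients taken over a ball, so I expect a similar inequality to hold for elements of $\partial_\sigma f(x)$ up to an error term controlled by $\sigma L$.

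\medskip

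First I would take an arbitrary element $w \in \partial_\sigma f(x)$. By definition~\eqref{eq:goldsteinsubgradientdef}, $w$ is a convex combination $w = \sum_j \lambda_j v_j$ where each $v_j \in \partial f(z_j)$ for some $z_j \in \closedball_\sigma(x)$, i.e., $\|z_j - x\| \leq \sigma$. For each such subgradient, convexity of $f$ gives
\begin{align*}
f(y) \geq f(z_j) + \langle v_j, y - z_j \rangle.
\end{align*}
The next step is to convert the right-hand side, which is anchored at $z_j$, into a bound anchored at $x$. I would write $f(z_j) \geq f(x) - L\|z_j - x\| \geq f(x) - \sigma L$ using that $f$ is $L$-Lipschitz on $B_{2\sigma}(x)$ (note $z_j \in \closedball_\sigma(x) \subseteq B_{2\sigma}(x)$), and similarly replace $\langle v_j, y - z_j\rangle$ by $\langle v_j, y - x\rangle$ at the cost of $\langle v_j, x - z_j\rangle \geq -\|v_j\|\|x - z_j\| \geq -\sigma L$, again using $\|v_j\| \leq L$. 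Combining these gives, for each $j$,
\begin{align*}
f(y) \geq f(x) + \langle v_j, y - x\rangle - 2\sigma L.
\end{align*}

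\medskip

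Then I would take the convex combination over $j$ with weights $\lambda_j$; since the error term $-2\sigma L$ and $f(x)$ are constant in $j$ and $\sum_j \lambda_j = 1$, this yields
\begin{align*}
f(y) \geq f(x) + \langle w, y - x \rangle - 2\sigma L.
\end{align*}
Rearranging gives $f(x) - f(y) \leq \langle w, x - y \rangle + 2\sigma L \leq \|x - y\|\,\|w\| + 2\sigma L$ by Cauchy--Schwarz. Since this holds for every $w \in \partial_\sigma f(x)$, and $\partial_\sigma f(x)$ is closed and convex, I may choose $w$ to be the minimal norm element, for which $\|w\| = \dist(0, \partial_\sigma f(x))$, obtaining the claimed bound. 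A minor technical point to verify is the reduction from a general convex combination to a finite one: either invoke Carath\'eodory's theorem to write $w$ as a finite convex combination of extreme subgradients, or argue directly via the closed-hull definition and a limiting/approximation argument. I do not anticipate a serious obstacle here; the only care needed is ensuring the Lipschitz bound applies at all the relevant points, which is why the lemma hypothesizes $L$ on $B_{2\sigma}(x)$ rather than merely on $B_\sigma(x)$.
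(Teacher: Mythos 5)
Your proof is correct and follows essentially the same route as the paper's: write a Goldstein subgradient as a finite convex combination of Clarke subgradients at points in $\overline B_{\sigma}(x)$, apply the convexity inequality anchored at each of those points, and absorb the two error terms via the Lipschitz bound to get the $2\sigma L$ slack, finally specializing to the minimal norm element. The only cosmetic differences are that the paper controls the gap between $f(x)$ and $\sum_i \lambda_i f(x_i)$ with one Jensen step plus one Lipschitz step rather than a per-point Lipschitz bound, and it does not bother to justify the finite convex combination, which your Carath\'eodory remark handles cleanly.
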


The final lemma provides conditions under which $\ngoldstein$ terminates with descent with high probability.
The result is closely related to~\cite[Corollary~2.6]{davis2021gradientsampling}, {\color{blue} but we take extra care to analyze the perturbation radius in Line~\ref{alg:ngoldsteinperturb} of Algorithm~\ref{alg:ngoldstein}.}
\begin{lem}[$\ngoldstein$ loop terminates with descent]\label{lemma:normalgold}
Let $f$ be a locally Lipschitz function. Fix initial point $x \in \RR^d$, radius $\sigma > 0$, subgradient $g \in \partial_{\sigma} f(x)$, and failure probability $p \in (0, 1)$.
Furthermore, let $L$ be a Lipschitz constant of $f$ on the ball $B_{2\sigma} (x)$.
Suppose that 
$$
\sigma \leq \frac{\dist(0, \partial_{\sigma} f(x))}{\sqrt{128}L}; \qquad \text{ and } \qquad 
T \geq \left\lceil \frac{64L^2}{\dist^2(0, \partial_{\sigma} f(x))} \right\rceil \left\lceil 2\log(1/p) \right\rceil.
$$
Define $g_+ := \ngoldstein(x, g, \sigma, T)$. 
Then $\|g_+\| \neq 0$ and the point $x_+ := x - \sigma\frac{g_+}{\|g_+\|}$ satisfies
$$
f(x_+) \leq f(x) - \frac{\sigma \dist(0, \partial_{\sigma} f(x))}{8} \qquad \text{with probability at least $ 1- p$.}
$$
\end{lem}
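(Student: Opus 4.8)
The plan is to follow the template of \cite{davis2021gradientsampling}, tracking the sequence $g_0, g_1, \ldots$ produced inside the $\ngoldstein$ loop and showing that, as long as the descent test fails, the squared norms $\|g_t\|^2$ decrease in expectation fast enough that the loop must exit with descent within $T$ steps with probability at least $1-p$. Write $\Delta := \dist(0,\partial_\sigma f(x))$ and let $\tau$ be the first iteration at which the loop exits. The first step is to record the invariant that every $g_t$ is a Goldstein subgradient: since $g_0=g\in\partial_\sigma f(x)$, each sampled $\hat g_t\in\partial f(y_t)$ with $y_t\in\overline B_\sigma(x)$ lies in $\partial_\sigma f(x)$, and $g_{t+1}\in[g_t,\hat g_t]$ stays in the convex set $\partial_\sigma f(x)$; by induction $\Delta\le\|g_t\|\le L$ for all $t$. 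This bound settles the nonvanishing claim immediately, since $\|g_+\|\ge\Delta>0$.

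The core of the argument is the per-step estimate: whenever the loop continues past step $t$ (i.e.\ the descent test fails, so $f(x)-f(x-\sigma u_t)\le\tfrac\sigma8\|g_t\|$ with $u_t:=g_t/\|g_t\|$), I would prove
$$
\EE\!\left[\|g_{t+1}\|^2\mid\mathcal F_t\right]\le\|g_t\|^2-\frac{\|g_t\|^4}{16L^2}.
$$
Applying the Lebourg mean value theorem \cite{clarke2008nonsmooth} along $[x,x-\sigma\hat u_t]$, with $\hat u_t:=\zeta_t/\|\zeta_t\|$ (the perturbation ensuring differentiability almost everywhere), identifies the conditional mean with the averaged gradient, giving $\sigma\dotp{\EE[\hat g_t],\hat u_t}=f(x)-f(x-\sigma\hat u_t)$. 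The remaining work is to pass from $\hat u_t$ back to $u_t$: the two unit vectors differ by at most $2r/\|g_t\|<2\sigma$, and the two Lipschitz corrections of size $L\sigma\|\hat u_t-u_t\|$ are controlled using $\sigma\le\Delta/(\sqrt{128}\,L)\le\|g_t\|/(\sqrt{128}\,L)$, whence $\dotp{\EE[\hat g_t],g_t}\le\tfrac12\|g_t\|^2$. Comparing the minimal-norm point $g_{t+1}$ of $[g_t,\hat g_t]$ against the fixed step $\bar\lambda=\|g_t\|^2/(8L^2)$ and using $\|\hat g_t-g_t\|\le 2L$ then yields the displayed decrease after taking expectations.

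The probabilistic termination is where I would be most careful, since the stated budget $T$ carries no $\log(L^2/\Delta^2)$ factor; the trick is a truncated potential that forgets the initial norm. Define $\tilde\phi_t:=\|g_t\|^2\1_{\{\tau>t\}}$. The core estimate together with the freezing of the loop at $\tau$ gives $\EE[\tilde\phi_{t+1}\mid\mathcal F_t]\le\tilde\phi_t-\tilde\phi_t^2/(16L^2)$, so Jensen's inequality yields $b_{t+1}\le b_t-b_t^2/(16L^2)$ for $b_t:=\EE\tilde\phi_t$, and the elementary harmonic bound gives $b_t\le 16L^2/t$ \emph{independently of} $b_0$. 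Taking $t=K:=\lceil 64L^2/\Delta^2\rceil$ forces $\EE\tilde\phi_K\le\Delta^2/4$; since $\tilde\phi_K\ge\Delta^2\1_{\{\tau>K\}}$, Markov's inequality gives $P(\tau>K)\le\tfrac14$. Because this per-block estimate is independent of the starting norm, it applies to each block of $K$ steps conditioned on the past, so after $N:=\lceil 2\log(1/p)\rceil$ blocks $P(\tau>NK)\le 4^{-N}\le p$. On the complementary event the loop has exited with descent, and $\|g_+\|\ge\Delta$ converts this into $f(x_+)\le f(x)-\tfrac\sigma8\Delta$.

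The main obstacle is the per-step estimate of the second paragraph. Unlike \cite{davis2021gradientsampling}, the perturbation radius is tied only to the computable quantity $\sigma\|g_t\|$ rather than to the (unknown) constant $L$, so one must verify that the direction error from replacing $u_t$ by $\hat u_t$ remains small; this is precisely where the constant $\sqrt{128}$ in the hypothesis on $\sigma$ is calibrated to keep $\dotp{\EE[\hat g_t],g_t}$ strictly below $\tfrac12\|g_t\|^2$. A secondary subtlety is the harmonic-decay step, where truncating the potential at $\tau$ is essential: it is what produces a per-block failure probability independent of $\|g_0\|$, and hence a budget free of any $\log(L^2/\Delta^2)$ overhead.
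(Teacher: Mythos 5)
Your proposal is correct, but it takes a different route from the paper: the paper's proof is a two-step reduction, while yours is a self-contained re-derivation of the machinery that the paper imports by citation. Concretely, the paper observes that $\ngoldstein$ is exactly Algorithm~1 of~\cite{davis2021gradientsampling} up to the perturbation-radius rule, shows that the computable constraint $r < \sigma\|g_t\|$ implies that paper's radius constraint $r < \|g_t\|\sqrt{1-(1-\|g_t\|^2/128L^2)^2}$ (via monotonicity of $h(a)=\sqrt{1-(1-a^2/128L^2)^2}$ and $h(a)\ge a/(\sqrt{128}L)$ on $[0,L]$, combined with $\sigma \le \dist(0,\partial_\sigma f(x))/(\sqrt{128}L)$ and $\dist(0,\partial_\sigma f(x)) \leq \|g_t\| \leq L$), and then invokes \cite[Corollary~2.6]{davis2021gradientsampling} verbatim. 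You instead absorb the perturbation directly into the per-step correlation estimate — bounding $\|\hat u_t - u_t\| \le 2r/\|g_t\| < 2\sigma$ and using the same hypothesis on $\sigma$ to keep $\dotp{\EE[\hat g_t],g_t}$ below $\tfrac12\|g_t\|^2$ (indeed $1/8 + 4/\sqrt{128} < 1/2$) — and then run the expected-decrease, truncated-potential, and block arguments yourself. Your details check out: the invariant $\dist(0,\partial_\sigma f(x)) \le \|g_t\| \le L$, the fixed-step comparison $\bar\lambda = \|g_t\|^2/(8L^2)$, the $b_t \le 16L^2/t$ bound independent of $b_0$, and the per-block failure probability $1/4$ all hold, and your accounting of the while-loop exit (including the boundary case $\tau = T$) is sound. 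What the paper's route buys is brevity and rigor-by-reference; what yours buys is transparency — in particular it makes explicit both where the constant $\sqrt{128}$ is spent and why the budget $T$ needs no $\log(L^2/\dist^2(0,\partial_\sigma f(x)))$ factor, two points the paper leaves buried inside the cited corollary.
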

\begin{proof}
First note that $g_+ \in \partial_{\sigma} f(x)$, so $\|g_+\| \geq \dist(0, \partial_{\sigma} f(x)) > 0$.
Now, observe that $\ngoldstein$ is precisely \cite[Algorithm~1]{davis2021gradientsampling} with a different bound on the perturbation radius $r$. 
Indeed, in \cite[Algorithm~1]{davis2021gradientsampling}, $r$ must satisfy 
$$
r < \|g_t\|\sqrt{1 - \left(1 - \frac{\|g_t\|^2}{128L^2}\right)^2}
$$
for all $t \geq 0$. 
We now show that the constraint $r \leq \sigma\|g_t\|$ implies the above bound. 
To that end, define the univariate function $h\colon a \mapsto \sqrt{1 - (1- \frac{a^2}{128L^2})^2}$.
Then $h$ is increasing in $a$ for $a \leq L$. Moreover, for $a \in [0, L]$, we have 
$
h(a) \geq \frac{a}{\sqrt{128}L}.
$
Consequently, since $$\dist(0, \partial_{\sigma} f(x)) \leq \|g_t\| \leq L$$ for all $t \leq T$, we have
$$
r < \sigma\|g_t\| \leq \frac{\dist(0, \partial_{\sigma} f(x))\|g_t\|}{\sqrt{128}L} \leq  h(\dist(0, \partial_{\sigma}f(x)))\|g_t\| \leq h(\|g_t\|)\|g_t\|.
$$
Thus the proof is a direct application of \cite[Corollary~2.6]{davis2021gradientsampling}.
\end{proof}

Given these lemmata, we are now ready to state and prove our main sublinear convergence guarantee. 
\begin{thm}[Sublinear convergence]\label{thm:sublinear}
Let $f \colon \RR^d \rightarrow \RR$ be a locally Lipschitz function.
Fix initial point $x_0 \in \RR^d$ and subgradient $g_0 \in \partial f(x_0)$. 
Assume that $g_0 \neq 0$.
Let $L \in \RR\cup \{+\infty\}$ be any Lipschitz constant of $f$ over the widened sublevel set 
$$
S: = \{x+ u \colon f(x) \leq f(x_0) \text{ and } u \in \overline B(x)\}.
$$
Fix a {\color{blue} scalar $\sscale \in (0,1]$}, budget sequence $\{T_k\}$, grid size sequence $\{\gridsize_k\}$, and failure probability $p\in (0, 1)$.
Let $\{x_k\}$ be generated by $\algname(x, g, {\color{blue} \sscale}, \{\gridsize_k\},\{T_k\})$. 
Then for all $K > 0$, the following holds with probability at least $1-p$:
Define $G := \min_{K \leq k \leq 2K-1}\gridsize_k$ and  $T: = \min_{K \leq k \leq 2K - 1} T_k$. 
Then for all $i \leq G$, {\color{blue} the following bound holds with $\sigma_i := 2^{-(G-i)}$:}
$$
\min_{K \leq k \leq 2K-1}  \dist(0, \partial_{\sigma_i} f(x_k)) \leq  
\max\left\{\frac{8(f(x_{K}) - \inf f)}{\sigma_i K}, \frac{16L\sqrt{2 \log(KG/p)}}{\sqrt{T}}, \sqrt{128}L \sigma_i\right\}.
$$
Now suppose that $f$ is convex and define {\color{blue} $D := \diam(\{x\in \RR^d \colon f(x) \leq f(x_0)\})$.} Then
\begin{align}\label{eq:functiongapboundsublinear}
f(x_{2K-1}) - \inf f \leq  
\min_{i \leq G}\left\{ D\max\left\{\frac{8(f(x_{K}) - \inf f)}{\sigma_i K}, \frac{16L\sqrt{2 \log(KG/p)}}{\sqrt{T}}, \sqrt{128}L\sigma_i\right\} + 2L\sigma_i  \right\}.
\end{align}
\end{thm}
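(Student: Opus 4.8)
The plan is to prove the two displayed bounds by tracking how $\dist(0,\partial_{\sigma_i}f(x_k))$ decays across the outer iterations $k = K, \dots, 2K-1$, and then, in the convex case, converting a subgradient bound into a function-gap bound via Lemma~\ref{lem:goldsteinsubgradientinequality}. The key mechanism is that $\algname$ is a descent method (Line~\ref{line:trustregion} of $\linesearch$ guarantees $f(x_{k+1}) \le f(x_k)$), so all iterates remain in the sublevel set $\{f \le f(x_0)\}$, and hence all points queried within radius $\sigma_i$ lie in the widened sublevel set $S$ where $f$ is $L$-Lipschitz. This justifies using $L$ as a uniform Lipschitz constant throughout.

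First I would argue by contradiction/dichotomy at each fixed grid index $i \le G$. Suppose that $\dist(0,\partial_{\sigma_i}f(x_k))$ exceeds the claimed maximum for every $k$ in the window $K \le k \le 2K-1$. In particular it exceeds $\sqrt{128}L\sigma_i$, which is exactly the condition $\sigma_i \le \dist(0,\partial_{\sigma_i}f(x_k))/(\sqrt{128}L)$ required by Lemma~\ref{lemma:normalgold}; and it exceeds $16L\sqrt{2\log(KG/p)}/\sqrt{T}$, which after rearrangement gives $T \ge \lceil 64L^2/\dist^2 \rceil \lceil 2\log(1/p') \rceil$ for the per-call failure probability $p' := p/(KG)$ required by the budget hypothesis of Lemma~\ref{lemma:normalgold}. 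Since $\linesearch$ invokes $\ngoldstein(x_k, u_i, \sigma_i, T_k)$ with a valid Goldstein subgradient input $u_i \in \partial_{\sigma_i}f(x_k)$ (Lemma~\ref{lem:linesearchproperties}) and $T_k \ge T$, each such call then produces descent of at least $\tfrac{\sigma_i}{8}\dist(0,\partial_{\sigma_i}f(x_k))$ with probability $\ge 1 - p'$. Summing the guaranteed descent $> \tfrac{\sigma_i}{8}\cdot\tfrac{8(f(x_K)-\inf f)}{\sigma_i K} = (f(x_K)-\inf f)/K$ over the $K$ iterations in the window telescopes to a total decrease strictly exceeding $f(x_K) - \inf f$, contradicting $f(x_{2K-1}) \ge \inf f$. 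A union bound over the $i \le G$ grid indices and $k$ in the window (at most $KG$ calls) controls the total failure probability at $p$.

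For the convex bound~\eqref{eq:functiongapboundsublinear}, I would apply Lemma~\ref{lem:goldsteinsubgradientinequality} with $x = x_{2K-1}$ and $y$ a minimizer, giving $f(x_{2K-1}) - \inf f \le \|x_{2K-1} - y\|\,\dist(0,\partial_{\sigma_i}f(x_{2K-1})) + 2\sigma_i L$; here $\|x_{2K-1} - y\| \le D$ since both points lie in the sublevel set. Since $x_{2K-1}$ is one of the iterates in the window, the minimum-over-$k$ subgradient bound just established applies to it, and taking the minimum over $i \le G$ yields the stated inequality. The main obstacle I anticipate is the bookkeeping of failure probabilities: one must verify that the single union bound at level $p$ simultaneously validates the hypotheses of Lemma~\ref{lemma:normalgold} for \emph{every} relevant $(i,k)$ pair, which forces the choice $p' = p/(KG)$ and is precisely why the logarithmic term carries $\log(KG/p)$ rather than $\log(1/p)$. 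A secondary technical point is confirming that descent genuinely accrues at the iterate level despite $\linesearch$ running $\tgoldstein$ before $\ngoldstein$ at each grid step, but the trust-region selection in Line~\ref{line:trustregion} ensures $x_{k+1}$ achieves at least the descent of the best admissible $\ngoldstein$ step, so the telescoping argument goes through unchanged.
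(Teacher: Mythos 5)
Your strategy for the first bound is essentially the paper's: Lemma~\ref{lem:linesearchproperties} to certify $u_i, v_{i+1} \in \partial_{\sigma_i}f(x_k)$, Lemma~\ref{lemma:normalgold} with per-call failure probability $p/(KG)$, and the observation that descent of $\tfrac{\sigma_i}{8}\dist(0,\partial_{\sigma_i}f(x_k))$ at every index of the window would drive $f$ below $\inf f$ (the paper phrases this as averaging, $\min_k \dist \le \tfrac{1}{K}\sum_k \dist \le \tfrac{8(f(x_K)-f(x_{2K}))}{\sigma_i K}$, rather than by contradiction, but the computation is identical). There is, however, a missing verification in your telescoping step: you assume the candidate $x_{k,i} := x_k - \sigma_i v_{i+1}/\|v_{i+1}\|$ is an \emph{admissible} point in Line~\ref{line:trustregion} of Algorithm~\ref{alg:linesearch}, but admissibility means $\sigma_i \le \|v_{i+1}\|/s$, and if this fails the candidate is discarded and $f(x_{k+1}) \le f(x_{k,i})$ does not follow. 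The verification is exactly why the $\sqrt{128}L\sigma_i$ term appears in the max: on the event $\dist(0,\partial_{\sigma_i}f(x_k)) \ge \sqrt{128}L\sigma_i$ one has $\|v_{i+1}\| \ge \dist(0,\partial_{\sigma_i}f(x_k)) \ge \sqrt{128}L\sigma_i \ge s\sigma_i$, using $s = \max\{\|g_k\|, \sscale\|g_0\|\} \le L$ (subgradients taken in the widened sublevel set $S$ are bounded by $L$). In your proposal this term is used only as the stepsize hypothesis of Lemma~\ref{lemma:normalgold}; it must do double duty.

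The convex part as written contains a genuine error. The bound you established controls $\min_{K \le k \le 2K-1}\dist(0,\partial_{\sigma_i}f(x_k))$, and a bound on a minimum does not transfer to the particular iterate $x_{2K-1}$: the minimum may be attained at some $k_i < 2K-1$ while $\dist(0,\partial_{\sigma_i}f(x_{2K-1}))$ remains large, so you may not invoke Lemma~\ref{lem:goldsteinsubgradientinequality} at $x = x_{2K-1}$. The repair is the paper's route and uses the descent property you yourself emphasized: let $k_i$ attain the minimum, apply Lemma~\ref{lem:goldsteinsubgradientinequality} at $x_{k_i}$ to get $f(x_{k_i}) - \inf f \le D\,\min_{K\le k\le 2K-1}\dist(0,\partial_{\sigma_i}f(x_k)) + 2\sigma_i L$, and then conclude via monotonicity $f(x_{2K-1}) \le f(x_{k_i})$ (valid since $k_i \le 2K-1$ and $\algname$ is a descent method), finally taking the minimum over $i \le G$. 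With these two repairs the argument matches the paper's proof.
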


\begin{proof}
Let us assume that $L < +\infty$; otherwise the result is trivial. Fix $K > 0$ and $i \leq G$. Define
$$
\epsilon_{i} :=  \max\left\{  \frac{16L\sqrt{2 \log(K\gridsize /p)}}{\sqrt{T}}, \sqrt{128}L\sigma_i\right\}.
$$
For every $K \leq k \leq 2K-1$, define 
$$
 x_{k, i} := x_k - \sigma_i\frac{v_{i+1}}{\|v_{i+1}\|}, \qquad \text{ where } v_{i+1} := \ngoldstein(x_k, u_i, \sigma_i, T_k), 
$$
and  $u_i$ appear in the definition of $\linesearch(x_k, g_k,  {\color{blue}\max\{\|g_k\|, \sscale\|g_0\|\}}, G_k, T_k)$; see Algorithm~\ref{alg:linesearch}.
Note that $v_{i+1} \in \partial_{\sigma_i} f(x_k)$ by Lemma~\ref{lem:linesearchproperties}. 
Thus, in the event $\{\dist(0, \partial_{\sigma_i} f(x_k)) \geq \epsilon_{i}\}$, we have 
\begin{enumerate}
\item $x_{k, i}$ is well-defined since $v_{i+1} \neq 0$; 
\item the trust region constraint $\sigma_i \leq \frac{\|v_{i+1}\|}{s}$ is satisfied \textcolor{blue}{for $s =\max\{ \|g_k\|, \sscale\|g_0\|\}$ (in Algorithm~\ref{alg:linesearch})}; indeed,  
$$
\frac{\|v_{i+1}\|}{s} \geq \frac{\dist(0, \partial_{\sigma_i}f(x_k))}{s} \geq \frac{\sqrt{128}L\sigma_i}{s} \geq \sigma_i,
$$
where the final inequality follows from the bound {\color{blue} $s  \leq L$}, a consequence of the inclusion $x_0 \subseteq \text{int}\; S$ and the Lipschitz continuity of $f$ on $S$.
\end{enumerate}
Finally, for every $K \leq k \leq 2K-1$, define
$$
A_{k, i} := \left\{f(x_{k, i}) - f(x_k) \geq -\frac{\sigma_i \dist(0, \partial_{\sigma_i} f(x_k))}{8} \right\}\cap \{\dist(0, \partial_{\sigma_i} f(x_k)) \geq \epsilon_{i}\}.
$$
Now we apply Lemma~\ref{lemma:normalgold}. 

To that end, observe that since $f(x_k)$ is nonincreasing and $\sigma_i \leq 1/2$, every iterate $x_k$ satisfies 
$
 B_{2\sigma_i}(x_k) \subseteq S.
$
Consequently, $L$ is a Lipschitz constant of $f$ on $ B_{2\sigma_i}(x_k)$. 
Therefore, by Lemma~\ref{lemma:normalgold}, for every $K \leq k \leq 2K-1$, we have
\begin{align}
P(A_{k, i}) \leq P(A_{k, i} \mid \dist(0, \partial_{\sigma_i} f(x_k)) \geq \epsilon_{i} ) \leq \frac{p}{\gridsize K}. 
\end{align}
Thus, by a union bound, with probability at least 
$1-\frac{p}{G}$, at least one of the following must hold at every index $K \leq k \leq 2K-1$:
\begin{align*}
f(x_{k, i}) - f(x_k) \leq -\frac{\sigma_i \dist(0, \partial_{\sigma_i} f(x_k))}{8} \qquad \text{ or } \qquad \dist(0, \partial_{\sigma_i} f(x_k)) \leq \epsilon_{i}.
\end{align*}
If $\dist(0, \partial_{\sigma_i} f(x_k)) \leq \epsilon_i$ for some $k$ satisfying $K \leq k \leq 2K-1$, then the result follows. 
On the other hand, suppose that for all  $K \leq k \leq 2K-1$, we have $\dist(0, \partial_{\sigma_i} f(x_k)) > \epsilon_i$; in particular, we have $\dist(0, \partial_{\sigma_i} f(x_k)) > \sqrt{128}L\sigma_{i}$. 
Therefore, with probability at least $1-\frac{p}{G}$, we must have 
$$
f(x_{k+1}) \leq f(x_{k, i}) \leq f(x_k) - \frac{\sigma_i \dist(0, \partial_{\sigma_i} f(x_k))}{8}, \qquad \text{ for all $K \leq k \leq 2K-1$.}
$$
where the first inequality follows since the trust region constraint is satisfied for $x_{k,i}$.
Iterating this inequality, we have with probability at least $1-\frac{p}{G}$, the bound
$$
\min_{K \leq k \leq 2K-1}  \dist(0, \partial_{\sigma_i} f(x_k)) \leq \frac{1}{K}\sum_{k = K}^{2K-1}  \dist(0, \partial_{\sigma_i} f(x_k)) \leq \frac{8(f(x_{K}) - f(x_{2K}))}{\sigma_i K}.
$$
This proves the result for $i$. 
Taking a union bound over $i$ then yields the bound for minimal norm Goldstein subgradient for all $i \leq G$.

To prove~\eqref{eq:functiongapboundsublinear}, fix an $i \leq G$ and let $k_{i}$ be the index that attains the minimum. 
Then 
$$
f(x_{2K - 1}) - \inf f \leq f(x_{k_i}) - \inf f \leq \dist(x_{k_i}, \cX_\ast) \min_{K \leq k \leq 2K-1}  \dist(0, \partial_{\sigma_i} f(x_k)) + 2\sigma_i L,
$$
where the first inequality follows since  $f(x_k)$ is nonincreasing and the second inequality follows from Lemma~\ref{lem:goldsteinsubgradientinequality}. 
{\color{blue}The proof then follows from the upper bound $\dist(x_{k_i}, \cX) \leq D$.}
\end{proof}

theorem provides bounds on the minimal norm Goldstein subgradient within any window of indices $K \leq k \leq 2K-1$. 
Let us briefly investigate the setting $T_k = k+1$ for all $k \geq 0$.
In this case, theorem implies that with probability at least $1-p$, we have 
$$
\min_{K \leq k \leq 2K-1}  \dist(0, \partial_{\sigma_i} f(x_k)) \leq  \max\left\{\frac{8(f(x_{K}) - \inf f)}{\sigma_i K}, \frac{16L\sqrt{2 \log(KG/p)}}{\sqrt{2K}}, \sqrt{128}L\sigma_i\right\}
$$
for all $i \leq G$. 
Let us now suppose $G$ is large enough that there exists $i \leq G$ satisfying  $(1/2)K^{-1/2} \leq \sigma_i \leq K^{-1/2}$, e.g., we may assume $G_k = \Omega(\log(k^{1/2}))$ for all $k > 0$. Then, we find that at most $O(KTG) = O(K^2G)$ first-order oracle evaluations are needed to find a point $x_k$ satisfying $$\dist(0, \partial_{K^{-1/2}} f(x_k))  = \tilde O(K^{-1/2}),$$ where $\tilde O$ hides logarithmic terms in $G, K$ and $p$. 
Let's consider two settings for $G_k$.
\begin{enumerate}
\item {\bf Setting 1: $G_k = O(\log(k^{1/2}))$.}  In this case, $\algname$ finds a point $x_k$ satisfying \break $\dist(0, \partial_{\varepsilon} f(x_k)) \leq \varepsilon$ using at most $\tilde O(\varepsilon^{-4})$ first-order oracle evaluations.
\item {\bf Setting 2: $G_k = k+1$.} In this case,  $\algname$ finds a point $x_k$ satisfying \break $\dist(0, \partial_{\varepsilon} f(x_k)) \leq \varepsilon$ using at most $\tilde O(\varepsilon^{-6})$ first-order oracle evaluations.
\end{enumerate}
The complexity of Setting 1 is more favorable than the complexity of Setting 2.
Nevertheless, when we establish our local rapid convergence guarantees, we will work in Setting $2$, which has more favorable local convergence properties. 
Before moving on, we note that the above guarantees likewise apply in the convex setting, namely $\algname$ finds a point $x_k$ with $f(x_k) - f^\ast \leq \varepsilon$ using at most $\tilde O(\varepsilon^{-4})$, respectively  $\tilde O(\varepsilon^{-6})$, first-order oracle evaluations in Setting 1, respectively Setting 2.

{\color{blue}
In addition to the nonasymptotic guarantees of Theorem~\ref{thm:sublinear}, the reader may wonder whether a given limit point $\bar x$ of $\algname$ is Clarke critical, meaning $0 \in \partial f(\bar x)$. We prove that this is indeed the case under a bounded sublevel set condition. We place the proof in Appendix~\ref{app:cor:clarkestationarity} since it follows a similar line of reasoning as Theorem~\ref{thm:sublinear}.
	\begin{cor}[Limiting points are Clarke critical]\label{cor:clarkestationarity}
		Let $f \colon \RR^d \rightarrow \RR$ be a locally Lipschitz function.
		Fix  initial point $x_0 \in \RR^d$ and subgradient $g_0 \in \partial f(x_0)$. 
		Assume that $g_0 \neq 0$.
		Suppose the  sublevel set $\{x \colon f(x) \leq f(x_0) \}$ is bounded.  
		Fix {\color{blue} scalar $\sscale \in (0,1]$}, budget sequence $\{T_k\}$, grid size sequence $\{\gridsize_k\}$ such that $\{G_k\}$ tends to infinity and $T_k \ge k$.
		Let $\{x_k\}$ be generated by $\algname(x, g, \sscale, \{\gridsize_k\},\{T_k\})$. 
		Then with probability one, all the limiting points of $\{x_k\}$ are Clarke critical. 
	\end{cor}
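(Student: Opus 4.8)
The plan is to reduce the statement to a quantitative decay of the minimal norm Goldstein subgradient along a sequence of shrinking scales, and then invoke the closedness of the graph of the Goldstein subdifferential as $\sigma \downarrow 0$. Concretely, I will use the following standard fact: if $x_\ell \to \bar x$, $\sigma_\ell \downarrow 0$, and $w_\ell \in \partial_{\sigma_\ell} f(x_\ell)$ with $w_\ell \to 0$, then $0 \in \partial f(\bar x)$. This follows from Carath\'eodory's theorem applied to the convex hull defining $\partial_{\sigma_\ell} f(x_\ell)$ together with outer semicontinuity of the Clarke subdifferential: writing $w_\ell$ as a convex combination of at most $d+1$ Clarke subgradients taken at points in $\overline B_{\sigma_\ell}(x_\ell) \to \{\bar x\}$ and passing to a subsequence of the weights and subgradients yields a representation of $\lim w_\ell = 0$ as a convex combination of elements of $\partial f(\bar x)$. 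Since $\algname$ is a descent method whose trajectory lies in the compact sublevel set $\{f \le f(x_0)\}$, the function $f$ is Lipschitz on the associated widened sublevel set $S$ with some finite constant $L$, and $f(x_k)$ is nonincreasing and bounded below; I will use all of this freely.

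First I would fix a dyadic scale $\sigma = 2^{-\ell}$ and a rational threshold $\delta$ with $\delta > \sqrt{128}\,L\sigma$, and show that almost surely $\dist(0,\partial_{\sigma} f(x_k)) \ge \delta$ for only finitely many $k$. Since $G_k \to \infty$, for all large $k$ the scale $\sigma$ appears in the grid of $\linesearch(x_k,\cdot)$ at index $i = G_k - \ell$, so $v_{i+1} = \ngoldstein(x_k,u_i,\sigma,T_k) \in \partial_{\sigma} f(x_k)$ by Lemma~\ref{lem:linesearchproperties}. On the event $\{\dist(0,\partial_{\sigma} f(x_k)) \ge \delta\}$ the hypothesis $\sigma \le \dist(0,\partial_{\sigma} f(x_k))/(\sqrt{128}L)$ of Lemma~\ref{lemma:normalgold} holds, and exactly as in the proof of Theorem~\ref{thm:sublinear} the trust region constraint $\sigma \le \|v_{i+1}\|/s$ is satisfied; hence any descent produced by $\ngoldstein$ is retained by $\linesearch$, giving $f(x_{k+1}) \le f(x_k) - \sigma\delta/8$ whenever the call succeeds. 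Because $T_k \ge k$, I can choose a conditional failure probability $p_k = e^{-ck}$ (with $c>0$ depending only on $L,\delta$) so that the budget requirement of Lemma~\ref{lemma:normalgold} is met for all large $k$; letting $A_k$ be the event that $\dist(0,\partial_{\sigma} f(x_k)) \ge \delta$ yet the call fails to descend, we get $P(A_k) \le p_k$ with $\sum_k p_k < \infty$. By Borel--Cantelli only finitely many $A_k$ occur almost surely. If infinitely many $k$ had $\dist(0,\partial_{\sigma} f(x_k)) \ge \delta$, all but finitely many would force a decrement of at least $\sigma\delta/8$, driving $f(x_k) \to -\infty$ and contradicting boundedness below. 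Thus almost surely $\limsup_k \dist(0,\partial_{\sigma} f(x_k)) \le \delta$.

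Next I would take a countable intersection of these almost sure events over all dyadic $\sigma=2^{-\ell}$ and rational $\delta > \sqrt{128}L\sigma$, and let $\delta \downarrow \sqrt{128}L\sigma$, to conclude that almost surely, for every $\ell \ge 1$, $\limsup_k \dist(0,\partial_{2^{-\ell}} f(x_k)) \le \sqrt{128}\,L\,2^{-\ell}$. Working on this full-measure event, I fix an arbitrary limit point $\bar x$ with $x_{m_j} \to \bar x$ and diagonalize: for each $\ell$ there is $K_\ell < \infty$ with $\dist(0,\partial_{2^{-\ell}} f(x_k)) \le (\sqrt{128}L+1)2^{-\ell}$ for all $k \ge K_\ell$, so I may pick a strictly increasing index $n_\ell := m_{j_\ell} \ge K_\ell$. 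Then $x_{n_\ell} \to \bar x$, the scales $2^{-\ell} \downarrow 0$, and the minimal norm elements $w_\ell \in \partial_{2^{-\ell}} f(x_{n_\ell})$ satisfy $\|w_\ell\| \to 0$. The closedness fact quoted above then gives $0 \in \partial f(\bar x)$. Since $\bar x$ was an arbitrary limit point, all limit points are Clarke critical on a full-measure event.

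The main obstacle is the probabilistic bookkeeping: the scales and iterates are generated adaptively, so the success of each $\ngoldstein$ call must be controlled conditionally on the history, and the budget schedule $T_k \ge k$ must be matched to a summable failure sequence $p_k$ uniformly across the countably many dyadic scales. The remaining points---verifying the trust region constraint and the inclusion $B_{2\sigma}(x_k) \subseteq S$ needed for the Lipschitz bound, together with the Carath\'eodory and outer-semicontinuity argument underlying the closedness of the Goldstein subdifferential---are routine and parallel the proof of Theorem~\ref{thm:sublinear}.
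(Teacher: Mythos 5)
Your proposal is correct and follows essentially the same route as the paper's proof in Appendix~C: a fixed dyadic scale, the $\ngoldstein$ descent guarantee (Lemma~\ref{lemma:normalgold}) combined with Borel--Cantelli and the summable failure probabilities afforded by $T_k \ge k$, boundedness of $f$ below along the iterates to rule out infinitely many fixed-size decrements, and finally a diagonalization across scales feeding into the closedness of the Goldstein subdifferential as $\sigma \downarrow 0$. The only cosmetic differences are that the paper invokes Goldstein's Proposition~2.8 for the closedness fact where you sketch a Carath\'eodory/outer-semicontinuity proof, and that your rational thresholds $\delta > \sqrt{128}L\sigma$ with a limiting step are slightly more roundabout than the paper's direct use of the threshold $\sqrt{128}L\tau_i$.
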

}

This concludes our sublinear convergence guarantees for $\algname$. 
In the following section, we describe the key structural assumptions needed to ensure that $\algname$ locally rapidly converges. 

\section{Main assumption, examples, and consequences}\label{sec:mainassumption} 

In this section, we introduce our key structural assumption -- Assumption~\ref{assumption:mainfinal}.
In Section~\ref{sec:examples} we show that Assumption~\ref{assumption:mainfinal} holds for generic semialgebraic functions and certain properly $C^p$ decomposable functions.
Then, in Section~\ref{sec:consequencesofA}, we extract several key consequences of Assumption~\ref{assumption:mainfinal}. 
These consequences will be instrumental in proving the gradient inequality~\eqref{eq:introgkl} and rapid convergence of $\algname.$ 
We now turn to the assumption.

\begin{assumption}\label{assumption:mainfinal}
	{\rm 
	Function $f \colon \RR^d \rightarrow \RR$ is locally Lipschitz with local minimizer $\bar x \in \RR^d$. 
	\begin{enumerate}[label=$\mathrm{(A\arabic*)}$]
			\item {\bf (Quadratic Growth)}\label{assum: quad}
			There exists $\scc > 0$ such that 
			$$
			f(x) - f(\bar x) \geq \frac{\scc}{2} \|x - \bar x\|^2 \qquad \text{for all $x$ near $\bar x$.}
			$$
		\item {\bf (Active Manifold)}\label{assum: smooth} Function $f$ admits a $C^{4}$-smooth active manifold $\cM$ around $\bar x$.
			\item {\bf (Strong-$(a)$ regularity)}\label{assum: stronga} 
			There exists $\ver > 0$ such that 
			$$
			\|P_{T_{\cM}(y)} (v - \nabla_\cM f(y))\| \leq \ver \|x - y\| \qquad \text{ for all $x \in \RR^d$, $v \in \partial f(x)$, and $y \in \cM$ near $\bar x$.}
			$$
			\item {\bf (($b_{\leq}$)-regularity)}\label{assum: bregularity} The following inequality holds  
			$$
			f(y) \geq  f(x) + \dotp{v, y - x} + o(\|y - x\|) \qquad  \text{ as $y \stackrel{\cM}{\rightarrow} \bar x$ and $x \rightarrow \bar x$ with $v \in \partial f(x)$},
			$$
			 where $o(\cdot)$ is any univariate function satisfying $\lim_{t \rightarrow 0} o(t)/t = 0$.
		\end{enumerate}
	}
\end{assumption}

Some comments are in order. 
Assumption~\ref{assum: quad} 
is a classical regularity condition that ensures local linear convergence of gradient methods for smooth convex functions.
Assumptions~\ref{assum: smooth},~\ref{assum: stronga}, and~\ref{assum: bregularity} describe the interaction of $f$ and a distinguished smooth manifold $\cM$.
Assumption~\ref{assum: smooth} requires $\cM$ to be an active manifold for $f$ around $\bar x$ in the sense of Definition~\ref{defn:ident_man}. 
In particular, along the manifold $\cM$, the function $f$ is  $C^4$ smooth with {covariant gradient} $\nabla_{\cM} f$; see Section~\ref{sec:notation} for a definition.
Assumption~\ref{assum: stronga} shows that in tangent directions the covariant gradient along the manifold approximates the subgradients of $f$ up to a linear error. 
This property recently appeared in~\cite{davis2021subgradient,bianchi2021stochastic}, where it was used to study saddle avoidance properties of the subgradient method for nonsmooth optimization.
Finally, Assumption~\ref{assum: bregularity} is a restricted lower smoothness property, showing that linear models of $f$ off the manifold are underapproximators of $f$ on the manifold up to first-order.
Note that the property is automatic if $f$ is weakly convex, meaning the mapping $x \mapsto f(x) + \frac{\rho}{2}\|x\|^2$ is convex for some $\rho \geq 0$. 
The weakly convex class is broad and contains all compositions of convex functions with smooth mappings that have Lipschitz Jacobians; see the survey~\cite{SIAGOPTnewletter2020} for an introduction.
We mention that the name ``$(b_{\leq})$-regularity" is motivated by ``uniform semismoothness" property of~\cite{davis2021subgradient}, which was called the ``$(b)$-regularity property."

In the following section, we provide examples of functions \textcolor{blue}{satisfying} Assumption~\ref{assumption:mainfinal}.

\subsection{Examples of Assumption~\ref{assumption:mainfinal}}\label{sec:examples}

In this section, we show that the aforementioned problems satisfy Assumption~\ref{assumption:mainfinal}.
The most important example is the class of generic semialgebraic functions.
The following theorem is essentially contained in~\cite{drusvyatskiy2016generic,davis2021subgradient}, but we provide a proof for completeness.

\begin{thm}[Generic semialgebraic functions]\label{thm:semialgebraic}
Consider a locally Lipschitz semialgebraic function $f \colon \RR^d \rightarrow \RR$.  
Then for a full Lebesgue measure set of $w \in \RR^d$, the tilted function $f_w \colon  x \mapsto f(x) + w^\T x$ satisfies Assumption~\ref{assumption:mainfinal} at every local minimizer.
\end{thm}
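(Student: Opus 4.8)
The plan is to build a single finite stratification of $\RR^d$ that is simultaneously strong enough to certify all four conditions for \emph{every} semialgebraic $f$, and then to show that for generic $w$ the local minimizers of $f_w$ land on the right strata. Concretely, I would first invoke the existence of a semialgebraic $C^4$ Verdier stratification $\{\cM_i\}$ of $\RR^d$ compatible with $f$, meaning each restriction $f|_{\cM_i}$ is $C^4$-smooth; such stratifications exist for any prescribed finite order of smoothness, so choosing $p=4$ is harmless. Every candidate active manifold at a minimizer will then be one of these finitely many strata, which immediately supplies the smoothness half of Assumption~\ref{assum: smooth}.

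Next I would dispose of Assumptions~\ref{assum: stronga} and~\ref{assum: bregularity}, since these require no genericity and are insensitive to the tilt. A short computation shows both are tilt-invariant: writing $v_w = v + w \in \partial f_w(x)$ and using $\nabla_\cM f_w(y) = \nabla_\cM f(y) + P_{\tangent(y)} w$ together with idempotence of $P_{\tangent(y)}$, one gets $P_{\tangent(y)}(v_w - \nabla_\cM f_w(y)) = P_{\tangent(y)}(v - \nabla_\cM f(y))$, and likewise $f_w(y) - f_w(x) - \dotp{v_w, y - x} = f(y) - f(x) - \dotp{v, y - x}$. Hence it suffices to verify (A3) and (A4) for $f$ itself, relative to each stratum. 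Here I would appeal directly to~\cite{davis2021subgradient}: the Verdier condition of the stratification yields the linear-error bound defining strong-$(a)$ regularity, while semismoothness of semialgebraic functions yields the one-sided estimate of $(b_{\leq})$-regularity (indeed (A4) is a weakening of the uniform semismoothness established there).

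The genericity is thereby concentrated entirely in Assumption~\ref{assum: quad} and the sharpness half of Assumption~\ref{assum: smooth}. For this I would invoke the generic-minimizer theory of~\cite{drusvyatskiy2016generic}: applying a semialgebraic Sard theorem to the critical-point maps associated with the restrictions $f|_{\cM_i}$, one shows that for $w$ outside a Lebesgue-null set, every local minimizer $\bar x$ of $f_w$ is nondegenerate, in the sense that (i) $\bar x$ lies on a stratum $\cM$ for which a strict-complementarity condition $-w \in \ri\partial f(\bar x)$ holds, forcing the sharp normal growth of Definition~\ref{defn:ident_man}, and (ii) the restricted Hessian of $f_w|_\cM$ at $\bar x$ is positive definite. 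Condition (i) upgrades $\cM$ to an active manifold, and (i) and (ii) together assemble into the full quadratic growth bound (A1): tangentially from the positive-definite Hessian, normally from the sharpness.

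The main obstacle is precisely this last, generic step. Smoothness, strong-$(a)$, and $(b_{\leq})$-regularity are effectively hard-wired into any semialgebraic function by stratification theory and are moreover tilt-invariant; the real content is the measure-theoretic statement that degenerate minimizers — those failing strict complementarity or having a degenerate tangential Hessian — are destroyed by almost every linear perturbation. Carrying this out rigorously demands the parametric transversality and semialgebraic Sard argument of~\cite{drusvyatskiy2016generic}, and the most delicate point is coupling the normal sharpness obtained from strict complementarity with the tangential second-order growth so that the two combine into a single quadratic lower bound valid on a full neighborhood of $\bar x$, rather than only along $\cM$ and its normal directions separately.
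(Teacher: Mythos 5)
Your proposal is correct and follows essentially the same route as the paper: the paper's proof is exactly the assembly you describe, citing \cite[Corollary 4.8, Theorem 4.16]{drusvyatskiy2016generic} for the generic validity of quadratic growth~\ref{assum: quad} and the active manifold~\ref{assum: smooth}, and \cite[Theorems 3.31, 3.11, 3.4]{davis2021subgradient} for strong-$(a)$~\ref{assum: stronga} and $(b_{\leq})$-regularity~\ref{assum: bregularity}, which is precisely the Verdier-stratification and semialgebraic-Sard machinery you unfold. Your tilt-invariance computation for \ref{assum: stronga} and \ref{assum: bregularity} is correct (idempotence of $P_{\tangent(y)}$ cancels the tilt in the first, exact cancellation handles the second) and makes explicit a point the paper leaves implicit in its citations: genericity is only needed for \ref{assum: quad} and the sharpness half of \ref{assum: smooth}.
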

\begin{proof}
\textcolor{blue}{The proof is a consequence of~\cite[Theorem 3.31]{davis2021subgradient} and~\cite[Corollary 4.8, Theorem 4.16]{drusvyatskiy2016generic}.}
\textcolor{blue}{A combination of Corollary 4.8 and Theorem 4.16 in \cite{drusvyatskiy2016generic} }shows that for a full Lebesgue measure set of $w \in \RR^d$, the following hold:
every local minimizer $\bar x$ of $f_w$ lies on a $C^4$ active manifold $\cM$, verifying~\ref{assum: smooth}; and
the quadratic growth condition~\ref{assum: quad} holds at $\bar x$. {\color{blue}Next, \cite[Theorem 3.31]{davis2021subgradient}} shows that $f_w$ also satisfies the strong $(a)$ property~\ref{assum: stronga} along $\cM$; 
applying{\color{blue}~\cite[Theorem 3.11 and Theorem 3.4]{davis2021subgradient}}, we deduce that $f_w$ also satisfies the $(b_{\leq})$-regularity property \ref{assum: bregularity} along $\cM$ at $\bar x$.
\end{proof}

Turning to our second class, we introduce so-called \emph{properly $C^p$ decomposable} functions, originally proposed and analyzed in~\cite{shapiroreducible}. 
At a high-level, the class consists of functions that are locally the composition of a sublinear function with a smooth mapping, which together satisfy a transversality condition.
\begin{definition}[Decomposable functions]\label{def:properlydecomposable}
	{\rm 
		A function $f\colon\RR^d\to \RR$ is called {\em properly $C^p$ decomposable at $\bar x$ as }$h\circ c$ if near $\bar x$ it can be written as 
		$$f(x)=f(\bar x)+h(c(x))$$
		for some $C^p$-smooth mapping $c\colon\RR^d\to \RR^m$ satisfying $c(\bar x)=0$ and some proper, closed sublinear function $h\colon \RR^m\to\RR$ satisfying the transversality condition:
		$$\Lin(h)+ \range(\nabla c(\bar x))= \RR^m.$$	
}
\end{definition}

The following theorem shows that decomposable functions satisfy Assumption~\ref{assumption:mainfinal} near local minimizers if they also satisfy a strict complementarity condition and a quadratic growth bound.
The proof is a consequence of results found in  works~\cite{davis2021subgradient,lewis2002active,shapiroreducible,drusvyatskiy2014optimality}. 

\begin{thm}[Properly decomposable functions]\label{thm:decomposable}
Consider a locally Lipschitz function $f \colon \RR^d \rightarrow \RR$.
Let $\bar x$ be a local minimizer of $f$ and suppose that $f$ is properly $C^4$ decomposable at $\bar x$.
Furthermore, suppose that 
\begin{enumerate}
\item {\bf (Strict Complementarity)} We have that $0 \in \ri \partial f(\bar x)$.
\item {\bf (Quadratic growth)} There exists $\scc > 0$ such that 
$$
f(x) - f(\bar x) \geq \frac{\scc}{2} \|x - \bar x\|^2 \qquad \text{for all $x$ near $\bar x$.}
$$
\end{enumerate}
Then $f$ satisfies Assumption~\ref{assumption:mainfinal} at $\bar x$.
\end{thm}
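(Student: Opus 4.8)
The plan is to verify the four conditions (A1)--(A4) of Assumption~\ref{assumption:mainfinal} one at a time, using the decomposition $f = f(\bar x) + h\circ c$. Condition~\ref{assum: quad} is precisely hypothesis~(2) of the theorem, so nothing is needed there. The conceptual work is to exhibit the active manifold and to relate the subgradients of $f$ to the covariant gradient along it; the latter is the technical content carried by~\cite{davis2021subgradient}. Throughout I take as candidate manifold the preimage of the lineality space,
\[
\cM := c^{-1}(\Lin(h)).
\]

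To establish~\ref{assum: smooth} I would first note that the transversality condition $\Lin(h)+\range(\nabla c(\bar x)) = \RR^m$ of Definition~\ref{def:properlydecomposable} says exactly that $c$ is transverse to the subspace $\Lin(h)$ at $\bar x$; composing $c$ with the projection onto $\Lin(h)^{\perp}$ then produces a submersion near $\bar x$, so $\cM$ is a $C^4$ manifold. Since any sublinear function is linear on its lineality space, every $a\in\partial h(0)$ satisfies $h(z)=\langle a,z\rangle$ for $z\in\Lin(h)$; hence on $\cM$ we have $f(x)=f(\bar x)+\langle a,c(x)\rangle$, which is $C^4$, giving the smoothness requirement. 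For the sharpness requirement I would invoke partial smoothness: by~\cite{shapiroreducible,lewis2002active} a properly $C^4$ decomposable function is partly smooth relative to $\cM$, and $f$ is prox-regular at the critical point $\bar x$. The strict complementarity hypothesis $0\in\ri\,\partial f(\bar x)$ then lets me upgrade partial smoothness to identifiability via~\cite{drusvyatskiy2014optimality}, which is exactly the lower bound $\inf\{\|v\|: v\in\partial f(x),\, x\in U\setminus\cM\}>0$ demanded by Definition~\ref{defn:ident_man}.

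For~\ref{assum: bregularity} the quickest route is weak convexity: whenever $h$ is finite-valued it is globally Lipschitz, so $f$ is a Lipschitz convex function precomposed with the $C^{1,1}$ map $c$ and is therefore weakly convex near $\bar x$, making~\ref{assum: bregularity} automatic by the remark following Assumption~\ref{assumption:mainfinal}; in the extended-valued case one instead quotes the $(b)$-regularity result for decomposable functions from~\cite{davis2021subgradient}. Condition~\ref{assum: stronga} is likewise proved for decomposable functions in~\cite{davis2021subgradient}, so the remaining task is to check that the present hypotheses (proper $C^4$ decomposability, transversality, strict complementarity, quadratic growth) place us in their setting and then to quote the corresponding statement.

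I expect~\ref{assum: stronga} to be the real obstacle. Bounding the tangential error $\|P_{T_{\cM}(y)}(v-\nabla_{\cM}f(y))\|$ by a constant multiple of $\|x-y\|$ requires, through the chain rule $\partial f(x)=\nabla c(x)^{*}\partial h(c(x))$, a Lipschitz-type control of a selection from $\partial h(c(x))$ as $x$ approaches the manifold, and $\partial h$ is dictated by the face structure of the base set $\partial h(0)$, which can be intricate for a general sublinear $h$. This is precisely the estimate supplied by~\cite{davis2021subgradient}; once it is in hand, combining it with the smoothness of $c$ and the linear description of $f$ along $\cM$ obtained above finishes the verification.
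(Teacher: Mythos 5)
Your proposal is correct and follows essentially the same route as the paper: the same manifold $\cM = c^{-1}(\Lin(h))$, the sharpness condition obtained from partial smoothness plus prox-regularity plus strict complementarity (the paper cites~\cite{hare2004identifying} where you cite~\cite{drusvyatskiy2014optimality}, an interchangeable choice here), and strong-$(a)$ regularity delegated to the decomposable-function results of~\cite{davis2021subgradient}, exactly as the paper does via its citation of Corollary~3.24 there. The only minor variation is that you verify the manifold and smoothness claims by hand (submersion via transversality, linearity of $h$ on $\Lin(h)$ so that $f|_{\cM}$ extends to $f(\bar x)+\langle a, c(\cdot)\rangle$ for any $a\in\partial h(0)$) and obtain $(b_{\leq})$-regularity from weak convexity of $h\circ c$ when $h$ is finite-valued, whereas the paper draws both $(b_{\leq})$ and strong-$(a)$ from the same cited corollary.
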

\begin{proof}
To set the notation for the proof, recall that since $f$ is properly $C^4$ decomposable, there exist functions $h$ and $c$ satisfying the conditions of Definition~\ref{def:properlydecomposable}. 
The discussion in~\cite[p.\ 683-4]{shapiroreducible} then shows that the set 
$$
\cM := c^{-1}(\Lin(h))
$$
is a so-called $C^4$ \emph{manifold of partial smoothness} for $f$ around $\bar x$ in the sense of Lewis~\cite{lewis2002active}. 
{\color{blue}Moreover, $f$ is prox-regular at $\bar x$ for $0$ in the sense of~\cite[Definition 1.1]{poliquin1996prox}, since by definition it is \emph{strongly amenable}~\cite[Definition 2.4]{poliquin1996prox} at $\bar x$; see~\cite[Proposition 2.5]{poliquin1996prox}.
Thus, according to \cite[Theorem 5.3]{hare2004identifying}, partial smoothness, prox-regularity, and strict complementarity ensure that the sharpness condition of Definition~\ref{defn:ident_man} holds. Consequently, $\cM$ is a $C^4$ smooth active manifold around $\bar x$, verifying~\ref{assum: smooth}.}
{\color{blue}In addition, \cite[Corollary 3.24]{davis2021subgradient}} ensures that $f$ satisfies the~\ref{assum: stronga} and~\ref{assum: bregularity} properties along $\cM$.
\end{proof}

A popular class of decomposable objectives arises from pointwise maxima of smooth functions that satisfy an affine independence property. 
For example, this class was considered in the work of Han and Lewis~\cite{han2021survey}.
As an immediate corollary of Theorem~\ref{thm:decomposable}, we show that such functions satisfy Assumption~\ref{assumption:mainfinal}.

\begin{cor}[Max-of-smooth functions]\label{cor:maxofsmooth}
Consider a locally Lipschitz function $f$ and a family of $C^4$ smooth functions $f_i \colon \RR^d \rightarrow\RR$ indexed by a finite set $i \in I$.
Fix a local minimizer $\bar x$ of $f$ and suppose the set $\{\nabla f_i(\bar x)\}_{i \in I}$ is affinely independent. 
Suppose furthermore that $f$ is locally expressible as 
$$
f(x) := \max_{i \in I} f_i(x) \qquad \text{for all $x$ near $\bar x$.}
$$
Then provided the strict complementarity and quadratic growth conditions of Theorem~\ref{thm:decomposable} hold, the function $f$ satisfies Assumption~\ref{assumption:mainfinal} at $\bar x$.
\end{cor}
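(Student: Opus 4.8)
The plan is to show that under affine independence the max-of-smooth function $f$ is properly $C^4$ decomposable at $\bar x$ in the sense of Definition~\ref{def:properlydecomposable}, and then to invoke Theorem~\ref{thm:decomposable}. First I would pass to the active indices $I_0 := \{i \in I : f_i(\bar x) = f(\bar x)\}$. By continuity of the $f_i$, the inactive functions remain strictly below the maximum on a neighborhood of $\bar x$, so $f(x) = \max_{i \in I_0} f_i(x)$ there. Since a subset of an affinely independent family is again affinely independent, $\{\nabla f_i(\bar x)\}_{i \in I_0}$ remains affinely independent, and I may work with $I_0$ throughout.

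Next I would exhibit the decomposition. Define the $C^4$ map $c \colon \RR^d \to \RR^{|I_0|}$ by $c(x) := (f_i(x) - f(\bar x))_{i \in I_0}$ and the function $h \colon \RR^{|I_0|} \to \RR$ by $h(z) := \max_{i \in I_0} z_i$. Then $c(\bar x) = 0$, while $h$ is finite-valued, convex, and positively homogeneous, hence proper, closed, and sublinear. One checks directly that $f(x) = f(\bar x) + h(c(x))$ on the neighborhood above, so it remains only to verify the transversality condition $\Lin(h) + \range(\nabla c(\bar x)) = \RR^{|I_0|}$.

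The crux of the argument -- and the step I expect to require the most care -- is this transversality check, which I would carry out by passing to orthogonal complements. A direct computation gives $\Lin(h) = \{z : \max_i z_i = \min_i z_i\} = \RR\cdot\1$, so $\Lin(h)^\perp = \{\lambda : \sum_{i \in I_0} \lambda_i = 0\}$. The rows of the Jacobian $\nabla c(\bar x)$ are the gradients $\nabla f_i(\bar x)^\top$, whence its range has orthogonal complement (the left null space) $\range(\nabla c(\bar x))^\perp = \{\lambda : \sum_{i \in I_0} \lambda_i \nabla f_i(\bar x) = 0\}$. Using $(\Lin(h) + \range(\nabla c(\bar x)))^\perp = \Lin(h)^\perp \cap \range(\nabla c(\bar x))^\perp$, the transversality condition is equivalent to
$$\left\{\lambda \in \RR^{|I_0|} : \sum_{i \in I_0} \lambda_i \nabla f_i(\bar x) = 0 \text{ and } \sum_{i \in I_0}\lambda_i = 0\right\} = \{0\}.$$
This is precisely the statement that $\{\nabla f_i(\bar x)\}_{i \in I_0}$ is affinely independent, which holds by assumption.

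Having established proper $C^4$ decomposability, I would conclude by applying Theorem~\ref{thm:decomposable}: since the strict complementarity and quadratic growth hypotheses are assumed in the statement, that theorem yields Assumption~\ref{assumption:mainfinal} at $\bar x$, completing the proof.
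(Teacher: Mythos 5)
Your proof is correct and follows the same route as the paper: exhibit $f$ as properly $C^4$ decomposable via the map $c(x) = (f_i(x) - f(\bar x))_{i}$ together with $h(z) = \max_i z_i$, then invoke Theorem~\ref{thm:decomposable}. The paper's own proof is a one-sentence assertion that affine independence is a restatement of the transversality condition; your reduction to the active indices and your orthogonal-complement computation showing $\Lin(h)^\perp \cap \range(\nabla c(\bar x))^\perp = \{0\}$ supply exactly the details that assertion leaves implicit.
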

\begin{proof}
To prove the result, note that the affine independence property is simply a restatement of the transversality condition of Definition~\ref{def:properlydecomposable} for the smooth mapping $x \mapsto (f_i(x))_{i \in I}$ and the sublinear function $y \mapsto \max_{i \in I}y_i$.
\end{proof}

We now turn our attention to the key consequences of Assumption~\ref{assumption:mainfinal}.

\subsection{Key consequences of Assumption~\ref{assumption:mainfinal}}\label{sec:consequencesofA}

The following proposition summarizes the key consequences of Assumption~\ref{assumption:mainfinal}. 
The proof of the result is straightforward but technical, so we place it in Appendix~\ref{sec:prop:proof:consequencesofA}.

\begin{proposition}[Consequences of Assumption~\ref{assumption:mainfinal}]\label{prop:proof:consequencesofA}
Suppose $f$ satisfies Assumption~\ref{assumption:mainfinal} at $\bar x$. 
Then there exists $\deltaA > 0$ such that on the ball $B_{2\deltaA}(\bar x)$, the projection operator $P_{\cM}$ is $C^3$ with Lipschitz Jacobian and the smooth extension $f_{\cM} := f \circ P_{\cM}$ is $C^3$ with Lipschitz gradient. 
Moreover, the following bounds hold:
\begin{enumerate}
\item \label{prop:proof:consequencesofA:item:quadgrowth} {\bf (Quadratic growth)} The quadratic growth bound~\ref{assum: quad} holds throughout $\overline B_{2\deltaA}(\bar x)$.
\item \label{prop:proof:consequencesofA:item:smoothproj} {\bf (Smoothness of $P_{\cM}$)} For all $x \in B_{\deltaA}(\bar x)$ and $x' \in B_{2\deltaA}(\bar x)$, we have
\begin{align}\label{eq:smoothnessprojectorproof}
\|P_\cM(x') - P_\cM(x) -P_{\tangent(P_{\cM}(x))}(x' - x)\| \leq \shape(\dist^2(x, \cM) + \|x - x'\|^2),
\end{align}
where $\shape := 2\lip^{\mathrm{op}}_{\nabla P_{\cM}}(\bar x)$.
\item\label{prop:proof:consequencesofA:item:smoothfunction} {\bf (Bounds on $\nabla_{\cM} f$)} For all $x \in B_{\deltaA}(\bar x)$, we have  
\begin{align}\label{eq:uplowersmoothnessprop}
\frac{\scc}{2}\|P_{\cM}(x) - \bar x\| \leq \|\nabla_{\cM} f(P_{\cM}(x))\| \leq \beta\|P_{\cM}(x) - \bar x\|,
\end{align}
where $\beta := 2\lip_{\nabla f_{\cM}}(\bar x)$.
\item \label{prop:proof:consequencesofA:item:stronga}{\bf (Consequence of strong $(a)$)} For all $x \in B_{\deltaA}(\bar x)$ and $\sigma \leq \deltaA$, we have
\begin{align}
{\color{blue}\sup_{g \in \partial_{\sigma} f(x)}} \|P_{T_{\cM}(P_{\cM}(x))} (g - \nabla_\cM f({\color{blue}P_{\cM}(x))})\| &\leq \ver(\dist(x, \cM) + \sigma); \label{eq:consequencestrongaeasy}\\
\sup_{g \in \partial_{\sigma} f(x)} \|P_{\tangent(P_{\cM}(x))} g\| &\leq \ver(\dist(x, \cM) + \sigma) + \beta \|{\color{blue}P_{\cM}(x)}- \bar x\|\label{eq:consequencestronga};\\
\sup_{g,g' \in \partial_{\sigma} f(x)} \|P_{\tangent(P_{\cM}(x))}(g - g')\| &\leq 2\ver(\dist(x, \cM) + \sigma).\label{eq:consequencestronga2}
\end{align}
\item\label{prop:proof:consequencesofA:item:sharpaim} {\bf (Aiming)} For all $x \in B_{\deltaA}(\bar x)$ and all $v \in \partial f(x)$, we have
\begin{align}\label{eq:sharpnessandaiming}
\dotp{v, x - P_{\cM}(x)} \geq \sharpc \, \dist(x, \cM),  
\end{align}
where $\sharpc := \frac{1}{4}\displaystyle\liminf_{\substack{x' \stackrel{\cM^c}{\rightarrow} \bar x }} \dist(0, \partial f(x')).$
\item {\bf (Subgradient bound)} \label{prop:proof:consequencesofA:item:Lipschitzbound}
For all $x \in B_{\deltaA}(\bar x)$ and $\sigma \leq \deltaA$, we have
$$
\sup_{g \in \partial_{\sigma} f(x)} \|g\| \leq L,
$$
where $L := 2\lip_f(\bar x)$
\item \label{prop:proof:consequencesofA:item:gap}{\bf (Function gap)} For all $x \in B_{\deltaA}(\bar x)$, we have
\begin{align}\label{eq:tangentnormalboundforfunction}
f(x)- f(\bar x) \leq L\dist(x, \cM) + \frac{\beta}{2}\|P_{\cM}(x) - \bar x\|^2. 
\end{align}	
\end{enumerate}
\end{proposition}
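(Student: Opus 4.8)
The plan is to first fix the radius $\deltaA>0$ so that several local conditions hold simultaneously on $\closedball_{2\deltaA}(\bar x)$, and then to derive the eight bounds in order of logical dependence. For the regularity claims I would invoke the manifold facts of Section~\ref{sec:notation}: since $\cM$ is $C^4$ around $\bar x$, the projection $P_\cM$ is $C^3$ and the extension $f_\cM=f\circ P_\cM$ is $C^3$ on a neighborhood, so $\nabla P_\cM$ and $\nabla f_\cM$ are $C^2$ and hence Lipschitz on the compact ball $\closedball_{2\deltaA}(\bar x)$. I would then shrink $\deltaA$ further so that (i) the local Lipschitz moduli of $f$, $\nabla f_\cM$, and $\nabla P_\cM$ on $\closedball_{2\deltaA}(\bar x)$ are each at most twice their limsup values at $\bar x$, which is exactly what is needed to justify the definitions $L=2\lip_f(\bar x)$, $\beta=2\lip_{\nabla f_\cM}(\bar x)$, and $\shape=2\lip^{\mathrm{op}}_{\nabla P_\cM}(\bar x)$; and (ii) quadratic growth~\ref{assum: quad}, strong $(a)$-regularity~\ref{assum: stronga}, $(b_\leq)$-regularity~\ref{assum: bregularity}, and the defining liminf of $\sharpc$ all hold on this ball. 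Items~\ref{prop:proof:consequencesofA:item:quadgrowth} and~\ref{prop:proof:consequencesofA:item:Lipschitzbound} are then immediate: quadratic growth holds by the choice of $\deltaA$, and every Clarke subgradient at a point of $\closedball_{2\deltaA}(\bar x)$ has norm at most $L$, so the same bound passes to the convex hull $\partial_\sigma f(x)$.

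Next I would treat the pure-calculus estimates on $P_\cM$ and $f_\cM$. For item~\ref{prop:proof:consequencesofA:item:smoothproj}, I would write $P_\cM(x')-P_\cM(x)=\int_0^1\nabla P_\cM(x+t(x'-x))(x'-x)\,dt$ and compare the integrand to $\nabla P_\cM(P_\cM(x))=P_{\tangent(P_\cM(x))}$, using the identity $\nabla P_\cM=P_{\tangent}$ on $\cM$; the Lipschitz bound on $\nabla P_\cM$ controls the difference by $\shape(\dist(x,\cM)+t\|x'-x\|)\|x'-x\|$, and integrating together with $ab\le\tfrac12(a^2+b^2)$ produces the stated $\shape(\dist^2(x,\cM)+\|x-x'\|^2)$. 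For item~\ref{prop:proof:consequencesofA:item:smoothfunction}, the upper bound is the estimate $\|\nabla_\cM f(P_\cM(x))\|=\|\nabla f_\cM(P_\cM(x))-\nabla f_\cM(\bar x)\|\le\beta\|P_\cM(x)-\bar x\|$, valid because $\bar x$ minimizes $f_\cM$, so $\nabla f_\cM(\bar x)=0$. The lower bound is more delicate: I would first show $H:=\nabla^2 f_\cM(\bar x)$ satisfies $\dotp{Hv,v}\ge\scc\|v\|^2$ for every $v\in\tangent(\bar x)$. This uses that along a $C^2$ curve $c$ in $\cM$ through $\bar x$ with velocity $v$, the curvature correction to the second derivative of $f_\cM$ is $\dotp{\nabla f_\cM(\bar x),\ddot c(0)}=0$, so quadratic growth on $\cM$ transfers directly to the quadratic form $\dotp{Hv,v}$. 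Then, expanding $\nabla f_\cM(P_\cM(x))$ by the fundamental theorem of calculus, projecting onto $\tangent(\bar x)$, and using the Lipschitz Hessian together with the near-tangency $P_\cM(x)-\bar x=P_{\tangent(\bar x)}(P_\cM(x)-\bar x)+O(\|P_\cM(x)-\bar x\|^2)$, I obtain $\|\nabla_\cM f(P_\cM(x))\|\ge\scc\|P_\cM(x)-\bar x\|-O(\|P_\cM(x)-\bar x\|^2)$, which exceeds $\tfrac{\scc}{2}\|P_\cM(x)-\bar x\|$ after a final shrinking of $\deltaA$.

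The strong $(a)$ consequences in item~\ref{prop:proof:consequencesofA:item:stronga} then follow by averaging. Any $g\in\partial_\sigma f(x)$ is a convex combination $\sum_j\lambda_j v_j$ with $v_j\in\partial f(x_j')$ and $x_j'\in\closedball_\sigma(x)$; applying~\ref{assum: stronga} at each $x_j'$ with $y=P_\cM(x)$ and using $\|x_j'-P_\cM(x)\|\le\sigma+\dist(x,\cM)$ yields~\eqref{eq:consequencestrongaeasy} after passing the projection through the convex combination. Inequality~\eqref{eq:consequencestronga} follows by adding and subtracting $\nabla_\cM f(P_\cM(x))$ and invoking the upper bound of item~\ref{prop:proof:consequencesofA:item:smoothfunction}, while~\eqref{eq:consequencestronga2} follows from~\eqref{eq:consequencestrongaeasy} and the triangle inequality. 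For the function gap in item~\ref{prop:proof:consequencesofA:item:gap}, I would split $f(x)-f(\bar x)=[f(x)-f(P_\cM(x))]+[f(P_\cM(x))-f(\bar x)]$: the first term is at most $L\dist(x,\cM)$ by Lipschitz continuity, and the second equals $f_\cM(P_\cM(x))-f_\cM(\bar x)\le\tfrac{\beta}{2}\|P_\cM(x)-\bar x\|^2$ by the descent lemma applied to $f_\cM$ at its critical point $\bar x$.

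The aiming inequality~\eqref{eq:sharpnessandaiming} of item~\ref{prop:proof:consequencesofA:item:sharpaim} is the step I expect to be the main obstacle, and I would attack it by converting it into a statement about function values. Applying $(b_\leq)$-regularity~\ref{assum: bregularity} with $y=P_\cM(x)\in\cM$ and the given $v\in\partial f(x)$ gives $\dotp{v,x-P_\cM(x)}\ge f(x)-f(P_\cM(x))+o(\dist(x,\cM))$, so it suffices to show the sharp growth $f(x)-f(P_\cM(x))\ge 2\sharpc\,\dist(x,\cM)$ up to negligible error. This growth I would obtain by integrating along the normal segment $x(t)=P_\cM(x)+t(x-P_\cM(x))$: at each interior point the sharpness built into $\sharpc$ (the liminf of $\dist(0,\partial f(\cdot))$ off $\cM$) lower-bounds the subgradient norm, while strong $(a)$-regularity forces the tangential part of these subgradients to be small, so that their component along the outward normal $x-P_\cM(x)$ — which is exactly the segment's direction since $x-P_\cM(x)\in\normal(P_\cM(x))$ — is bounded below. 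The delicate points are controlling the sign and alignment of the normal component uniformly along the segment and absorbing the $o(\cdot)$ and curvature errors; the factor $\tfrac14$ in the definition of $\sharpc$ is precisely the slack that renders these error terms harmless after one last shrinking of $\deltaA$.
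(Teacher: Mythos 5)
Your treatment of the regularity preamble and of Items~\ref{prop:proof:consequencesofA:item:quadgrowth}, \ref{prop:proof:consequencesofA:item:smoothproj}, \ref{prop:proof:consequencesofA:item:smoothfunction}, \ref{prop:proof:consequencesofA:item:stronga}, \ref{prop:proof:consequencesofA:item:Lipschitzbound}, and \ref{prop:proof:consequencesofA:item:gap} is sound and close in spirit to the paper's: you prove Item~\ref{prop:proof:consequencesofA:item:smoothproj} via the fundamental theorem of calculus rather than a Taylor expansion based at the manifold point $P_{\cM}(x)$, and the lower bound in Item~\ref{prop:proof:consequencesofA:item:smoothfunction} via tangent-restricted positive definiteness of $\nabla^2 f_{\cM}(\bar x)$ rather than the paper's convexity-type inequality $f_\cM(y) + \dotp{\nabla f_{\cM}(y), \bar x - y} \le f_\cM(\bar x)$; both variants rest on the same ingredients (Lipschitz Hessian, quadratic growth, $\nabla f_{\cM}(\bar x)=0$) and deliver the stated constants.

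The genuine gap is in Item~\ref{prop:proof:consequencesofA:item:sharpaim} (aiming). Your reduction via $(b_{\leq})$-regularity at $y = P_{\cM}(x)$ is fine: it does suffice to prove a growth bound of the form $f(x) - f(P_{\cM}(x)) \gtrsim \dist(x,\cM)$. But your proposed proof of that growth, integrating along the normal segment and invoking sharpness plus strong $(a)$-regularity, cannot work as stated. Along $x(t) = P_{\cM}(x) + t(x - P_{\cM}(x))$ one has, for a.e.\ $t$, $\tfrac{d}{dt}f(x(t)) = \dotp{v_t, x - P_{\cM}(x)} = \dotp{P_N v_t, x - P_{\cM}(x)}$ for some $v_t \in \partial f(x(t))$, where $N = \normal(P_{\cM}(x))$. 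Sharpness bounds $\|v_t\|$ from below and strong $(a)$ makes the tangential part $\|P_T v_t\|$ small, so $\|P_N v_t\|$ is bounded below --- but neither assumption says anything about the \emph{direction} of $P_N v_t$ inside $N$. When $\dim N \ge 2$ (and even when $\dim N = 1$, because of the sign ambiguity), $P_N v_t$ can a priori be orthogonal to, or anti-aligned with, the outward normal $x - P_{\cM}(x)$, in which case the integrand vanishes or is negative and no linear growth follows; the pointwise constraints you invoke are consistent with $f$ being constant along the segment. The alignment of the normal component with the outward normal is precisely the content of the aiming inequality, so this step of your argument is circular --- you yourself flag ``controlling the sign and alignment'' as the delicate point, but offer no mechanism for it. The paper's mechanism is variational: with $\kappa = 2\sharpc$, let $\hat x$ minimize $f(\cdot) + \kappa\|\cdot - x\|$ over $\closedball_{2\deltaA}(\bar x)$. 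Fermat's rule gives $0 \in \partial f(\hat x) + \kappa \closedball$, which contradicts the sharpness condition unless $\hat x \in \cM$; the definition of $\hat x$ then yields the growth $f(x) - f(\hat x) \ge \kappa\|x - \hat x\| \ge \kappa\,\dist(x,\cM)$ \emph{for free} (with respect to $\hat x$, not $P_{\cM}(x)$), and $(b_{\leq})$-regularity applied at $\hat x$, combined with Items~\ref{prop:proof:consequencesofA:item:smoothproj} and~\ref{prop:proof:consequencesofA:item:stronga} to pass from $\hat x$ back to $P_{\cM}(x)$, gives the stated inequality. Without this (or an equivalent) construction, your proof of Item~\ref{prop:proof:consequencesofA:item:sharpaim} does not go through.
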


Let us briefly comment on the result. 
Item~\ref{prop:proof:consequencesofA:item:smoothproj} provides a crucial smoothness property of the projection operator of $\cM$.
Item~\ref{prop:proof:consequencesofA:item:smoothfunction} shows that the Riemannian gradient of $f$ is proportional to the distance of the projection $y$ to $\bar x$.
Item~\ref{prop:proof:consequencesofA:item:stronga} shows how the Goldstein subgradients inherit the strong $(a)$ property~\ref{assum: stronga} of Assumption~\ref{assumption:mainfinal}.
Indeed, Equation~\eqref{eq:consequencestronga} shows that Goldstein subgradients are ``small" in tangent directions and Equation~\eqref{eq:consequencestronga2} shows Goldstein subgradients vary in an approximate Lipschitz fashion in tangent directions.
Item~\ref{prop:proof:consequencesofA:item:sharpaim} shows that the subgradients of $f$ off of the manifold have a constant level of correlation with $ x- P_{\cM}(x)$, i.e., the direction $-v$ ``aims" towards the manifold. 
Note that $\mu > 0$ due to the active manifold Assumption~\ref{assum: smooth}.
The proof of Item~\ref{prop:proof:consequencesofA:item:sharpaim}  is based on Assumptions~\ref{assum: smooth} and~\ref{assum: bregularity}; a similar result appears in~\cite[Theorem D.2]{davis2019stochasticgeometric}.
Item~\ref{prop:proof:consequencesofA:item:Lipschitzbound} provides a bound on the Goldstein subgradients of $f$ near $\bar x$; 
we will appeal to this bound many times throughout the analysis without referencing this proposition.
Finally, Item~\ref{prop:proof:consequencesofA:item:gap} 
decomposes the function gap into a sum of two terms: the distance to the manifold and the squared distance of the projection to the solution.
The proof relies on the smoothness of $f$ along the manifold.
Note that the trivial upper bound $L\|x - \bar x\|$ for the gap can be weaker than~\eqref{eq:tangentnormalboundforfunction}.

This concludes our discussion of Assumption~\ref{assumption:mainfinal}.
The following three sections establish further consequences: 
the gradient inequality~\eqref{eq:introgkl} (Section~\ref{sec:GKLinequality}); rapid local convergence of $\ngoldstein$ and $\tgoldstein$ (Section~\ref{sec:minnorm}); and rapid local convergence of $\algname$ (Section~\ref{sec:locallinear}).
In all three sections, we use the notation and results introduced in Proposition~\ref{prop:proof:consequencesofA}. 
{\color{blue} 
Finally, the statements of the results in Section~\ref{sec:GKLinequality} and~\ref{sec:minnorm} contain several parameters/radii which we will use in Section~\ref{sec:locallinear} to determine the region of near linear convergence and the oracle complexity for $\algname$. For the readers' convenience, we have listed these parameters in Table~\ref{tab:constants}.
\begin{table}[ht]
\centering
\renewcommand{\arraystretch}{1.5}
\begin{tabular}{@{}ll@{}}
\toprule
{Parameter} & {Definition} \\
\midrule
$\done $ & $\frac{\mu}{8(\mu + L)}$\\
$\dtwo$ & $\frac{\mu}{2}$ \\
$\cone$ & $\frac{\scc}{4}$ \\
$\ctwo$ & $\min\left\{\frac{\scc}{8\ver}, \frac{\min\{1, 1/\deltaA\}}{2}\right\}$ \\
$\cfour$ & $\frac{\cone^2}{8L}$ \\
$\csix$ & $\min\left\{\frac{\beta }{\ver(1+\deltaA)}, \frac{{\color{blue}\min\left\{\sharpc/\deltaA, \cfour\cfive/\beta\right\}}}{4(1 +  (1+\deltaA)\shape)  (\mu + L))}, {\color{blue}\frac{1}{2}}\right\}$ \\
$\cseven$ & $\min \left\{\frac{\beta}{2\ver}, \frac{\cfour\cfive}{32\ver\beta},\csix, \frac{\ctwo}{4}\right\}$ \\
$\deltaGKL$ & $\min \left\{\frac{\delta_A}{4}, \frac{\done }{\shape}\right\}$ \\
$\deltaND$ & $\min\left\{\deltaGKL, \frac{\dtwo}{\done L\sqrt{128}}\right\}$ \\
$\deltagrid$ & $\min\left\{\frac{\deltaA}{2}, \frac{1}{\shape(\ceight + 1)}, \frac{\mu}{8(\ver + \beta)}\right\}$ \\
\bottomrule
\end{tabular}
\caption{Parameters used throughout Sections~\ref{sec:GKLinequality} and~\ref{sec:minnorm}.}
\label{tab:constants}
\end{table}
}

\section{Verifying the gradient inequality~\eqref{eq:introgkl} under Assumption~\ref{assumption:mainfinal}}\label{sec:GKLinequality}

In this section, we establish the gradient inequality~\eqref{eq:introgkl} for functions satisfying Assumption~\ref{assumption:mainfinal}.
Throughout the section, we assume that Assumption~\ref{assumption:mainfinal} is in force. 
We also use the notation set out in Proposition~\ref{prop:proof:consequencesofA}.

We present the formal statement and the gradient inequality~\eqref{eq:introgkl} in Theorem~\ref{thm:gkl}, which appears at the end of this section. 
The proof is a consequence of the two lemmata. 
In the first lemma, we prove a constant-sized lower bound for $\dist(0, \partial_{\sigma} f(x))$, whenever $\sigma$ is sufficiently small. 
The proof of this bound relies on the active manifold assumption~\ref{assum: smooth} and the aiming inequality~\eqref{eq:sharpnessandaiming}. 
A consequence of the argument is that all elements of $\partial_{\sigma} f(x)$ are correlated with the normal direction $x - P_{\cM}(x) \in \normal(P_{\cM}(x))$. 
Later in Proposition~\ref{lemma:normal} we will also show that Algorithm~\ref{alg:ngoldstein} ($\ngoldstein$) terminates rapidly when $\sigma$ is in the region, motivating the name Normal Descent.
We now turn to the lemma.

\begin{lem}[Lower bound on Goldstein subgradients I]\label{lem:lbnormal}
Define 
$$
\done  := {\color{blue}\frac{\mu}{8(\mu + L)}};  \qquad   \dtwo := \frac{\mu}{2}; \qquad \text{ and } \qquad {\color{blue}\deltaGKL} := \min\left\{ \frac{\deltaA}{4},  \frac{\done }{\shape}\right\}.
$$
Then for all $x \in B_{\deltaGKL}(\bar x)$ and $0 < \sigma \leq \done  \dist(x, \cM)$,
we have 
$$
\dist(0, \partial_{\sigma} f(x)) \geq \dtwo.
$$
\end{lem}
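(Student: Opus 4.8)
The plan is to reduce the statement to a pointwise lower bound on the correlation of \emph{every} Clarke subgradient in the ball with a single fixed direction. Note first that the hypothesis $0 < \sigma \le \done\dist(x,\cM)$ forces $\dist(x,\cM) > 0$, so that $x \notin \cM$ and the unit normal direction
$$
u := \frac{x - P_\cM(x)}{\dist(x,\cM)} \in \normal(P_\cM(x))
$$
is well defined. My aim is to show that $\dotp{v, u} \ge \dtwo = \tfrac{\mu}{2}$ for every $y \in \overline{B}_{\sigma}(x)$ and every $v \in \partial f(y)$. Granting this, the closed halfspace $H := \{w : \dotp{w, u} \ge \mu/2\}$ contains $\partial f(y)$ for all such $y$, hence contains the closed convex hull $\partial_\sigma f(x)$. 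Consequently every $g \in \partial_\sigma f(x)$ satisfies $\|g\| \ge \dotp{g, u} \ge \mu/2$, and taking the infimum yields $\dist(0, \partial_\sigma f(x)) \ge \dtwo$. Before starting I would record the containments $\|y - \bar x\| < 2\deltaGKL \le \deltaA/2$ and $\sigma < \deltaGKL \le \deltaA$, which guarantee that the aiming inequality \eqref{eq:sharpnessandaiming}, the projection estimate \eqref{eq:smoothnessprojectorproof}, and the subgradient bound $\|v\| \le L$ (item~\ref{prop:proof:consequencesofA:item:Lipschitzbound}) are all in force at the relevant points.

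The core of the argument is to \emph{transport} the aiming inequality, which is stated relative to the direction $y - P_\cM(y)$ at the perturbed point $y$, over to the fixed direction $x - P_\cM(x)$ at the center $x$. I would write
$$
\dotp{v,\, x - P_\cM(x)} = \dotp{v,\, y - P_\cM(y)} + \dotp{v,\, (x - y) - (P_\cM(x) - P_\cM(y))},
$$
bound the first term below by $\mu\,\dist(y,\cM) \ge \mu(\dist(x,\cM) - \sigma)$ using \eqref{eq:sharpnessandaiming} and the $1$-Lipschitzness of $\dist(\cdot,\cM)$. For the second term, the smoothness estimate \eqref{eq:smoothnessprojectorproof} applied with base point $x$ and second point $y$ gives $P_\cM(x) - P_\cM(y) = P_{\tangent(P_\cM(x))}(x-y) + e$ with $\|e\| \le \shape(\dist^2(x,\cM) + \|x-y\|^2)$, so that $(x-y) - (P_\cM(x)-P_\cM(y)) = P_{\normal(P_\cM(x))}(x-y) - e$ has norm at most $\sigma + \shape(\dist^2(x,\cM) + \sigma^2)$. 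Combined with $\|v\| \le L$, writing $D := \dist(x,\cM)$, this produces
$$
\dotp{v,\, x - P_\cM(x)} \ge \mu D - (\mu + L)\sigma - L\shape(D^2 + \sigma^2).
$$

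It then remains to verify, through the constants $\done = \frac{\mu}{8(\mu+L)}$ and $\deltaGKL \le \done/\shape$, that each error term is at most $\tfrac{\mu}{8}D$: the choice $\sigma \le \done D$ gives $(\mu+L)\sigma \le \tfrac{\mu}{8}D$, while $\shape D \le \shape\deltaGKL \le \done$ together with $\sigma \le D$ controls both $L\shape D^2$ and $L\shape\sigma^2$ by $\tfrac{\mu}{8}D$. Summing, $\dotp{v, x - P_\cM(x)} \ge \tfrac{5\mu}{8}D$, and dividing by $D = \|x - P_\cM(x)\|$ gives $\dotp{v,u} \ge \tfrac{5\mu}{8} \ge \tfrac{\mu}{2}$, as required. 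The arithmetic here is entirely routine; the genuine obstacle is the transport step, since the aiming guarantee is anchored at the moving point $y$ while the normal direction $u$ is anchored at $x$, and it is precisely the $C^2$-smoothness of $P_\cM$ encoded in \eqref{eq:smoothnessprojectorproof} that lets me replace one by the other at the cost of an error term that the stepsize restriction $\sigma \le \done\dist(x,\cM)$ absorbs.
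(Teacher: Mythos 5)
Your proof is correct and follows essentially the same route as the paper's: both transport the aiming inequality~\eqref{eq:sharpnessandaiming} from the perturbed point to the fixed normal direction $x - P_{\cM}(x)$ using the projection smoothness estimate~\eqref{eq:smoothnessprojectorproof}, absorb the error terms via the stepsize restriction $\sigma \leq \done\dist(x,\cM)$ and the radius bound $\shape\deltaGKL \leq \done$, and then pass to all of $\partial_{\sigma}f(x)$ by convexity (your closed-halfspace containment is just a cleaner phrasing of the paper's explicit convex-combination averaging). The constants check out, so nothing further is needed.
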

\begin{proof}
We begin with some preliminary bounds.
Fix {\color{blue}$x \in B_{\deltaGKL}(\bar x)$} and $\sigma > 0$ satisfying the lemma assumptions. 
We observe that 
$$
\sigma \leq \done  \dist(x, \cM) \leq \dist(x, \cM) \leq  \|x - \bar x\| \leq  {\color{blue}\deltaGKL,}
$$
 where the second inequality follows since $\done  \leq 1$ and the third follows since $\bar x \in \cM$.
Consequently, 
\begin{align}\label{eq:quadraticshrinking}
L\shape(\sigma^2+ \dist^2(x, \cM)) &\leq {\color{blue}\deltaGKL} L\shape(\sigma+ \dist(x, \cM))\notag \\
&\leq 2L{\color{blue}\deltaGKL}\shape\dist(x, \cM) \notag\\
&\leq 2L\done \dist(x, \cM),
\end{align}
where the first inequality follows from the bound $\max\{\sigma,\dist(x, \cM)\} \leq {\color{blue}\deltaGKL}$ and the second follows from the bound $\sigma \leq \dist(x, \cM)$. We now turn to the proof.

Now, let $x' \in \overline B_{\sigma}(x) \subseteq B_{\deltaA}(\bar x)$ and observe that \textcolor{blue}{by aiming condition~\eqref{eq:sharpnessandaiming},}
$$
\dotp{v, x' - P_{\cM}(x')} \geq \sharpc \dist(x', \cM) \qquad \text{for all $v \in \partial f(x')$.}
$$
We claim that $\dotp{v, x - P_{\cM}(x)} \geq \dtwo \dist(x, \cM)$ for all $v \in \partial f(x')$. 
Indeed, for all $v \in \partial f(x')$ we may upper bound the inner product as follows:
\begin{align*}
&\dotp{v, x' - P_{\cM}(x')} \\
&\leq \dotp{v, x- P_{\cM}(x)} + \|v\|\|x' - P_{\cM}(x') - x - P_{\cM}(x)\|\\
&\leq \dotp{v, x- P_{\cM}(x)} + L\|(I - P_{\tangent(P_{\cM}(x))})(x - x')\| + L\shape(\sigma^2+ \dist^2(x, \cM)) \\
&\leq \dotp{v, x- P_{\cM}(x)} + 3L\done \dist(x, \cM), 
\end{align*}
where the second inequality follows from the bound $\|v\| \leq L$ and Item~\ref{prop:proof:consequencesofA:item:smoothproj} of Proposition~\ref{prop:proof:consequencesofA};  and the third inequality follows from $\|x - x'\| \leq \sigma\leq \done \dist(x, \cM)$ \textcolor{blue}{and~\eqref{eq:quadraticshrinking}}. Consequently, for all $v \in \partial f(x')$, we have
\begin{align}\label{eq:onestepsub}
 \dotp{v, x - P_{\cM}(x)}  &\geq \sharpc \dist(x', \cM) -  3L\done \dist(x, \cM) \notag\\
 &\geq \sharpc\dist(x, \cM) - \sharpc\sigma -  3L\done \dist(x, \cM)\notag \\
 &\geq \sharpc(1 - \done (1 + 3L/\sharpc))\dist(x, \cM) \notag\\
 &= \dtwo \dist(x, \cM),
\end{align}
where the second inequality follows from 1-Lipschitz continuity of $\dist(\cdot , \cM)$; {\color{blue}and  the final inequality follows from the bound $\done \leq \frac{1}{2(1 + 3L/\mu)}$}. This proves the claim.

Now, fix $g \in \partial_{\sigma} f(x)$. 
By definition of $\partial_{\sigma} f(x)$, there exists a family of coefficients $\lambda_i \in [0, 1]$, points $x_i \in \overline B_{\sigma}(x) \subseteq  B_{\deltaA}(\bar x)$, and subgradients $g_i \in \partial f(x_i)$ indexed by a finite set $i \in I$ such that $\sum_{i\in I} \lambda_i = 1$ and $g =  \sum_{i\in I} \lambda_i g_i$.
Thus, by~\eqref{eq:onestepsub}, we have
$$
 \dotp{g, x - P_{\cM}(x)} = \sum_{i\in I} \lambda_i \dotp{g_i, x - P_{\cM}(x)} \geq \dtwo \dist(x, \cM).
$$
Therefore, we have 
$$
\|g\|  \geq \frac{\dotp{g, x - P_{\cM}(x)}}{\dist(x, \cM)} \geq \dtwo,
$$
as desired.
\end{proof}

In the second lemma, we provide a lower bound for $\dist(0, \partial_{\sigma} f(x))$ on the order of $\|P_{\cM}(x) - \bar x\|$, provided $\sigma = O(\|P_{\cM}(x) - \bar x\|)$.
The proof of this bound relies on quadratic growth~\ref{assum: quad} and strong $(a)$-regularity~\ref{assum: stronga}.
A consequence of the argument is that the minimal norm element of $\partial_{\sigma} f(x)$ is close to the tangent vector $\nabla_{\cM}f(P_{\cM}(x))\in \tangent(P_{\cM}(x))$. 
Later in Proposition~\ref{lem:goldsteinterminatedescent} we will also show that Algorithm~\ref{alg:tgoldstein} ($\tgoldstein$) terminates rapidly when $\sigma$ is in the region, motivating the name Tangent Descent.
We now turn to the lemma.

\begin{lem}[Lower bound on Goldstein subgradients II]\label{lem:lowerboundgoldstein}
Define 
$$
\cone := \frac{\scc}{4}; \qquad \text{ and } \qquad {\color{blue}\ctwo := \min\left\{\frac{\scc}{8\ver}, \frac{\min\{1, 1/\deltaA\}}{2}\right\}}.
$$
Then for all {\color{blue}$x \in B_{\deltaA}(\bar x)$} and $\sigma \geq 0$ satisfying 
$$
\max\{\dist(x, \cM), \sigma\} \leq \ctwo \|P_{\cM}(x) - \bar x\|,
$$
we have 
$$
\|P_{\tangent(P_{\cM}(x))}(g)\| \geq \cone\|P_{\cM}(x) - \bar x\| \qquad \text{for all $g \in \partial_\sigma f(x)$}.
$$
\end{lem}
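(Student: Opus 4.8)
The plan is to lower bound the tangent component of $g$ by comparing it to the covariant gradient $\nabla_{\cM} f(P_{\cM}(x))$, which by construction lives in $\tangent(P_{\cM}(x))$. Since $P_{\tangent(P_{\cM}(x))}(\nabla_{\cM} f(P_{\cM}(x))) = \nabla_{\cM} f(P_{\cM}(x))$, a reverse triangle inequality gives
$$
\|P_{\tangent(P_{\cM}(x))}(g)\| \geq \|\nabla_{\cM} f(P_{\cM}(x))\| - \|P_{\tangent(P_{\cM}(x))}(g) - \nabla_{\cM} f(P_{\cM}(x))\|.
$$
The first term on the right is bounded below by $\frac{\scc}{2}\|P_{\cM}(x) - \bar x\|$ by Item~\ref{prop:proof:consequencesofA:item:smoothfunction} of Proposition~\ref{prop:proof:consequencesofA}, so the whole argument reduces to showing the error term is at most $\frac{\scc}{4}\|P_{\cM}(x) - \bar x\|$.

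First I would control the error term using the consequence of strong $(a)$-regularity, namely inequality~\eqref{eq:consequencestrongaeasy}, which yields
$$
\|P_{\tangent(P_{\cM}(x))}(g) - \nabla_{\cM} f(P_{\cM}(x))\| \leq \ver(\dist(x, \cM) + \sigma)
$$
for any $g \in \partial_\sigma f(x)$. To invoke this bound I must first verify its hypothesis $\sigma \leq \deltaA$. This is where the otherwise opaque factor $\frac{\min\{1, 1/\deltaA\}}{2}$ in the definition of $\ctwo$ earns its keep: using $\|P_{\cM}(x) - \bar x\| \leq 2\|x - \bar x\| \leq 2\deltaA$ together with $\sigma \leq \ctwo\|P_{\cM}(x) - \bar x\|$, one checks in both cases $\deltaA \leq 1$ and $\deltaA > 1$ that $\sigma \leq 2\ctwo\deltaA \leq \deltaA$.

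With the error bound in hand, I would substitute the standing hypothesis $\max\{\dist(x, \cM), \sigma\} \leq \ctwo\|P_{\cM}(x) - \bar x\|$ to get $\dist(x, \cM) + \sigma \leq 2\ctwo\|P_{\cM}(x) - \bar x\|$, and then use $\ctwo \leq \frac{\scc}{8\ver}$ to conclude $\ver(\dist(x,\cM) + \sigma) \leq \frac{\scc}{4}\|P_{\cM}(x) - \bar x\|$. Combining this with the lower bound $\frac{\scc}{2}\|P_{\cM}(x) - \bar x\|$ on $\|\nabla_{\cM} f(P_{\cM}(x))\|$ produces
$$
\|P_{\tangent(P_{\cM}(x))}(g)\| \geq \left(\frac{\scc}{2} - \frac{\scc}{4}\right)\|P_{\cM}(x) - \bar x\| = \frac{\scc}{4}\|P_{\cM}(x) - \bar x\| = \cone\|P_{\cM}(x) - \bar x\|,
$$
as claimed. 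The argument is essentially a single reverse triangle inequality, so I expect no conceptual difficulty; the only genuine subtlety is the bookkeeping needed to verify $\sigma \leq \deltaA$ so that the strong-$(a)$ consequence applies, and noticing that the two pieces of $\ctwo$ were chosen precisely so that the error term is dominated by exactly half of the main lower bound.
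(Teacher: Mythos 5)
Your proposal is correct and follows essentially the same argument as the paper: a reverse triangle inequality against the covariant gradient $\nabla_{\cM} f(P_{\cM}(x))$, the lower bound $\|\nabla_{\cM} f(P_{\cM}(x))\| \geq \frac{\scc}{2}\|P_{\cM}(x)-\bar x\|$ from Item~\ref{prop:proof:consequencesofA:item:smoothfunction} of Proposition~\ref{prop:proof:consequencesofA}, and the strong-$(a)$ consequence~\eqref{eq:consequencestrongaeasy} to absorb the error term via $\ctwo \leq \frac{\scc}{8\ver}$. Your verification that $\sigma \leq \deltaA$ (so that~\eqref{eq:consequencestrongaeasy} applies) matches the paper's own chain $\sigma \leq \ctwo\|P_{\cM}(x)-\bar x\| \leq 2\ctwo\|x-\bar x\| \leq \deltaA$.
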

\begin{proof}
{\color{blue}For the purposes of this proof, the term $1/\deltaA$ in the definition of $\ctwo$ is unnecessary; however, it will be crucial in the proof of Theorem~\ref{thm:gkl}.}
Turning to the proof, fix {\color{blue}$x \in B_{\deltaA}(\bar x)$} and $\sigma \geq 0$ satisfying the lemma assumptions. Define $y = P_{\cM}(x)$. 
Note that  
$$
\sigma \leq \ctwo\|y - \bar x\| \leq 2\ctwo \|x - \bar x\| \leq \deltaA.
$$
Thus, by~\eqref{eq:consequencestrongaeasy}, for all $g \in \partial_{\sigma} f(x)$, we have
$$
\|P_{\tangent(y)}(g - \nabla_{\cM} f(y))\| \leq \ver (\dist(x, \cM) + \sigma) \leq \frac{\scc}{4}\|y - \bar x\|.
$$
In addition,  by~\eqref{eq:uplowersmoothnessprop}, we have $\|\nabla_\cM f(y) \| \ge \frac{\scc}{2} \|y - \bar x\|.$
	Therefore, for all $g \in \partial_{\sigma} f(x)$, we have
	\begin{align*}
		\|P_{\tangent(y)}(g)\|&\ge \|\nabla_\cM f(y) \| - \ver(\dist(x, \cM)  + \sigma) \geq \frac{\scc}{4} \|y - \bar x\|,
	\end{align*}
as desired.
\end{proof}

Given these lemmata, we are now ready to establish the gradient inequality~\eqref{eq:introgkl}. 
The following theorem verifies the bound 
$$
\sigma\dist(0, \partial_{\sigma} f(x)) \geq \eta(f(x) - f(\bar x)),
$$
for some $\eta > 0$ provided $x$ is sufficiently near $\bar x$ and \textcolor{blue}{$(x,\sigma)$} lies within one of two regions, described in Item~\ref{item:gkl:1} and Item~\ref{item:gkl:2} of Theorem~\ref{thm:gkl}.
Item~\ref{item:gkl:1} and Item~\ref{item:gkl:2} roughly correspond to the regions considered in Lemma~\ref{lem:lbnormal} and Lemma~\ref{lem:lowerboundgoldstein}, respectively.
Comparing with the statement of the {\color{blue}gradient inequality}~\eqref{eq:introgkl},
we see that gradient inequality of Theorem~\ref{thm:gkl} does not require knowledge of an explicit function $\sigma(x)$. 
Instead, we need only find some $\sigma$ proportional to $\done \dist(x, \cM)$ or $\ctwo\|P_{\cM}(x) - \bar x\|$ up to a factor of, say, 2. 
Later in Proposition~\ref{prop:onestep} we show that this flexibility allows us to find an appropriate $\sigma$ through the $\linesearch$ procedure.

\begin{thm}[Gradient inequality]\label{thm:gkl}
Suppose that function $f$ satisfies Assumption~\ref{assumption:mainfinal} at $\bar x \in \RR^d$. 
For any constants $a_1 \in (0, \done ]$ and $a_2 \in (0,\ctwo]$, we have 
$$
\sigma \dist(0, \partial_{\sigma} f(x)) \geq \min\left\{\frac{\scc a_2}{8\max\{4La_2^2, \beta\}},  \frac{\sharpc a_1}{4\max\{2L, \beta/a_2^2\}}\right\} (f(x) - f(\bar x)),
$$
whenever
$x \in  B_{\deltaGKL}(\bar x)$ and $\sigma >0$ satisfy Item~\ref{item:gkl:1} or Item~\ref{item:gkl:2}:
\begin{enumerate}
\item \label{item:gkl:1}
\begin{enumerate}
\item \label{item:gkl:1a}$\frac{a_1}{2}\dist(x, \cM) \leq \sigma \leq a_1\dist(x, \cM) $; 
\item \label{item:gkl:1b}$a_2^2\|P_{\cM}(x) - \bar x\|^2 \leq \dist(x, \cM)$.
\end{enumerate}
\item \label{item:gkl:2}
\begin{enumerate}
\item \label{item:gkl:2a} $\frac{a_2}{2}\|P_{\cM}(x) - \bar x\|  \leq \sigma \leq a_2\|P_{\cM}(x) - \bar x\|$;
\item\label{item:gkl:2b} $\frac{\dist(x, \cM)}{\sigma} \leq2a_2\|P_{\cM}(x) - \bar x\|$.
\end{enumerate}
\end{enumerate}
Moreover, for any $x\in B_{\deltaGKL}(\bar x)\backslash \{ \bar x\}$, there exists $\sigma > 0$ such that Item~\ref{item:gkl:1} or Item~\ref{item:gkl:2} is satisfied. 
\end{thm}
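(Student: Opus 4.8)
The plan is to treat the two regions separately, exactly as the statement suggests: Item~\ref{item:gkl:1} (the normal region) is handled through Lemma~\ref{lem:lbnormal}, and Item~\ref{item:gkl:2} (the tangent region) through Lemma~\ref{lem:lowerboundgoldstein}. In each case I multiply the resulting lower bound on $\dist(0,\partial_\sigma f(x))$ by the available lower bound on $\sigma$, and then convert the geometric quantity ($\dist(x,\cM)$ or $\|P_\cM(x)-\bar x\|^2$) into the function gap via the decomposition $f(x)-f(\bar x)\le L\dist(x,\cM)+\tfrac{\beta}{2}\|P_\cM(x)-\bar x\|^2$ from Item~\ref{prop:proof:consequencesofA:item:gap} of Proposition~\ref{prop:proof:consequencesofA}. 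Since Item~\ref{item:gkl:1} produces the second term of the minimum and Item~\ref{item:gkl:2} the first, the stated inequality (with the full minimum) holds in whichever region $(x,\sigma)$ happens to lie, because each single-region bound dominates the minimum.

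For the normal region, the constraint $\sigma\le a_1\dist(x,\cM)\le\done\dist(x,\cM)$ together with $x\in B_{\deltaGKL}(\bar x)$ places us precisely in the hypotheses of Lemma~\ref{lem:lbnormal}, giving $\dist(0,\partial_\sigma f(x))\ge\dtwo=\tfrac{\sharpc}{2}$. Combined with Item~\ref{item:gkl:1a}, namely $\sigma\ge\tfrac{a_1}{2}\dist(x,\cM)$, this yields $\sigma\dist(0,\partial_\sigma f(x))\ge\tfrac{a_1\sharpc}{4}\dist(x,\cM)$. To pass to the gap I use Item~\ref{item:gkl:1b}, i.e.\ $\|P_\cM(x)-\bar x\|^2\le\dist(x,\cM)/a_2^2$, so the gap decomposition gives $f(x)-f(\bar x)\le\big(L+\tfrac{\beta}{2a_2^2}\big)\dist(x,\cM)\le\max\{2L,\beta/a_2^2\}\dist(x,\cM)$; solving for $\dist(x,\cM)$ and substituting produces the coefficient $\tfrac{\sharpc a_1}{4\max\{2L,\beta/a_2^2\}}$.

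The tangent region is the main obstacle, because invoking Lemma~\ref{lem:lowerboundgoldstein} demands the \emph{linear} bound $\max\{\dist(x,\cM),\sigma\}\le\ctwo\|P_\cM(x)-\bar x\|$, whereas Item~\ref{item:gkl:2} only supplies \emph{quadratic} control of the normal distance. The $\sigma$ half is immediate from Item~\ref{item:gkl:2a}, since $\sigma\le a_2\|P_\cM(x)-\bar x\|\le\ctwo\|P_\cM(x)-\bar x\|$. For $\dist(x,\cM)$, combining Items~\ref{item:gkl:2a} and~\ref{item:gkl:2b} gives $\dist(x,\cM)\le 2a_2^2\|P_\cM(x)-\bar x\|^2$; I convert this square to a linear bound using $\|P_\cM(x)-\bar x\|\le 2\deltaGKL\le\deltaA/2$ together with the factor $\tfrac{\min\{1,1/\deltaA\}}{2}$ baked into $\ctwo$, which forces $2a_2^2\|P_\cM(x)-\bar x\|<\ctwo$ and hence $\dist(x,\cM)<\ctwo\|P_\cM(x)-\bar x\|$. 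This is exactly where the otherwise-redundant $1/\deltaA$ term in the definition of $\ctwo$ earns its place, as anticipated in the proof of Lemma~\ref{lem:lowerboundgoldstein}. With the hypotheses verified (note $x\in B_{\deltaGKL}(\bar x)\subseteq B_{\deltaA}(\bar x)$), Lemma~\ref{lem:lowerboundgoldstein} and $\|g\|\ge\|P_{\tangent(P_\cM(x))}g\|$ give $\dist(0,\partial_\sigma f(x))\ge\cone\|P_\cM(x)-\bar x\|$; using Item~\ref{item:gkl:2a} again, $\sigma\dist(0,\partial_\sigma f(x))\ge\tfrac{\scc a_2}{8}\|P_\cM(x)-\bar x\|^2$. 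Feeding $\dist(x,\cM)\le 2a_2^2\|P_\cM(x)-\bar x\|^2$ back into the gap decomposition bounds $f(x)-f(\bar x)\le\max\{4La_2^2,\beta\}\|P_\cM(x)-\bar x\|^2$, yielding the coefficient $\tfrac{\scc a_2}{8\max\{4La_2^2,\beta\}}$.

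For the final ``moreover'' clause I argue by a dichotomy on the quantity in Item~\ref{item:gkl:1b}. If $\dist(x,\cM)\ge a_2^2\|P_\cM(x)-\bar x\|^2$, then (since $x\ne\bar x$ forces $\dist(x,\cM)>0$ here, else $P_\cM(x)=x=\bar x$) any $\sigma\in[\tfrac{a_1}{2}\dist(x,\cM),a_1\dist(x,\cM)]$ satisfies Item~\ref{item:gkl:1}. Otherwise $\dist(x,\cM)<a_2^2\|P_\cM(x)-\bar x\|^2$, which forces $\|P_\cM(x)-\bar x\|>0$; any $\sigma\in[\tfrac{a_2}{2}\|P_\cM(x)-\bar x\|,a_2\|P_\cM(x)-\bar x\|]$ satisfies Item~\ref{item:gkl:2a}, and Item~\ref{item:gkl:2b} follows from $\dist(x,\cM)/\sigma<a_2^2\|P_\cM(x)-\bar x\|^2/(\tfrac{a_2}{2}\|P_\cM(x)-\bar x\|)=2a_2\|P_\cM(x)-\bar x\|$. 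Hence a valid $\sigma>0$ always exists.
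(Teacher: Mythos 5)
Your proof is correct and follows essentially the same route as the paper's: both regions are handled via Lemma~\ref{lem:lbnormal} and Lemma~\ref{lem:lowerboundgoldstein} together with the gap decomposition~\eqref{eq:tangentnormalboundforfunction}, and your use of the $1/\deltaA$ factor in $\ctwo$ to convert the quadratic control of $\dist(x,\cM)$ into the linear hypothesis of Lemma~\ref{lem:lowerboundgoldstein} is exactly the step the paper performs (there via the bound $2\sigma\leq 1$). The only cosmetic difference is that your ``moreover'' clause proceeds by a direct dichotomy on Item~\ref{item:gkl:1b} rather than the paper's split on $x\in\cM$ versus failure of Item~\ref{item:gkl:1}, which is logically equivalent.
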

\begin{proof}
We first show that for any $x \in B_{\deltaGKL}(\bar x)\backslash\{\bar x\}$, there exists $\sigma > 0$ such that either Item~\ref{item:gkl:1} or Item~\ref{item:gkl:2} is satisfied.
We consider two cases. First, suppose $x \in \cM$. Then Item~\ref{item:gkl:2} is trivially satisfied for $\sigma = a_2\|P_{\cM}(x) - \bar x\|$. 
Second, suppose $x \notin \cM$ and Item~\ref{item:gkl:1} cannot be satisfied for any $\sigma > 0$. 
In this case, we have 
$$
\dist(x, \cM) \leq a_2^2\|P_{\cM}(x) - \bar x\|^2 = 2a_2\sigma\|P_{\cM}(x) - \bar x\| \qquad \text{with $\sigma := a_2\|P_{\cM}(x) - \bar x\|/2$.}
$$
Thus, Item~\ref{item:gkl:2} is satisfied.

Now we prove the gradient inequality is satisfied whenever $\sigma$ satisfies Item~\ref{item:gkl:1} or Item~\ref{item:gkl:2}. 
Let us suppose that Item~\ref{item:gkl:1} holds for some $x \in B_{\deltaGKL}(\bar x)$ and $\sigma > 0.$
From~\eqref{eq:tangentnormalboundforfunction}, we have the bound:
\begin{align*}
\frac{1}{\max\{2L, \beta/a_2^2\}}(f(x) - f(\bar x)) &\leq  \frac{1}{\max\{2L, \beta/a_2^2\}}\left(L\dist(x, \cM) + \frac{\beta}{2}\|P_{\cM}(x) - \bar x\|^2\right)\\
&\leq \frac{1}{2}\left(\dist(x, \cM) + a_2^2\|P_{\cM}(x) - \bar x\|^2\right) \\
&\leq \dist(x, \cM).
\end{align*}
Now observe that the assumptions of Lemma~\ref{lem:lbnormal} are satisfied since $x \in B_{\deltaGKL}(\bar x)$,  $a_1 \leq \done $, and $x$ and $\sigma$ satisfy Item~\ref{item:gkl:1}. Therefore, we have 
$$
\sigma \dist(0, \partial_{\sigma} f(x))\geq \sigma \dtwo \geq \frac{\sharpc a_1}{4}\dist(x, \cM) \geq \frac{\sharpc a_1}{4\max\{2L, \beta/a_2^2\}}(f(x) - f(\bar x)), 
$$
as desired.

Next, let us  suppose that Item~\ref{item:gkl:2} holds for some $x \in B_{\deltaGKL}(\bar x)$ and $\sigma > 0.$
From~\eqref{eq:tangentnormalboundforfunction}, we have the bound:
\begin{align*}
\frac{1}{\max\{4La_2^2, \beta\}}(f(x) - f(\bar x))& \leq \frac{1}{\max\{4La_2^2, \beta\}}\left( L\dist(x, \cM)+  \frac{\beta}{2}\|P_{\cM}(x) - \bar x\|^2\right) \\
&\leq \frac{1}{2}\left( \frac{\dist(x, \cM)}{2a_2^2} +  \|P_{\cM}(x) - \bar x\|^2\right) \\
&\leq \frac{1}{2}\left( \frac{\dist(x, \cM)\|P_{\cM}(x) - \bar x\|}{2a_2\sigma} +  \|P_{\cM}(x) - \bar x\|^2\right) \\
&\leq \|P_{\cM}(x) - \bar x\|^2.
\end{align*}
Now observe that since $a_2 \leq \ctwo$ and $x$ and $\sigma$ satisfy Item~\ref{item:gkl:2}, {\color{blue} we have 
$$\sigma \leq \ctwo\|P_{\cM}(x) - \bar x\| \leq 2\ctwo\deltaGKL \leq (1/\deltaA)(\deltaA/4) \leq 1,$$  
where we use the bound $\ctwo \leq 1/2\deltaA$.}
Consequently, we have 
$$
\dist(x, \cM) \leq 2\sigma \ctwo\|P_{\cM}(x) - \bar x\| \leq \ctwo \|P_{\cM}(x) - \bar x\|.
$$
Therefore, $\max\{\dist(x, \cM), \sigma\} \leq \ctwo \|P_{\cM}(x) - \bar x\|$, so the conditions of Lemma~\ref{lem:lowerboundgoldstein} are satisfied (recall $\deltaGKL \leq \deltaA$).
Thus, let $g$ denote the minimal norm element of $\partial_{\sigma} f(x)$ and let us apply Lemma~\ref{lem:lowerboundgoldstein}:
$$
\dist(0,\partial_{\sigma} f(x))=\|g\| \geq \|P_{\tangent(P_{\cM}(x))}(g)\| \geq \frac{\scc}{4}\|P_{\cM}(x) - \bar x\|.
$$
Consequently, we have
$$
\sigma \dist(0, \partial_{\sigma} f(x)) \geq  \frac{\sigma\scc}{4}\|P_{\cM}(x) - \bar x\|  \geq \frac{\scc a_2}{8\max\{4La_2^2, \beta\}}(f(x) - f(\bar x)), 
$$
where the last inequality follows from $\sigma \ge \frac{a_2}{2}\|P_{\cM}(x) - \bar x\|$. This completes the proof.
\end{proof}
 
 {\color{blue}
 \begin{remark}
Note that $a_1, a_2 \in (0, 1)$ as claimed in Section~\ref{sec:normal_tangent_regions}, where we introduced the \emph{normal and tangent regions} appearing in the statement of Theorem~\ref{thm:gkl}.
 \end{remark}
 }

This concludes the proof of the gradient inequality~\eqref{eq:introgkl} under Assumption~\ref{assumption:mainfinal}. 
In Section~\ref{sec:locallinear}, we will use the gradient inequality to establish rapid local convergence of $\algname.$
Before proving that, the following section analyzes $\tgoldstein$ and $\ngoldstein$ methods.

\section{Rapid termination of $\ngoldstein$ and $\tgoldstein$ under Assumption~\ref{assumption:mainfinal}}\label{sec:minnorm}

In this section, we analyze the $\ngoldstein$ and $\tgoldstein$ methods, 
showing that both methods rapidly terminate with descent in appropriate regions.
Throughout the section, we assume that Assumption~\ref{assumption:mainfinal} is in force. 
We also use the results and notation of Proposition~\ref{prop:proof:consequencesofA}, Table~\ref{tab:constants}, Lemma~\ref{lem:lbnormal}, and Lemma~\ref{lem:lowerboundgoldstein}.

The main results of this section are Propositions~\ref{sec:analysisndescent} and \ref{lem:goldsteinterminatedescent}, which analyze $\ngoldstein$ and $\tgoldstein$, respectively. 
Proposition~\ref{sec:analysisndescent} shows that $\ngoldstein$ terminates with descent in a constant number of iterations within the region considered in Item~\ref{item:gkl:1} of Theorem~\ref{thm:gkl}. 
Proposition~\ref{lem:goldsteinterminatedescent} shows that $\tgoldstein$ either terminates with descent in $O(\log^{-1}(f(x) - f(\bar x)))$ iterations or $f(x) - f(\bar x)$ is already exponentially small in $T$ within the region considered in Item~\ref{item:gkl:2} of Theorem~\ref{thm:gkl}.
These lemmata will be the basis of our main convergence theorem -- Theorem~\ref{thm:maintheorem} -- appearing in Section~\ref{sec:locallinear}.

\subsection{Analysis of $\ngoldstein$}\label{sec:analysisndescent}

The following proposition shows that $\ngoldstein$ locally terminates in finitely many iterations whenever $\sigma$ is sufficiently small.
The result is a simple consequence of Lemmas~\ref{lemma:normalgold} and~\ref{lem:lbnormal}.
\begin{proposition}[$\ngoldstein$ loop terminates with descent]\label{lemma:normal}
Define a radius 
$$
\deltaND := {\color{blue}\min\left\{\deltaGKL, \frac{\dtwo}{\done L\sqrt{128}}\right\}.}
$$
Then for all $x \in B_{\deltaND}(\bar x)$, radii $\sigma> 0$ with $\sigma \leq \done  \dist(x, \cM)$, subgradients $g \in \partial_{\sigma} f(x)$, failure probabilities $p \in (0, 1)$ and budgets $T > 0$ satisfying 
$$
T \geq \left\lceil\frac{64L^2}{\dtwo^2} \right\rceil \left\lceil 2\log(1/p) \right\rceil,
$$
the point 
$
x_+ := \ngoldstein(x,g, \sigma, T)
$
satisfies
$$
f(x_+) \leq f(x) - \frac{\sigma \dist(0, \partial_{\sigma} f(x))}{8} \qquad \text{with probability at least $ 1- p$.}
$$
\end{proposition}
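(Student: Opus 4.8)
The plan is to obtain the result by feeding the constant lower bound of Lemma~\ref{lem:lbnormal} into the generic descent guarantee of Lemma~\ref{lemma:normalgold}. The key observation is that every hypothesis of Lemma~\ref{lemma:normalgold} is phrased in terms of the a priori unknown quantity $\dist(0, \partial_\sigma f(x))$, and in the present regime this quantity is bounded below by the explicit constant $\dtwo$; since a \emph{larger} value of $\dist(0, \partial_\sigma f(x))$ only \emph{relaxes} the stepsize and budget requirements, I may replace it throughout by $\dtwo$ and thereby discharge each hypothesis uniformly.

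First I would verify the hypotheses of Lemma~\ref{lem:lbnormal}: because $\deltaND \le \deltaGKL$, any $x \in B_{\deltaND}(\bar x)$ also lies in $B_{\deltaGKL}(\bar x)$, and the standing bound $\sigma \le \done \dist(x,\cM)$ is precisely its second hypothesis, so $\dist(0, \partial_\sigma f(x)) \ge \dtwo$. Next I would check the three quantitative hypotheses of Lemma~\ref{lemma:normalgold}. For the Lipschitz-constant hypothesis, I record that $\sigma \le \done\dist(x,\cM) \le \dist(x,\cM) \le \|x-\bar x\| \le \deltaND \le \deltaA/4$ (using $\done \le 1$ and $\bar x \in \cM$), so both $\sigma$ and $\|x - \bar x\|$ are at most $\deltaA/4$ and hence $B_{2\sigma}(x) \subseteq B_{\deltaA}(\bar x)$; on this ball $L = 2\lip_f(\bar x)$ is a Lipschitz constant by Item~\ref{prop:proof:consequencesofA:item:Lipschitzbound} of Proposition~\ref{prop:proof:consequencesofA}. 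For the stepsize condition $\sigma \le \dist(0,\partial_\sigma f(x))/(\sqrt{128}L)$, it suffices, using the lower bound just established, to show $\sigma \le \dtwo/(\sqrt{128}L)$, which follows from $\sigma \le \done\deltaND \le \done\cdot \dtwo/(\done L\sqrt{128})$ by the definition of $\deltaND$. For the budget condition, the bound $\dist(0,\partial_\sigma f(x)) \ge \dtwo$ and monotonicity of $t \mapsto \lceil 64L^2/t^2\rceil$ give $\lceil 64L^2/\dist^2(0,\partial_\sigma f(x))\rceil \le \lceil 64L^2/\dtwo^2\rceil$, so the assumed budget $T$ dominates the one required by Lemma~\ref{lemma:normalgold}.

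With all three hypotheses in force, Lemma~\ref{lemma:normalgold} directly yields the claimed descent inequality (and $\|g_+\| \ne 0$) with probability at least $1-p$, completing the argument. There is no genuine obstacle here; the only points requiring care are the consistent substitution of the lower bound $\dtwo$ for $\dist(0,\partial_\sigma f(x))$ — making sure each inequality is relaxed rather than tightened — and the containment $B_{2\sigma}(x) \subseteq B_{\deltaA}(\bar x)$ that legitimizes invoking the subgradient bound to identify $L$ as a Lipschitz constant on the relevant ball.
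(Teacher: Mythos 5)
Your proposal is correct and follows essentially the same route as the paper's proof: invoke Lemma~\ref{lem:lbnormal} to obtain the constant lower bound $\dist(0,\partial_\sigma f(x)) \geq \dtwo$, then verify the stepsize and budget hypotheses of Lemma~\ref{lemma:normalgold} by substituting this lower bound, using the definition of $\deltaND$ exactly as the paper does. Your additional verification that $B_{2\sigma}(x) \subseteq B_{\deltaA}(\bar x)$ (so that $L$ is a valid Lipschitz constant for Lemma~\ref{lemma:normalgold}) is a detail the paper leaves implicit, and is a welcome bit of extra care rather than a deviation.
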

\begin{proof}
Fix $x \in B_{\deltaND}(\bar x)$ and $\sigma > 0$ satisfying the lemma assumptions.
Observe that 
$$
\sigma \leq \done \dist(x, \cM) \leq \done  \deltaND \leq \min\left\{\deltaGKL, \frac{\dtwo}{ L \sqrt{128}}\right\}, 
$$
where the final inequality follows from the bound $\done  \leq 1$; {\color{blue}see Lemma~\ref{lem:condition_number_larger}.}
Thus, by Lemma~\ref{lem:lbnormal}, we have $\dist(0, \partial_{\sigma} f(x)) \geq \dtwo$ (recall $\deltaND \leq \deltaGKL$). Consequently, 
$$ \sigma \leq \frac{\dtwo}{ L \sqrt{128}} \leq \frac{\dist(0, \partial_{\sigma} f(x))}{ L \sqrt{128}}.$$
Therefore, $\sigma$ and $T$ satisfy the assumptions of Lemma~\ref{lemma:normalgold}. 
Hence, the desired descent condition is guaranteed with probability at least $1-p$.
\end{proof}

We now turn to the analysis of the $\tgoldstein$ step.

\subsection{Analysis of $\tgoldstein$}\label{sec:analysistdescent}

{\color{blue} In this section, we analyze $\tgoldstein$, proving two main results. First, we prove Proposition~\ref{lem:goldsteinterminatedescent},
which shows that $\tgoldstein$ terminates rapidly. Second, in Lemma~\ref{lem:smallsubgeneral} we show that the trust region constraint in Line~\ref{line:trustregion} of Algorithm~\ref{alg:linesearch} ($\linesearch$) prevents long steps. Thus, once the method enters a sufficiently small neighborhood of $\bar x$, it cannot leave.}

We begin with descent  Proposition~\ref{lem:goldsteinterminatedescent}, which relies on four technical lemmata that analyze the structure of Goldstein subgradients when $\sigma$ is sufficiently small and $x$ is sufficiently near $\bar x$:
Lemma~\ref{lem: decentwithsmallnormal} states that elements of Goldstein subdifferential with small normal components are descent directions. 
Lemmas~\ref{claim: approximatereflectionintermediate_lemma} and~\ref{lem: approximatereflection} show that normalized subgradient steps approximately reflect points across the active manifold.
Lemma~\ref{lem: normalshrinklinearly} uses the approximate reflection property to show that $\tgoldstein$ geometrically decreases the normal component of the input subgradient, ensuring that we rapidly find a descent direction. We now turn to the Lemmata.

\subsubsection{Descent with small normal part}
The first lemma shows that Goldstein subgradients with small normal components are descent directions.

\begin{lem}[Descent with small normal part]\label{lem: decentwithsmallnormal}
Define  \textcolor{blue}{
$$
\cfour := \frac{\cone^2}{8L}. 
$$
}
Then for all {\color{blue}$x \in B_{\deltaA}(\bar x)$}, 
$\sigma > 0$, and $g \in \partial_\sigma f(x)\backslash\{0\}$ satisfying
\begin{enumerate}
\item $
\max\{\dist(x,\cM) , \sigma \} \le \frac{\ctwo}{4}\|P_{\cM}(x) - \bar x\|$;
\item $ \|P_{\normal(P_{\cM}(x))}(g)\| \leq \cfour \|P_{\cM}(x) - \bar x\|^2,$
\end{enumerate}
we have 
$$
f\left(x - \sigma \frac{g}{\|g\|}\right) \leq f(x) - \frac{\sigma\|g\|}{8}.
$$
\end{lem}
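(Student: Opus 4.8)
The plan is to reduce the descent inequality to a single inner-product bound via the Lebourg mean value theorem, and then to exploit the tangent/normal decomposition supplied by Proposition~\ref{prop:proof:consequencesofA}. Write $y := P_{\cM}(x)$, abbreviate $T := \tangent(y)$ and $N := \normal(y)$, and set $u := g/\|g\|$ and $x_+ := x - \sigma u$. Since the segment $[x, x_+]$ has length $\sigma$, it lies inside $\overline B_{\sigma}(x)$, so Lebourg's theorem \cite[Theorem 2.4]{clarke2008nonsmooth} produces a point $z \in [x,x_+]$ and a subgradient $\bar v \in \partial f(z) \subseteq \partial_\sigma f(x)$ with
$$
f(x_+) - f(x) = \dotp{\bar v, x_+ - x} = -\frac{\sigma}{\|g\|}\dotp{\bar v, g}.
$$
Thus the claimed descent $f(x_+) \le f(x) - \frac{\sigma\|g\|}{8}$ is equivalent to $\dotp{\bar v, g} \ge \tfrac18\|g\|^2$, which is what I would establish. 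A preliminary observation is that hypothesis~1 gives $\max\{\dist(x,\cM),\sigma\} \le \tfrac{\ctwo}{4}\|y - \bar x\|$ together with $\|y-\bar x\| \le 2\|x-\bar x\| \le 2\deltaA$, from which $\sigma \le \deltaA$; hence both $g$ and $\bar v$ are genuine $\sigma$-Goldstein subgradients to which all bounds of Proposition~\ref{prop:proof:consequencesofA} apply.

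The heart of the argument is the tangent part. Writing $G := \|\nabla_{\cM} f(y)\|$, the estimate \eqref{eq:uplowersmoothnessprop} gives $G \ge \tfrac{\scc}{2}\|y-\bar x\|$, while \eqref{eq:consequencestrongaeasy} applied to both $g$ and $\bar v$ bounds $\|P_T g - \nabla_{\cM} f(y)\|$ and $\|P_T\bar v - \nabla_{\cM} f(y)\|$ by $\ver(\dist(x,\cM) + \sigma)$. The choice $\ctwo \le \scc/(8\ver)$ forces this error to be at most $\tfrac{\scc}{16}\|y-\bar x\| \le \tfrac18 G$, so both tangent components lie within $G/8$ of $\nabla_{\cM} f(y)$. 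Expanding $\dotp{P_T\bar v, P_T g}$ around $\nabla_{\cM} f(y)$ then yields a bound of the form $\bigl(1 - \tfrac14 - \tfrac1{64}\bigr)G^2 = \tfrac{47}{64}G^2$. The crucial feature is that this bound is quadratic in $G$, so the potentially large constant $\beta$ never enters the eventual ratio $\dotp{\bar v,g}/\|g\|^2$.

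For the normal part, hypothesis~2 bounds $\|P_N g\| \le \cfour\|y-\bar x\|^2$ and Item~\ref{prop:proof:consequencesofA:item:Lipschitzbound} gives $\|P_N \bar v\| \le L$; together with $\cfour = \cone^2/(8L)$ and $G \ge 2\cone\|y-\bar x\|$ this makes $\dotp{P_N\bar v, P_N g} \ge -L\cfour\|y-\bar x\|^2 \ge -\tfrac{1}{32}G^2$, a lower-order correction, so summing the two parts gives $\dotp{\bar v,g} \ge \tfrac{45}{64}G^2$. On the other side, $\|P_T g\| \le \tfrac98 G$, and — using $G = \|\nabla_{\cM} f(y)\| \le L$, which follows from strong $(a)$-regularity applied with coincident points ($P_T v = \nabla_{\cM}f(y)$ for $v\in\partial f(y)$) together with $\|v\|\le L$ — the normal contribution satisfies $\|P_N g\|^2 \le (G^2/(32L))^2 \le (G/32)^2$, whence $\|g\|^2 \le \tfrac{81}{64}G^2 + \tfrac{1}{1024}G^2$. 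Since $\tfrac{45}{64}G^2 \ge \tfrac18\bigl(\tfrac{81}{64}+\tfrac1{1024}\bigr)G^2$ with ample slack, the required inequality $\dotp{\bar v, g}\ge \tfrac18\|g\|^2$ follows, which would complete the proof.

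The main obstacle — and the reason the thresholds $\ctwo$ and $\cfour$ are chosen as they are — is that a naive bound on $\|g\|$ through $\|\nabla_{\cM} f(y)\| \le \beta\|y-\bar x\|$ would introduce $\beta^2$ on the right and destroy any chance of a $\beta$-independent factor $1/8$. The resolution is to phrase both $\dotp{\bar v, g}$ and $\|g\|^2$ in terms of the single scalar $G$ so that $\beta$ cancels, which requires (i) that the strong $(a)$ error be a genuinely small fraction of $G$ (forcing $\ctwo \lesssim \scc/\ver$) and (ii) that the normal component, of order $\|y-\bar x\|^2$, be negligible against the tangent component of order $\|y-\bar x\|$ (forcing the quadratic threshold $\cfour \sim \cone^2/L$). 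Care must also be taken to confirm $\sigma \le \deltaA$ so that the Goldstein-subgradient estimates of Proposition~\ref{prop:proof:consequencesofA} are in force for both $g$ and $\bar v$.
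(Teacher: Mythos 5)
Your proposal is correct and takes essentially the same route as the paper's proof: Lebourg's mean value theorem reduces descent to an inner-product bound, which is then established by splitting into tangent and normal components at $P_{\cM}(x)$, using the strong-$(a)$ consequences of Proposition~\ref{prop:proof:consequencesofA} for the tangent part and hypothesis 2 with the Lipschitz bound for the normal part. The only difference is bookkeeping: you compare both tangent components to $\nabla_{\cM}f(P_{\cM}(x))$ via \eqref{eq:consequencestrongaeasy} and \eqref{eq:uplowersmoothnessprop}, whereas the paper compares the mean-value subgradient's tangent component directly to $g$ via \eqref{eq:consequencestronga2} and Lemma~\ref{lem:lowerboundgoldstein} --- the same ingredients, arranged slightly differently.
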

\begin{proof}
We begin with preliminary notation and bounds. 
We fix {\color{blue}$x \in B_{\deltaA}(\bar x)$} and subgradient $g \in \partial_{\sigma} f(x) \backslash \{0\}$. We define $y := P_{\cM}(x)$, $T := \tangent(y)$, and  $N := \normal(y)$. 
\textcolor{blue}{We observe that
$$
\sigma \leq \frac{\ctwo}{4}\|y - \bar x\| \leq \frac{\ctwo}{2}\|x- \bar x\| \leq \ctwo\deltaA \leq \deltaA,
$$
where the final inequality follows since $\ctwo \leq 1$; see Lemma~\ref{lem:lowerboundgoldstein}.
}
We now turn to the proof.

The starting point of the proof is Lebourg's mean value Theorem~\cite[Theorem 2.4]{clarke2008nonsmooth}, which ensures that there exists $v \in \partial_{\sigma} f(x)$ such that
\begin{align*}
	f\left(x- \sigma\frac{g}{\|g\|}\right) - f(x)  = \dotp{v, -\sigma\frac{g}{\|g\|}}
	 = -\frac{\sigma}{\|g\|} \dotp{v, P_T(g)}- \frac{\sigma}{\|g\|} \dotp{v, P_N(g)}.
\end{align*}
In what follows, we will show that the first term satisfies $\dotp{v, P_T(g)} \geq \frac{3}{8}\|g\|^2$, while the second term satisfies $|\dotp{v, P_N(g)}|\leq \frac{1}{8}\|g\|^2$, yielding the result.

Indeed, beginning with $|\dotp{v, P_N(g)}|$, {\color{blue} we note that 
\begin{align}\label{eq:small_normal_for_descent}
\|P_N(g)\| \leq \cfour\|P_{\cM}(x) - \bar x\|^2 \leq \frac{\cfour}{\cone^2}\|g\|^2 =\frac{1}{8L}\|g\|^2,
\end{align}
where the second inequality follow from Lemma~\ref{lem:lowerboundgoldstein}.
Consequently, we have the bound
$|\dotp{v, P_N(g)}| \le L\|P_N(g)\| \le  \frac{1}{8}\|g\|^2,
$
where we first inequality relies on the estimate $\|v\| \leq L$; see Item~\ref{prop:proof:consequencesofA:item:Lipschitzbound} of  Proposition~\ref{prop:proof:consequencesofA}.}

Next, we prove a lower bound on $\dotp{v, P_T(g)}$.  
Since $v \in \partial_{\sigma} f(x)$, {\color{blue} 
\begin{align*}
\|P_{T}(v - g)\| \leq 2\ver(\dist(x, \cM) + \sigma ) \leq \ctwo\ver\|P_{\cM}(x) - \bar x\| \leq \frac{\ctwo\ver}{\cone}\|g\| \leq \frac{1}{2}\|g\|.
\end{align*}
where the first inequality follows from~\eqref{eq:consequencestronga2}; the second by assumption; the third follows from Lemma~\ref{lem:lowerboundgoldstein}; and the fourth follows from the bound $\ctwo \leq \frac{\cone}{2\ver}$.
Therefore, 
\begin{align*}
 \|P_T(v) - g\| &\leq \|P_{T}(v - g)\| + \|P_N(g)\| \leq \frac{1}{2}\|g\| + \frac{1}{8L}\|g\|^2 \leq \frac{5}{8}\|g\|, 
\end{align*}
where the second inequality follows from~\eqref{eq:small_normal_for_descent} and the third follows from the bound $\|g\| \leq L$.
Consequently, we have the bound
\begin{align*}
\dotp{v, P_T(g)} &= \dotp{P_T(v), g} \geq \|g\|^2 - \|P_T(v) - g\|\|g\| \geq \frac{3}{8}\|g\|^2.
\end{align*}
 This completes the proof.}
\end{proof}

{\color{blue}Note that the proof implies a slightly stronger bound than claimed, namely that we have $f\left(x - \sigma g/\|g\|\right) \leq f(x) - \sigma\|g\|/4$. For the sake of maintaining symmetry with Proposition~\ref{lemma:normal}, however, we use the constant $1/8$ throughout.}

\subsubsection{The approximate reflection property}\label{sec:approximate_reflection}

The next two lemmata prove the approximate reflection property that was described in the introduction.
The lemmas roughly show that normalized subgradient steps approximately ``flip the sign" of the normal component of the subgradient nearby the manifold; see Section~\ref{sec:simpleexample} for more intuition. 
{\color{blue}The first lemma proves the approximate reflection property up to a tolerance depending on the distance to the manifold and $\sigma$. This lemma will be used again in the proofs of Lemma~\ref{lem: approximatereflection} and Lemma~\ref{lem: approximatereflection2}.}

{\color{blue}
\begin{lem}[Approximate reflection inequality, general case] \label{claim: approximatereflectionintermediate_lemma}
	For all $x \in B_{\deltaA/2}(\bar x), \sigma \in (0, \deltaA/2], g \in \partial_{\sigma}f(x) \backslash \{0\}$ and $\hat g \in \partial f\left(x - \sigma \frac{g}{\norm{g}}\right)$, we have
\begin{align}
&\dotp{P_{\normal(P_{\cM}(x))}(\hat g), g}\notag \\
&\hspace{20pt}\leq -\mu\|P_{\normal(P_{\cM}(x))} g\|  + \frac{(\mu + L)\|g\| \dist(x, \cM)}{\sigma}+ \frac{(\mu+L)\|g\|\shape (\dist^2(x, \cM) +\sigma^2)}{\sigma}.\label{claim: approximatereflectionintermediate}
\end{align}
\end{lem}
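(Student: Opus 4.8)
The plan is to exploit the aiming inequality~\eqref{eq:sharpnessandaiming} at the shifted point $x_+ := x - \sigma g/\|g\|$, since this is precisely where the negative correlation (the ``sign flip'') that produces the main term $-\mu\|P_{\normal(P_{\cM}(x))}g\|$ comes from. Throughout I abbreviate $y := P_{\cM}(x)$, $T := \tangent(y)$, and $N := \normal(y)$. First I would record the ball memberships needed to invoke the earlier machinery: since $x \in B_{\deltaA/2}(\bar x)$ and $\sigma \le \deltaA/2$, the step obeys $\|x_+ - x\| = \sigma$, so $x_+ \in B_{\deltaA}(\bar x)$. Hence both the aiming inequality~\eqref{eq:sharpnessandaiming} and the Lipschitz bound $\|\hat g\| \le L$ (Item~\ref{prop:proof:consequencesofA:item:Lipschitzbound} of Proposition~\ref{prop:proof:consequencesofA}) are available at $x_+$.

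The key geometric step is to express the normal offset $w := x_+ - P_{\cM}(x_+)$ in terms of $P_N(g)$. Applying the projection smoothness (Item~\ref{prop:proof:consequencesofA:item:smoothproj}) with the identity $I - P_T = P_N$ and the substitution $x_+ - x = -\tfrac{\sigma}{\|g\|}g$, I would derive
\[
w = (x - y) - \frac{\sigma}{\|g\|}P_N(g) - e, \qquad \|e\| \le \shape\bigl(\dist^2(x, \cM) + \sigma^2\bigr).
\]
Taking the inner product with $\hat g$ and rearranging gives the exact identity
\[
\frac{\sigma}{\|g\|}\dotp{\hat g, P_N(g)} = \dotp{\hat g, x - y} - \dotp{\hat g, w} - \dotp{\hat g, e}.
\]

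Now I would bound the three terms separately. By Cauchy--Schwarz and $\|x - y\| = \dist(x, \cM)$, the first term is at most $L\dist(x, \cM)$, and the third is at most $L\shape(\dist^2(x, \cM) + \sigma^2)$. The middle term is the heart of the argument: the aiming inequality at $x_+$ gives $\dotp{\hat g, w} \ge \mu\dist(x_+, \cM)$, hence $-\dotp{\hat g, w} \le -\mu\dist(x_+, \cM)$. To convert $\dist(x_+, \cM) = \|w\|$ back into $\|P_N(g)\|$, I apply the reverse triangle inequality to the displayed decomposition of $w$, obtaining
\[
\dist(x_+, \cM) \ge \frac{\sigma}{\|g\|}\|P_N(g)\| - \dist(x, \cM) - \shape\bigl(\dist^2(x, \cM) + \sigma^2\bigr).
\]
Substituting this lower bound, collecting the $\dist(x, \cM)$ and $\shape(\cdot)$ contributions into the common coefficient $\mu + L$, multiplying through by $\|g\|/\sigma$, and finally using the self-adjointness of $P_N$ (so that $\dotp{\hat g, P_N(g)} = \dotp{P_N(\hat g), g}$) yields exactly~\eqref{claim: approximatereflectionintermediate}.

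I expect the only genuine obstacle to be bookkeeping: the projection error $e$ enters \emph{twice} — once inside the inner-product identity and once inside the lower bound on $\dist(x_+, \cM)$ — and one must track both occurrences so that the error contributions merge cleanly into the single factor $(\mu + L)\shape(\dist^2(x, \cM) + \sigma^2)$ rather than leaving stray remainder terms. Likewise, the two appearances of $\dist(x, \cM)$ (from $\dotp{\hat g, x-y}$ and from the $\dist(x_+,\cM)$ estimate) must combine into the coefficient $\mu + L$. Beyond this careful accounting, the proof is a routine chain of Cauchy--Schwarz, the triangle inequality, and the consequences of Assumption~\ref{assumption:mainfinal} already packaged in Proposition~\ref{prop:proof:consequencesofA}.
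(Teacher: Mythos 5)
Your proposal is correct and follows essentially the same route as the paper's proof: both apply the aiming inequality~\eqref{eq:sharpnessandaiming} at the stepped point $x - \sigma g/\|g\|$, use the projection smoothness bound of Item~\ref{prop:proof:consequencesofA:item:smoothproj} to write $x_+ - P_{\cM}(x_+)$ as $(x - P_{\cM}(x)) - \tfrac{\sigma}{\|g\|}P_N(g)$ up to an error of size $\shape(\dist^2(x,\cM)+\sigma^2)$, recover $\|P_N(g)\|$ via the reverse triangle inequality, and multiply through by $\|g\|/\sigma$. The only difference is bookkeeping — you carry the projection error as an explicit vector $e$ and bound three inner products separately, while the paper groups two of them into a single term bounded by $LS$ — and your accounting of how the $\dist(x,\cM)$ and $\shape$ contributions merge into the coefficient $\mu+L$ is exactly right.
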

\begin{proof}
We begin with preliminary notation and bounds. 
We fix {\color{blue}$x \in B_{\deltaA/2}(\bar x)$} and subgradient $g \in \partial_{\sigma} f(x) \backslash \{0\}$. We define $y := P_{\cM}(x)$, $T := \tangent(y)$, and $N := \normal(y)$.
Finally, define $u := \frac{g}{\|g\|}$. 
Note that since $x \in B_{\deltaA/2}(\bar x)$ and $\sigma \leq \deltaA/2$, we have $ x - \sigma u \in B_{\deltaA}(\bar x)$. 

Therefore, by the aiming inequality~\eqref{eq:sharpnessandaiming}, we have 
	$$
	\underbrace{\dotp{\hat g, x-\sigma u - P_\cM\paren{x- \sigma u}}}_{=:A} \ge \underbrace{\sharpc\norm{ x-\sigma u - P_\cM\paren{x- \sigma u}}}_{=: B}.
	$$
We aim to simplify this inequality with~\eqref{eq:smoothnessprojectorproof}. To that end, first note that
\begin{align}\label{eqn: x+sigmauandx}
\|(x-\sigma u - P_\cM(x-\sigma u)) - (x- P_\cM(x)- \sigma P_N(u))\| &= \|P_\cM(x-\sigma u) - P_\cM(x) +\sigma P_T(u)\|\notag\\
&\le   \shape(\dist^2(x, \cM) +\sigma^2) .
\end{align}
Consequently, we have 
\begin{align*}
A \geq B \geq \mu\|x-P_\cM(x)- \sigma P_N(u)\| - \mu \shape(\dist^2(x, \cM) +\sigma^2)  \geq \sigma\mu  \|P_N(u)\| -\mu S, 
\end{align*}
where $S := \dist(x, \cM) + \shape(\dist^2(x, \cM) +\sigma^2)$.
In addition, by~\eqref{eqn: x+sigmauandx} we have $$
\dotp{\hat g, (x-\sigma u - P_\cM\paren{x- \sigma u}) + \sigma P_Nu} \leq LS.
$$
Therefore, we have
\begin{align}
\dotp{\hat g, \sigma P_N(u)} &= -A + \dotp{\hat g, (x-\sigma u - P_\cM\paren{x- \sigma u}) + \sigma P_Nu} 
\leq -\sigma\mu  \|P_N (u)\| +(\mu + L)S.\label{eq:usedagain}
\end{align}
Inequality~\eqref{claim: approximatereflectionintermediate} then follows by multiplying both sides of inequality~\eqref{eq:usedagain} by $\|g\|/\sigma$.
\end{proof}}

{\color{blue}The second lemma is an application of Lemma~\ref{claim: approximatereflectionintermediate_lemma} nearby the manifold.}

\begin{lem}[Approximate reflection inequality near the manifold]\label{lem: approximatereflection}
 Define 
$$\csix := \min\left\{\frac{\beta }{\ver(1+\deltaA)}, \frac{{\color{blue}\min\{\sharpc/\deltaA, \cfour\cfive/\beta\}}}{4(1 +  (1+\deltaA)\shape)  (\mu + L)}, {\color{blue}\frac{1}{2}}\right\}.$$
Then for all {\color{blue}$x \in B_{\deltaA/2}(\bar x)$},  $\sigma > 0$,  and $g \in \partial_\sigma f(x)\backslash\{0\}$ satisfying 
$$
\max\left\{\frac{\dist(x,\cM)}{\sigma} , \sigma \right\} \le \csix\|P_{\cM}(x) - \bar x\|, 
$$
we have 
$$
\dotp{P_{\normal(P_{\cM}(x))}(\hat g), g} \leq - \cfive\|P_{\normal(P_{\cM}(x))}g\| + \frac{\cfour\cfive}{2}\|P_{\cM}(x) - \bar x\|^2\qquad\text{for all $\hat g \in \partial f\left(x - \sigma \frac{g}{\|g\|}\right)$.}
$$
\end{lem}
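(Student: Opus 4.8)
The plan is to derive the stated inequality directly from the general approximate-reflection bound, Lemma~\ref{claim: approximatereflectionintermediate_lemma}, and then collapse its two error terms using the stronger hypothesis $\max\{\dist(x,\cM)/\sigma,\sigma\}\le\csix\|P_\cM(x)-\bar x\|$ together with the three defining bounds on $\csix$. Throughout I write $y:=P_\cM(x)$, $N:=\normal(y)$, $T:=\tangent(y)$, and $r:=\|y-\bar x\|$, and I recall $\mu=\sharpc$ and $\cfive=\dtwo=\mu/2$. First I would verify the hypotheses of Lemma~\ref{claim: approximatereflectionintermediate_lemma}: since $r\le 2\|x-\bar x\|\le\deltaA$ and $\csix\le 1/2$, the assumption gives $\sigma\le\csix r\le\csix\deltaA\le\deltaA/2$, so $x\in B_{\deltaA/2}(\bar x)$ and $\sigma\in(0,\deltaA/2]$ as required. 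Applying the lemma yields
$$
\dotp{P_N(\hat g),g}\le -\mu\|P_N g\|+\frac{(\mu+L)\|g\|}{\sigma}S,\qquad S:=\dist(x,\cM)+\shape\big(\dist^2(x,\cM)+\sigma^2\big).
$$

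Second, I would bound the error factor $S/\sigma$. Splitting it into its three pieces and using $\dist(x,\cM)/\sigma\le\csix r$, $\sigma\le\csix r$, and the crude estimate $\dist(x,\cM)\le\|x-\bar x\|\le\deltaA$, gives
$$
\frac{S}{\sigma}=\frac{\dist(x,\cM)}{\sigma}+\shape\,\dist(x,\cM)\cdot\frac{\dist(x,\cM)}{\sigma}+\shape\sigma\le\csix r\big(1+(1+\deltaA)\shape\big).
$$
Plugging this in and invoking the middle bound defining $\csix$, namely $(\mu+L)(1+(1+\deltaA)\shape)\csix\le\tfrac14\min\{\mu/\deltaA,\cfour\cfive/\beta\}$, collapses the error to
$$
\frac{(\mu+L)\|g\|}{\sigma}S\le\tfrac14\min\{\mu/\deltaA,\cfour\cfive/\beta\}\,\|g\|\,r.
$$

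Finally, I would split $\|g\|\le\|P_N g\|+\|P_T g\|$ and treat the two contributions separately. For the normal part, $r\le\deltaA$ gives $\tfrac14(\mu/\deltaA)\|P_N g\|r\le\tfrac{\mu}{4}\|P_N g\|$, which I absorb into the leading $-\mu\|P_N g\|$, leaving coefficient $-\tfrac34\mu\le-\cfive$. For the tangent part I would bound $\|P_T g\|$ via~\eqref{eq:consequencestronga}: since $\dist(x,\cM)\le\sigma$ (because $\csix\deltaA\le1/4$) and $\sigma\le\csix r$, the bound $\csix\le\beta/(\ver(1+\deltaA))$ yields $\ver(\dist(x,\cM)+\sigma)\le\beta r$, hence $\|P_T g\|\le 2\beta r$; this turns the tangent contribution into $\tfrac14(\cfour\cfive/\beta)\cdot 2\beta r\cdot r=\tfrac{\cfour\cfive}{2}r^2$. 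Combining the two pieces produces exactly the claimed inequality.

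The main obstacle is the constant bookkeeping in this last step: the three-way definition of $\csix$ is engineered precisely so that simultaneously (i) the normal contribution never exceeds the $\mu/2$ budget freed by relaxing $-\mu\|P_N g\|$ to $-\cfive\|P_N g\|$, and (ii) the tangent factor satisfies $\|P_T g\|\le 2\beta r$, keeping the residual error at most $\tfrac{\cfour\cfive}{2}r^2$. Verifying that all these estimates hold at once with the stated $\csix$—in particular securing $\ver(\dist(x,\cM)+\sigma)\le\beta r$—is the delicate part; the reflection inequality itself is a direct quotation of Lemma~\ref{claim: approximatereflectionintermediate_lemma}, so no new geometric input is needed.
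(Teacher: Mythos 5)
Your strategy coincides with the paper's own proof: invoke Lemma~\ref{claim: approximatereflectionintermediate_lemma}, bound $S/\sigma\le\csix\bigl(1+(1+\deltaA)\shape\bigr)\|P_{\cM}(x)-\bar x\|$, use the middle term in the definition of $\csix$ to collapse the error to $\tfrac14\min\{\sharpc/\deltaA,\cfour\cfive/\beta\}\,\|g\|\,r$, split $\|g\|\le\|P_N g\|+\|P_T g\|$, absorb the normal contribution into $-\mu\|P_N g\|$ using $r\le\deltaA$, and control the tangent contribution via $\|P_T g\|\le 2\beta r$. All of this matches the paper, and those steps of your argument are correct.

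However, your justification of $\|P_T g\|\le 2\beta r$ has a gap. You claim that $\dist(x,\cM)\le\sigma$, $\sigma\le\csix r$, and $\csix\le\beta/(\ver(1+\deltaA))$ together yield $\ver(\dist(x,\cM)+\sigma)\le\beta r$. But the crude bound $\dist(x,\cM)\le\sigma$ only gives
$$
\ver\bigl(\dist(x,\cM)+\sigma\bigr)\le 2\ver\sigma\le 2\ver\csix r\le \frac{2\beta}{1+\deltaA}\,r,
$$
and $\tfrac{2}{1+\deltaA}\le 1$ requires $\deltaA\ge 1$, which is not guaranteed (the paper even treats $\deltaA\le 1$ as the generic case; see Lemma~\ref{lem:contractionlowerbound}). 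The other two terms in the min defining $\csix$ do not rescue this: neither $\csix\le\tfrac12$ nor the middle term implies $2\ver\csix\le\beta$ in general, and even your sharper observation $\dist(x,\cM)\le\sigma/4$ only relaxes the requirement to $\deltaA\ge 1/4$. The fix is local and uses facts you already derived: keep the asymmetric bound $\dist(x,\cM)\le\csix\sigma r\le\csix\deltaA\, r$ (valid since $\sigma\le\csix r\le\deltaA$), so that
$$
\ver\bigl(\dist(x,\cM)+\sigma\bigr)\le\ver\csix(1+\deltaA)\,r\le\beta r
$$
follows exactly from the first term in the definition of $\csix$; then~\eqref{eq:consequencestronga} gives $\|P_T g\|\le\bigl(\ver\csix(1+\deltaA)+\beta\bigr)r\le 2\beta r$, which is precisely the paper's computation, and the rest of your assembly goes through unchanged.
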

\begin{proof}
We begin with preliminary notation and bounds. 
We fix {\color{blue}$x \in B_{\deltaA/2}(\bar x)$} and subgradient $g \in \partial_{\sigma} f(x) \backslash \{0\}$. We define $y := P_{\cM}(x)$, $T := \tangent(y)$, and $N := \normal(y)$.
\textcolor{blue}{We observe that 
$$
\sigma \leq \csix\|y - \bar x\| \leq 2\csix\|x - \bar x\| \leq  \csix \delta_A\leq \deltaA/2.
$$}
{\color{blue} Finally, we have
\begin{align}
S := \dist(x, \cM) + \shape(\dist^2(x, \cM) +\sigma^2) \leq \sigma \csix(1+ \shape(1+\deltaA))\|y - \bar x\|.\label{eq:sbound}
\end{align}
where the inequality follows from the bound $\dist(x, \cM) \leq \|x - \bar x\| \leq \deltaA$.

We now apply inequality~\eqref{claim: approximatereflectionintermediate}:
\begin{align*}
\dotp{P_N\hat g, g} &\leq -\mu\|P_{N} g\|  + \frac{(\mu + L)\|g\|S}{\sigma} \\
&\leq -\mu\|P_{N} g\|  +(1 +  (1+\deltaA)\shape) (\mu + L)\csix\|g\|\|y - \bar x\|\\
&\leq -\mu\|P_{N} g\|  +(1 +  (1+\deltaA)\shape) (\mu + L)\csix(\|P_T(g)\| +\|P_N(g)\|)\|y - \bar x\|\\
&\leq -\frac{\mu}{2}\|P_{N} g\|+\frac{\cfour\cfive}{4\beta}\|P_T(g)\|\|y - \bar x\|,
 \end{align*}
where the second inequality follows from~\eqref{eq:sbound}; the third inequality follows from triangle inequality; and the fourth inequality follows from the bound 
$$
(1 +  (1+\deltaA)\shape) (\mu + L)\csix\|y - \bar x\| \leq \frac{\mu/\deltaA}{4} \cdot  (2\|x - \bar x\|) \leq \frac{\mu/\deltaA}{4}\deltaA \leq  \mu/2.
$$}
The proof will be complete if we can show that 
$$
\|P_T(g)\| \leq 2\beta\|y - \bar x\|.
$$
To that end, we have
\begin{align*}
\|P_T(g)\| \leq \ver(\dist(x, \cM) + \sigma)+  \beta \|y - \bar x\| &\leq  (\csix\ver(1+\deltaA)+ \beta)\|y - \bar x\| \leq  2\beta\|y - \bar x\|,
\end{align*}
where the first inequality follows from~\eqref{eq:consequencestronga}; the second inequality   follows from the lemma assumptions and the bound $
\dist(x, \cM) \leq \csix\sigma\|y - \bar x\| \leq \csix\deltaA\|y - \bar x\|
$; and the third inequality follows from the bounds on $\csix$. This completes the proof. 
\end{proof}

\subsubsection{The normal component shrinks geometrically}

The following lemma shows that every step of $\tgoldstein$ geometrically shrinks the normal component of the subgradient, up to a tolerance of $O(\|P_{\cM}(x) - \bar x\|^2)$.

\begin{lem}[Normal component shrinks geometrically]\label{lem: normalshrinklinearly}
 Define 
$$
\cseven:= \min \left\{\frac{\beta}{2\ver}, \frac{\cfour\cfive}{32\ver\beta},\csix, \frac{\ctwo}{4}\right\}. 
$$
\textcolor{blue}{Then for all $x \in B_{\deltaA/2}(\bar x)$}, $\sigma >0$, $g \in \partial_\sigma f(x)\backslash\{0\}$,
 and $\hat g \in \partial f(x-\sigma\frac{g}{\|g\|})\backslash\{0\}$ satisfying 
\begin{enumerate}
\item $ \|P_{\normal(P_{\cM}(x))}g\| \ge \cfour\|P_{\cM}(x) - \bar x\|^2$;
\item $
	\max\left\{\frac{\dist(x,\cM)}{\sigma} , \sigma \right\} \le \cseven\|P_{\cM}(x) - \bar x\|,
	$
\end{enumerate}	
	the vector $g'$ = $\argmin_{h \in [g,\hat g]}\|h\|$ satisfies:
	$$
    \|P_{\normal(P_{\cM}(x))}(g')\|^2 \le \paren{1- \frac{3\cfive^2}{64\lipf^2}} \|P_{\normal(P_{\cM}(x))}g\|^2.
	$$
\end{lem}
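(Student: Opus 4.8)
The plan is to reduce everything to a two–dimensional computation in the orthogonal splitting at $y := P_{\cM}(x)$. Write $P_T := P_{\tangent(y)}$, $P_N := P_{\normal(y)}$, and set $a := P_N g$, $b := P_N \hat g$, $s := P_T g$, $t := P_T \hat g$. Since $\cseven \le \csix$, the hypotheses of Lemma~\ref{lem: approximatereflection} hold, giving $\dotp{P_N \hat g, g} \le -\cfive\|P_N g\| + \tfrac{\cfour\cfive}{2}\|y - \bar x\|^2$. Combining this with hypothesis~1 (which yields $\cfour\|y - \bar x\|^2 \le \|a\|$) and the identity $\dotp{P_N \hat g, g} = \dotp{b, a}$, I obtain the clean negative correlation $\dotp{a, b} \le -\tfrac{\cfive}{2}\|a\|$. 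Two consequences will be used repeatedly: first $\|b\| \ge \cfive/2$, and second, by anti-correlation, $\|b - a\|^2 = \|a\|^2 - 2\dotp{a,b} + \|b\|^2 \ge \|b\|^2 \ge \cfive^2/4$.

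Next I analyze $g' = \argmin_{h \in [g,\hat g]}\|h\|$, writing $g' = g + \lambda^*(\hat g - g)$ with $\lambda^* \in [0,1]$, and I compare it to the minimizer of the \emph{normal part alone}, $\lambda_N := \argmin_{\lambda \in [0,1]}\|a + \lambda(b - a)\|^2$. The unconstrained normal minimizer lies in $[0,1]$ because $\dotp{a, b-a} < 0$, and the standard one–step reduction gives $\|a + \lambda_N(b-a)\|^2 = \|a\|^2 - \dotp{a,b-a}^2/\|b-a\|^2 \le \left(1 - \tfrac{\cfive^2}{16 L^2}\right)\|a\|^2$, using $\dotp{a,b-a}^2 \ge (\tfrac{\cfive}{2}\|a\|)^2$ and $\|b-a\| \le 2L$. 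Since $g'$ minimizes the full norm over the entire segment while $h_{\lambda_N} := g + \lambda_N(\hat g - g)$ is a point of that segment, $\|g'\| \le \|h_{\lambda_N}\|$; subtracting tangent parts via $\|P_N g'\|^2 = \|g'\|^2 - \|P_T g'\|^2$ yields
\[
\|P_N g'\|^2 \le \|a + \lambda_N(b-a)\|^2 + \big(\|P_T h_{\lambda_N}\|^2 - \|P_T g'\|^2\big).
\]
Thus the lemma reduces to showing the tangent discrepancy on the right is at most $\tfrac{\cfive^2}{64 L^2}\|a\|^2$; this combines with the base reduction to give $\left(1 - \tfrac{4\cfive^2}{64L^2} + \tfrac{\cfive^2}{64L^2}\right)\|a\|^2 = \left(1 - \tfrac{3\cfive^2}{64L^2}\right)\|a\|^2$, as claimed.

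Controlling the tangent discrepancy is the crux and the main obstacle. The naive bound $\|P_T g'\| \ge \min_\lambda \|P_T h_\lambda\|$ is hopeless: the tangent spread along the segment is of order $\|y - \bar x\|^2$, whereas $\|a\|^2$ may be as small as $\cfour^2\|y-\bar x\|^4$. Instead I must prove $\lambda^*$ is genuinely close to $\lambda_N$. I first rule out the endpoints: $\lambda^* \ne 0$ because the term $\tfrac{\cfour\cfive}{32\ver\beta}$ in $\cseven$ forces $\dotp{s, t-s} < \tfrac{\cfive}{2}\|a\|$, so $\tfrac{d}{d\lambda}\|h_\lambda\|^2\big|_{0} = 2\dotp{g, \hat g - g} < 0$; and $\lambda^* \ne 1$ since $\dotp{\hat g, g} \le \|t\|\,\|s\|$ is of order $\|y-\bar x\|^2$ while $\|\hat g\|^2 \ge \|b\|^2 \ge \cfive^2/4$. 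With $\lambda^* \in (0,1)$, the first–order condition $\dotp{g', \hat g - g} = 0$ together with $\tfrac{d}{d\lambda}\|a+\lambda(b-a)\|^2\big|_{\lambda_N} = 0$ lets me solve for the difference and bound $|\lambda^* - \lambda_N| \le (\|s\| + \|t-s\|)\,\|t-s\|/\|b-a\|^2$. The essential inputs are the strong-$(a)$ consequences \eqref{eq:consequencestronga}–\eqref{eq:consequencestronga2} and the quadratic-growth bound \eqref{eq:uplowersmoothnessprop}, which give $\|s\| \le \ver(\dist(x,\cM)+\sigma) + \beta\|y-\bar x\|$ and $\|t-s\| \le 2\ver(\dist(x,\cM)+\sigma)$; under hypothesis~2 both $\dist(x,\cM)$ and $\sigma$ are $O(\cseven\|y-\bar x\|)$, so $\|s\| = O(\beta\|y-\bar x\|)$ and $\|t - s\| = O(\ver\cseven\|y-\bar x\|)$. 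Because $\|b-a\|^2 \ge \cfive^2/4$ is bounded below by a \emph{constant}, this yields $|\lambda^* - \lambda_N| = O(\cseven\|y-\bar x\|^2)$, whence $\|P_T h_{\lambda_N}\|^2 - \|P_T g'\|^2 \le 2|\lambda^*-\lambda_N|\,\|t-s\|\,(\|s\| + \|t-s\|) = O(\cseven^2\|y-\bar x\|^4)$. This matches the scale $\|a\|^2 \ge \cfour^2\|y-\bar x\|^4$, and the remaining terms defining $\cseven$ are precisely what force the hidden constant to be small enough that the discrepancy is $\le \tfrac{\cfive^2}{64 L^2}\|a\|^2$. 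The delicate point throughout — and the reason minimizing the full norm does shrink the normal component — is that the tangent components are $O(\|y-\bar x\|)$ rather than $O(L)$, so that the full-norm minimizer behaves like the normal-norm minimizer up to a second-order tangent error.
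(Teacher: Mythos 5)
Your reduction is set up correctly, and the identity $|\lambda^* - \lambda_N| \le (\|s\|+\|t-s\|)\,\|t-s\|/\|b-a\|^2$ does follow from the two first-order conditions; the gap is in the final step, where you claim the tangent discrepancy can be forced below $\tfrac{\cfive^2}{64\lipf^2}\|a\|^2$ by the terms defining $\cseven$. Chase your own estimates with the constants fixed in the statement: hypothesis~2 together with $\cseven \le \tfrac{\beta}{2\ver}$ and $\cseven \le \tfrac{\cfour\cfive}{32\ver\beta}$ gives $(\|s\|+\|t-s\|)\,\|t-s\| \le 16\ver\beta\cseven\|y-\bar x\|^2 \le \tfrac{\cfour\cfive}{2}\|y-\bar x\|^2 \le \tfrac{\cfive}{2}\|a\|$, where the last inequality (hypothesis~1) is the \emph{only} mechanism relating tangent quantities to $\|a\|$, since $\|a\|$ may be as small as $\cfour\|y-\bar x\|^2$. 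Feeding this into your bounds yields $|\lambda^*-\lambda_N| \le \tfrac{2\|a\|}{\cfive}$ and hence a tangent discrepancy of at most $2|\lambda^*-\lambda_N|\,\|t-s\|\,(\|s\|+\|t-s\|) \le 2\|a\|^2$ --- and this chain is essentially tight. Your budget, however, is $\tfrac{\cfive^2}{64\lipf^2}\|a\|^2 \le \tfrac{1}{256}\|a\|^2$ (since $\cfive = \mu/2 \le \lipf/2$), so the estimate overshoots by a factor of order $\lipf^2/\mu^2$, at least $512$, and in fact swamps the entire base reduction $\tfrac{4\cfive^2}{64\lipf^2}\|a\|^2$. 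Closing your argument would require $\cseven \lesssim \tfrac{\cfour\cfive^2}{\lipf\ver\beta}$, i.e.\ an extra factor of $\mu/\lipf$ that the stated $\cseven$ does not contain; the same obstruction appears if you reorganize via the exact identity $\|P_N g'\|^2 = \|a+\lambda_N(b-a)\|^2 + \|b-a\|^2(\lambda^*-\lambda_N)^2$. (A secondary issue: ruling out $\lambda^*=1$ needs the strict bound $\|s\|\|t\| < \cfive^2/4$, i.e.\ $\beta\|y-\bar x\|$ strictly below $\cfive/4$, which you assert by an order-of-magnitude remark but never verify against $\deltaA$.)

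The paper's proof avoids this trap structurally: it never requires $\lambda'$ to be near the normal-only minimizer. It instead proves the sandwich $\lambda_1 \le \lambda' \le \lambda_2$ with $\lambda_1 = -\dotp{a,b-a}/8\lipf^2$ and $\lambda_2 = -3\dotp{a,b-a}/2\|b-a\|^2$, which needs only the constant-factor inequality $|\dotp{s,t-s}| \le \tfrac12|\dotp{a,b-a}|$ --- exactly what $\cseven \le \tfrac{\cfour\cfive}{32\ver\beta}$ plus hypothesis~1 delivers --- and then uses convexity of the quadratic $r(\lambda) = \|a+\lambda(b-a)\|^2$ to conclude $r(\lambda') \le \max\{r(\lambda_1), r(\lambda_2)\}$, evaluating $r$ only at the two endpoints. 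Tolerating a whole interval of admissible $\lambda'$, rather than pinning $\lambda'$ to $\lambda_N$, is the idea your proposal is missing; with it, the contraction goes through with the stated constants.
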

\begin{proof}
We begin with preliminary notation and bounds. 
We fix {\color{blue}$x \in B_{\deltaA/2}(\bar x)$} and subgradient $g \in \partial_{\sigma} f(x) \backslash \{0\}$. We define $y := P_{\cM}(x)$, $T := \tangent(y)$, and $N := \normal(y)$.
{\color{blue}We observe two bounds. 
First, we have 
$$
\sigma \leq \cseven\|y - \bar x\|\leq 2\cseven\|x - \bar x\| \leq \cseven\deltaA \leq 1.
$$
where the final inequality follows since $\cseven \leq \ctwo/4 \leq 1/(8\deltaA)$.
Second, we have 
\begin{align}\label{eq:c7distbound}
\dist(x, \cM) \leq \cseven \sigma\|y - \bar x\| \leq \cseven \|y - \bar x\|, 
\end{align}
since $\sigma \leq 1$.}
We now turn to the proof. 

Consider the optimal weight 
$\lambda' := \argmin_{\lambda \in [0, 1]} \|g +\lambda (\hat g - g)\|$. 
By definition we have $g' = g + \lambda' (\hat g - g)$. 
Moreover, a quick calculation shows that 
$$
\lambda' = \max\left\{\min\left\{-\frac{\dotp{g,\hat g -g}}{\|\hat g - g\|^2},1 \right\}, 0\right\}.
$$
We claim that the following bound holds on $\lambda'$:
{\color{blue}\begin{align}\label{eq:lambdasandwhich}
\underbrace{-\frac{\dotp{P_N(g), \hat g - g}}{8L^2}}_{=:\lambda_1} \leq \lambda' \leq -\underbrace{\frac{3\dotp{P_N(g), \hat g - g}}{2\|P_N(\hat g - g)\|^2}}_{=:\lambda_2}.
\end{align}}
Note that~\eqref{eq:lambdasandwhich} is an immediate consequence of the following bound:
\begin{align}\label{eq:sandwichtry1}
0 \leq -\frac{1}{2}\dotp{P_N(g), \hat g - g} \leq -\dotp{g, \hat g - g} \leq -\frac{3}{2}\dotp{P_N(g), \hat g - g} .
\end{align}
Indeed, if~\eqref{eq:sandwichtry1} holds, then 
$
\lambda' = \min\left\{-\frac{\dotp{g,\hat g -g}}{\|\hat g - g\|^2},1 \right\}.
$
Thus, we obtain the upper bound
\begin{align*}
\lambda' \leq -\frac{\dotp{g,\hat g -g}}{\|\hat g - g\|^2} \leq -\frac{3}{2}\frac{\dotp{P_N(g), \hat g - g}}{\|g - \hat g\|^2} \leq -\frac{3}{2}\frac{\dotp{P_N(g), \hat g - g}}{\|P_N(g - \hat g)\|^2} {\color{blue}= \lambda_2 .}
\end{align*}
Likewise, we obtain the lower bound 
$$
\lambda' = \min\left\{-\frac{\dotp{g,\hat g -g}}{\|\hat g - g\|^2},1 \right\} \geq \min\left\{-\frac{\dotp{g,\hat g -g}}{4L^2},1 \right\} = -\frac{\dotp{g,\hat g -g}}{4L^2} \geq -\frac{\dotp{P_N(g), \hat g - g}}{8L^2} {\color{blue}= \lambda_1,}
$$
where the first inequality follows from the bound $\|\hat g - g\|^2 \leq 2(\|\hat g\|^2 + \|g\|^2) \leq 4L^2$;  and the second equality follows from the bound $|\dotp{g, \hat g - g}| \leq \|g\|\|\hat g - g\|\leq 2L^2$. 
Thus, we now prove~\eqref{eq:sandwichtry1}. 

To that end, note that \eqref{eq:sandwichtry1} is equivalent to the following bound:
\begin{align}\label{eqn: tangentg-hatg}
		\abs{\dotp{P_T(g),\hat g - g}} \le \frac{-\dotp{P_N(g), \hat g - g}}{2}. 
	\end{align}
Therefore, we first bound $\abs{\dotp{P_T(g),\hat g - g}}$:
\begin{align*}
	\abs{\dotp{P_T(g),\hat g - g}} &\le \|P_T(g)\| \|P_T(\hat g - g)\|\\
	&\le 2\ver (\dist(x, \cM) +\sigma)(\ver (\dist(x, \cM)+\sigma) + \lipg\|y - \bar x\|)\\
	&\le 4\ver \cseven(2\ver \cseven+ \lipg )\|y - \bar x\|^2\\	
	&\le \frac{\cfour\cfive}{4} \|y - \bar x\|^2,
\end{align*}
where the second inequality follows from~\eqref{eq:consequencestronga} and~\eqref{eq:consequencestronga2};  the third inequality follows from~\eqref{eq:c7distbound} and the bound $\sigma \leq \cseven \|y - \bar x\|$; and the fourth inequality follows from the definition of $\cseven$. To complete the proof of~\eqref{eqn: tangentg-hatg}, we show that $\frac{\cfour\cfive}{4} \|y - \bar x\|^2 \leq  -\frac{1}{2}\dotp{P_N(g), \hat g - g}$: 
\begin{align}
\frac{\cfour\cfive}{2}\|y - \bar x\|^2 &\leq  \cfive\|P_N (g)\| - \frac{\cfour\cfive}{2}\|y - \bar x\|^2  \notag\\
&\leq  -\dotp{P_N(g), \hat g} \notag \\
&\leq -\dotp{P_N(g), \hat g - g}, \label{eq:lowerbounddpnormalparta}
\end{align}
where the  first inequality follows from the {\color{blue} assumption} $\frac{\cfive}{2}\|P_N(g)\| \geq \frac{\cfour\cfive}{2}\|y - \bar x\|^2$; the second inequality follows from Lemma~\ref{lem: approximatereflection} (recall $\cseven \leq \csix$ and {\color{blue}$x \in B_{\deltaA/2}(\bar x)$}); \textcolor{blue}{and the third inequality follows from $\dotp{P_N(g), g} = \|P_N(g)\|^2 \ge 0$.} 
Thus, the equivalent bounds~\eqref{eqn: tangentg-hatg} and \eqref{eq:sandwichtry1} hold. Consequently, Equation~\eqref{eq:lambdasandwhich} holds.

Now we turn to the contraction argument. 
Consider the function $r \colon \RR \rightarrow \RR$ satisfying 
$$r(\lambda) = \|P_N(g)\|^2 + 2\lambda \dotp{P_N(g), \hat g- g} + \lambda^2 \|P_N(\hat g -g)\|^2 \qquad \text{for all $\lambda \in \RR$.}
$$ 
Observe that
$$
\|P_N(g')\|^2 = \|P_N(g)\|^2 + 2\lambda' \dotp{P_N(g), \hat g- g} + (\lambda')^2 \|P_N(\hat g -g)\|^2= r(\lambda').
$$
Therefore, by convexity of $r$ and~\eqref{eq:lambdasandwhich}, we have 
$$
\|P_N(g')\|^2 = r(\lambda') \leq {\color{blue}\max_{\lambda \in [\lambda_1, \lambda_2]} r(\lambda) \leq \max\left\{r\left(\lambda_1\right), r\left(\lambda_2\right)\right\}}.
$$
To complete the proof, we show each term in the ``max" is bounded by $\left(1 - \frac{3\cfive^2}{64L^2}\right)\|P_N(g)\|^2$.

To show this, we will use the following consequence of~\eqref{eq:lowerbounddpnormalparta}:
\begin{align}
-\dotp{P_N(g), \hat g - g} \geq \cfive\|P_N (g)\| - \frac{\cfour\cfive}{2}\|y - \bar x\|^2 \geq \frac{\cfive}{2}\|P_N(g)\|,
\label{eq:lowerbounddpnormalpartb}
\end{align}
where the final inequality follows \textcolor{blue}{from the assumption} $\frac{\cfour\cfive}{2}\|y - \bar x\|^2 \leq \frac{\cfive}{2}\|P_N(g)\|$.
Indeed, first observe that 
\begin{align*}
{\color{blue}r\left(\lambda_2\right)} &= \|P_{N}(g)\|^2 - \frac{3}{4}\frac{\dotp{P_N(g), \hat g - g}^2}{\|P_N(\hat g - g)\|^2}\\
&\leq \left(1 - \frac{3\cfive^2}{16\|P_N(\hat g - g)\|^2}\right)\|P_N(g)\|^2 \\
&\leq \left(1 - \frac{3\cfive^2}{64L^2}\right)\|P_N(g)\|^2,
\end{align*}
where the first inequality from~\eqref{eq:lowerbounddpnormalpartb} and the second inequality follows from the bound $\|P_N(\hat g - g)\|^2\le \|\hat g - g\|^2 \leq 4L^2$.
Likewise, observe that
\begin{align*}
{\color{blue} r\left(\lambda_1\right)} &= \|P_N(g)\|^2 - \frac{\dotp{P_N(g), \hat g- g}^2}{4L^2} + \frac{\dotp{P_N(g), \hat g- g}^2\|P_N(\hat g - g)\|^2}{64L^4} \\
&\leq \|P_N(g)\|^2 - \frac{\dotp{P_N(g), \hat g- g}^2}{4L^2} + \frac{\dotp{P_N(g), \hat g- g}^2}{16L^2} \\
&\leq \left(1 - \frac{3\cfive^2}{64L^2}\right)\|P_N(g)\|^2, 
\end{align*}
where the first inequality follows from the bound  $\|P_N(\hat g - g)\|^2\le \|\hat g - g\|^2 \leq 4L^2$;  and the second inequality follows from~\eqref{eq:lowerbounddpnormalpartb}. Therefore, the proof is complete. \end{proof} 

\subsubsection{$\tgoldstein$ terminates with descent}

The following proposition is the main result of this section. 
It shows that $\tgoldstein$ must either terminate with descent or $f(x) - f(\bar x)$ is already exponentially small in $T$. 

\begin{proposition}[$\tgoldstein$ loop terminates with descent]\label{lem:goldsteinterminatedescent}
Fix $T \in \NN$. Then for all $x \in B_{\deltaA/2}(\bar x)$, 
 \textcolor{blue}{$v \in \partial_\sigma f(x)$}, and $\sigma >0$  satisfying 
$$
\textcolor{blue}{\max\left\{\frac{\dist(x,\cM)}{\sigma} , \sigma \right\} \le   \cseven\|P_\cM(x) - \bar x\| },
$$
at least one of the following holds: 
\begin{enumerate}
\item \label{lem:goldsteinterminatedescent:smallval} we have $$
f(x) - f(\bar x) \le \frac{(\cseven^2\lipf + \lipg)\lipf}{\cfour}\left(1-\frac{3\sharpc^2}{256\lipf^2}\right)^{T/2};
$$
\item \label{lem:goldsteinterminatedescent:descent} the vector $g := \tgoldstein(x, v, \sigma,T)$ satisfies  $\|g\| > 0$ and $$f\left(x- \sigma \frac{g}{\|g\|}\right) \le f(x) - \frac{\sigma\dist(0,\partial_{\sigma} f(x))}{8}.$$
\end{enumerate}
\end{proposition}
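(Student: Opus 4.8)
The plan is to run $\tgoldstein(x, v, \sigma, T)$ and argue by how its while-loop terminates. First I would record the facts that hold throughout the run. Writing $y := P_{\cM}(x)$, the hypothesis $\max\{\dist(x,\cM)/\sigma,\sigma\} \le \cseven\|y - \bar x\|$ forces $\sigma \le 1$ (since $\cseven \le \ctwo/4 \le 1/(8\deltaA)$ and $\|y-\bar x\|\le\deltaA$), whence $\dist(x,\cM) \le \cseven\sigma\|y-\bar x\| \le \cseven\|y-\bar x\|$; together with $\cseven \le \ctwo/4$ this gives $\max\{\dist(x,\cM),\sigma\} \le \ctwo\|y-\bar x\|$, so Lemma~\ref{lem:lowerboundgoldstein} applies and yields $\|g\| \ge \|P_{\tangent(y)}(g)\| \ge \cone\|y-\bar x\| > 0$ for every $g \in \partial_{\sigma}f(x)$. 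Since $g_0 = v$ and each $g_{t+1} \in [g_t,\hat g_t]$ with $\hat g_t \in \partial f(x - \sigma g_t/\|g_t\|) \subseteq \partial_{\sigma} f(x)$, all iterates $g_t$ and all $\hat g_t$ lie in $\partial_{\sigma}f(x)$ and are therefore nonzero. Consequently the loop can only exit by achieving descent or by exhausting its budget, never because $\|g_t\| = 0$, and the returned vector is automatically nonzero.

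Next I would set up the per-step dichotomy on the normal component, with $N := \normal(y)$. If $\|P_N(g_t)\| \le \cfour\|y-\bar x\|^2$, then Lemma~\ref{lem: decentwithsmallnormal} applies (its hypothesis $\max\{\dist(x,\cM),\sigma\}\le\tfrac{\ctwo}{4}\|y-\bar x\|$ follows from the running bounds since $\cseven\le\ctwo/4$) and produces descent; by the remark after that lemma the descent is strict, so the loop stops at step $t$. If instead $\|P_N(g_t)\| > \cfour\|y-\bar x\|^2$, then Lemma~\ref{lem: normalshrinklinearly} applies and the next iterate obeys $\|P_N(g_{t+1})\|^2 \le (1 - \tfrac{3\cfive^2}{64L^2})\|P_N(g_t)\|^2$. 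Using $\cfive = \dtwo = \sharpc/2$, the contraction factor is exactly $r := 1 - \tfrac{3\sharpc^2}{256L^2} \in (0,1)$.

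Now split on termination. If the descent test succeeds at some step $t^\ast \le T-1$, the returned $g = g_{t^\ast}$ satisfies $\|g\|>0$ and $f(x - \sigma g/\|g\|) < f(x) - \tfrac{\sigma}{8}\|g\| \le f(x) - \tfrac{\sigma}{8}\dist(0,\partial_{\sigma}f(x))$, which is conclusion~\ref{lem:goldsteinterminatedescent:descent}. Otherwise the loop runs all $T$ iterations without descent; by the contrapositive of the small-normal case this forces $\|P_N(g_t)\| > \cfour\|y-\bar x\|^2$ for every $t = 0,\dots,T-1$, so Lemma~\ref{lem: normalshrinklinearly} may be iterated at each such step. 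Telescoping the contraction and using $\|P_N(g_0)\| \le \|g_0\| \le L$ gives $\cfour^2\|y-\bar x\|^4 < \|P_N(g_{T-1})\|^2 \le r^{T-1}L^2$, hence $\|y-\bar x\|^2 \le \tfrac{L}{\cfour}\,r^{(T-1)/2}$. Finally I would feed this into the function-gap decomposition (Item~\ref{prop:proof:consequencesofA:item:gap}) together with $\dist(x,\cM) \le \cseven^2\|y-\bar x\|^2$, obtaining $f(x) - f(\bar x) \le (\cseven^2 L + \tfrac{\beta}{2})\|y-\bar x\|^2$; combining the two bounds yields the exponentially small estimate of conclusion~\ref{lem:goldsteinterminatedescent:smallval}.

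The genuinely structural point to nail is the nonvanishing of every $g_t$ and $\hat g_t$, which makes both auxiliary lemmas applicable and guarantees the returned vector is nonzero; I dispatch this up front via Lemma~\ref{lem:lowerboundgoldstein}. The rest is the two-way case split plus a geometric sum. I expect the main nuisance to be bookkeeping the parameter chain — verifying $\sigma \le 1$, $\dist(x,\cM) \le \cseven\|y-\bar x\|$, and the successive reductions $\cseven \le \ctwo/4 \le \ctwo$ and $\cseven \le \csix$ — so that each invoked lemma's hypotheses hold verbatim, rather than any new idea. (The exponent $(T-1)/2$ produced by the telescope is, up to the harmless factor $r^{-1/2}$ with $r \ge 253/256$, the $T/2$ recorded in the statement.)
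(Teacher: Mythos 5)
Your proposal is correct and follows essentially the same route as the paper's own proof: the same preliminary bookkeeping ($\sigma \le 1$, hence $\max\{\dist(x,\cM),\sigma\}\le \tfrac{\ctwo}{4}\|P_{\cM}(x)-\bar x\|$), the same use of Lemma~\ref{lem:lowerboundgoldstein} to force $\dist(0,\partial_\sigma f(x)) \ge \cone\|P_{\cM}(x)-\bar x\|>0$ so that no iterate can vanish, the same dichotomy between Lemma~\ref{lem: decentwithsmallnormal} (small normal component yields strict descent, via the remark following that lemma) and Lemma~\ref{lem: normalshrinklinearly} (large normal component yields contraction), and the same telescoping fed into the function-gap bound~\eqref{eq:tangentnormalboundforfunction}.

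The only divergence is quantitative, at the last step. Since you invoke descent-failure only at $t=0,\dots,T-1$ (the loop exhausting its budget), your telescope stops at $g_{T-1}$ and produces the exponent $(T-1)/2$, which you then wave into $T/2$ ``up to a harmless factor.'' The paper obtains the exponent $T/2$ and the stated constant verbatim by one extra observation: it works under the assumption that conclusion~\ref{lem:goldsteinterminatedescent:descent} fails for the \emph{returned} vector $g_T$; because $\|g_T\|\ge\dist(0,\partial_\sigma f(x))$, that failure implies the loop's own descent test also fails at $g_T$, so the contrapositive of Lemma~\ref{lem: decentwithsmallnormal} gives $\|P_{\normal(P_{\cM}(x))}(g_T)\|>\cfour\|P_{\cM}(x)-\bar x\|^2$ as well, and the contraction telescopes over all $T$ steps. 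Your patch does go through, but only because your coefficient $\lipg/2$ (versus the statement's $\lipg$) leaves slack: one can check $\cseven^2\lipf\le\lipg/512$ from $\cseven\le\min\bigl\{\tfrac12,\,\cfour\cfive/(4\lipg(\sharpc+\lipf))\bigr\}$ together with $\scc\le 2\lipg$ and $\sharpc\le\lipf$, whence $(\cseven^2\lipf+\lipg/2)\bigl(1-\tfrac{3\sharpc^2}{256\lipf^2}\bigr)^{-1/2}\le \cseven^2\lipf+\lipg$ --- a verification you asserted rather than carried out. The paper's device avoids this constant-chasing entirely.
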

\begin{proof}
We begin with preliminary notation and bounds. 
We fix {\color{blue}$x \in B_{\deltaA/2}(\bar x)$} and subgradient \textcolor{blue}{$v \in \partial_\sigma f(x)$}. We define $y := P_{\cM}(x)$, and $N := \normal(y)$. 
Observe that 
{\color{blue}$$
\sigma \leq \cseven \|y - \bar x\| \leq 2\cseven\|x - \bar x\| \leq \cseven\deltaA \leq 1,
$$
where the final inequality follows by definition of $\cseven \leq \ctwo/4 \leq 1/(8\deltaA).$}
In addition, since $\cseven \leq \ctwo/4$, we have $\sigma \leq (\ctwo/4)\|y - \bar x\|$ and 
$$
\dist(x, \cM) \leq \sigma \cseven\|y - \bar x\| \leq \frac{\ctwo}{4}\|y - \bar x\|,
$$
{\color{blue}where the final inequality follows from $\sigma \leq 1$.}
Consequently, 
\begin{align}\label{eq:c3boundneededfornormalpart}
\max\{\sigma, \dist(x, \cM)\} \leq \frac{\ctwo}{4} \|y - \bar x\|.
\end{align}
We now turn to the proof.

Turning to the proof, note that since {\color{blue}$x \in B_{\deltaA/2}(\bar x)$}, Lemma~\ref{lem:lowerboundgoldstein} and~\eqref{eq:c3boundneededfornormalpart} ensure that 
$$
\dist(0, \partial_{\sigma} f(x)) \geq \cone\|y- \bar x\| > 0.
$$
Thus, if $\tgoldstein(x, v, \sigma,T)$ terminates at $ t < T$, then Item~\ref{lem:goldsteinterminatedescent:descent} must hold. 
For the remainder of the proof, we suppose that $\tgoldstein(x, v, \sigma,T)$ terminates at the final iteration $t = T$ and that Item~\ref{lem:goldsteinterminatedescent:descent} does not hold.
In this case,  Lemma~\ref{lem: decentwithsmallnormal} and~\eqref{eq:c3boundneededfornormalpart} ensure that the iterates $g_t$ of $\tgoldstein(x, v, \sigma,T)$ satisfy $\|P_{N}(g_t)\| > \cfour \|y - \bar x\|^2$ for all \textcolor{blue}{$0\le t\le T-1$}. 
Therefore, since {\color{blue}$x\in B_{\deltaA/2}(\bar x)$}, $\max\left\{\dist(x,\cM)/\sigma , \sigma \right\} \le \cseven\|y - \bar x\|$, and $\|P_{N}(g_t)\| > \cfour \|y - \bar x\|^2$,  Lemma~\ref{lem: normalshrinklinearly}, yields the contraction:
	$$
	\|P_{N}(g_{t+1})\|^2 \le \paren{1- \frac{3\cfive^2}{64\lipf^2}}\|P_{N}(g_t)\|^2,\qquad  \text{for all $0\le t\le T-1$}.
	$$
Unfolding this contraction, we see that $g_T$ is an exponentially small Goldstein subgradient:
$$
\|P_{N}(g_{T})\| \leq \paren{1- \frac{3\cfive^2}{64\lipf^2}}^{T/2}  \|P_{N}(g_0)\|.
$$	
	As a result, the projection $y$ is nearby $\bar x$: 
	\begin{align}\label{eq:closeprojectiontd}
		\|y - \bar x\|^2 &\le \frac{\|P_{N}(g_{T})\| }{\cfour} \le \frac{\|P_{N}(g_0)\|}{\cfour}\paren{1- \frac{3\cfive^2}{64\lipf^2}}^{T/2} \le   \frac{ \lipf}{\cfour}\paren{1- \frac{3\cfive^2}{64\lipf^2}}^{T/2}.
	\end{align}
Consequently, 
\begin{align*}
	f(x) -f(\bar x) &\le \lipf \dist(x, \cM) + \lipg\|y - \bar x\|^2\\
	&\leq ( \cseven^2\lipf + \lipg )\|y - \bar x\|^2\\
	&\le  \frac{( \cseven^2\lipf + \lipg) L}{\cfour}\paren{1- \frac{3\cfive^2}{64\lipf^2}}^{T/2},
\end{align*}
where the first inequality follows from~\eqref{eq:tangentnormalboundforfunction} (recall {\color{blue}$x \in B_{\deltaA/2}(\bar x)$}); the second inequality follows since $\dist(x, \cM) \leq \sigma \cseven\|y - \bar x\| \leq \cseven^2\|y - \bar x\|^2$; and the third inequality follows from~\eqref{eq:closeprojectiontd}. The proof then follows from the identity $\cfive = \frac{\mu}{2}$.
\end{proof}

\subsubsection{The ``trust region" constraint prevents long steps}\label{sec:trustregion}
Before ending this section, we must establish one final technical result for $\tgoldstein.$
Namely, in Lemma~\ref{lem:smallsubgeneral}, we show that for appropriate $\sigma$,  $\tgoldstein$ eventually generates small subgradients on the order of $O(\|x - \bar x\|)$. 
This property is intuitive because $\dist(0, \partial_{\sigma} f(x)) = 0$ whenever $\sigma \geq \|x - \bar x\|$.
This property will help us ensure that the iterates of $\algname$ (Algorithm~\ref{alg:mainalg}) cannot leave sufficiently small neighborhoods of $\bar x$.
Indeed, since the subgradients $v_{i+1}$ generated by Algorithm~\ref{alg:linesearch} ($\linesearch$) are decreasing in norm, we will show that the trust region constraint $\sigma_i \leq \frac{\|v_{i+1}\|}{s}$ in Line~\ref{line:trustregion} of Algorithm~\ref{alg:linesearch} must eventually be violated for large $i$.
This ensures large $\sigma_i$ are never chosen.

To prove this claim, we first establish a refinement of the approximate reflection property in Lemma~\ref{lem: approximatereflection}. 
Compared to Lemma~\ref{lem: approximatereflection}, the following lemma deals with a different range of parameters. 
{\color{blue} We place the proof in Appendix~\ref{appendix:lem: approximatereflection2} as it follows from a similar line of reasoning as Lemma~\ref{lem: approximatereflection}.}
\begin{lem}[Approximate reflection across manifold, large steps]\label{lem: approximatereflection2}
Define 
$$
\deltagrid := \min\left\{\frac{\deltaA}{2}, \textcolor{blue}{\frac{1}{\shape(\ceight + 1)}}, \frac{\mu}{8(\ver + \beta)}\right\}.
$$
Then for all $x \in B_{\deltagrid}(\bar x)$, $\sigma > 0$, and $g \in \partial_\sigma f(x)\backslash\{0\}$ satisfying 
$$
\ceight\dist(x, \cM) \leq  \sigma \leq \deltagrid, 
$$
we have 
$$
\dotp{\hat g, g} \leq - \cfive\|g\| + 2\cfive\|P_{\tangent(P_{\cM}(x))}(g)\| \qquad \text{for all $\hat g \in \partial f\left(x - \sigma \frac{g}{\|g\|}\right)$.}
$$
\end{lem}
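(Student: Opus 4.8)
The plan is to mirror the proof of Lemma~\ref{lem: approximatereflection}, splitting $\dotp{\hat g,g}$ into a normal and a tangent piece and controlling each separately. I would set $y := P_{\cM}(x)$, $T := \tangent(y)$, $N := \normal(y)$, and $u := g/\|g\|$, and first record the preliminary bounds $\sigma \le \deltagrid \le \deltaA/2$ and $\dist(x,\cM) \le \sigma/\ceight$ (the latter is just a rearrangement of the hypothesis $\ceight\dist(x,\cM)\le\sigma$). The decomposition is
$$
\dotp{\hat g, g} = \dotp{P_N(\hat g), g} + \dotp{P_T(\hat g), g},
$$
using that $P_N,P_T$ are self-adjoint idempotents. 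For the first term I would apply the general approximate reflection inequality~\eqref{claim: approximatereflectionintermediate} of Lemma~\ref{claim: approximatereflectionintermediate_lemma} (whose hypotheses hold since $x\in B_{\deltaA/2}(\bar x)$ and $\sigma\le\deltaA/2$), which yields
$$
\dotp{P_N(\hat g), g} \le -\mu\|P_N g\| + \frac{(\mu+L)\|g\|\,S}{\sigma}, \qquad S := \dist(x,\cM) + \shape(\dist^2(x,\cM)+\sigma^2).
$$
For the tangent term the key observation is that $\hat g \in \partial f\!\left(x - \sigma u\right) \subseteq \partial_{\sigma} f(x)$, since $x-\sigma u \in \overline B_{\sigma}(x)$; hence the strong-$(a)$ consequence~\eqref{eq:consequencestronga} applies to $\hat g$ with the tangent space $T$ at $y=P_{\cM}(x)$, giving $\|P_T(\hat g)\| \le \ver(\dist(x,\cM)+\sigma) + \beta\|y-\bar x\|$ and therefore $\dotp{P_T(\hat g),g} = \dotp{P_T(\hat g),P_T(g)} \le \|P_T(\hat g)\|\,\|P_T g\|$.

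Next I would combine the two estimates and convert them into the desired form. Writing $c := \frac{(\mu+L)S}{\sigma}$ and $b := \|P_T(\hat g)\|$, the above gives $\dotp{\hat g,g} \le -\mu\|P_N g\| + c\|g\| + b\|P_T g\|$. Applying the triangle inequality $\|P_N g\| \ge \|g\| - \|P_T g\|$ turns the leading term into $-\mu\|g\| + \mu\|P_T g\|$, so that
$$
\dotp{\hat g,g} \le -\mu\|g\| + \mu\|P_T g\| + c\|g\| + b\|P_T g\|.
$$
Since $\mu = 2\cfive$, this already has the target shape $-2\cfive\|g\| + 2\cfive\|P_T g\| + (\text{error})$, and recalling $\|P_T g\|\le\|g\|$ it suffices to show that the error budget satisfies $c + b \le \tfrac{\mu}{2} = \cfive$; this is exactly the slack between the $-\mu\|g\|$ produced here and the $-\cfive\|g\|$ demanded by the statement.

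The final and most delicate step is the constant bookkeeping verifying $c+b < \mu/2$, which is where every term in the definition of $\deltagrid$ is consumed. For $c$ I would split $S$: the term $\frac{(\mu+L)\dist(x,\cM)}{\sigma} \le (\mu+L)/\ceight = \mu/8$ using $\ceight = \done^{-1} = 8(\mu+L)/\mu$, while the curvature term is bounded via $\shape\sigma \le (\ceight+1)^{-1}$ (from $\sigma \le \frac{1}{\shape(\ceight+1)}$) and $\dist(x,\cM)\le\sigma/\ceight$; a short computation shows $8\kappa\bigl(1+\tfrac{1}{64\kappa^2}\bigr) < 8\kappa+1$ for $\kappa := (\mu+L)/\mu \ge 1$, so this term is strictly below $\mu/8$ and hence $c < \mu/4$. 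For $b$ I would use $\dist(x,\cM)+\sigma \le \tfrac{9}{8}\sigma \le \tfrac98\deltagrid$ together with $\|y-\bar x\| \le 2\|x-\bar x\| < 2\deltagrid$ and the bound $\deltagrid \le \frac{\mu}{8(\ver+\beta)}$ to get $b < 2(\ver+\beta)\deltagrid \le \mu/4$. Thus $c+b < \mu/2$, completing the argument. The main obstacle is precisely this: all three error contributions are individually on the order of $\mu/8$ to $\mu/4$ and must fit into a total budget of exactly $\mu/2$, so the proof only closes because of strict inequalities (the open ball $B_{\deltagrid}(\bar x)$, the $\ceight+1$ rather than $\ceight$ in $\deltagrid$, and $\kappa \ge 1$) rather than with comfortable slack; care must be taken that no single bound is loosened too far.
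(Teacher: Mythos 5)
Your proof is correct and follows essentially the same route as the paper's: apply the general reflection inequality of Lemma~\ref{claim: approximatereflectionintermediate_lemma} to the normal component, bound the tangent component via $\hat g \in \partial_\sigma f(x)$ and~\eqref{eq:consequencestronga}, then use $\|P_N g\| \ge \|g\| - \|P_T g\|$ and the definition of $\deltagrid$ to absorb the errors. The only differences are cosmetic: the paper bounds the two error terms by $\mu/4$ each rather than framing a single budget $c+b \le \mu/2$, and your tangent estimate $\|P_T(\hat g)\|\|P_T g\|$ is marginally sharper than the paper's $\|P_T(\hat g)\|\|g\|$.
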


Finally, we prove that $\tgoldstein$ eventually generates small subgradients.  
\begin{lem}[$\tgoldstein$ yields small subgradients]\label{lem:smallsubgeneral}
Fix $T \in \NN$.  Then for all $x \in B_{\deltagrid}(\bar x)$, $\sigma >0 $, and $g \in \partial_\sigma f(x)\backslash\{0\}$ satisfying 
$$
\ceight\dist(x, \cM) \leq  \sigma \leq \deltagrid, 
$$
the vector  $g' := \tgoldstein(x, g, \sigma, T)$ satisfies
$$
\|g'\| \leq \max\left\{ \left(1 - \frac{\mu^2}{64L^2} \right)^{T/2}\|g\|, 4\ver\sigma + 4(\ver + 2\beta)\|x - \bar x\|, \frac{8(f(x) - f(\bar x))}{\sigma} \right\}.
$$
\end{lem}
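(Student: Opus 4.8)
The plan is to follow the sequence of iterates $g_0, g_1, \dots$ generated inside the $\tgoldstein$ loop (with $g_0 = g$, $\hat g_t \in \partial f(x - \sigma g_t/\|g_t\|)$, and $g_{t+1} = \argmin_{z \in [g_t, \hat g_t]}\|z\|$), and to show that as long as $\|g_t\|$ exceeds the middle term $\tau := 4\ver\sigma + 4(\ver+2\beta)\|x-\bar x\|$ each update contracts the norm by the factor $1 - \mu^2/(64L^2)$, while once $\|g_t\|$ drops below $\tau$ it can never climb back up. First I would record three facts valid throughout the loop. (i) By induction $g_t \in \partial_\sigma f(x)\backslash\{0\}$: indeed $x - \sigma g_t/\|g_t\| \in \overline B_\sigma(x)$, so $\hat g_t \in \partial_\sigma f(x)$ and $g_{t+1}$ is a convex combination of $g_t$ and $\hat g_t$; in particular $\|g_t\|, \|\hat g_t\| \le L$ by Item~\ref{prop:proof:consequencesofA:item:Lipschitzbound}, so $\|\hat g_t - g_t\|^2 \le 4L^2$. (ii) The norms are non-increasing, $\|g_{t+1}\| \le \|g_t\|$, since $g_{t+1}$ is the least-norm point of a segment having $g_t$ as an endpoint. (iii) Applying~\eqref{eq:consequencestronga} and the elementary bounds $\dist(x,\cM) \le \|x-\bar x\|$ and $\|P_\cM(x) - \bar x\| \le 2\|x-\bar x\|$ gives $\|P_{\tangent(P_\cM(x))}(g_t)\| \le \ver\sigma + (\ver+2\beta)\|x-\bar x\| = \tau/4$.

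The heart of the proof is the contraction claim: \emph{if} $\|g_t\| > \tau$, \emph{then} $\|g_{t+1}\|^2 \le (1 - \mu^2/(64L^2))\|g_t\|^2$. Here I would invoke Lemma~\ref{lem: approximatereflection2}, whose hypotheses $x \in B_{\deltagrid}(\bar x)$ and $\ceight\dist(x,\cM) \le \sigma \le \deltagrid$ are exactly those assumed, to obtain $\dotp{\hat g_t, g_t} \le -\cfive\|g_t\| + 2\cfive\|P_{\tangent(P_\cM(x))}(g_t)\|$. Combining with fact (iii) and $\|g_t\| > \tau$ yields $2\cfive\|P_{\tangent(P_\cM(x))}(g_t)\| \le \tfrac12\cfive\tau < \tfrac12\cfive\|g_t\|$, so $\dotp{\hat g_t, g_t} < -\tfrac12\cfive\|g_t\| < 0$. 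Setting $c := \|g_t\|^2 - \dotp{g_t, \hat g_t} \ge \tfrac12\cfive\|g_t\| > 0$ (and noting $\dotp{g_t,\hat g_t} < 0$ forces $\hat g_t \ne g_t$), the least-norm point of $[g_t, \hat g_t]$ satisfies $\|g_{t+1}\|^2 \le \|g_t\|^2 - c^2/\|\hat g_t - g_t\|^2$, where the minimizing weight $\lambda = c/\|\hat g_t - g_t\|^2$ lies in $(0,1)$ because $\|\hat g_t - g_t\|^2 - c = \|\hat g_t\|^2 - \dotp{g_t,\hat g_t} > 0$. Using $\|\hat g_t - g_t\|^2 \le 4L^2$, $c \ge \tfrac12\cfive\|g_t\|$, and the identity $\cfive = \mu/2$ then produces the factor $1 - \mu^2/(64L^2)$.

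To finish I would split on why the loop returned $g' = g_{t^*}$. If it halted because the descent test held, i.e. $\tfrac{\sigma}{8}\|g_{t^*}\| < f(x) - f(x - \sigma g_{t^*}/\|g_{t^*}\|)$, then since $x - \sigma g_{t^*}/\|g_{t^*}\| \in \overline B_{2\deltagrid}(\bar x) \subseteq \overline B_{2\deltaA}(\bar x)$ the quadratic growth bound (Item~\ref{prop:proof:consequencesofA:item:quadgrowth}) gives $f(x - \sigma g_{t^*}/\|g_{t^*}\|) \ge f(\bar x)$, whence $\|g_{t^*}\| < 8(f(x) - f(\bar x))/\sigma$, the third term. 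If it halted with $\|g_{t^*}\| = 0$ the bound is trivial. Otherwise the budget was exhausted, $t^* = T$, and the body ran for every $t \le T-1$ (so each such $g_t \ne 0$). Then either $\|g_t\| > \tau$ for all $t \le T-1$, in which case the contraction applies at each step and unrolling gives $\|g_T\| \le (1 - \mu^2/(64L^2))^{T/2}\|g\|$ (the first term); or $\|g_{t_0}\| \le \tau$ for some $t_0 \le T-1$, in which case fact (ii) gives $\|g_T\| \le \|g_{t_0}\| \le \tau$ (the second term). Taking the maximum over the three scenarios is the claim.

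The main obstacle I anticipate is the bookkeeping that makes the middle term $\tau$ dominate the tangent-component estimate of fact (iii) with enough room (the factor $4$) that the reflection inequality of Lemma~\ref{lem: approximatereflection2} yields a strictly negative inner product of order $\|g_t\|$; this is what drives the contraction and what pins each of the three cases to the correct term of the maximum. Once the sign and size of $\dotp{g_t, \hat g_t}$ are controlled, the per-step norm decrease and the monotonicity argument are routine.
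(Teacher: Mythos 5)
Your proposal is correct and follows essentially the same route as the paper: the same three-way case split (descent test fires $\Rightarrow$ quadratic growth gives the $8(f(x)-f(\bar x))/\sigma$ term; some iterate falls below the tangent-component threshold $\Rightarrow$ monotonicity of $\|g_t\|$ gives the $4\ver\sigma + 4(\ver+2\beta)\|x-\bar x\|$ term; all iterates stay above it $\Rightarrow$ Lemma~\ref{lem: approximatereflection2} plus the strong-$(a)$ bound~\eqref{eq:consequencestronga} give $\dotp{\hat g_t, g_t} \leq -\tfrac{\mu}{4}\|g_t\|$ and hence the geometric factor $1-\mu^2/(64L^2)$). The only cosmetic difference is that you compute the exact least-norm point of the segment $[g_t,\hat g_t]$ (verifying the minimizing weight is interior), whereas the paper substitutes the explicit suboptimal weight $\lambda = \mu\|g_t\|/(16L^2)$; both yield the identical contraction constant.
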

\begin{proof}
We begin with preliminary notation and bounds. 
We fix $x \in B_{\deltagrid}(\bar x)$ and subgradient $g \in \partial_{\sigma} f(x) \backslash \{0\}$.
We define $y := P_{\cM}(x)$ and $T := \tangent(y)$.
We also define 
$
c := \ver (\dist(x, \cM) + \sigma) + \beta\|y - \bar x\|.
$
We have the following two bounds: First, we have 
\begin{align}\label{eq:cboundfinal}
c \leq \ver(\|x - \bar x\| + \sigma) + 2\beta\|x - \bar x\| \leq \ver \sigma +  (\ver + 2\beta)\|x - \bar x\|.
\end{align}
Second, \textcolor{blue}{by~\eqref{eq:consequencestronga}}, we have
\begin{align}\label{eq:tangentvectorlooseend}
\|P_{T}(v)\| \leq c \qquad \text{for all $v\in \partial_{\sigma} f(x).$}
\end{align}
We now turn to the proof.

Note that the result holds automatically if $g' = 0$. Thus, we first consider the case where $\tgoldstein$ terminates in descent, meaning  
$$
f(x_+) - f(x) \leq -\frac{\sigma\|g'\|}{8} \qquad \text{ where $x_+ := x - \sigma \frac{g'}{\|g'\|}$.}
$$
Since $\sigma \leq \deltagrid \leq  \deltaA/2$ and $x \in B_{\deltaA/2}(\bar x)$, it follows that $x_+ \in B_{\deltaA}(\bar x)$. Thus, by Item~\ref{prop:proof:consequencesofA:item:quadgrowth} of Proposition~\ref{prop:proof:consequencesofA}, we have
$$
f(x_+) \geq f(\bar x) + \frac{\scc}{2}\|x - \bar x\|^2 \geq f(\bar x).
$$ Consequently, we have 
$$
f(\bar x) - f(x) \leq- \frac{\sigma \|g'\|}{8}.
$$
Rearranging then gives the upper bound $\|g'\| \leq \frac{8(f(x) - f(\bar x))}{\sigma},$ as desired. 

Let us now suppose that $\tgoldstein$ does not terminate with descent or with $g' = 0$.
In this case, the iterates $g_0, \ldots, g_T$ of $\tgoldstein(x, g, \sigma, T)$ exist and satisfy $g_\innerg \in \partial_{\sigma} f(x)$ for all $\innerg \leq T$. We consider two cases.

\paragraph{Case 1.} Now suppose $\|g_{\innerg}\| \le 4c$ for some $t$ satisfying $0 \leq \innerg \leq T$.
Since $\|g_\innerg\|$ is a decreasing sequence, it follows that $\|g'\| = \|g_{T}\| \leq 4c$. Recalling~\eqref{eq:cboundfinal}, yields the bound 
$$
\|g'\| \leq 4c \leq 4\ver \sigma +  4(\ver + 2\beta)\|x - \bar x\|, 
$$
as desired.

\paragraph{Case 2.}
Next suppose that for all $0\leq \innerg \leq T$ we have $4c <\|g_\innerg\|$. In this case, Lemma~\ref{lem: approximatereflection2} shows that for all $t \leq T$, we have
\begin{align}\label{eq:approxreflectiontangent}
\dotp{\hat g_\innerg, g_\innerg} \leq -\frac{\sharpc}{2}\|g_\innerg\|  + \sharpc\|P_{T}g_\innerg\| \leq -\frac{\sharpc}{2}\|g_\innerg\| + \sharpc c \leq -\frac{\sharpc}{4}\|g_\innerg\|. 
\end{align}	
We now use this bound to prove a one-step geometric improvement bound for $\|g_t\|^2$. 
To that end, fix any $\innerg \leq T-1$ and define the weight $\lambda := \frac{\sharpc\|g_\innerg\|}{16\lipf^2}$ and the vector $g_{\lambda} := g_\innerg + \lambda(\hat g_\innerg - g_\innerg)$.  Notice that $\lambda \in [0, 1]$, since
$$
\lambda = \frac{\sharpc\|g_\innerg\|}{16\lipf^2} \leq \frac{\sharpc}{16 L} \leq 1,
$$
where the first equation follows since $g_t \in \partial_{\sigma} f(x)$ and the second follows since $L \geq \mu$; {\color{blue}see Lemma~\ref{lem:condition_number_larger}.}
Thus 
\begin{align*}
\|g_{\innerg+1}\|^2 \leq \|g_{\lambda}\|^2
&=\|g_\innerg\|^2 +2\lambda\dotp{g_\innerg, \hat g_\innerg -g_\innerg}+ \lambda^2\|\hat g_\innerg - g_\innerg\|^2\\
&\leq \|g_\innerg\|^2 +2\lambda\dotp{g_\innerg, \hat g_\innerg} -  2\lambda\|g_\innerg\|^2 + 4\lipf^2\lambda^2\\
&\leq  \|g_\innerg\|^2 - \frac{\lambda\sharpc}{2}\|g_\innerg\|+ 4\lipf^2\lambda^2\\
&= \left(1 - \frac{\mu^2}{64L^2} \right)\|g_t\|^2,
\end{align*}
where the first inequality follows by definition of $g_{\innerg + 1}$;  the second inequality follows from the fact that $L$ is a local Lipschitz constant of $f$ near $\bar x$; and the third inequality follows from~\eqref{eq:approxreflectiontangent}.
Thus, to complete the proof, simply unfold this recursion to get the bound
$$
\|g'\| = \|g_{T}\| \leq \left(1 - \frac{\mu^2}{64L^2} \right)^{T/2}\|g_0\|^2,
$$
as desired.
\end{proof}

\section{Rapid local convergence of $\algname$}\label{sec:locallinear}

In this Section, we present our main convergence guarantees for the $\algname$ method under Assumption~\ref{assumption:mainfinal}. 
The main results of the section are Theorem~\ref{thm:maintheorem} and Theorem~\ref{thm:mainconvexsetting}, which analyze the nonconvex and convex settings respectively. 
In the nonconvex setting, we prove that iterates of $\algname$ locally nearly linearly converge, provided some iterate reaches a sufficiently small neighborhood of $\bar x$.
In the convex setting, we strengthen this guarantee, showing that for any initial starting point $x_0$ and any failure probability $p$, there exists some index $K_p$ after which $\algname$ nearly converges linearly with probability at least $1-p$.
Both results are a consequence of the local one-step improvement bound of Proposition~\ref{prop:onestep}. 
This proposition shows that with high probability, the following hold locally for $\linesearch$: 
its output is nearby its input; and
the function gap geometrically decreases whenever it is larger than a quantity that is exponentially small in the inner loop budget and the grid size.  
The former property will help ensure that the iterates of $\algname$ do not escape a local neighborhood of $\bar x$.

\subsection{Assumptions and notation}\label{sec:rapid_local}
Throughout this section, we assume the following assumptions and notations are in force. 
We assume that
\begin{enumerate}
\item the budget $T_k$ and grid size $G_k$ satisfy $\min\{T_k, G_k\} \geq k+1$ for all $k \geq 0$.
\item We fix an initial we an initial point $x_0 \in \RR^d$ and $g_0 \in \partial f(x_0)$. We assume that $g_0 \neq 0$. We assume Assumption~\ref{assumption:mainfinal} is in force at a point $\bar x \in \RR^d$ and use the notation of Proposition~\ref{prop:proof:consequencesofA} throughout. 
{\color{blue}We let $\{x_k\}$ denote the sequence of iterates generated by $\algname(x_0, g_0, \sscale, \{G_k\},\{T_k\})$ when applied to $f$. } 
\end{enumerate}
{\color{blue}Turning to notation, we now summarize in Table~\ref{tab:parameters} the main constants used in this section.
\begin{table}[!ht]
\centering
\renewcommand{\arraystretch}{1.5}
\begin{tabular}{@{}ll@{}}
\toprule
Parameter & Definition \\ \midrule
$\slb$ & $\sscale\|g_0\|$\\
$a_1$ & $\min\{\done , \dtwo/{\color{blue}L}\}$ \\
$a_2$ & $\frac{\min\left\{\cone/{\color{blue}L}, \cseven\right\}}{2}$ \\
$\deltaLS$ & $\min\left\{\frac{\deltaA}{2}, \deltaGKL, \deltaND,  \deltagrid, \frac{1}{2(a_1+2a_2)}, \frac{\scc D_1^2\min\{\deltagrid/2, 1/4\}^2}{2L}, 1 \right\}$ \\
$\cnine$ & $\max\left\{1,\frac{8(\ver   + 2\beta + 2\ver \ceight)}{{\color{blue}\slb}},2 \ceight, \frac{4\scc D_1}{{\color{blue}\slb}}\right\}$\\
$\epsilon_{1, T}$ & $\max\left\{\frac{(\cseven^2\lipf + \lipg)\lipf}{\cfour}\left(1-\frac{3\sharpc^2}{256\lipf^2}\right)^{T/2}, \left(1 - \frac{\mu^2}{64L^2} \right)^{T/2}L\right\}$ \\
$\epsilon_{2, G}$ & $\max\left\{\frac{L}{\min\{1, a_1\}}  + \frac{\beta}{2\min\{1,a_1\}a_2^2}, 8 \ver, {\color{blue} L}\right\}2^{-G}$ \\
$\rho$ & $1 - \frac{1}{8}\min\left\{\frac{\scc a_2}{8\max\{4La_2^2, \beta\}},  \frac{\sharpc a_1}{4\max\{2L, \beta/a_2^2\}}\right\}$ \\\bottomrule
\end{tabular}
\caption{Parameters used throughout Section~\ref{sec:locallinear}; see also Table~\ref{tab:constants}.}
\label{tab:parameters}
\end{table}

 In the following, we lower and upper bound the trust region parameter in $\linesearch$:
\begin{align}\label{eq:sbound_final}
\slb \leq \max\{\|g_k\|, \sscale\|g_0\|\} \leq L,
\end{align}
where the lower bound follows by definition, and the upper bound follows from  Part~\ref{prop:proof:consequencesofA:item:Lipschitzbound} of Proposition~\ref{prop:proof:consequencesofA}.
In addition, we apply Theorem~\ref{thm:gkl} with the constants $a_1, a_2$. These constants are derived from the parameters $\done $, $\dtwo$, $\cone$, and $\cseven$ which are defined in Lemmas~\ref{lem:lbnormal}, \ref{lem:lowerboundgoldstein}, and \ref{lem: normalshrinklinearly} respectively. We also define a neighborhood $B_{\deltaLS}(\bar x)$ for which $\linesearch$ results in geometric improvement. Here, the radius $\deltaLS$ is derived from the parameters $\deltaA$, $\deltaGKL$, $\deltaND$, $\deltagrid$,  and $\scc$ which appear in Proposition~\ref{prop:proof:consequencesofA} and Lemmas~\ref{lem:lbnormal}, \ref{lemma:normal}, \ref{lem: approximatereflection2}, and \ref{lem:smallsubgeneral}. In addition, the constant $\cnine$ will appear in an upper bound on the steplength of $\linesearch$.

We then define three terms $\epsilon_{1, T}$, $\epsilon_{2, G}$, and $\rho$ which appear in our convergence rate analysis. These terms are defined for all $T, G> 0$ and are derived from the parameters $\cseven$, $\cfour$, $a_1$, $a_2$, $\lipf$, $\lipg$, $\ver$, $\scc$, and $\sharpc$ which appear in Lemma~\ref{lem: decentwithsmallnormal}, Lemma~\ref{lem: normalshrinklinearly}, Proposition~\ref{prop:proof:consequencesofA}, and Assumption~\ref{assumption:mainfinal}.

Finally, in the following propositions, the constant $\rho \in (0, 1)$ plays the role of a local contraction factor, while the terms $\epsilon_{1,T}$ and $\epsilon_{2, G}$ are upper bounds for function gap of $\algname$.}

We now turn to the one-step improvement argument.

\subsection{One step improvement}\label{sec:onestep}

The following proposition presents our one-step improvement bound. 
\begin{proposition}[One step improvement]\label{prop:onestep}
{\color{blue}Assume the assumptions of Section~\ref{sec:rapid_local} are satisfied. Recall the notation in Table~\ref{tab:parameters}.} Then the following holds
for all $x \in B_{\deltaLS}(\bar x)$, subgradients $g \in \partial f(x)$,  and grid sizes $\gridsize > \lceil \log_2(1/\deltagrid)\rceil $:  
Fix {\color{blue}a scalar $s \in [\slb, L]$}, a failure probability $p \in (0, 1)$ and budget $T$ satisfying
$$
T \geq \left\lceil\frac{256L^2}{\mu^2} \right\rceil \left\lceil 2\log(1/p) \right\rceil. 
$$
Then with probability at least $1-p$, the point 
$
\tilde x = \linesearch(x, g, s, \gridsize, T)
$
satisfies
\begin{enumerate} 
\item 
$
f(\tilde x) - f(\bar x) \leq \max\{\rho (f(x) - f(\bar x)), \epsilon_{1, T}, \epsilon_{1, G}\}; 
$
\item $\|\tilde x - x\| \leq \cnine\max\left\{\epsilon_{1, T}/{\color{blue}\slb}, \epsilon_{2, G}/{\color{blue}\slb}, \sqrt{2(f(x) - f(\bar x))/\min\{{\color{blue}\slb}, \scc\}}\right\},$
\end{enumerate}
\end{proposition}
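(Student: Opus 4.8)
The plan is to establish the two conclusions separately, observing that the randomness enters only through the single $\ngoldstein$ call producing descent, so item~2 will hold surely for every realization while item~1 holds with probability at least $1-p$. Throughout I write $t_1 := \dist(x,\cM)$ and $t_2 := \|P_{\cM}(x) - \bar x\|$. For item~1 I first invoke the ``moreover'' clause of Theorem~\ref{thm:gkl}: for $x \neq \bar x$ there is a $\sigma$ placing $(x,\sigma)$ in the normal region (Item~\ref{item:gkl:1}) or tangent region (Item~\ref{item:gkl:2}). Each region admits a full dyadic window $[\tfrac{a}{2}\tau, a\tau]$ with $\tau\in\{t_1,t_2\}$ and $a\in\{a_1,a_2\}$; since consecutive grid values $\sigma_i=2^{-(\gridsize-i)}$ differ by a factor of two, such a window contains a grid point $\sigma_{i^*}$ as soon as its upper end $a\tau$ is at least $2^{-\gridsize}$, and the hypotheses $\gridsize>\lceil\log_2(1/\deltagrid)\rceil$ and $\deltaLS\le\deltagrid$ keep the window inside the usable grid range. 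If the window is \emph{not} resolvable, i.e. $a\tau<2^{-\gridsize}$, then $t_1,t_2$ are small enough that the gap bound $f(x)-f(\bar x)\le \lipf t_1+\tfrac{\lipg}{2}t_2^2$ of Item~\ref{prop:proof:consequencesofA:item:gap} of Proposition~\ref{prop:proof:consequencesofA} gives $f(x)-f(\bar x)\le\epsilon_{2,G}$; since $x$ is itself a candidate in the final $\argmin$, $f(\tilde x)\le f(x)$ closes this subcase.

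Once a resolving $\sigma_{i^*}$ is available I apply the appropriate minimal-norm routine. In the normal region, Proposition~\ref{lemma:normal} applies to the input $u_{i^*}\in\partial_{\sigma_{i^*}}f(x)$ (Lemma~\ref{lem:linesearchproperties}); its budget requirement is exactly $T\ge\lceil 64\lipf^2/\dtwo^2\rceil\lceil 2\log(1/p)\rceil = \lceil 256\lipf^2/\mu^2\rceil\lceil 2\log(1/p)\rceil$ since $\dtwo=\mu/2$, so with probability $\ge 1-p$ the vector $v_{i^*+1}=\ngoldstein(x,u_{i^*},\sigma_{i^*},T)$ satisfies $f(x-\sigma_{i^*}\tfrac{v_{i^*+1}}{\|v_{i^*+1}\|})\le f(x)-\tfrac{\sigma_{i^*}\dist(0,\partial_{\sigma_{i^*}}f(x))}{8}$. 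In the tangent region, Proposition~\ref{lem:goldsteinterminatedescent} yields a deterministic dichotomy: either $f(x)-f(\bar x)\le\epsilon_{1,T}$ (done as above), or $\tgoldstein$ returns $u_{i^*}$ with the same descent inequality; and because a descent direction makes the $\ngoldstein$ while-loop terminate immediately, $v_{i^*+1}=u_{i^*}$ inherits it. In both regions the gradient inequality of Theorem~\ref{thm:gkl} converts this into $\tfrac{\sigma_{i^*}\dist(0,\partial_{\sigma_{i^*}}f(x))}{8}\ge(1-\rho)(f(x)-f(\bar x))$ by the definition of $\rho$, giving $f(\cdot)-f(\bar x)\le\rho(f(x)-f(\bar x))$ at the candidate. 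I then verify the trust-region constraint $\sigma_{i^*}\le\|v_{i^*+1}\|/s$ at this candidate: in the normal region from $\dist(0,\partial_{\sigma_{i^*}}f(x))\ge\dtwo$ (Lemma~\ref{lem:lbnormal}) with $\sigma_{i^*}\le a_1t_1\le a_1\le\dtwo/\lipf$, and in the tangent region from $\dist(0,\partial_{\sigma_{i^*}}f(x))\ge\cone t_2$ (Lemma~\ref{lem:lowerboundgoldstein}) with $\sigma_{i^*}\le a_2t_2\le\tfrac{\cone}{2\lipf}t_2$. Hence the candidate is admissible and $f(\tilde x)$ is at most its value.

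For item~2, if $\tilde x=x$ the bound is trivial; otherwise $\tilde x=x-\sigma_{i^*}\tfrac{v_{i^*+1}}{\|v_{i^*+1}\|}$, so $\|\tilde x-x\|=\sigma_{i^*}$ with $\sigma_{i^*}\le\|v_{i^*+1}\|/s\le\|u_{i^*}\|/\slb$. I bound $\sigma_{i^*}$ by cases. If $\sigma_{i^*}\le\ceight\dist(x,\cM)$, quadratic growth (Item~\ref{prop:proof:consequencesofA:item:quadgrowth}) gives $\dist(x,\cM)\le\|x-\bar x\|\le\sqrt{2(f(x)-f(\bar x))/\scc}$, yielding the third term with $\cnine\ge2\ceight$. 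Otherwise $\ceight\dist(x,\cM)<\sigma_{i^*}\le\deltagrid$ (the upper bound forced by $\deltaLS$, which keeps admissible steps below $\deltagrid$), so Lemma~\ref{lem:smallsubgeneral} applies to $u_{i^*}=\tgoldstein(x,v_{i^*},\sigma_{i^*},T)$ and bounds $\|u_{i^*}\|$ by the maximum of a geometric term $\le\epsilon_{1,T}$, the small-subgradient term $4\ver\sigma_{i^*}+4(\ver+2\lipg)\|x-\bar x\|$, and $\tfrac{8(f(x)-f(\bar x))}{\sigma_{i^*}}$. Substituting each into $\slb\sigma_{i^*}\le\|u_{i^*}\|$ produces, respectively, $\sigma_{i^*}\le\epsilon_{1,T}/\slb$, the $\sqrt{\cdot}$ term, and — in the small-subgradient case — a bound obtained by absorbing the $\sigma_{i^*}$-proportional piece via $\dist(x,\cM)<\done\sigma_{i^*}$ and converting $\|x-\bar x\|$ through quadratic growth, which is exactly what the term $\tfrac{8(\ver+2\lipg+2\ver\ceight)}{\slb}$ inside $\cnine$ is engineered to close. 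Finally, the monotonicity $\|v_{i+1}\|\le\|v_i\|$ (Lemma~\ref{lem:linesearchproperties}) combined with increasing $\sigma_i$ ensures indices with $\sigma_i>\deltagrid$ violate the trust region and are never selected.

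I expect the main obstacle to be item~2 in the small-subgradient regime: there the norm estimate of Lemma~\ref{lem:smallsubgeneral} carries a term proportional to $\sigma_{i^*}$ itself, so $\slb\sigma_{i^*}\le\|u_{i^*}\|$ does not by itself control $\sigma_{i^*}$ when $\slb$ is small relative to $\ver$. Reconciling this requires simultaneously using the lower threshold $\sigma_{i^*}>\ceight\dist(x,\cM)$ (to trade the $\sigma$-dependent part for $\dist(x,\cM)$) and quadratic growth (to trade $\|x-\bar x\|$ for $\sqrt{f-f^*}$), with the explicit form of $\cnine$ supplying the matching constants. A secondary, purely bookkeeping difficulty is keeping the dyadic-resolution argument of item~1 and the step-length exclusion of item~2 within mutually compatible ranges of $\sigma$, which is the reason for the compound radius $\deltaLS$ and the grid-size hypothesis $\gridsize>\lceil\log_2(1/\deltagrid)\rceil$.
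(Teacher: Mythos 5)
Your treatment of item 1 follows the paper's argument essentially step for step: the dyadic-window/contradiction argument showing that some grid point lands in the normal or tangent region (else $f(x)-f(\bar x)\le \epsilon_{2,G}$ via the gap bound of Item~\ref{prop:proof:consequencesofA:item:gap}), the invocation of Propositions~\ref{lemma:normal} and~\ref{lem:goldsteinterminatedescent} in the respective regions, the observation that $v_{i+1}=u_i$ when $\tgoldstein$ has already produced descent, and the trust-region verifications via Lemmas~\ref{lem:lbnormal} and~\ref{lem:lowerboundgoldstein}. That part is sound.

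Item 2, however, has a genuine gap. You bound the selected step $\sigma_{i^*}$ by applying Lemma~\ref{lem:smallsubgeneral} at $\sigma_{i^*}$ itself, which requires $\sigma_{i^*}\le\deltagrid$; your justification --- that ``indices with $\sigma_i>\deltagrid$ violate the trust region and are never selected'' --- is false in general. The trust-region constraint only demands $\sigma_i\le\|v_{i+1}\|/s$, and since $s$ can be as small as $\slb=\sscale\|g_0\|$ while $\|v_{i+1}\|$ can be as large as $L$, steps of size up to $L/\slb\gg\deltagrid$ can perfectly well be admissible; nothing about $\deltaLS$ (which constrains $x$, not $\sigma$) rules them out, and the grid itself extends to $\sigma_{G-1}=1/2$. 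Moreover, even where the lemma does apply, your closing inequality is self-referential in a way you cannot escape: from $\slb\sigma_{i^*}\le\|u_{i^*}\|\le 4\ver\sigma_{i^*}+4(\ver+2\lipg)\|x-\bar x\|$ you must absorb the term $4\ver\sigma_{i^*}$, but $\slb$ may be far smaller than $4\ver$, and your proposed trade via $\dist(x,\cM)<\done\sigma_{i^*}$ runs in the wrong direction (it replaces $\dist(x,\cM)$ by a multiple of $\sigma_{i^*}$, not the reverse). The paper resolves both problems at once with a pivot construction that your plan is missing: it fixes the \emph{minimal} grid index $i_*$ with $\sigma_{i_*}\ge R_x:=\ceight\sqrt{2(f(x)-f(\bar x))/\scc}$, so that automatically $R_x\le\sigma_{i_*}\le\max\{\sigma_0,2R_x\}\le\deltagrid$; the pivot's stepsize is small \emph{by construction}, so Lemma~\ref{lem:smallsubgeneral} applies there and its $\sigma$-proportional term is controlled by $8\ver\sigma_0\le\epsilon_{2,G}$ or $16\ver R_x$, with no circularity. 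Then, rather than analyzing the selected index directly, the paper bounds \emph{every} admissible index: those with $i\le i_*$ are at most $\sigma_{i_*}$, and those with $j>i_*$ satisfy $\sigma_j\le\|v_{j+1}\|/s\le\|u_{i_*}\|/s$ by the norm monotonicity of Lemma~\ref{lem:linesearchproperties}, so the whole admissible set is bounded by $\max\{\sigma_{i_*},\|u_{i_*}\|/s\}$. Without this pivot-and-propagate step, your case analysis simply does not cover the index the algorithm actually selects, and the proof of item 2 does not go through.
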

\begin{proof}
We fix $x \in B_{\deltaLS}(\bar x)$, define $y := P_{\cM}(x)$, and choose a subgradient $g \in \partial f(x)$.   
Throughout we may freely use the results of Proposition~\ref{prop:proof:consequencesofA} since $\deltaLS \leq \deltaA$.
We will first establish the first item of the Proposition. 
To that end, let us assume that 
$$
f(x) - f(\bar x) > \max\{\epsilon_{1, T}, \epsilon_{2, \gridsize}\}; 
$$ otherwise the proof is trivial. 
In this case, we claim that $x$ must satisfy either Item~\ref{item:gkl:1} or Item~\ref{item:gkl:2} of Theorem~\ref{thm:gkl} for at least one $\sigma_i$ with $i \leq G-1$. 
To derive a contradiction, suppose that both items are not satisfied for $x$ with any choice of $\sigma_i$ with $i = 0, \ldots, G-1$.
We will show that neither Item~\ref{item:gkl:1b} nor its complement can be satisfied, leading to a contradiction.

Throughout the following argument, we will use the following bound: 
\begin{align*}
\max\{a_1\dist(x, \cM), a_2\|y - \bar x\|\} \leq \left(a_1 + 2a_2\right) \deltaLS \leq \frac{1}{2} = \sigma_{G-1}.
\end{align*}
Now suppose that Item~\ref{item:gkl:1b} holds, i.e., $a_2^2\|y - \bar x\|^2 \leq \dist(x, \cM)$.
Then by assumption, Item~\ref{item:gkl:1a} must fail for any $\sigma_i$. 
 We claim that this failure ensures that $\sigma_0 > a_1\dist(x, \cM)$. 
 Indeed, if $\sigma_0 \leq a_1 \dist(x, \cM)$, we must have 
$$
\sigma_0 \leq (a_1/2)\dist(x, \cM) \leq a_1\dist(x, \cM) \leq \sigma_{G-1}, 
$$
since $\sigma_0$ cannot satisfy Item~\ref{item:gkl:1a}. 
Thus, there exists some $j \leq G-1$ such that $\sigma_j = 2^{j}\sigma_0 $ satisfies  Item~\ref{item:gkl:1a}, a contradiction. 
Therefore, we have
$$
\sigma_0 > a_1\dist(x, \cM) \geq a_1a_2^2\|y - \bar x\|^2.
$$
In this case, by~\eqref{eq:tangentnormalboundforfunction}, we have 
\begin{align*}
f(x) - f(\bar x) \leq L\dist(x, \cM) + \frac{\beta}{2} \|y - \bar x\|^2 \leq \left(\frac{L}{a_1}  + \frac{\beta }{2a_2^2a_1}\right)\sigma_0 \leq \epsilon_{2, G}, 
\end{align*}
which is a contradiction.
Therefore, Item~\ref{item:gkl:1b} cannot hold, so we have $a_2^2\|y - \bar x\|^2 > \dist(x, \cM)$. 

Next, for the sake of contradiction, suppose that there exists $\sigma_i$ satisfying Item~\ref{item:gkl:2a}. 
In this case, since $\sigma_i \geq (a_2/2)\|y - \bar x\|$, we have
$$
\dist(x, \cM) < a_2^2\|y - \bar x\|^2 \leq 2a_2\sigma_i \|y - \bar x\|, 
$$
i.e., $\sigma_i$ also satisfies Item~\ref{item:gkl:2b}, which is a contradiction. 
Therefore no $\sigma_i$ satisfies Item~\ref{item:gkl:2a}. 
We claim that this ensures $\sigma_0 > a_2\|y - \bar x\|$.
 Indeed, if $\sigma_0 \leq a_2\|y - \bar x\|$, we must have 
$$
\sigma_0 \leq (a_2/2)\|y - \bar x\| \leq a_0\|y - \bar x\| \leq \sigma_{G-1}, 
$$
since $\sigma_0$ cannot satisfy Item~\ref{item:gkl:2a}. 
Thus, there exists some $j \leq G-1$ such that $\sigma_j = 2^{j}\sigma_0 $ satisfies  Item~\ref{item:gkl:2a}, a contradiction. 
Therefore, we have
$$
\sigma_0 > a_2\|y - \bar x\| \geq \sqrt{\dist(x, \cM)}.
$$
In this case, by~\eqref{eq:tangentnormalboundforfunction}, we have 
\begin{align*}
f(x) - f(\bar x) \leq L\dist(x, \cM) + \frac{\beta}{2} \|y - \bar x\|^2 \leq \left(L  + \frac{\beta }{2a_2^2}\right)\sigma_0^2 \leq \epsilon_{2, G}, 
\end{align*}
which is a contradiction. Therefore, there must exist $\sigma_i$ satisfying either Item~\ref{item:gkl:1} or Item~\ref{item:gkl:2} of Theorem~\ref{thm:gkl}.

Let us now fix a $\sigma_i$ satisfying either Item~\ref{item:gkl:1} or Item~\ref{item:gkl:2} of Theorem~\ref{thm:gkl}. 
Then, by Theorem~\ref{thm:gkl}, we have the bound 
$$
\sigma_i\dist(0, \partial_{\sigma_i} f(x)) \geq 8(1-\rho) (f(x) - f(\bar x)).
$$
In what follows, we will use the above bound to prove that with probability at least $1-p$, we have  
$
f(\tilde x) - f(\bar x) \leq \rho(f(x) - f(\bar x)) 
$
whenever $f(x) - f(\bar x) > \max\{ \epsilon_{1, T}, \epsilon_{2, \gridsize}\}$.

\paragraph{Contraction case 1: normal step.}
We first suppose that there exists $\sigma_i$ satisfying Item~\ref{item:gkl:1}.
In the interest of analyzing $v_{i+1} \in \partial_{\sigma_i}f(x)$, let us show that $x$, $\sigma_i$, and $T$ satisfy the conditions of Proposition~\ref{lemma:normal}: 
First $x\in B_{\deltaND}(\bar x)$ since $\deltaLS \leq \deltaND$. 
Second, by Item~\ref{item:gkl:1a} of Theorem~\ref{thm:gkl}, we have 
\begin{align}\label{eq:conditionsfornormalstep}
0 < \sigma_i \leq a_1\dist(x, \cM) \leq \done \dist(x, \cM).
\end{align}
Finally, from the definition $\dtwo = \mu/2$, it follows that $T$ satisfies the conditions of Proposition~\ref{lemma:normal}.
Therefore, with probability at least $1-p$, we have 
$$
f\left(x - \sigma_i \frac{v_{i+1}}{\|v_{i+1}\|}\right) - f(\bar x) \leq f(x) - f(\bar x)- \frac{\sigma_i}{8}\dist(0, \partial_{\sigma_i} f(x)) \leq \rho(f(x) - f(\bar x)).
$$

Next, we show that $v_{i+1}$ and $\sigma_i$ satisfy the trust region condition $\sigma_i \leq \frac{\|v_{i+1}\|}{s}$. 
To that end, note that the conditions of Lemma~\ref{lem:lbnormal} are met: We have $x \in B_{\deltaGKL}(\bar x)$ since $\deltaLS \leq \deltaGKL$. We also have bound $\sigma_i \leq \done \dist(x, \cM)$ from~\eqref{eq:conditionsfornormalstep}.
Therefore, it follows that the minimal norm Goldstein subgradient is lower bounded: $\dist(0, \partial_{\sigma_i} f(x)) \geq \dtwo$.
Consequently, we have 
$$
\sigma_i \leq a_1\dist(x, \cM) \leq \frac{\dtwo\deltaLS}{s} \leq \frac{\dist(0, \partial_{\sigma_i} f(x))\deltaLS}{s} \leq \frac{\|v_{i+1}\|}{s},
$$
where the second inequality follows from the definition of in Table~\ref{tab:parameters} and {\color{blue}the inequality $s \leq L$}; and the fourth inequality follows from the bound $\deltaLS \le 1$. Therefore, since the trust region constraint $\sigma_i \leq \frac{\|v_{i+1}\|}{s}$ is satisfied, the following holds with probability at least $1-p$:
$$
f(\tilde x) - f(\bar x)\leq f\left(x - \sigma_i \frac{v_{i+1}}{\|v_{i+1}\|}\right) - f(\bar x) \leq \rho(f(x) - f(\bar x)).
$$
 Thus, the first item of the proposition follows. 

\paragraph{Contraction case 2: tangent step.}
Next, we suppose that there exists $\sigma_i$ satisfying Item~\ref{item:gkl:2} of Theorem~\ref{thm:gkl}. 
In the interest of analyzing $u_{i} \in \partial_{\sigma_i} f(x)$, let us show that $x$, $\sigma_i$, and $T$ satisfy the conditions of Proposition~\ref{lem:goldsteinterminatedescent}:
 $x\in B_{\deltaA/2}(\bar x)$ since $\deltaLS \leq \deltaA/2$.
Second, by Item~\ref{item:gkl:2a} of Theorem~\ref{thm:gkl}, we have 
$$
\sigma_i \leq a_2\|y - \bar x\| \leq \cseven\|y - \bar x\|.
$$
Finally, by Item~\ref{item:gkl:2b}  of Theorem~\ref{thm:gkl}, we have $$\dist(x, \cM)/\sigma_i \leq 2a_2 \|y - \bar x\| \leq  \cseven  \|y - \bar x\|.$$
Therefore, since $f(x) - f(\bar x) > \epsilon_{1, T}$, \textcolor{blue}{Proposition~\ref{lem:goldsteinterminatedescent}} implies that 
$$
f\left(x - \sigma_i \frac{u_{i}}{\|u_{i}\|}\right) - f(\bar x)  \leq f(x) - f(\bar x )- \frac{\sigma_i}{8}\dist(0, \partial_{\sigma_i} f(x)) \leq \rho(f(x) - f(\bar x)).
$$

Next, we show that $u_{i}$ and $\sigma_i$ satisfy the trust region condition $\sigma_i \leq \frac{\|u_{i}\|}{s}$.  
To show this, we first note that $\sigma_i$ and $x$ satisfy the conditions of Lemma~\ref{lem:lowerboundgoldstein}: 
First $x\in B_{\deltaA/2}(\bar x)$ since $\deltaLS \leq \deltaA/2$. Second, by Item~\ref{item:gkl:2a} of Theorem~\ref{thm:gkl}, we have 
$$
\sigma_i \leq a_2\|y - \bar x\|\leq \ctwo\|y - \bar x\|.
$$
Finally, by Item~\ref{item:gkl:2}  of Theorem~\ref{thm:gkl}, we have
$$
\dist(x, \cM) \leq 2a_2\sigma_i\|y - \bar x\| \leq 2a_2^2\|y - \bar x\|^2  \leq \ctwo\|y - \bar x\|,
$$
where the third inequality follows from the bounds $\|y - \bar x\|\leq 2\deltaLS \leq 1/a_2$ and $a_2 \leq \ctwo/2$ (recall that $\cseven \leq \ctwo$).
Therefore, by Lemma~\ref{lem:lowerboundgoldstein} we have $\|u_{i}\| \geq \|P_{\tangent(y)}u_i\| \geq \cone\|y - \bar x\|$. 
Consequently, we have 
$$
\sigma_i \leq a_2\|y - \bar x\| \leq \frac{\cone\|y - \bar x\| }{s} \leq \frac{\|u_{i}\|}{s},
$$
\textcolor{blue}{where the second inequality follows from the definition of $a_2$ in Table~\ref{tab:parameters} and the inequality $s \leq L$.}
To complete the proof, observe that $v_{i+1} = u_i$:  since the sufficient descent condition is met, namely $f(x - \sigma_iu_i/\|u_i\|) \leq f(x) - \sigma\|u_i\|$, $\ngoldstein$ terminates at the first iteration. Therefore, we must have 
$$
f(\tilde x) - f(\bar x)\leq f\left(x - \sigma_i \frac{v_{i+1}}{\|v_{i+1}\|}\right) - f(\bar x) \leq \rho(f(x) - f(\bar x)),
$$
as desired.

Having proved the desired contraction $f(\tilde x) - f(\bar x) \leq \rho(f(x) - f(\bar x))$, we now turn to the bound on $\|\tilde x - x\|$.

\paragraph{Stepsize bound.}
We now no longer assume that $f(x) - f(\bar x) > \max\{\epsilon_{2, \gridsize}, \epsilon_{1, T}\}$.
We claim that we have 
\begin{align}\label{eq:stepsizeboundmaingoal}
\max_{0 \leq i \leq G-1}\left\{\sigma_i \colon \sigma_i \leq \|v_{i+1} \|/s\right\} \leq \cnine\max\left\{\epsilon_{1, T}/{\color{blue}\slb}, \epsilon_{2, G}/{\color{blue}\slb}, \sqrt{2(f(x) - f(\bar x))/\min\{{\color{blue}\slb}, \scc\}}\right\}.
\end{align}
Note that inequality~\eqref{eq:stepsizeboundmaingoal} immediately yields the second item of the proposition, since
$$
\|\tilde x - x\| \leq \max_{0 \leq i \leq G-1}\left\{\sigma_i \colon \sigma_i \leq \|v_{i+1} \|/s\right\}.
$$
To prove~\eqref{eq:stepsizeboundmaingoal}, we will apply Lemma~\ref{lem:smallsubgeneral}. 

To that end, first note that $x \in B_{\deltagrid}(\bar x)$ since $\deltaLS \leq \deltagrid.$
Next, we verify that there exists an index $i$ such that $\sigma_i$ satisfies a slightly stronger version of the assumptions of Lemma~\ref{lem:smallsubgeneral}.
Indeed, recall that by the quadratic growth condition~\ref{assum: quad}, we have the bound 
\begin{equation}\label{eqn: disttosolbound}
\dist(x, \cM) \leq \|x - \bar x\| \leq \sqrt{2(f(x) - f(\bar x))/\scc}.
\end{equation}

Thus, to satisfy the assumptions of Lemma~\ref{lem:smallsubgeneral}, we prove that there exists $i$ such that 
\begin{align}\label{eq:goalbound}
R_x:= \ceight\sqrt{2(f(x) - f(\bar x))/\scc} \leq \sigma_i \leq \deltagrid.
\end{align}
Indeed, first notice that 
$
\sigma_0 \leq \deltagrid
$
since $G \geq \lceil \log_2(1/\deltagrid) \rceil$.
Thus, if $\sigma_0 \geq R_x,$ the bound~\eqref{eq:goalbound} holds for $\sigma_0$.
If instead $\sigma_0 < R_x$, we have 
$$
\sigma_0 < R_x\leq \ceight\sqrt{2L\deltaLS/\scc} \leq \min\{\deltagrid/2, 1/4\}\leq \min\{\deltagrid, 1/2\} \leq 1/2 = \sigma_{G-1},
$$
where the second inequality follows since $\|x - \bar x\| < \deltaLS$ and $f$ is $L-$Lipschitz continuous on $B_{\deltaLS}(\bar x)$; and the third inequality follows since $\deltaLS \leq \scc D_1^2\min\{\deltagrid/2, 1/4\}^2/(2L)$.
Thus, there exists $i$ such that $\sigma_i \in [ \min\{\deltagrid/2, 1/4\},  \min\{\deltagrid, 1/2\}]$. 
Since  $ \min\{\deltagrid/2, 1/4\} \geq R_x$, inequality~\eqref{eq:goalbound} follows.

Now let $i_\ast$ be the minimal such index such that~\eqref{eq:goalbound} is satisfied for $i = i_\ast$. 
If $i_\ast \neq 0$, the bound $\sigma_{i_\ast - 1} \leq R_x$ holds. 
In particular, $\sigma_{i_\ast} \leq 2R_x$. Therefore, considering the cases $i_\ast = 0$ and $i_\ast \neq 0$ separately, we have
\begin{equation}\label{eqn: sigmaistarupperbound}
R_x \leq \sigma_{i_\ast} \leq \max\left\{\sigma_0, 2R_x\right\}.
\end{equation}
Now we bound the step length $\|x - \tilde x\|$ by considering two cases. 

First suppose that $\sigma_{i_\ast} > \|u_{i_\ast}\|/s$. In this case,~\eqref{eq:decreasingnormsub} ensures $\sigma_{i_\ast} > \|v_{i_\ast + 1}\|/s$.
Then, since $\sigma_i$ is increasing in $i$, we have 
\begin{align*}
\max_{0 \leq i \leq G-1}\left\{\sigma_i \colon \sigma_i \leq \|v_{i+1} \|/s\right\} \leq \sigma_{i_\ast} &\leq  \max\left\{\sigma_0, 2R_x\right\} \\
 &\leq \cnine\max\left\{\epsilon_{2, G}/{\color{blue}\slb},\sqrt{2(f(x) - f(\bar x))/\min\{{\color{blue}\slb}, \scc\}}\right\}.\\
 &\leq \cnine\max\left\{\epsilon_{1, T}/{\color{blue}\slb}, \epsilon_{2, G}/{\color{blue}\slb}, \sqrt{2(f(x) - f(\bar x))/\min\{{\color{blue}\slb}, \scc\}}\right\}, 
\end{align*}
which verifies~\eqref{eq:stepsizeboundmaingoal}.
We now consider the alternative case.

Next suppose that $\sigma_{i_\ast} \leq \|u_{i_\ast}\|/s$. 
We consider two subcases. 
First suppose that the following bound also holds:
\begin{align}\label{eq:descentboundgap}
\|u_{i_\ast}\| \leq \frac{8(f(x) - f(\bar x))}{\sigma_{i_\ast}}.
\end{align}
Then, since $\sigma_{i_\ast} \geq R_x$, we have 
$$
\|u_{i_\ast}\| \leq \sqrt{32\scc D_1^2 (f(x) - f(\bar x))}.
$$
Second, suppose that~\eqref{eq:descentboundgap} does not hold. 
Let us apply Lemma~\ref{lem:smallsubgeneral} \textcolor{blue}{to $\sigma = \sigma_{i_\ast}$}:
\textcolor{blue}{
\begin{align*}
\|u_{i_\ast}\| &\leq \max\left\{ \left(1 - \frac{\mu^2}{64L^2} \right)^{T/2}L, 4 \ver \max\{\sigma_0, 2R_x\}+ 4(\ver + 2\beta)\|x - \bar x\| \right\}\\
&\leq \max\left\{ \left(1 - \frac{\mu^2}{64L^2} \right)^{T/2},  8 \ver\sigma_0,  8(\ver + 2\beta)\|x - \bar x\| + 16\ver R_x \right\} \\
&\leq \max\left\{ 1\cdot \epsilon_{1, T}, 1 \cdot \epsilon_{2, G}, 8(\ver   + 2\beta + 2\ver \ceight) \sqrt{2(f(x) - f(\bar x))/\gamma} \right\} \\
&\leq s\cnine\max\{\epsilon_{1,T}/\slb, \epsilon_{2,G}/\slb,\sqrt{2(f(x) - f(\bar x))/\gamma}\},
\end{align*}
where first inequality follows from Lemma~\ref{lem:smallsubgeneral} and bound~\eqref{eqn: sigmaistarupperbound}; the second inequality follows from the bound: $\max\{a, b\} + c \leq a + b + c \leq 2\max\{a, b + c\}$ for all $a, b, c \geq 0$; 
 the third inequality follows by definition of $\epsilon_{1, T}$, $\epsilon_{2, G}$ and $R_x$, and~\eqref{eqn: disttosolbound}; and the last inequality follows since $\cnine \geq \max\{1,8(\ver   + 2\beta + 2\ver \ceight)/\slb\}$.
}

Therefore, as long as $\sigma_{i_\ast} \leq \|u_{i_\ast}\|/s$, we have 
\begin{align*}
\|u_{i_\ast}\|/s  &\leq \max\left\{\cnine\max\{\epsilon_{1,T}/\slb, \epsilon_{2,G}/\slb, \sqrt{2(f(x) - f(\bar x))/\gamma}\}, \sqrt{32\scc D_1^2(f(x) - f(\bar x))/\slb^2}\right\} \\
&\leq  \cnine\max\left\{\epsilon_{1, T}/\slb, \epsilon_{2, G}/\slb,  \sqrt{2(f(x) - f(\bar x))/\gamma}\right\} \\
&\leq \cnine\max\left\{\epsilon_{1, T}/\slb, \epsilon_{2, G}/\slb, \sqrt{2(f(x) - f(\bar x))/\min\{\slb, \scc\}}\right\},
\end{align*}
where second inequality follows from the bound $\cnine \geq 4\gamma D_1/(\slb)$; and the third inequality follows from the bound~\eqref{eqn: disttosolbound}.
To complete the proof of~\eqref{eq:stepsizeboundmaingoal}, recall that by~\eqref{eq:decreasingnormsub}, for all $j > i_\ast$, we have $\|v_{j}\| \leq \|u_{i_\ast}\|$. 
Consequently,  
\begin{align*}
&\max_{0 \leq i \leq G-1}\left\{\sigma_i \colon \sigma_i \leq \|v_{i+1} \|/s\right\} \\
&\leq \max\left\{\sigma_{i_\ast}, \|u_{i_\ast}\|/s\right\}\\
&=\|u_{i_\ast}\|/s\\
&\leq \cnine\max\left\{\epsilon_{1, T}/{\color{blue}\slb}, \epsilon_{2, G}/{\color{blue}\slb}, \sqrt{2(f(x) - f(\bar x))/\min\{{\color{blue}\slb}, \scc\}}\right\},
\end{align*}
which verifies~\eqref{eq:stepsizeboundmaingoal}.
\end{proof}

\subsection{Main convergence theorems}\label{sec:mainconvergencetheorem}

We are now ready to prove the main results of this work. 
The goal of this section is to prove that an event of the following form occurs with high probability.
\begin{definition}[$E_{k_0, q, C}$]\label{def:Event}
{\rm
For any $k_0 > 0$, $q \in (0, 1)$ and $C > 0$, let $E_{k_0, q, C}$ denote the event that for all $k \geq k_0$, we have the following two bounds:
\begin{align*}
f(x_{k}) - f(\bar x) &\leq \max\{(f(x_{k_0}) - f(\bar x))q^{k - k_0}, Cq^{k}\}; \\
\|x_k - \bar x\|^2 &\leq  \frac{2}{\scc}\max\{(f(x_{k_0}) - f(\bar x))q^{k - k_0}, Cq^{k}\}. 
\end{align*}}
\end{definition}
We will lower bound the probability of the event $E_{k_0, q, C}$ in both nonconvex and convex settings for a particular choice of $k_0,$ $q$, and $C$.
In the nonconvex setting, our result will lower bound the conditional probability of $E_{k_0, q, C}$, given that iterate $x_{k_0}$ enters a sufficiently small neighborhood of $\bar x$.
To prove the result, we will simply iterate the one-step improvement bound of Proposition~\ref{prop:onestep}.
In the convex setting, we will lower bound the unconditional probability of $E_{k_0, q, C}$. 
To prove this result, we will combine the conditional result with the sublinear convergence guarantee of Theorem~\ref{thm:sublinear}.

Before turning to the proofs, we introduce the main parameters that are common to both the nonconvex and convex settings.
\begin{table}[ht]
\centering
\renewcommand{\arraystretch}{1.5}
\begin{tabular}{@{}ll@{}}
\toprule
Parameter & Definition \\
\midrule
$C'$ & $\frac{2048L^2}{\mu^2}$ \\
$C$ & $\max\left\{\frac{(\cseven^2\lipf + \lipg)\lipf}{\cfour}, L, \frac{L}{\min\{1, a_1\}}  + \frac{\beta}{2\min\{1,a_1\}a_2^2}, 8 \ver \right\}$ \\
$q$ & $\max\left\{\rho, \sqrt{1-\frac{3\sharpc^2}{256\lipf^2}}, \frac{1}{2}\right\}$ \\
$\deltaNCVX$ & $\min\left\{ \frac{\deltaLS}{4}, \frac{\deltaLS^2 \min\{\slb, \scc\}(1-q^{1/2})^2}{32L\cnine^2}, \frac{\deltaLS \slb(1-q)}{4L\cnine} \right\}$ \\
$K_0$ & $\left\lceil  \max\left\{\log_q\left(\frac{\deltaLS^2 \min\{\slb, \scc\}(1-q^{1/2})^2}{32C\cnine^2}\right),\log_q\left(\frac{\deltaLS \slb(1-q)}{4C\cnine}\right), \log_2\left(\frac{1}{\deltagrid}\right) \right\} \right\rceil$ \\
\bottomrule
\end{tabular}
\caption{Parameters used throughout Section~\ref{sec:mainconvergencetheorem}; see also Tables~\ref{tab:constants} and~\ref{tab:parameters}.}\label{table:main_convergence_theorems}
\end{table}

\subsubsection{The nonconvex setting}
The following theorem is our main convergence theorem in the nonconvex setting. 

\begin{thm}[Main Theorem: Nonconvex Setting]\label{thm:maintheorem}
{\color{blue} Assume the assumptions outlined at the start of Section~\ref{sec:locallinear} are satisfied.
Recall the notation of Table~\ref{table:main_convergence_theorems}.} 
{\color{blue} Fix a failure probability $p \in (0, 1)$ and an index $k_0 \geq \max\left\{K_0, C'\log\left(C'/p\right)\right\}$. Suppose $P(x_{k_0} \in B_{\deltaNCVX }(\bar x)) > 0$. Then,}
$$
P(E_{k_0, q, C} \mid x_{k_0} \in B_{\deltaNCVX }(\bar x)) \geq 1-p.
$$
\end{thm}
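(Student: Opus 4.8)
The plan is to condition on the event $\{x_{k_0}\in B_{\deltaNCVX}(\bar x)\}$ and then iterate the one-step improvement bound of Proposition~\ref{prop:onestep} indefinitely, tying together two mutually dependent inductive claims with a single union bound over per-step failure events. Concretely, I would work under the conditional measure $\tilde P := P(\,\cdot \mid x_{k_0}\in B_{\deltaNCVX}(\bar x))$, which is well defined since $P(x_{k_0}\in B_{\deltaNCVX}(\bar x))>0$. For each $k\ge k_0$ let $\cA_k$ be the event that both conclusions of Proposition~\ref{prop:onestep} hold at the call $x_{k+1}=\linesearch(x_k,g_k,\max\{\|g_k\|,\slb\},G_k,T_k)$, run with a per-step failure probability $p_k$ to be chosen. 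Since $\max\{\|g_k\|,\slb\}\in[\slb,L]$ by~\eqref{eq:sbound_final}, and $G_k\ge k+1>\lceil\log_2(1/\deltagrid)\rceil$ (because $k_0\ge K_0\ge\lceil\log_2(1/\deltagrid)\rceil$), the hypotheses of Proposition~\ref{prop:onestep} are satisfied whenever $x_k\in B_{\deltaLS}(\bar x)$, provided $T_k\ge\lceil 256L^2/\mu^2\rceil\lceil 2\log(1/p_k)\rceil$; I would verify this budget constraint at the very end.

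The heart of the argument is a simultaneous induction. On the event $\bigcap_{k\ge k_0}\big(\cA_k\cup\{x_k\notin B_{\deltaLS}(\bar x)\}\big)$ I claim that for every $k\ge k_0$ we have both (i) $x_k\in B_{\deltaLS}(\bar x)$ and (ii) $f(x_k)-f(\bar x)\le \max\{(f(x_{k_0})-f(\bar x))q^{k-k_0},Cq^{k}\}$. The base case $k=k_0$ is immediate from $\deltaNCVX\le\deltaLS/4$. For the step, (i) at stage $k$ activates Proposition~\ref{prop:onestep}, so $\cA_k$ holds; its Item~1, combined with $\rho\le q$ and the elementary bounds $\epsilon_{1,T_k}\le Cq^{T_k}\le Cq^{k+1}$ and $\epsilon_{2,G_k}\le Cq^{G_k}\le Cq^{k+1}$ (which follow from $q\ge\max\{\sqrt{1-3\mu^2/(256L^2)},1/2\}$ and the choice of $C$ as an upper bound for every coefficient appearing in $\epsilon_{1,\cdot}$ and $\epsilon_{2,\cdot}$), gives $f(x_{k+1})-f(\bar x)\le\max\{q(f(x_k)-f(\bar x)),Cq^{k+1}\}$, which propagates (ii). The matching distance bound required by $E_{k_0,q,C}$ then follows for free from quadratic growth (Item~\ref{prop:proof:consequencesofA:item:quadgrowth} of Proposition~\ref{prop:proof:consequencesofA}), since $x_k\in B_{\deltaLS}(\bar x)\subseteq B_{2\deltaA}(\bar x)$ yields $\tfrac{\scc}{2}\|x_k-\bar x\|^2\le f(x_k)-f(\bar x)$.

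Establishing (i) at stage $k+1$ is the crux, and it is where the definitions of $\deltaNCVX$ and $K_0$ are spent. Applying Item~2 of Proposition~\ref{prop:onestep} at each $j$ with $k_0\le j\le k$, together with (ii), every step obeys $\|x_{j+1}-x_j\|\le\cnine\max\{\epsilon_{1,T_j}/\slb,\epsilon_{2,G_j}/\slb,\sqrt{2(f(x_j)-f(\bar x))/\min\{\slb,\scc\}}\}$. I would bound the three terms by geometric sequences: the two $\epsilon$-terms by $Cq^{j+1}/\slb$, and the square-root term by $\sqrt{2/\min\{\slb,\scc\}}\,(\sqrt{f(x_{k_0})-f(\bar x)}\,q^{(j-k_0)/2}+\sqrt{C}\,q^{j/2})$, using $f(x_{k_0})-f(\bar x)\le L\deltaNCVX$ from Lipschitzness. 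Summing from $j=k_0$ to $\infty$ and adding the base displacement $\|x_{k_0}-\bar x\|\le\deltaNCVX$, one finds that each contribution is at most $\deltaLS/4$: the radius thresholds in $\deltaNCVX$ (in particular $\deltaNCVX\le\deltaLS/4$ and $\deltaNCVX\le\deltaLS^2\min\{\slb,\scc\}(1-q^{1/2})^2/(32L\cnine^2)$) control the initial displacement and the initial-gap-dependent square-root series, while the two logarithmic terms in $K_0$ — precisely $\log_q$ of $\deltaLS^2\min\{\slb,\scc\}(1-q^{1/2})^2/(32C\cnine^2)$ and $\deltaLS\slb(1-q)/(4C\cnine)$ — force $q^{k_0}$ small enough to control the remaining $C$-dependent constant series. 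Hence $\|x_{k+1}-\bar x\|\le\|x_{k_0}-\bar x\|+\sum_{j=k_0}^{k}\|x_{j+1}-x_j\|<\deltaLS$, which closes the induction and shows $E_{k_0,q,C}$ occurs on the good event.

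Finally, I would choose the per-step failure probabilities to decay geometrically in the budget, e.g. $p_k=\exp(-T_k/(2\lceil 256L^2/\mu^2\rceil))$, so that the budget hypothesis $T_k\ge\lceil 256L^2/\mu^2\rceil\lceil 2\log(1/p_k)\rceil$ holds automatically (recall $T_k\ge k+1$) while $\sum_{k\ge k_0}p_k\le p$. Bounding the geometric tail of $\sum p_k$ reduces the summability requirement to $k_0\gtrsim\tfrac{L^2}{\mu^2}\log\tfrac{L^2}{\mu^2 p}$, which is guaranteed by the hypothesis $k_0\ge C'\log(C'/p)$ with $C'=2048L^2/\mu^2$, the constant factor over $256L^2/\mu^2$ absorbing the normalization of the sum and the ceilings. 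A union bound then gives $\tilde P\big(\bigcup_{k\ge k_0}(\cA_k^{c}\cap\{x_k\in B_{\deltaLS}(\bar x)\})\big)\le p$, and on the complement the induction yields $E_{k_0,q,C}$, so $P(E_{k_0,q,C}\mid x_{k_0}\in B_{\deltaNCVX}(\bar x))\ge 1-p$. I expect the main obstacle to be the intertwined nature of (i) and (ii): Proposition~\ref{prop:onestep} may only be invoked once $x_k$ is known to lie in $B_{\deltaLS}(\bar x)$, yet that containment is established only by summing the very displacement bounds the proposition supplies, so the series of step lengths must be controlled uniformly in $k$ — exactly the role of the calibrated constants $\deltaNCVX$ and $K_0$. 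A secondary subtlety is the measure-theoretic bookkeeping of conditioning on $\{x_{k_0}\in B_{\deltaNCVX}(\bar x)\}$, which is resolved by noting this event is $\cF_{k_0}$-measurable, so the conditional one-step bounds at each $k\ge k_0$ survive under $\tilde P$ via the tower property.
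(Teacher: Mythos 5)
Your proposal is correct and follows essentially the same route as the paper's proof: iterate Proposition~\ref{prop:onestep}, show $\epsilon_{1,T_k},\epsilon_{2,G_k}\leq Cq^{k+1}$ from the definitions of $C$ and $q$, keep the iterates inside $B_{\deltaLS}(\bar x)$ by summing the geometric step-length bounds calibrated by $\deltaNCVX$ and $K_0$, recover the distance bound from quadratic growth, and absorb the per-step failures $\exp(-T_k/C')$ using $k_0\geq C'\log(C'/p)$. The only differences are cosmetic bookkeeping — you organize the failure events via a union bound over $\cA_k^c\cap\{x_k\in B_{\deltaLS}(\bar x)\}$ and argue deterministically on the complement, whereas the paper telescopes the conditional probabilities $P(A_{k+1}\mid A_k\cap F_{k_0})$, and your specific choice of $p_k$ needs the same constant slack in $C'$ (which you note) to survive the ceilings in the budget hypothesis.
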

\begin{proof}
We begin with preliminary notation and bounds.
Fix $k_0 \geq \max\{K_0, C'\log(C'/p)\}$ and for all $k \geq k_0$, define the quantity 
$$
R_{k} := \max\{(f(x_{k_0}) - f(\bar x))q^{k - k_0}, Cq^{k}\}.
$$
Note that whenever $x_{k_0} \in B_{\deltaNCVX }(\bar x)$ we have the bound 
\begin{align}\label{eq:Lipschitzboundfinal}
R_k \leq \max\{L\deltaNCVX q^{k - k_0}, Cq^{k}\}, 
\end{align}
since $f$ is $L$-Lipschitz continuous on $B_{\delta}(\bar x)$.

Next, we prove that 
\begin{align}\label{eq:epsboundneededforfinal}
\max\{\epsilon_{1, T_k}, \epsilon_{2, G_k}\} \leq R_{k+1} \qquad \text{for all $k \geq 0$.}
\end{align}
Indeed, beginning with $\epsilon_{1, T_k}$, we have
\begin{align*}
\epsilon_{1, T_k} &= \max\left\{\frac{(\cseven^2\lipf + \lipg)\lipf}{\cfour}\left(1-\frac{3\sharpc^2}{256\lipf^2}\right)^{T_k/2}, \left(1 - \frac{\mu^2}{64L^2} \right)^{T_k/2}L\right\} \\
&\leq C \max\left\{\left(1-\frac{3\sharpc^2}{256\lipf^2}\right)^{\frac{T_k }{2}}, \left(1 - \frac{\mu^2}{64L^2} \right)^{\frac{T_k }{2}}\right\} \\
&\leq Cq^{k+1} \leq R_{k+1}, 
\end{align*}
where the first and second inequalities follow from the definitions of $C$ and $q$ together with the lower bound $T_k \geq k+1$. Turning to $\epsilon_{2, G_k}$, we have
\begin{align*}
\epsilon_{2, G_k} =  \max\left\{\frac{L}{\min\{1, a_1\}}  + \frac{\beta}{2\min\{1,a_1\}a_2^2}, 8 \ver, {\color{blue}L}\right\}2^{-G_k } 
\leq C 2^{-G_k} \leq Cq^{k+1} \leq R_{k+1}, 
\end{align*}
where the first and second inequalities follow from the definition of $C$ and $q$ together with the lower bound $G_k \geq k+1$. Thus~\eqref{eq:epsboundneededforfinal} holds.

Finally,  we analyze the quantity   
$$
D_{k_0, \deltaNCVX } :=  \sum_{k=k_0}^{\infty} \cnine\max\left\{\sqrt{2R_k/\scc' },R_{k+1}/{\color{blue}\slb}\right\}  \qquad \text{ where $\scc' := \min\{{\color{blue}\slb}, \gamma\}$.}
$$
We claim in particular that 
\begin{align}\label{eq:stayinneighborhodprebound}
D_{k_0, \deltaNCVX} + \deltaNCVX  \leq \deltaLS/2.
\end{align}
Since $\deltaNCVX  \leq \deltaLS/4$, it suffices to prove $D_{k_0, \deltaNCVX } \leq \deltaLS/4.$ 
To that end, we have
\begin{align*}
D_{k_0, \deltaNCVX } &= \sum_{k=k_0}^{\infty} \cnine\max\left\{\sqrt{2R_k/\scc' },R_{k+1}/{\color{blue}\slb}\right\} \\
&\leq \sum_{k=k_0}^{\infty} \cnine \max\left\{\sqrt{2\max\{L\deltaNCVX q^{k - k_0}, Cq^{k}\}/\scc'}, \max\{L\deltaNCVX q^{k - k_0}/\scc', Cq^{k}\}/{\color{blue}\slb}\right\} \\
&\leq \cnine\max\left\{\frac{\sqrt{2L\deltaNCVX }}{\sqrt{\scc'}(1-q^{1/2})}, \frac{\sqrt{2Cq^{k_0}}}{\sqrt{\scc'}(1 - q^{1/2})}, \frac{L\deltaNCVX }{{\color{blue}\slb}(1-q)}, \frac{Cq^{k_0}}{{\color{blue}\slb}(1 - q)}\right\}\\
&\leq \frac{\deltaLS}{4}, 
\end{align*}
where the first inequality follows from the bounds~\eqref{eq:Lipschitzboundfinal} and the bound $R_{k+1} \leq R_k$; the second inequality follows by summing the infinite series; and the third inequality follows from the definitions of $K_0$ and $\deltaNCVX $ together with the bound $k_0 \geq K_0$ . 
This proves~\eqref{eq:stayinneighborhodprebound}.

We now turn to the proof. Consider the following sequence defined for all $k \geq k_0$: 
$$
b_{k} := 
\deltaNCVX  + \sum_{j=k_0}^{k-1} \cnine\max\left\{\sqrt{2R_k/\scc' },R_{k+1}/{\color{blue}\slb}\right\}.  
$$
Note that~\eqref{eq:stayinneighborhodprebound} ensures that $b_k \leq \deltaLS/2$ for all $k \geq k_0$.
Now, define the event  
$$
F_{k_0} := \{x_{k_0} \in B_{\deltaNCVX }(\bar x)\}.
$$
In addition, define the following decreasing sequence of events 
$$
A_k := \bigcap_{j=k_0}^k\left\{f(x_{j}) - f(\bar x) \leq R_j \text{ and }  \|x_j - \bar x\| \leq b_j \right\}.
$$
We claim that
\begin{align}\label{eq:inductiveprobability}
P(A_{k+1}\mid A_k\cap F_{k_0}) \geq 1- \exp(-T_k/C') \qquad \text{ for all $k \geq k_0$}.
\end{align}
Indeed, Proposition~\ref{prop:onestep} implies that conditioned on $A_k\cap F_{k_0}$, the following four inequalities are satisfied with probability at least $1- \exp(-T_k/C')$:
\begin{enumerate}
\item $f(x_k) - f(\bar x) \leq R_k$
\item $\|x_k - \bar x\| \leq b_k$; 
\item $\|x_{k+1} - x_k\| \leq \cnine\max\left\{\epsilon_{1, T_k}/{\color{blue}\slb}, \epsilon_{2, G_k}/{\color{blue}\slb}, \sqrt{2(f(x_k) - f(\bar x))/\scc'}\right\}$; 
\item $f(x_{k+1}) - f(\bar x) \leq \max\{\rho(f(x_{k}) - f(\bar x)),\epsilon_{1, T_k}, \epsilon_{2, G_k}\}$.
\end{enumerate}
{\color{blue}(Note that in applying the Proposition~\ref{prop:onestep}, we use the scalar $s = \max\{\|g_k\|, \sscale\|g_0\|\}$ and the inclusion $s \in [\slb, L]$, which was proved~\eqref{eq:sbound_final}.)}
Thus, the bound~\eqref{eq:inductiveprobability} will follow by induction if we can prove that whenever the above four inequalities hold, we have $\|x_{k+1} - \bar x\| \leq b_{k+1}$ and $f(x_{k+1}) - f(\bar x) \leq  R_{k+1}$.

To that end, we first prove $\|x_{k+1} - \bar x\| \leq b_{k+1}$. 
Indeed,
\begin{align}
\|x_{k+1} - \bar x\| \notag &\leq  \|x_{k+1} - x_k\| + \|x_k - \bar x\|\notag \\
&\leq \cnine\max\left\{\epsilon_{1, T_k}/{\color{blue}\slb}, \epsilon_{2, G_k}/{\color{blue}\slb}, \sqrt{2(f(x_k) - f(\bar x))/\scc'}\right\} +  b_{k}\notag \\
&\leq \cnine\max\left\{\sqrt{2R_k/\scc'},R_{k+1}/{\color{blue}\slb}\right\}   +  b_{k} = b_{k+1},
\end{align}
\textcolor{blue}{where the second inequality follows from Proposition~\ref{prop:onestep}; and the third inequality follows from the bound~\eqref{eq:epsboundneededforfinal}.}
Next, we prove the bound on $f(x_{k+1}) - f(\bar x) \leq  R_{k+1}$. Indeed,  
\begin{align*}
f(x_{k+1}) - f(\bar x) &\leq \max\{\rho(f(x_{k}) - f(\bar x)),\epsilon_{1, T_k}, \epsilon_{2, G_k}\} \\
&\leq \max\{\rho \max\{(f(x_{k_0}) - f(\bar x))q^{k - k_0}, Cq^{k}\} , \epsilon_{1, T_k}, \epsilon_{2, G_k}\}\\
&\leq  R_{k+1}, 
\end{align*}
where the final inequality follows from~\eqref{eq:epsboundneededforfinal} and the bound $\rho \leq q$.
Consequently, the bound~\eqref{eq:inductiveprobability} holds. Moreover, due to the bound $T_k \geq k+1$, we have 
\begin{align}\label{eq:conclusionofprop}
P(A_{k+1}\mid A_k\cap F_{k_0}) \geq 1- \exp(-T_k/C') \geq 1- \exp(-(k+1)/C').
\end{align}

Now we relate $A_k$ to $E_{k_0, q, C}$.  
To that end, by the conditional law of total probability, for all $k \geq k_0$, we have
\begin{align*}
P(A_{k+1} \mid F_{k_0}) \geq P(A_{k+1} \mid A_{k} \cap F_{k_0})P(A_{k} \mid F_{k_0}) \geq P(A_{k} \mid F_{k_0})- \exp(-(k+1)/C').
\end{align*}
Therefore, for all $k \geq k_0$, we have 
$$
P(A_{k} \mid F_{k_0}) \geq P(A_{k_0} \mid F_{k_0}) - \sum_{j=k_0+1}^\infty \exp(-j/C') = 1 -  \frac{\exp(-\frac{k_0+1}{C'})}{1 - \exp(-\frac{1}{C'})} \geq 1- p , 
$$
where the equality follows since $P(A_{k_0} \mid F_{k_0}) = 1$; and the final inequality follows by definition of $k_0 \geq C'\log(C'/p)$. 
Now recall that $\sup_{k \geq k_0} b_k \leq \deltaLS/2$. 
Therefore, defining the event 
$$E_{k_0, q, C}' := \{f(x_k) - f(\bar x) \leq R_k \text{ for all $k \geq k_0$ and } x_k \in B_{\deltaLS}(\bar x)\},$$ 
 we have  
$$
P(E_{k_0, q, C}' \mid F_{k_0}) \geq \lim_{k \rightarrow \infty} P(A_{k} \mid F_{k_0}) \geq 1 -  p.
$$
Next, recall that since $\deltaLS \leq \deltaA$,   the quadratic growth bound~\ref{assum: quad} 
$$\|x_k - \bar x \|^2 \leq \frac{2}{\scc} (f(x_k) - f(\bar x)) \leq \frac{2}{\scc}R_{k} 
$$
holds for every $k \geq k_0$  within the event $E_{k_0, q, C}'$. Thus, $E_{k_0, q, C} \supseteq E_{k_0, q, C}'$. 
Therefore, we have 
$$
P(E_{k_0, q, C} \mid F_{k_0}) \geq P(E_{k_0, q, C}' \mid F_{k_0}) \geq 1 -  p,
$$
as desired.
\end{proof}

\subsubsection{The convex setting}

Now we turn to the convex setting. 
Our goal is to prove a lower bound on $P(E_{k_0, q, C})$ for $q$ and $C$ chosen as in Table~\ref{table:main_convergence_theorems} and all sufficiently large $k_0$.
Before stating the result, we recall a simple fact about convex functions satisfying Assumption~\ref{assumption:mainfinal}. {\color{blue}A similar result appears in~\cite[Section 2.4]{bolte2017error}, but for completeness we provide a proof in Appendix~\ref{sec:proof:sec:getinneighborhood}.}
\begin{lem}\label{sec:getinneighborhood}
In addition to the assumption set out at the start of the section, suppose that function $f$ is convex. 
Then for all $ a > 0$, we have 
$$
\{x \in \RR^d \colon f(x) - f(\bar x) \leq a\} \subseteq \overline B_{r_a}(\bar x) \qquad \text{ where } r_a := \max\left\{\frac{2a}{\scc\delta_A}, \sqrt{\frac{2a}{\scc}} \right\}.
$$
In particular, $f$ has bounded sublevel sets. 
\end{lem}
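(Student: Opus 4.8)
The plan is to leverage the global quadratic growth that convexity forces out of the purely local bound. Recall from Item~\ref{prop:proof:consequencesofA:item:quadgrowth} of Proposition~\ref{prop:proof:consequencesofA} that the quadratic growth inequality $f(x) - f(\bar x) \geq \frac{\scc}{2}\|x - \bar x\|^2$ holds throughout $\overline B_{2\deltaA}(\bar x)$. Fix $a > 0$ and a point $x$ with $f(x) - f(\bar x) \leq a$. I would split into two cases according to whether $x$ lies inside $\overline B_{\deltaA}(\bar x)$.

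In the first case, $\|x - \bar x\| \leq \deltaA \leq 2\deltaA$, so the quadratic growth bound applies directly and yields $\frac{\scc}{2}\|x - \bar x\|^2 \leq a$, i.e.\ $\|x - \bar x\| \leq \sqrt{2a/\scc} \leq r_a$.

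In the second case, $\|x - \bar x\| > \deltaA$, and here convexity does the essential work. I would consider the intermediate point $z := \bar x + \deltaA \frac{x - \bar x}{\|x - \bar x\|}$ on the segment $[\bar x, x]$, which satisfies $\|z - \bar x\| = \deltaA \leq 2\deltaA$. Writing $z = (1 - t)\bar x + t x$ with $t = \deltaA/\|x - \bar x\| \in (0, 1)$ and invoking convexity of $f$, I obtain $f(z) - f(\bar x) \leq t\,(f(x) - f(\bar x)) \leq \frac{\deltaA}{\|x - \bar x\|}\, a$. On the other hand, since $z \in \overline B_{2\deltaA}(\bar x)$, quadratic growth gives $f(z) - f(\bar x) \geq \frac{\scc}{2}\deltaA^2$. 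Chaining these two inequalities and rearranging produces $\|x - \bar x\| \leq \frac{2a}{\scc \deltaA} \leq r_a$.

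Taking the larger of the two case bounds shows $\|x - \bar x\| \leq r_a$ whenever $f(x) - f(\bar x) \leq a$, which is exactly the claimed inclusion; boundedness of sublevel sets then follows immediately, as each is contained in the closed ball $\overline B_{r_a}(\bar x)$. I do not anticipate a genuine obstacle: the only point requiring care is to choose the case-split threshold (namely $\deltaA$) so that the intermediate point $z$ stays inside the region $\overline B_{2\deltaA}(\bar x)$ on which the local quadratic growth of Proposition~\ref{prop:proof:consequencesofA} is guaranteed, which the construction above ensures.
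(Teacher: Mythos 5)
Your proof is correct and follows essentially the same route as the paper: both arguments handle points inside $\overline B_{\deltaA}(\bar x)$ by direct quadratic growth, and for points outside, both pass to the intersection of the segment $[\bar x, x]$ with the sphere of radius $\deltaA$, combining convexity (Jensen) with local quadratic growth at that boundary point to get $\|x-\bar x\| \leq 2a/(\scc\deltaA)$. The only cosmetic difference is that the paper packages the two cases into the single global inequality $f(x)-f(\bar x) \geq \frac{\scc}{2}\min\{\deltaA,\|x-\bar x\|\}\|x-\bar x\|$ before deducing the inclusion, whereas you bound $\|x - \bar x\|$ directly in each case.
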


We now turn to our main theorem. 
\begin{thm}[Main Theorem: Convex setting]\label{thm:mainconvexsetting}
{\color{blue} Assume the assumptions of Section~\ref{sec:rapid_local} are satisfied. Recall the notation of Table~\ref{table:main_convergence_theorems}. In addition,} suppose that function $f$ is convex.
Consider the bounded set 
$$
S: = \{x+ u \colon f(x) \leq f(x_0) \text{ and } u \in \overline B(x)\}.
$$
Let $L'$ be a Lipschitz constant of $f$ on $S$.  Define the constants
$$
a := \min\left\{\frac{\scc\deltaA\deltaNCVX}{4}, \frac{\scc\deltaNCVX^2}{8}\right\}; \qquad\text{ and } \qquad b := \inf_{\alpha\in (0, 1)} \frac{\left(64L'\sqrt{\frac{2}{\alpha}}\right)^{\frac{2}{(1-\alpha)}}}{\left(\frac{a}{\mathrm{diam}(S)}\right)^{\frac{2\alpha}{(1-\alpha)}}}.
$$
Finally, define
$$
K_1 := \left\lceil \frac{4\mathrm{diam}^2(S)}{a^2}\min\left\{16^2(f(x_0) - \inf f)^2,\frac{b}{4}, 2048 L'^2\log\left(\frac{2}{p}\right), 128(L')^2\right\} + \frac{(4L')^2}{a^2}\right\rceil.
$$
Then, for every failure probability $p \in (0, 1)$, we have
$$
P(E_{k_0, q, C}) \geq 1 - p \qquad \text{for all $k_0 \geq \max\left\{K_0, C'\log\left(\frac{2C'}{p}\right), 2K_1 - 1\right\}$.}
$$
\end{thm}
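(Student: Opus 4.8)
The plan is to bridge the global sublinear guarantee of Theorem~\ref{thm:sublinear} with the local conditional guarantee of Theorem~\ref{thm:maintheorem} by a two-phase argument and a union bound, splitting the failure budget $p$ into two halves of size $p/2$. Set $F := \{x_{k_0} \in B_{\deltaNCVX}(\bar x)\}$ and let $A$ denote the event $\{f(x_{2K_1-1}) - \inf f \le a\}$. The first thing I would record is the containment $A \subseteq F$. The constant $a = \min\{\scc\deltaA\deltaNCVX/4,\ \scc\deltaNCVX^2/8\}$ is chosen precisely so that the radius $r_a$ of Lemma~\ref{sec:getinneighborhood} satisfies $r_a \le \deltaNCVX/2$: the first branch controls the term $2a/(\scc\deltaA)$ and the second controls $\sqrt{2a/\scc}$, so $\{x : f(x) - f(\bar x) \le a\} \subseteq \overline B_{r_a}(\bar x) \subseteq B_{\deltaNCVX}(\bar x)$. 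Since $f$ is convex, $\inf f = f(\bar x)$; and because $\algname$ is a descent method with $2K_1 - 1 \le k_0$, the bound $f(x_{2K_1-1}) - f(\bar x) \le a$ forces $f(x_{k_0}) - f(\bar x) \le a$, hence $x_{k_0} \in B_{\deltaNCVX}(\bar x)$, i.e. $A \subseteq F$.

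Next I would establish $P(A) \ge 1 - p/2$ via Theorem~\ref{thm:sublinear} applied with window parameter $K = K_1$ and failure probability $p/2$. That theorem bounds $f(x_{2K_1-1}) - \inf f$ through the three-term maximum, minimized over the grid index $i \le G$ (equivalently over $\sigma_i = 2^{-(G-i)}$). Using $T_k, G_k \ge k+1$ we get $T, G \ge K_1 + 1$ on the window $[K_1, 2K_1-1]$, while $f(x_{K_1}) - \inf f \le f(x_0) - \inf f$ by descent. I would then choose an index $i$ whose stepsize $\sigma_i$ balances $8(f(x_{K_1}) - \inf f)/(\sigma_i K_1)$ against $\sqrt{128}L\sigma_i$ and $2L\sigma_i$, controlling the probabilistic middle term $16L\sqrt{2\log(K_1 G/p)}/\sqrt{T}$ through $T \ge K_1 + 1$. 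Requiring each competing term to fall below $a$ produces the entries inside the definition of $K_1$ (with $\diam(S)$ playing the role of the sublevel diameter $D$ and $L'$ the Lipschitz constant on $S$), while the constant $b$ and the infimum over $\alpha \in (0,1)$ arise from optimizing this balance under quadratic growth, i.e. bootstrapping the recursion $f(x_{2K-1}) - \inf f \lesssim \diam(S)\sqrt{L(f(x_K) - \inf f)/K}$ across doubling windows.

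With both phases in hand I would close by a union bound. Since $k_0 \ge \max\{K_0,\ C'\log(2C'/p)\}$ and $P(F) \ge P(A) \ge 1 - p/2 > 0$, Theorem~\ref{thm:maintheorem} (applied with failure probability $p/2$) gives $P(E_{k_0,q,C}^c \mid F) \le p/2$, hence $P(E_{k_0,q,C}^c \cap F) \le (p/2)P(F) \le p/2$. Decomposing and using $F^c \subseteq A^c$ (from $A \subseteq F$),
$$
P(E_{k_0,q,C}^c) \le P(E_{k_0,q,C}^c \cap F) + P(F^c) \le \tfrac{p}{2} + P(A^c) \le \tfrac{p}{2} + \tfrac{p}{2} = p,
$$
so $P(E_{k_0,q,C}) \ge 1 - p$, as required.

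I expect the main obstacle to be the first phase: verifying that the sublinear estimate of Theorem~\ref{thm:sublinear} actually reaches accuracy $a$ within $2K_1 - 1$ iterations under the stated failure budget. This demands the delicate selection of the grid index $i$ to balance the three competing terms, careful tracking of the constants $L$, $L'$, $\diam(S)$ and the logarithmic factor across the window, and—if one wants the sharper count encoded by $b$—a recursive exploitation of quadratic growth. By contrast, the transfer into the neighborhood (via Lemma~\ref{sec:getinneighborhood} and monotone descent) and the concluding union bound are routine.
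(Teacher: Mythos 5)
Your proposal is correct and follows essentially the same route as the paper: both split the failure budget into $p/2$ for Theorem~\ref{thm:maintheorem} (conditional on $x_{k_0}\in B_{\deltaNCVX}(\bar x)$) and $p/2$ for Theorem~\ref{thm:sublinear} over the window $[K_1,2K_1-1]$, then use Lemma~\ref{sec:getinneighborhood} with exactly the stated choice of $a$ to convert the function-gap bound into membership in $B_{\deltaNCVX}(\bar x)$, combining the two events at the end (your union bound on complements is an equivalent variant of the paper's product bound $P(E)\ge P(E\mid F)P(F)\ge(1-p/2)^2\ge 1-p$). Your explicit use of the descent property to pass from $f(x_{2K_1-1})-f(\bar x)\le a$ to $f(x_{k_0})-f(\bar x)\le a$ for all $k_0\ge 2K_1-1$ is a point the paper leaves implicit, and is a welcome clarification rather than a deviation.
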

\begin{proof}
Theorem~\ref{thm:maintheorem} shows that 
\begin{align}\label{eq:dimdep1}
P(E_{k_0, q, C} \mid x_{k_0} \in  B_{\deltaNCVX}(\bar x)) \geq 1 - p/2 \qquad \text{ for all $k_0 \geq \max\left\{K_0, C'\log\left(\frac{2C'}{p}\right)\right\}$.}
\end{align}
We claim that 
\begin{align}\label{eq:lowerboundgetinneighborhood}
P(x_{k_0} \in B_{\deltaNCVX}(\bar x)) \geq 1- p/2 \qquad \text{ for all $k_0 \geq 2K_1 - 1$.}
\end{align}
Note that this yields the proof, since in that case
$$
P(E_{k_0, q, C}) \geq P(E_{k_0, q, C} \mid x_{k_0} \in U) P(x_{k_0} \in B_{\deltaNCVX}(\bar x)) \geq 1 + p^2/4 - p \geq 1-p,
$$
for all $k_0 \geq \max\left\{K_0, C'\log\left(\frac{2C'}{p}\right), 2K_1 - 1\right\}$.

Observe that~\eqref{eq:lowerboundgetinneighborhood} will follow if 
\begin{align}\label{eq:lowerboundgetinneighborhood2}
P(f(x_{k_0}) - f(\bar x) \leq a) \geq 1- p/2 \qquad \text{ for all $k_0 \geq 2K_1 - 1$.}
\end{align}
Indeed, by Lemma~\ref{sec:getinneighborhood}, we have. 
$$
\{x \in \RR^d \colon f(x) - f(\bar x) \leq a\} \subseteq \overline B_{\deltaNCVX/2}(\bar x) \subseteq B_{\deltaNCVX}(\bar x).
$$
To prove~\eqref{eq:lowerboundgetinneighborhood2}, we apply Theorem~\ref{thm:sublinear}.
To that end, note that $\{x\in \RR^d \colon f(x) \leq f(x_0)\}$ and the widened sublevel set $S$ are indeed bounded, due to Lemma~\ref{sec:getinneighborhood}.
Therefore {\color{blue}$D$ and} the Lipschitz constant $L'$ of $f$ on $S$ are finite.
Now observe $G := \min_{K_1 \leq k \leq 2K_1 - 1}\{ G_k\} \geq K_1$ since $G_k \geq k+1$ for all $k$.
Thus, there exists $i \leq G$ such that 
$$(1/2)K_1^{-1/2}\leq \sigma_i \leq K_1^{-1/2}.$$
Therefore, applying Theorem~\ref{thm:sublinear} (in particular~\eqref{eq:functiongapboundsublinear}) with this $\sigma_i$, we have 
\begin{align}\label{eq:dimdep2}
f(x_{2K_1 -1}) - f(\bar x) \leq {\color{blue}D}\max\left\{\frac{16(f(x_{K_1}) - \inf f)}{ K_1^{1/2}}, \frac{16L'\sqrt{2 \log(2K_1^2/p)}}{K_1^{1/2}}, \frac{\sqrt{128}L'}{K_1^{1/2}}\right\} +  \frac{2L'}{K_1^{1/2}}
\end{align}
with probability at least $ 1- p/2$.  Thus, to complete the proof, we show that the left-hand side of~\eqref{eq:dimdep2} is smaller than $a$.
Indeed, it is straightforward to check that
$$
 \max\left\{\frac{2L'}{K_1^{1/2}}, \frac{16{\color{blue}D}(f(x_{K_1}) - \inf f)}{ K_1^{1/2}},\frac{\sqrt{128}{\color{blue}D}L'}{K_1^{1/2}}  \right\} \leq \frac{a}{2}.
$$
Thus, the proof will follow if
\begin{align}\label{eq:finalneededbound}
\frac{16{\color{blue}D}L'\sqrt{2 \log(2K_1^2/p)}}{K_1^{1/2}} \leq \frac{a}{2}.
\end{align}
We perform this calculation in Appendix~\ref{sec:finalneededbound}. 
Thus, the proof is complete. 
\end{proof}

\subsubsection{Local oracle complexity}

Thus, we have established a local nearly linear convergence rate for $\algname$.
To understand the overall complexity of the method, we must derive an upper bound on the contraction factor $q$.
The following lemma, which is proved in Appendix~\ref{sec:contractionlowerbound}, provides one {\color{blue} that depends on a worst-case condition number of $f$.}
\begin{lem}\label{lem:contractionlowerbound}
Suppose that without loss of generality that $\deltaA \leq 1$. Define the condition number 
\textcolor{blue}{
$$
\kappa = \frac{\max\{L,\beta, \ver\}}{\min\{\gamma, \mu\}}. 
$$
}
Then there exists a universal constant $\eta > 0$ independent of $f$ such that 
$$
q \leq 1 - \frac{\eta}{\kappa^8(1+\shape)^2}.
$$
where $q$ is defined as in Table~\ref{table:main_convergence_theorems}.
\end{lem}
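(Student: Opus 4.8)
The plan is to bound each of the three terms in the maximum defining $q$ (Table~\ref{table:main_convergence_theorems}) separately, so that everything reduces to a lower bound on $1-\rho$. Throughout I write $M := \max\{L,\beta,\ver\}$ and $m := \min\{\gamma,\mu\}$, so that $\kappa = M/m \geq 1$, and I note that every constant appearing in Tables~\ref{tab:constants} and~\ref{tab:parameters} is a ratio of products of $L,\beta,\ver,\gamma,\mu$ and $\shape$. The entire argument is thus an exercise in controlling these ratios by powers of $m$, $M$, and $(1+\shape)$. I will freely use $\mu \leq L$ (Lemma~\ref{lem:condition_number_larger}) and $\gamma \leq 2\beta$ (Item~\ref{prop:proof:consequencesofA:item:smoothfunction} of Proposition~\ref{prop:proof:consequencesofA}), which yield the crude estimates $m \leq \mu,\gamma$, $\;L,\beta,\ver \leq M$, $\;\mu+L \leq 2M$, and $\beta \geq m/2$, together with the standing assumption $\deltaA \leq 1$ (so $\min\{1,1/\deltaA\}=1$ and $1+\deltaA\leq 2$).

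The two easy terms come first. Since $\mu/L \geq m/M = 1/\kappa$, the inequality $\sqrt{1-t}\leq 1-t/2$ gives $\sqrt{1 - 3\mu^2/(256L^2)} \leq 1 - 3/(512\kappa^2)$, while trivially $\tfrac12 \leq 1 - 1/(2\kappa^8(1+\shape)^2)$ for $\kappa \geq 1$. Both are of the target form $1 - \eta\kappa^{-8}(1+\shape)^{-2}$, because $\kappa^2 \leq \kappa^8(1+\shape)^2$. The heart of the proof is therefore the bound on $1-\rho = \tfrac18\min\{T_1,T_2\}$, where $T_1 := \frac{\gamma a_2}{8\max\{4La_2^2,\beta\}}$ and $T_2 := \frac{\mu a_1}{4\max\{2L,\beta/a_2^2\}}$. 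I first pin down the sizes of $a_1$ and $a_2$. Unwinding the definitions, $a_1 = \min\{\done,\dtwo/L\} \geq 1/(16\kappa)$, and $a_1 \leq \tfrac18$. For $a_2 = \tfrac12\min\{\cone/L,\cseven\}$, the factor $\cone/L = \gamma/(4L)$ is $\Omega(1/\kappa)$, so the binding quantity is $\cseven$; propagating through $\csix$, $\cfour$, and $\ctwo$ shows the dependence on $\shape$ enters exactly once, through the middle term of $\csix$, giving $\cseven = \Omega\!\big(1/(\kappa^3(1+\shape))\big)$ and hence $a_2 = \Omega\!\big(1/(\kappa^3(1+\shape))\big)$ together with the easy bound $a_2 \leq \tfrac14$.

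Plugging these in, $T_1 \geq \gamma a_2/(8M) = \Omega\!\big(1/(\kappa^4(1+\shape))\big)$, since $\max\{4La_2^2,\beta\}\leq M$. The decisive term is $T_2$: because $a_2$ may be small, its denominator is dominated by $\beta/a_2^2 = O\!\big(M\kappa^6(1+\shape)^2\big)$, whence $T_2 = \Omega\!\big(\mu a_1/(M\kappa^6(1+\shape)^2)\big) = \Omega\!\big(1/(\kappa^8(1+\shape)^2)\big)$ using $\mu/M \geq 1/\kappa$ and $a_1 = \Omega(1/\kappa)$. Taking the minimum, $1-\rho = \Omega\!\big(1/(\kappa^8(1+\shape)^2)\big)$, i.e.\ $\rho \leq 1 - \eta'\kappa^{-8}(1+\shape)^{-2}$ for a universal $\eta'$; combining with the two easy bounds and choosing $\eta$ as the smallest constant produced gives $q \leq 1 - \eta\kappa^{-8}(1+\shape)^{-2}$. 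The main obstacle is purely bookkeeping: faithfully carrying the worst-case $\kappa$- and $\shape$-dependence through the nested definitions, and in particular recognizing that the single factor $\beta/a_2^2$ in the denominator of $T_2$ is exactly what produces both the exponent $8$ on $\kappa$ and the square on $(1+\shape)$.
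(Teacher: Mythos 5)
Your proposal is correct and follows essentially the same route as the paper's own proof: both reduce the problem to lower-bounding $a_1$ and $a_2$ by tracing the constants $\ctwo$, $\cfour$, $\csix$, $\cseven$ through Tables~\ref{tab:constants} and~\ref{tab:parameters} (obtaining $a_1 = \Omega(1/\kappa)$, $a_2 = \Omega(1/(\kappa^3(1+\shape)))$, $a_2 \leq 1/4$), and then identify the term $\sharpc a_1/\max\{2L,\beta/a_2^2\}$ in $1-\rho$ as the one producing the exponent $8$ and the factor $(1+\shape)^2$. Your explicit use of $\gamma \leq 2\beta$ (from Item~\ref{prop:proof:consequencesofA:item:smoothfunction} of Proposition~\ref{prop:proof:consequencesofA}) makes precise a fact the paper uses implicitly when bounding $\beta/\ver$ from below, so your write-up is, if anything, slightly more careful on that point.
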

With this upper bound on $q$, it is straightforward to derive a local complexity estimate for $\algname$: the method locally produces a point $\hat x$ satisfying $f(\hat x) - f(\bar x) \leq \varepsilon$ with at most  
\textcolor{blue}{
$$
O\left(\left(\kappa^8(1+\shape)^2 \log(1/\varepsilon)\right)^3\right),
$$
}
first-order oracle evaluations.
This bound may be pessimistic since we did not attempt to optimize the constants $C_i$ or $a_i$. 
We leave the improvement of this complexity as an intriguing open question.

Before moving to a brief numerical illustration, we explain how Theorem~\ref{thm:maintheoremsemi} from the introduction follows from the above results.
\begin{remark}[Establishing Theorem~\ref{thm:maintheoremsemi}]
{\rm 
Theorem~\ref{thm:maintheoremsemi} from the introduction immediately follows from Theorems~\ref{thm:maintheorem} and~\ref{thm:mainconvexsetting}.
Indeed, first the event $E_{k_0, q, C}$ from Theorems~\ref{thm:maintheorem} and~\ref{thm:mainconvexsetting} contains the corresponding event $E_{k_0, q, C}$ from Theorem~\ref{thm:maintheoremsemi} for particular $q$ and $C$, which depend solely on $f$.
Second, from the statement of theorems, we see that the neighborhood of local nearly linear convergence, $B_{\deltaNCVX}(\bar x)$, depends solely on $f$.
}
\end{remark}


\section{Numerical illustration}\label{sec:numerical}
In this section, we briefly illustrate the numerical performance of $\algname$ on two nonsmooth objective functions, borrowed from~\cite{lewis2019simple,lewis2013nonsmooth,burer2007solving,anstreicher2004masked}.
In both experiments, we compare $\algname$ to the subgradient method with the popular Polyak stepsize (\texttt{PolyakSGM})~\cite{Polyak69}, which iterates 
$$
x_{k+1} = x_k - \frac{f(x_k) - \inf f}{\|w_k\|^2} w_k \qquad \text{for some $w_k \in \partial f(x_k).$}
$$
In the first example, $\inf f$ is known, in the second, we estimate $\inf f$ from multiple runs of $\algname.$
We choose to compare against the subgradient method because it is a simple first-order method with strong convergence guarantees in convex~\cite{Polyak69} and nonconvex settings~\cite{davis2020stochastictame}. 
Importantly, \texttt{PolyakSGM} accesses the objective solely through function and subgradient evaluations.
Thus, we compare the accuracy achieved by $\polyak$ and $\algname$ after a fixed number of oracle calls, i.e., evaluations of $\partial f$.

Let us comment on the implementation of $\algname$. 
First, in all experiments, {\color{blue}unless otherwise noted}, we do not tune parameters of $\algname$. 
Instead, we {\color{blue}simply choose scaling constant $\sscale = 10^{-6}$ and loop size parameters}
$$
T_k = k+1 \qquad \text{ and } \qquad G_k = \min\{k+1 , \lceil \log_2(10^{-16})\rceil\} \quad \text{for all $k \geq 0$.}
$$ 
Second, we attempt to save first-order oracle calls by breaking the loop on Lines~\ref{line:linesearchbegin} through~\ref{line:linesearchend} of Algorithm~\ref{alg:linesearch} 
whenever we find that $\sigma_i > \|v_{i+1}\|/s$. 
Since $\sigma_i$ is increasing in $i$ and $\|v_{i+1}\|$ is nonincreasing in $i$, this does not affect the iterates $x_k$ of $\algname$; see Lemma~\ref{lem:linesearchproperties}. {\color{blue} Finally, in all problems, we initialize $\algname$ and $\polyak$ at a random vector $a z$ where $z$ is sampled from the uniform distribution on the unit sphere. For all problems, we use $a = 1$ unless otherwise noted. 
Note that in the problems of Section~\ref{sec:maxofsmoothsection} and ~\ref{sec:matrix_sensing}, the solution is known, while in the problem of  Section~\ref{eq:eigenvalueproduct}, the solution is unknown.

The purpose of this section is not to argue that $\algname$ is a substitute for standard subgradient methods in most problems. 
Instead, we only wish to point out some scenarios where standard first-order methods are known to perform poorly, yet $\algname$ asymptotically accelerates.
We are also not arguing that $\algname$ has fast global rates: indeed, we previously mentioned that the $\algname$'s global rate is $O(\eps^{-6})$ which is much worse than $\polyak$'s $O(\eps^{-2})$ rate for general convex problems. In practice, one could devise schemes that couple $\algname$ with $\polyak$, eventually switching to $\algname$ when it begins to outperform $\polyak$.
While we leave a more thorough numerical study to future work, the reader may download and run our PyTorch~\cite{paszke2019pytorch} implementation of $\algname$ at the following url: \href{https://github.com/COR-OPT/ntd.py}{https://github.com/COR-OPT/ntd.py}

}
We now turn to the examples.

\subsection{A max-of-smooth function}\label{sec:maxofsmoothsection}
In this example, $f$ takes the following form
\begin{align}\label{eq:maxofsmootheqexp}
f(x) = \max_{i=1,\ldots, m}\left\{ g_i^\top x + \frac{1}{2} x^T H_i x\right\},
\end{align}
where we generate a random vector $\lambda \in \RR^m$ in $\{\lambda>0 \colon \sum_{i=1}^{m}\lambda_i=0\}$, a random positive semi-definite matrix $H_i$, and a random vector $g_i$ satisfying that $\sum_{i=1}^{m}\lambda_i g_i = 0$.
In this case, one can show that with probability 1, $f$ satisfies Assumption~\ref{assumption:mainfinal} at its unique minimizer $0$.
 \begin{figure}[H]
	\centering
	\begin{subfigure}[b]{\widthforbig\textwidth}
		\centering
		\includegraphics[width=\textwidth]{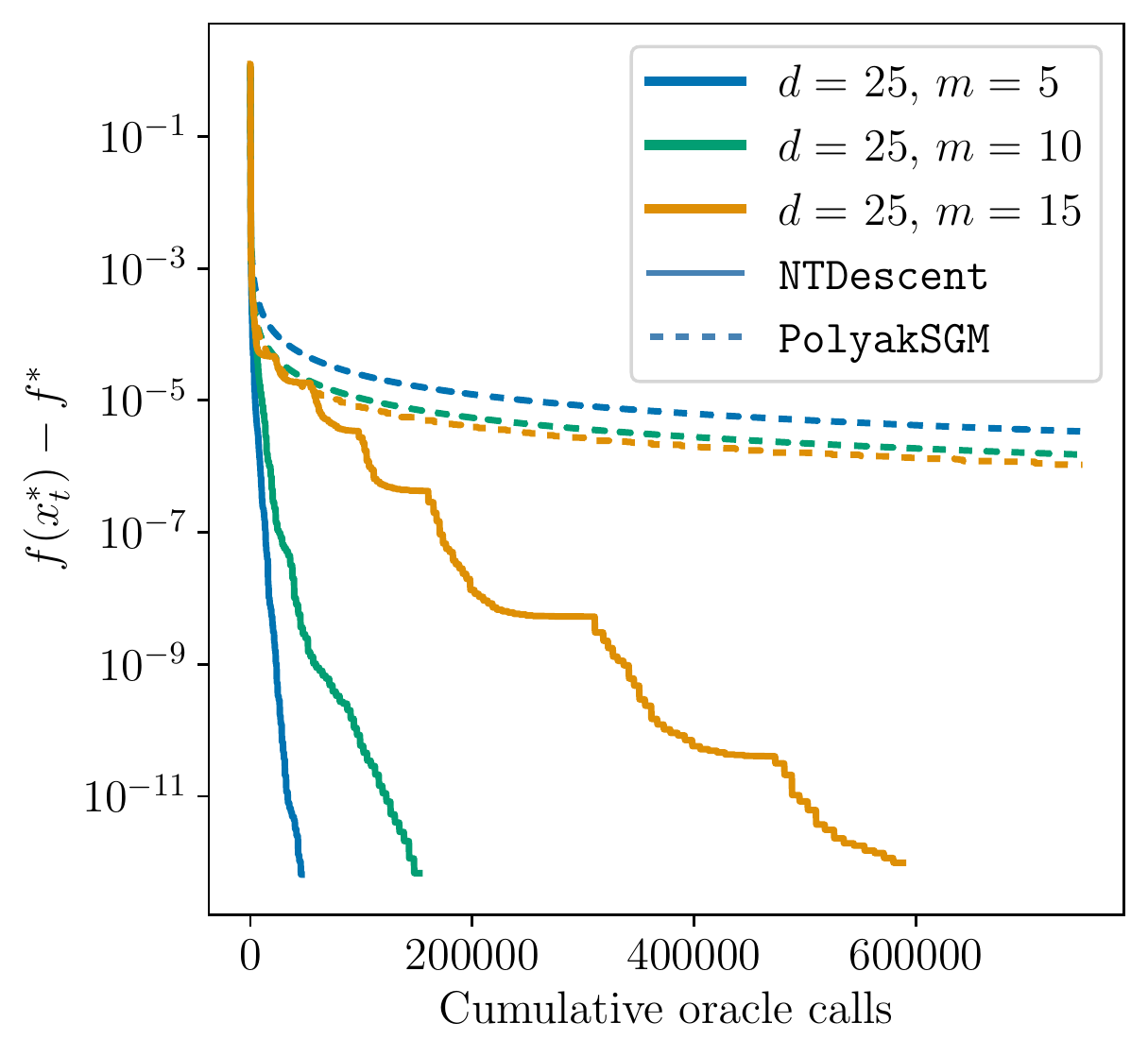}
		\caption{}
		\label{fig:3a}
	\end{subfigure}
	\hfill
	\begin{subfigure}[b]{\widthforbig\textwidth}
		\centering
		\includegraphics[width=1\textwidth]{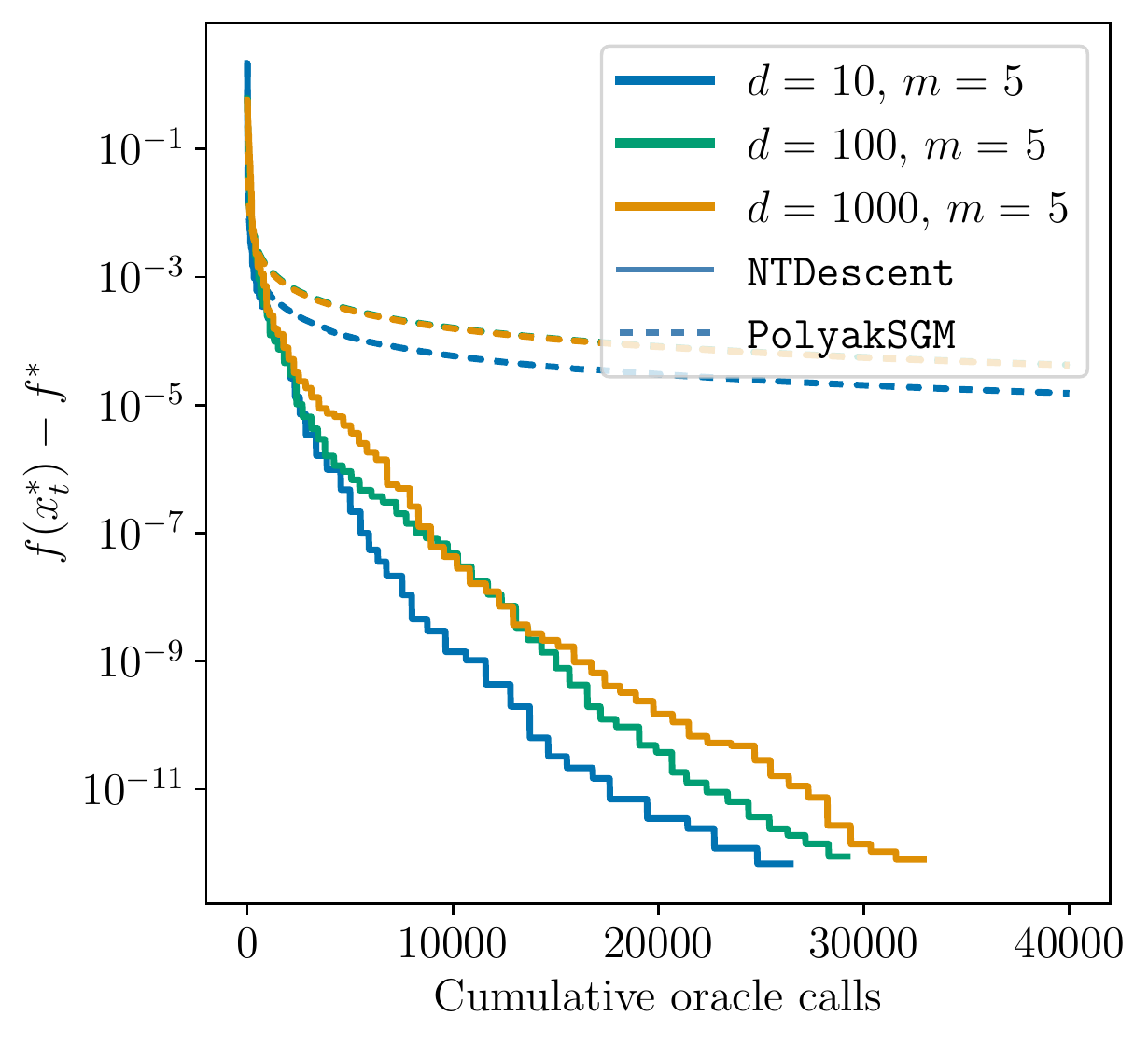}
		\caption{}
		\label{fig:3b}
	\end{subfigure}\\
		\begin{subfigure}[b]{\widthforbig\textwidth}
		\centering
		\includegraphics[width=\textwidth]{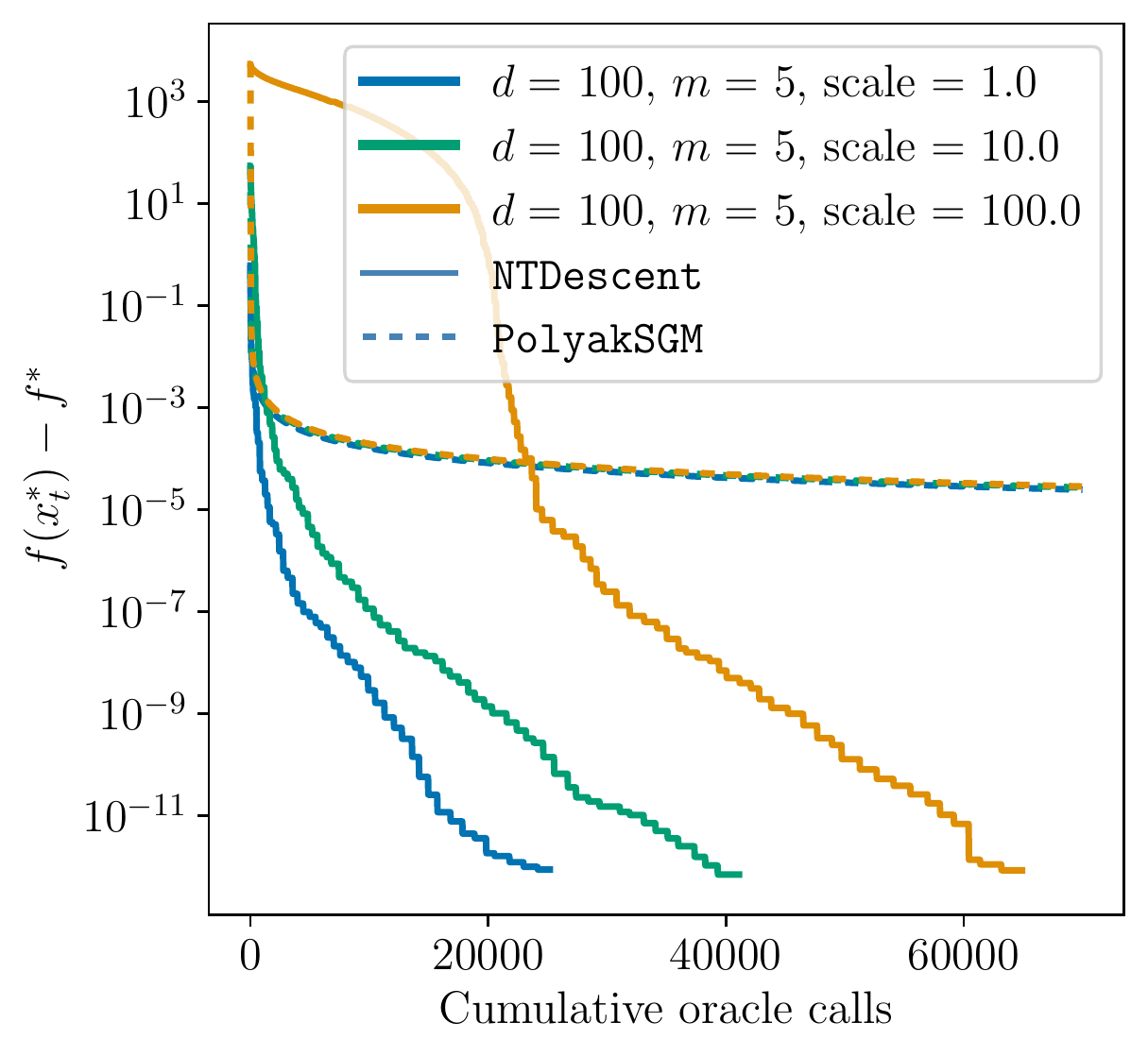}
		\caption{}
		\label{fig:3c}
	\end{subfigure}
	\hfill
	\begin{subfigure}[b]{\widthforbig\textwidth}
		\centering
		\includegraphics[width=1\textwidth]{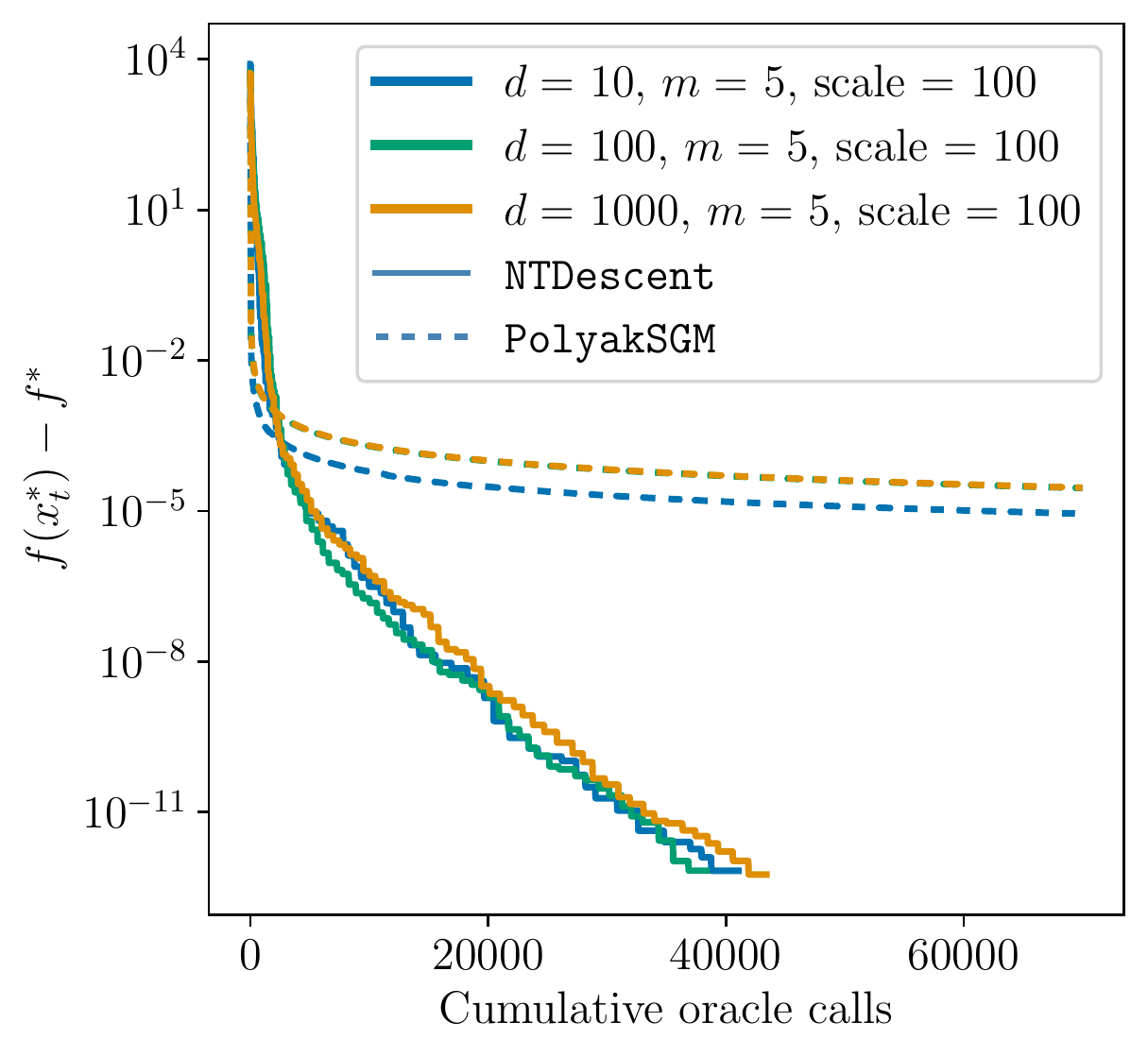}
		\caption{}
		\label{fig:3d}
	\end{subfigure}
		\begin{subfigure}[b]{\widthforbig\textwidth}
		\centering
		\includegraphics[width=\textwidth]{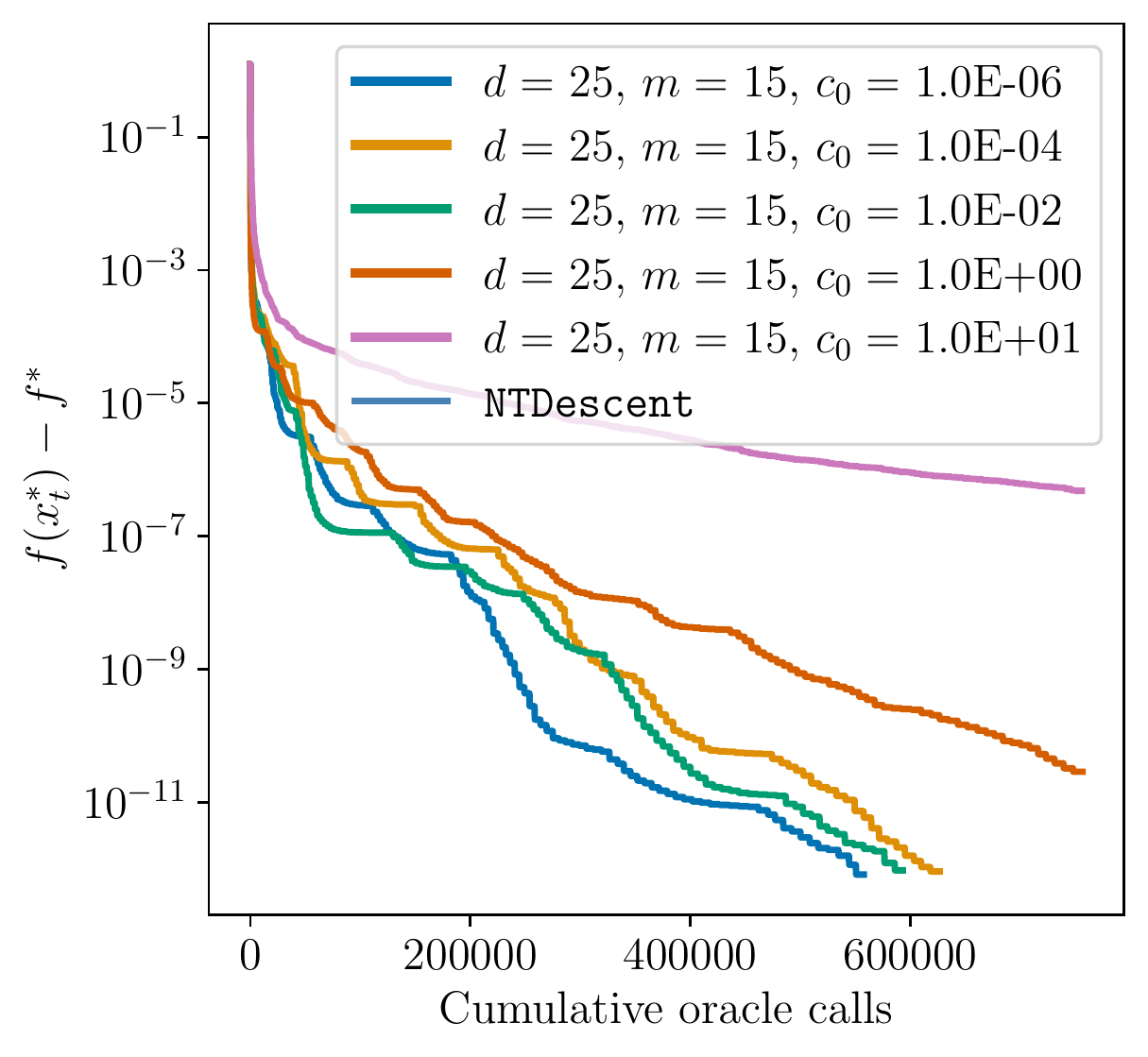}
		\caption{}
		\label{fig:3e}
	\end{subfigure}
	\hfill
	\begin{subfigure}[b]{\widthforbig\textwidth}
		\centering
		\includegraphics[width=1\textwidth]{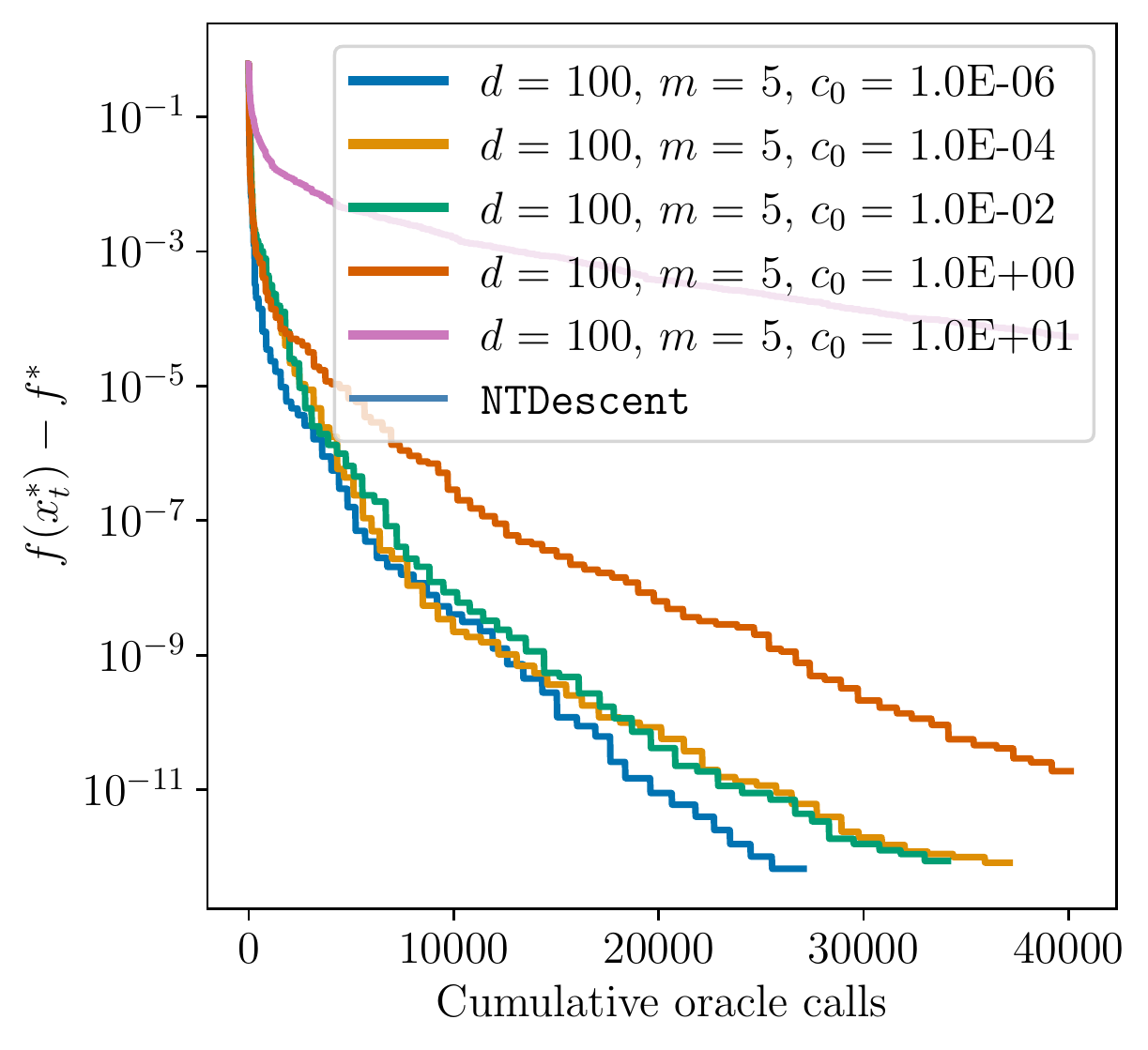}
		\caption{}
		\label{fig:3f}
	\end{subfigure}	
	\caption{Comparison of $\algname$ with $\polyak$ on~\eqref{eq:maxofsmootheqexp}. 
	For both algorithms, the value $f(x_t^*)$ denotes the best function seen after $t$ oracle evaluations. 
	See text for description.}\label{fig: maxofsmooth}
\end{figure}

In Figure~\ref{fig: maxofsmooth} we plot the performance of $\algname$ and $\polyak$ for multiple pairs of $(d, m)$, {\color{blue}varying initialization scale, a slight modification of $\algname$ that allows longer steps, and varying scales $\sscale$.}
{\color{blue} We begin with Figure~\ref{fig:3a} and Figure~\ref{fig:3b}}.
Figure~\ref{fig:3a} shows that the performance of $\algname$ depends on $m$. 
On the other hand, Figure~\ref{fig:3b} shows $\algname$ performance is independent of $d$, as expected.
Both plots show that $\algname$ {\color{blue} asymptotically} outperforms $\polyak.$ 
{\color{blue}
Turning to initialization, Figure~\ref{fig:3c} shows the result of initializing $\algname$ at a random vector $az$, where $z$ is uniformly drawn from the sphere and $a$ is a scale parameter satisfying $a\in \{1, 10, 100\}$. 
Clearly, $\algname$ is affected by the initialization scale but surpasses $\polyak$ after $30000$ oracle calls. 
While we expect $\algname$ to  converge slowly when far from minimizers, we introduce a simple strategy to mitigate this behavior.
\paragraph{Adaptive grid strategy.}
Briefly, suppose we run $\linesearch$ the full $G$ steps without exiting (via the violation of the trust region constraint). 
Then we simply continue the $\linesearch$ loop trying $\sigma_{-1} = 10\sigma_0$, $\sigma_{-2} = 10\sigma_{-1}$, and so on, until we violate the trust region constraint or $\sigma_{-i}$ exceeds a predefined threshold. \

Figure~\ref{fig:3d} shows the result of this strategy with a predefined threshold $\infty$, showing that it compensates for poor initialization quality.
Finally in Figure~\ref{fig:3e} and~\ref{fig:3f}, we show the effect of changing the $\sscale$ input to $\algname$. It appears $\algname$ is relatively insensitive to $\sscale$ and smaller choices generally result in better performance.
This motivates our default choice $\sscale = 10^{-6}$ in the remainder of the experiments.
}

Before turning to our second experiment, we briefly mention two alternative methods -- Prox-linear~\cite{burke1985descent,fletcher1982model,enright1992discrete,wright1990convergence,yuan1985superlinear} and Survey Descent~\cite{han2021survey} -- which could be applied to this problem. 
In order to explain these algorithms, let us write $f = \max_{i = 1, \ldots, m} \{f_i\}$, where the $f_i$ are the quadratic function from~\eqref{eq:maxofsmootheqexp}. 
\paragraph{Prox-linear method.}
Given a point $x \in \RR^d$, the Prox-linear update $x_+$ solves  
$$
x_{+} = \argmin_{y \in \RR^d} \max_{i=1, \ldots, m}\left\{f_i(x) + \dotp{\nabla f_i(x), y - x} \right\} + \frac{\rho}{2} \|y - x\|^2.
$$
One may show that $x_+$ geometrically improves on $x$; see~\cite{prox_error}.
However, in contrast to $\algname$, the prox-linear method requires that the components $f_i$ are known. 
This is stronger than the first-order oracle model considered in this work.
Thus, we do not compare $\algname$ with prox-linear.

\paragraph{Survey Descent} 
The Survey Descent method is a multi-point generalization of gradient descent, designed for max-of-smooth functions. 
Rather than maintaining a single iterate sequence, the Survey Descent maintains a \emph{survey} $S$ of points, meaning a collection of points $\{s_i\}_{i=1}^m$ at which $f$ is differentiable. 
A single iteration of the Survey Descent method then aims to produce a new survey $S^+ = \{s_i^+\}_{i = 1}^m$ satisfying
\begin{align*}
s_i^+ := &\argmin_{x \in \RR^d} \left\|x - \left(s_i - \frac{1}{L}\nabla f(s_i)\right)\right\|^2\\
&\text{subject to:} \; f(s_j) + \dotp{\nabla f(s_j), x - s_j} + \frac{L}{2}\|x - s_j\|^2 \leq f(s_i) +  \dotp{\nabla f(s_i), x - s_i} \quad \forall j \neq i.
\end{align*}
Here, $L$ is an upper bound on the Lipschitz constant of $\nabla f_i$ for all $i = 1,\ldots, m$.
In~\cite{han2021survey}, Han and Lewis study linear convergence of Survey Descent on max-of-smooth functions under the conditions of Corollary~\ref{cor:maxofsmooth}. 
Given a survey $S$, they show that the updated survey $S^+$ geometrically improves on $S$ (in an  appropriate sense) whenever the following conditions are satisfied: 
(i) all elements of the survey $S$ are near $\bar x$; (i) the survey $S$ is \emph{valid}, meaning there exists a permutation $a$ on $[m]$ such that 
$$
f_{a(i)}(s_i) = f(s_i) \qquad \text{ and } \qquad \partial f(s_i) = \{ \nabla f_{a(i)}(s_i)\} \qquad \text{for all $i = 1, \ldots, m$}.
$$
To estimate the number of components $m$ and find a valid initial survey $S$ sufficiently close to $\bar x$, Han and Lewis suggest an empirical procedure based on running a nonsmooth variant of BFGS~\cite{lewis2013nonsmooth} for several iterations. 
After running BFGS, they suggest to (i) compute an estimate $\hat m$ of $m$ from a singular value decomposition of the computed gradients, and (ii) build the survey from $\hat m$ past iterates in such a way that the computed gradients form an affine independent set. 
From the numerical illustration in~\cite{han2021survey}, Survey Descent performs well on several small problems.
However, since the initialization procedure and implementation of Survey Descent are somewhat sophisticated, we leave a detailed comparison between $\algname$ and Survey Descent and to future work.

{\color{blue} 
\subsection{A matrix sensing problem} \label{sec:matrix_sensing}

In this example, $f$ takes the following form 
\begin{align*}
f(X) = \frac{1}{n}\|\cA(XX^T) - \cA(M_{\star})\|_1, 
\end{align*}
where $M_{\star} \in \RR^{N\times N}$ is an unknown positive semidefinite matrix of rank $r_\star$ that we wish to recover from known linear measurements $\cA(M_\star)$; the linear operator $\cA \colon \RR^{N\times N} \rightarrow \RR^n$ takes the form $Y \mapsto (a_i^T Y a_i - b_i^T Y b_i)_{i = 1}^n$, for $n \in N$, where $a_i, b_i \in \RR^d$ are random vectors sampled from a standard multivariate normal distribution; and the decision variable is a tall and skinny matrix $X \in \RR^{N\times r}$, where in general we allow $r \neq r_\star$. This optimization problem appears in various signal processing applications and is known as \emph{quadratic sensing}~\cite{chen2015exact}.
Note that this objective does not satisfy Assumption~\ref{assumption:mainfinal}, since the solution set is not isolated. 

We consider two settings in this section: the exact setting $r = r_\star$ and the overparameterized setting $r > r_\star$. 
In the exact setting~\cite{charisopoulos2021low} showed that if $n = \Omega(Nr)$, the objective $f$ is sharp, meaning $f(x) = \Omega(\dist(x, \argmin f))$ and that $\polyak$ converges linearly whenever the initial iterate is sufficiently close to the set of minimizers. 
In the overparameterized setting, we are not aware of similar guarantees. 
Note that in practice, $r_\star$ is unknown, so the overparameterized setting is likely to be encountered. 

In Figure~\ref{fig:matrixsensing} we plot the performance of $\algname$ and $\polyak$ in two experiments. 
In Figure~\ref{fig:matrixsensinga} we use base dimensions $N = 100$, optimal rank $r_\star = 5$, and varying overparameterization $r \in \{r_\star, r_\star + 2, r_\star + 5\}$.
In Figure~\ref{fig:matrixsensingab} we use base dimensions $N = 100$, varying optimal rank $r_\star \in \{5, 10, 15\}$, and fixed overparameterization $r = r_\star + 5$. 
In both experiments, we fix $n = 4Nr_\star$.
Note that the dimension of the decision variable $X$ is varying across each run since $d = Nr$ and $r$ are varying.
As can be seen from the plot, $\polyak$ outperforms $\algname$ in the exact setting. This is to be expected, since $f$ is a sharp function on which $\polyak$ is known to perform well. 
On the other hand, when $r > r_\star$, we find that both methods slow down. 
However, $\algname$ continues to converge nearly linearly, while $\polyak$ converges sublinearly.

 \begin{figure}[H]
	\centering
	\begin{subfigure}[b]{0.45\textwidth}
		\centering
		\includegraphics[width=\textwidth]{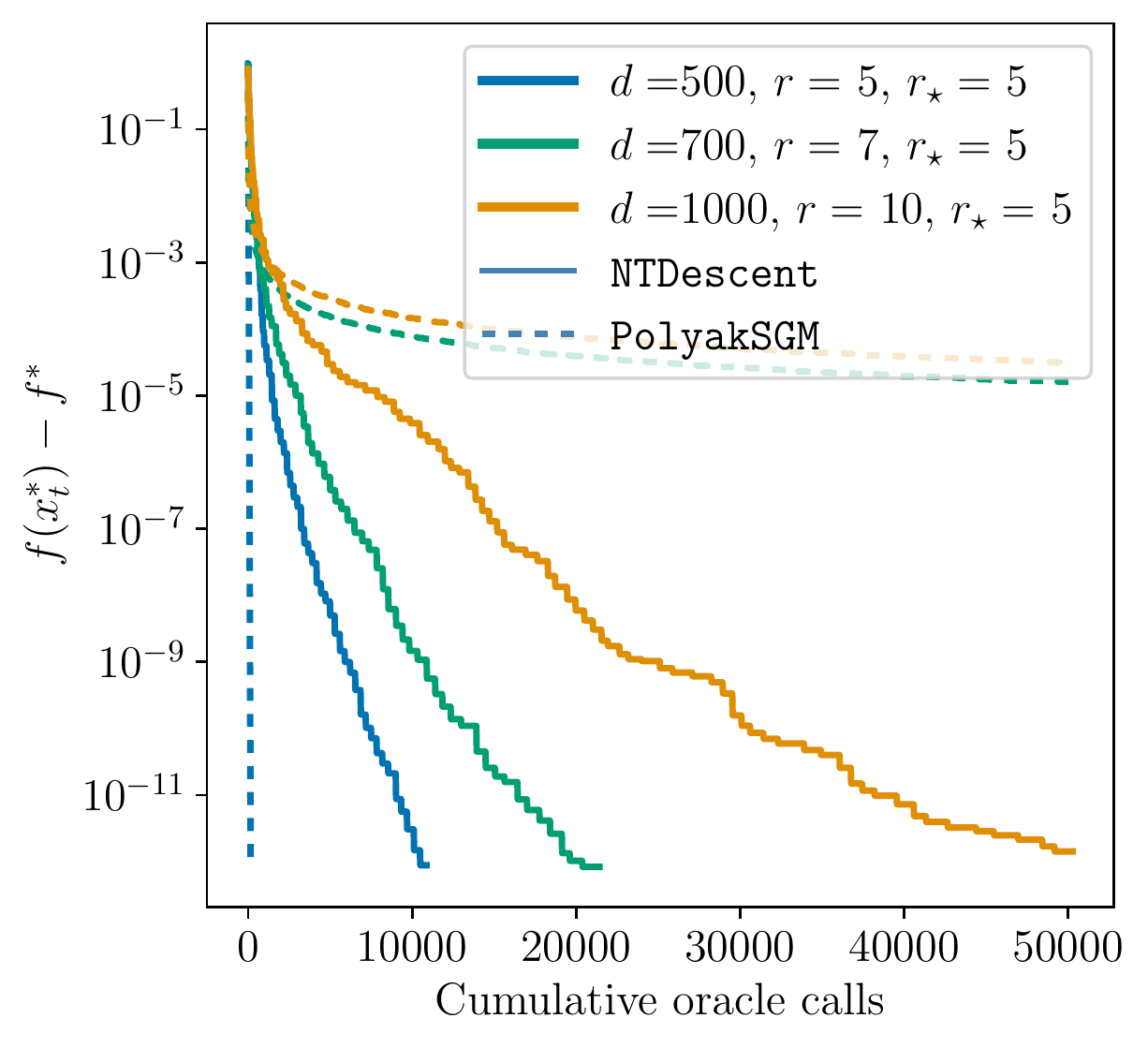}
		\caption{}
		\label{fig:matrixsensinga}
	\end{subfigure}
	\hfill
	\begin{subfigure}[b]{0.45\textwidth}
		\centering
		\includegraphics[width=\textwidth]{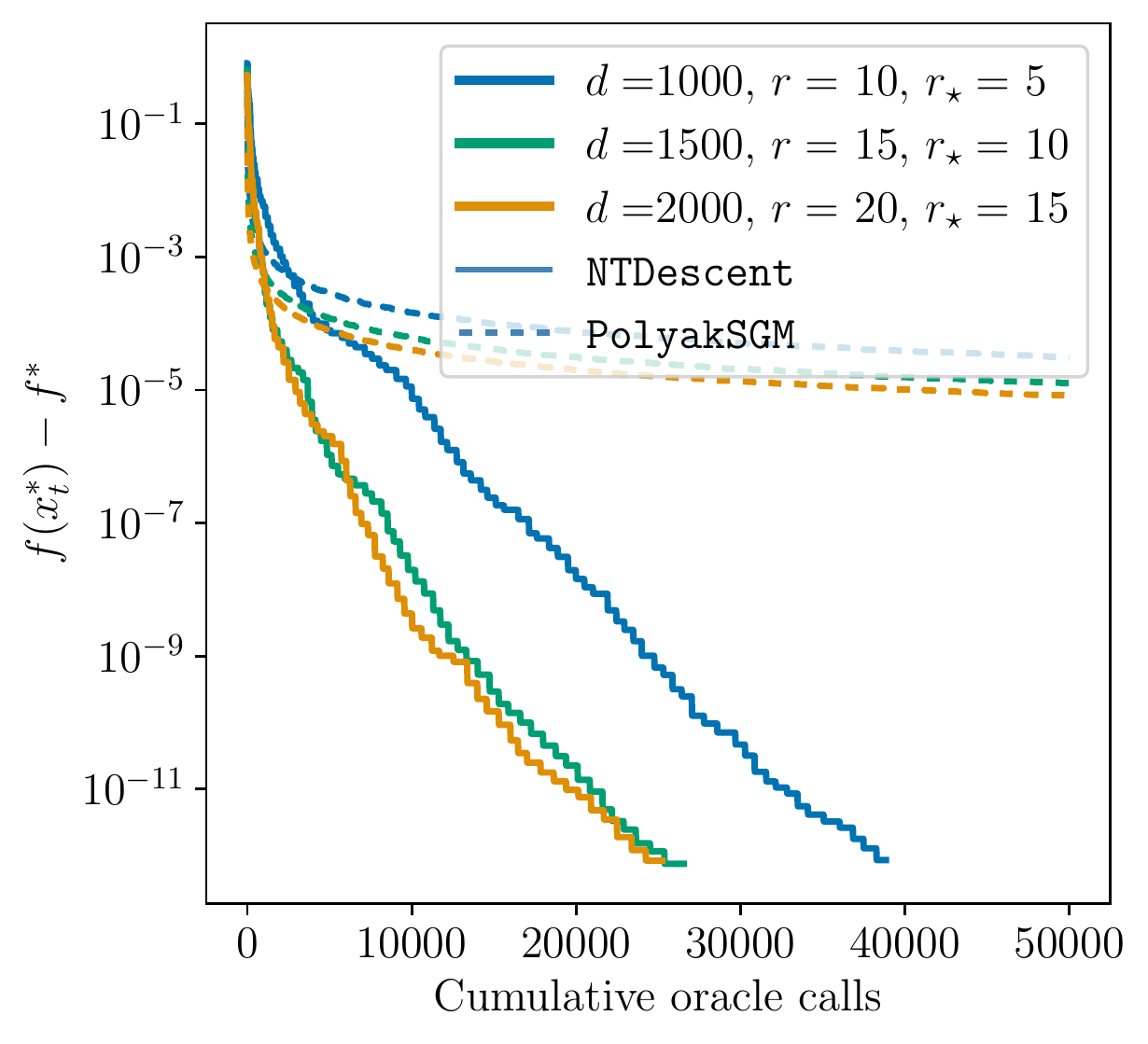}
		\caption{}
		\label{fig:matrixsensingab}
	\end{subfigure}\\
	\caption{{\color{blue}Comparison of $\algname$ with $\polyak$ on~\eqref{eq:maxofsmootheqexp}. In both plots, the base dimension is $N = 100$. Left: fixed optimal rank $r_\star = 5$ and varying overparameterization $r \in \{5, 7, 10\}$;  Right: varying $r_\star \in \{5, 10, 15\}$, fixed overparameterization $r = r_\star + 5$. For both algorithms, the value $f(x_t^*)$ denotes the best function seen after $t$ oracle evaluations.}}\label{fig:matrixsensing}
\end{figure}

}

\subsection{An eigenvalue product function}\label{eq:eigenvalueproduct}
In this example, we aim to optimize a function $\tilde f$ that takes the following form
$$
\tilde f(X) = \log E_K(A\odot X),
$$ 
where $A$ is a fixed positive semi-definite data matrix, $E_K(Y)$ denotes the product of $K$ largest eigenvalues of a symmetric matrix $Y \in \mathbb{S}^{N}$, and $\odot$ denotes the Hadamard (entrywise) matrix product, subject to the constraint that $X$ is positive semi-definite and its diagonal entries are $1$. 
This example is a nonconvex relaxation of an entropy minimization problem arising in an environmental application~\cite{burer2007solving,anstreicher2004masked}. 
In our experiments, we choose $A$ as in~\cite{anstreicher2004masked}: $A$ is the leading $N\times N$ submatrix of a $63\times 63$ covariance matrix, scaled so that the largest entry is $1$.
As suggested by~\cite{burer2007solving}, we reformulate this problem as an unconstrained optimization problem using a Burer-Monteiro type factorization
\begin{align}\label{eq:eigenvalueproblem}
\min_{V \in \RR^{N\times N}}  f(V) = \tilde f(c(V)c(V)^\top),
\end{align}
where $c \colon \RR^{N \times N} \rightarrow \mathbb{S}^N$ satisfies $c(V) = \text{Diag}([\text{diag}(VV^\T)]^{-1/2})V$ for all $V \in  \RR^{N \times N}$. Here, the mapping $\text{diag}(\cdot)$ takes a matrix an $N \times N$ matrix $A$ to the $N$ dimensional vector with $i$th entry $A_{ii}$.
On the other hand, the mapping $\text{Diag}(\cdot)$ takes an $N$ dimensional vector $v$ to the $N\times N$ diagonal matrix with $i$th diagonal entry $v_{i}$. 
A formula for the subgradient of $f$ may be found~\cite{burer2007solving}.
We do not attempt to verify that $f$ satisfies the full Assumption~\ref{assumption:mainfinal}. 
Instead, we point out that under a ``transversality condition," function $f$ admits an active manifold at local minimizers~\cite{lewis2013nonsmooth}.

Turning to the experiment, we consider the case where $N=14$ and $K=7$. 
In this example, the optimal function value $\inf f$ is not known. 
Thus, we run $\algname$ from four random initial starting points.
We terminate each run of $\algname$ when a certain ``optimality gap" $R_k$ satisfies $R_k \le 10^{-12}$. 
We denote the minimal function value achieved across all four runs by $\opt$.
Let us now define and motivate the optimality gap.
For iteration $k$ in Algorithm~\ref{alg:mainalg}, define
$$
R_k := \min\left\{ \max\{\sigma_i^{(k)}, \|v_{i+1}^{(k)}\|^2\} \colon  \sigma_i^{(k)} \le \|v_{i+1}^{(k)}\| \right\},
$$
where $\sigma_i^{(k)}$ and $ v_{i+1}^{(k)}$ are computed in Lines~\ref{line:linesearchbegin} through~\ref{line:linesearchend} of Algorithm~\ref{alg:linesearch} at iteration $k$. 
Provided that $x_k$ is sufficiently close to a point $\bar x$ at which function $f$ satisfies Assumption~\ref{assumption:mainfinal}, 
it is possible to show that $R_k$ satisfies $f(x_k) - f(\bar x) \lesssim R_k$.
This is illustrated in Figure~\ref{fig:4a}: there, the optimality gap closely tracks the estimated function gap, when approximating by $\inf f $ by $\opt$.
In Figure~\ref{fig:4b}, we compare the performance of $\algname$ on the three runs which did not achieve function value $\opt$ before termination. 
In all three cases, we see similar performance. 
Next, for each run of $\algname$, we also run $\polyak$ from the same initial starting point, estimating $\inf f$ by $\opt$.
We see that $\algname$ outperforms $\polyak$.
 \begin{figure}[H]
 	\centering
	   \begin{subfigure}[b]{0.45\textwidth}
         \centering
 	\includegraphics[width=\textwidth]{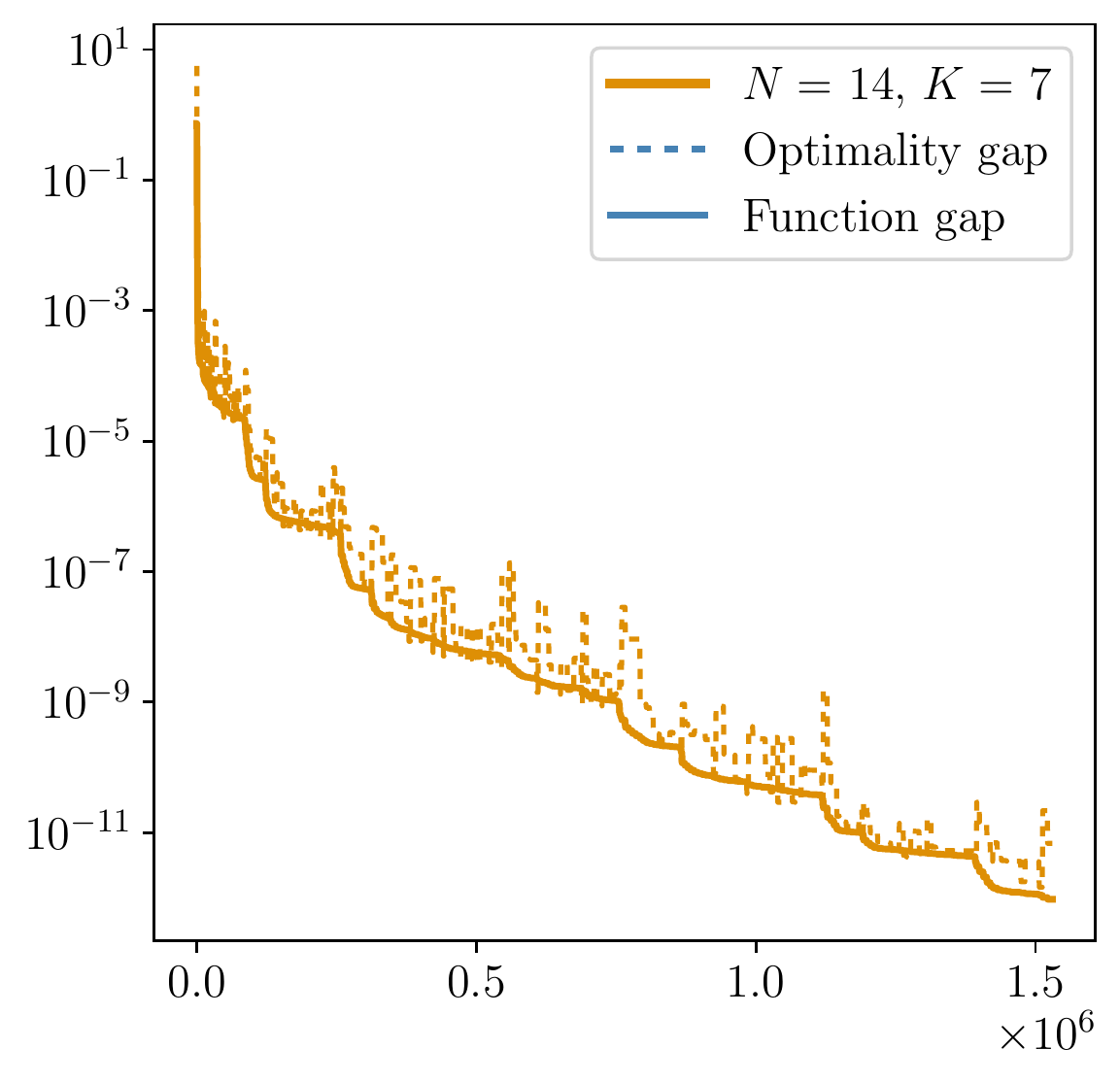}
         \caption{}
         \label{fig:4a}
     \end{subfigure}
     \hfill
     \begin{subfigure}[b]{0.45\textwidth}
         \centering
	\includegraphics[width=1.0375\textwidth]{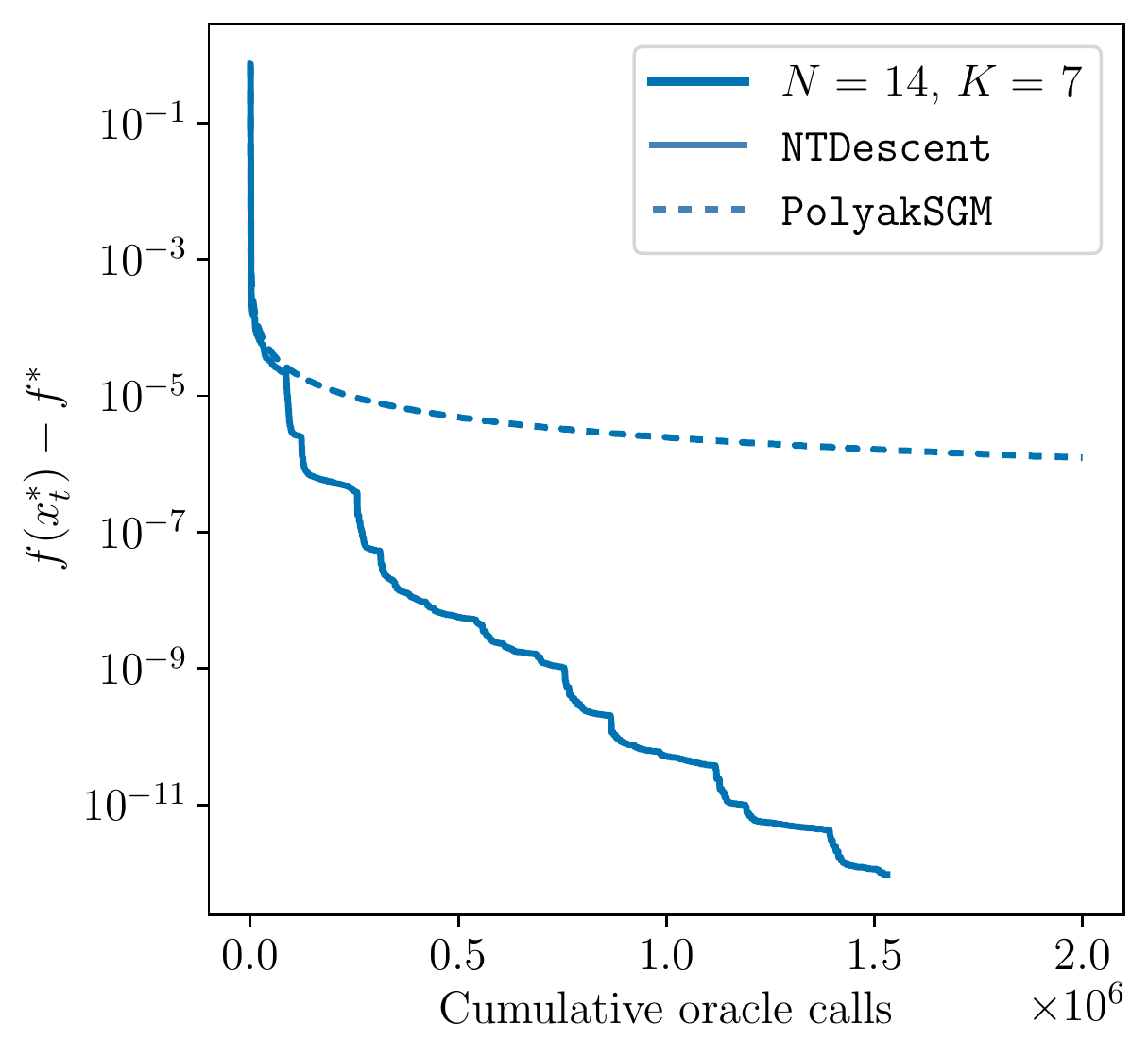}
         \caption{}
         \label{fig:4b}
     \end{subfigure}
 	\caption{Numerical performance on~\eqref{eq:eigenvalueproblem}. Left: the close relationship between ``optimality gap" and function gap; Right: comparison of $\polyak$ and $\algname$ from three initial starting points. For both algorithms, the value $f(x_t^*)$ denotes the best function seen after $t$ oracle evaluations. See text for detail.}\label{fig: optimalitygapvsfunctiongap}
 \end{figure}


			\bibliographystyle{abbrv}
	\bibliography{bibliography}



\appendix

\section{Proof of Lemma~\ref{lem:goldsteinsubgradientinequality}}\label{sec:prooflem:goldsteinsubgradientinequality}

Let $g$ denote the minimal norm element of $\partial_{\sigma} f(x)$. 
Write $g$ as a convex combination of subgradients: $g = \sum_{i=1}^{n} \lambda_i g_i$ where $\sum_{i=1}^{n} \lambda_i = 1$ and $g_i \in \partial f(x_i)$ for some $x_i \in B_{\sigma}(x)$ and $n > 0$. 
Then 
\begin{align*}
f(x) &\leq f\left( \sum_{i=1}^n \lambda_i x_i + \sum_{i=1}^n \lambda_i(x - x_i)\right)\\
&\leq \sum_{i=1}^n \lambda_i f(x_i) + L \sigma \\
&\leq f(y) + \sum_{i=1}^n \dotp{\lambda_i g_i, x_i - y} + L \sigma\\
&\leq f(y) + \dotp{g, x - y} + \sum_{i=1}^n \lambda_i \dotp{g_i, x_i - x} + L \sigma\\
&\leq f(y) + \dist(0, \partial_{\sigma} f(x))\|x - y\| + 2L\sigma,
\end{align*}
as desired.

\section{Proof of Proposition~\ref{prop:proof:consequencesofA}}\label{sec:prop:proof:consequencesofA}

We begin with preliminary notation and bounds. 
First, since $\cM$ is $C^4$ smooth, the projection $P_{\cM}$ is $C^3$ smooth near $\bar x$. 
Second, since $f$ is $C^3$ smooth along $\cM$ near $\bar x$, the composition $f_{\cM} := f \circ P_{\cM}$ is also $C^3$ smooth near $\bar x$. 
Third, the constant $\mu$ is positive due to the active manifold assumption. 
Fourth, choose $\delta >0$ small enough that the following hold: 
\begin{enumerate}
\item $\nabla P_{\cM}$ is $\shape$-Lipschitz on $B_{\delta}(\bar x)$;
\item $\nabla f_{\cM}$ is $\beta$-Lipschitz on $B_{\delta}(\bar x)$;
\item $\nabla^2 f_{\cM}$ is $\rho$-Lipschitz on $B_{\delta}(\bar x)$ in the operator norm, where $\rho := 2\lip_{\nabla^2 f_{\cM}}^{\text{op}}(\bar x)$;
\item $f$ is $L$-Lipschitz on $B_{\delta}(\bar x)$;
\item the quadratic growth bound~\ref{assum: quad} holds:
$$
f(x) - f(\bar x) \geq \frac{\scc}{2} \|x - \bar x\|^2 \qquad \text{ for all $x \in \overline B_{\delta}(\bar x)$;}
$$
\item the strong $(a)$ bound~\ref{assum: stronga} holds: 
\begin{align}\label{eq:strongaappendix}
\|P_{T_{\cM}(y)} (v - \nabla_\cM f(y))\| \leq \ver \|x - y\|  
\end{align}
for all $x \in \overline B_{\delta}(\bar x)$, $v \in \partial f(x)$, and $y \in \cM\cap \overline B_{\delta}(\bar x)$.
\item the $(b_{\leq})$ regularity bound~\ref{assum: bregularity} holds:
\begin{align}\label{eq:bregappendix}
f(x') \geq f(x) + \dotp{v, x' - x} - \frac{\mu}{2} \|x - \hat x\|
\end{align}
for all $x \in B_{\delta}(\bar x)$, $v \in \partial f(x)$, and $x' \in B_{\delta}(\bar x) \cap \cM$.
\item the sharpness condition holds: 
$$
\dist(0, \partial f(x)) > 2 \mu \qquad \text{ for all $x \in B_{\delta}(\bar x) \backslash \cM$.}
$$
\end{enumerate}

Given these bounds, let us define 
$$
\deltaA := \frac{1}{2}\min\left\{\delta, \frac{9\gamma}{16\rho}, \frac{\sharpc}{2(\ver + 2\beta  + 2\shape L )}\right\}.
$$
For this choice of $\deltaA$, Item~\ref{prop:proof:consequencesofA:item:quadgrowth} holds automatically.
We now prove the remaining items.

\subsection{Item~\ref{prop:proof:consequencesofA:item:smoothproj}: Smoothness of $P_{\cM}$.} 

Fix $x' \in B_{2\deltaA}(\bar x)$ and $x\in B_{\deltaA}(x)$. 
Observe that $P_{\cM}(x) \in B_{2\deltaA}(\bar x)$ and we have the inclusion $x - P_{\cM}(x) \in \normal(P_{\cM}(x))$. 
Consequently, we have 
\begin{enumerate}
\item $P_{\tangent(P_{\cM}(x))}(x) = P_{\tangent(P_{\cM}(x))}(P_{\cM}(x))$;
\item $P_{\cM}(x) = P_{\cM}(P_\cM(x))$;
\item  $\nabla P_{\cM}(P_{\cM}(x)) = P_{\tangent(P_{\cM}(x))}$.
\end{enumerate}
Therefore, we have
\begin{align*}
&\|P_{\cM}(x') - P_{\cM}(x) - P_{\tangent(P_{\cM}(x))}(x' - x)\| \\
&= \|P_{\cM}(x') - P_{\cM}(P_{\cM}(x)) - \nabla P_{\cM}(P_{\cM}(x))(x' - P_{\cM}(x))\| \\
&\leq \frac{\shape}{2}\|x' -  P_{\cM}(x)\|^2\\
&\leq \shape(\|x' -x\|^2 + \dist^2(x, \cM)),
\end{align*}
where the first inequality follows from Lipschitz continuity of $\nabla P_{\cM}$ on $B_{2\deltaA}(\bar x) \subseteq B_{\delta}(\bar x)$.

\subsection{Item~\ref{prop:proof:consequencesofA:item:smoothfunction}: Bounds on $\nabla_{\cM} f$}\label{appendix:smoothnesslowerbound}

Recall that $P_{\cM}(x) \in B_{2\deltaA}(\bar x)$ whenever $x \in B_{\deltaA}(\bar x)$. 
Thus, below we prove that 
$$
\frac{\scc}{2}\|y - \bar x\| \leq \|\nabla f_\cM(y)\| \leq \beta\|y - \bar x\| \qquad \text{ for all $y \in B_{2\deltaA}(\bar x)\cap \cM$.}
$$
This is equivalent to the claimed bound since $\nabla f_{\cM}(y) = \nabla_{\cM} f(y)$ for all $y \in B_{2\deltaA}(\bar x)\cap \cM$.

Let us first prove the claimed upper bound. Due to the inequality, 
$$
f_{\cM}(x) - f_\cM(\bar x) \geq \frac{\scc}{2}\|P_{\cM}(x) - \bar x\|^2 \qquad \text{ for all $x \in B_{\delta}(\bar x)$},
$$
it follows that $\bar x$ is a local minimizer of $f_{\cM}$. Consequently, $\nabla f_{\cM}(\bar x) = 0$.
Thus, since $\beta$ is a local Lipschitz constant of $\nabla f_{\cM}$ on $B_{\delta}(\bar x)$, we have
$$
\|\nabla f_{\cM}(y)\| \leq \beta\|y - \bar x\| \qquad \text{ for all $y \in B_{\delta}(\bar x) \cap \cM$.}
$$
Since $2\deltaA \leq \delta$, this proves the claimed upper bound.

Next, we prove the claimed lower bound. It suffices to establish the following convexity inequality: 
\begin{align}\label{eq:convexitylb}
f_\cM(y) + \dotp{\nabla f_{\cM}(y),  \bar x -y} \leq f_\cM(\bar x) \qquad \text{ for all $y \in B_{2\deltaA}(\bar x)\cap  \cM $.}
\end{align}
Indeed, if this inequality holds, we have 
$$
\dotp{\nabla f_{\cM}(y), y - \bar x} \geq f_\cM(y) - f_\cM(\bar x) \geq \frac{\scc}{2}\|y - \bar x\|^2 \qquad \text{for all $y \in  B_{2\deltaA}(\bar x)\cap  \cM $}, 
$$
and the desired result follows from Cauchy-Schwarz.

To that end, observe that 
since $\nabla f_{\cM}(\bar x) = 0$ and $\nabla^2f_{\cM}$ is $\rho$-Lipschitz in $B_{2\deltaA}(\bar x)$, we have 
$$
f_{\cM}(y) \leq f_{\cM}(\bar x) + \frac{1}{2}\dotp{\nabla^2 f_{\cM}(\bar x)(y - \bar x), y - \bar x} +\frac{\rho}{6}\|y - \bar x\|^3 \qquad \text{for all $y\in B_{2\deltaA}(\bar x) $}.
$$
Consequently, we have the lower bound on the quadratic form: for all $y \in B_{2\deltaA}(\bar x)\cap \cM$, we have
\begin{align}\label{eq:hessianquad}
 \frac{1}{2}\dotp{\nabla^2 f_{\cM}(\bar x)(y - \bar x), (y - \bar x)} &\geq f_{\cM}(y) - f_{\cM}(\bar x) - \frac{\rho}{6}\|y - \bar x\|^3 \notag \\
 &\geq \frac{\scc}{2}\|y - \bar x\|^2  - \frac{\rho}{6}\|y - \bar x\|^3 \notag\\
 &\geq \frac{3\scc}{8}\|y - \bar x\|^2, 
\end{align}
where the second inequality follows from the quadratic growth bound and the third follows from the bound $\|y - \bar x\| \leq 2\deltaA \leq \frac{3\scc}{4\rho}.$
Therefore, for all $y \in \cM \cap B_{2\deltaA}(\bar x)$, we have
\begin{align*}
f_\cM(\bar x) &\geq f_\cM(y) + \dotp{\nabla f_{\cM}(y), \bar x - y} + \frac{1}{2}\dotp{\nabla^2 f_{\cM}(y)(\bar x - y),(\bar x - y)}  - \frac{\rho}{6} \|y - \bar x\|^3\\
&\geq f_\cM(y)+ \dotp{\nabla f_{\cM}(y), \bar x - y} + \frac{1}{2}\dotp{\nabla^2 f_{\cM}(\bar x)(\bar x - y),(\bar x - y)}  - \frac{2\rho}{3} \|y - \bar x\|^3\\
&\geq f_\cM(y)+ \dotp{\nabla f_{\cM}(y), \bar x - y} +  \frac{3\scc}{8}\|y - \bar x\|^2  - \frac{2\rho}{3} \|y - \bar x\|^3 \\
&\geq f_\cM(y)+ \dotp{\nabla f_{\cM}(y), \bar x - y}, 
\end{align*}
where the first and second inequalities follow by Lipschitz continuity of $\nabla^2f_{\cM}$; the third inequality follows from~\eqref{eq:hessianquad}; and the fourth inequality follows from the bound \break $\|y - \bar x\| \leq 2\deltaA \leq \frac{9\scc}{16\rho}.$ This completes the proof.

\subsection{Item~\ref{prop:proof:consequencesofA:item:stronga}: Consequences of strong $(a)$-regularity}

Fix $x \in B_{\deltaA}(\bar x)$ and $\sigma \leq \deltaA$.
Recall that $y: = P_{\cM}(x) \in B_{2\deltaA}(\bar x)$ since $x \in B_{\deltaA}(\bar x)$. 
Fix $g \in \partial_{\sigma} f(x) $. 
By definition of $\partial_{\sigma} f(x)$, there exists a family of coefficients $\lambda_i \in [0, 1]$, points $x_i \in \overline B_{\sigma}(x) \subseteq \overline B_{\delta}(\bar x)$, and subgradients $g_i \in \partial f(x_i)$ indexed by a finite set $i \in I$ such that $\sum_{i\in I} \lambda_i = 1$ and $g =  \sum_{i\in I} \lambda_i g_i$.
Therefore, by averaging the strong $(a)$ bound~\eqref{eq:strongaappendix} over $g_i$, we find that 
\begin{align*}
\|P_{T_{\cM}(y)} (g - \nabla_\cM f(y))\|  &\leq\sum_{i\in I} \lambda_i \|P_{T_{\cM}(y)} (g_i - \nabla_\cM f(y))\| \\
&\leq \sum_{i\in I}\lambda_i \ver\|x_i - y\|.\notag\\
&\leq \ver(\dist(x, \cM) + \sigma).
\end{align*}
Since $g$ was arbitrary, it follows that for all $x \in B_{\deltaA}(\bar x)$ and $\sigma \leq \deltaA$, we have 
\begin{align}\label{eq:strongAaveragehelp}
\sup_{g \in \partial_{\sigma} f(x)} \|P_{T_{\cM}(y)} (g - \nabla_\cM f(y))\| \leq \ver(\dist(x, \cM) + \sigma).
\end{align}
Now we apply this bound to establish the two remaining inequalities.

Indeed, first observe that for all $x \in B_{\deltaA}(\bar x)$ and $\sigma \leq \deltaA$, we have
$$
\sup_{g \in \partial_{\sigma} f(x)} \|P_{T_{\cM}(y)} g\| \leq \|\nabla_{\cM} f(y)\| + \ver(\dist(x, \cM) + \sigma) \leq  \beta\|y - \bar x\| + \ver(\dist(x, \cM) + \sigma),
$$
where the first inequality follows from~\eqref{eq:strongAaveragehelp} and the second inequality follows from Item~\ref{prop:proof:consequencesofA:item:smoothfunction}.
This proves the first claimed bound. 
Second, observe that for all $x \in B_{\deltaA}(\bar x)$ and $\sigma \leq \deltaA$, we have
\begin{align*}
\sup_{g, g' \in \partial_{\sigma} f(x)}\|P_{T_{\cM}(y)} (g - g')\| &\leq  \sup_{g \in \partial_{\sigma} f(x)}\|P_{T_{\cM}(y)} (g -  \nabla_\cM f(y))\| +  \sup_{g' \in \partial_{\sigma} f(x)}\|P_{T_{\cM}(y)} (g' - \nabla_\cM f(y))\| \\
&\leq 2 \ver(\dist(x, \cM) + \sigma).
\end{align*}
where the second inequality follows from~\eqref{eq:strongAaveragehelp}. This completes the proof.

\subsection{Item~\ref{prop:proof:consequencesofA:item:sharpaim}: Aiming inequality}

Consider a point $x \in B_{\deltaA}(\bar x)$, let $\kappa = 2\sharpc$, and define
$$
\hat x \in \argmin_{x' \in \closedball_{2\deltaA}(\bar x)} \left\{f(x') + \kappa\|x' - x\|\right\}.
$$
We claim that $\hat x \in \cM \cap B_{2\deltaA}(\bar x)$. 
Indeed, first note that by definition of $\hat x$ and the inclusion $\hat x \in \closedball_{2\deltaA}(\bar x)$, we have
$$
\|\hat x -  x\| \leq \frac{f(\bar x) - f(\hat x)}{\kappa} + \|\bar x - x\| \leq  \|\bar x - x\| < \deltaA, 
$$
where the second inequality follows since $\bar x$ is a minimizer of $f$ on $B_{2\deltaA}(\bar x)$, a consequence of quadratic growth.
Thus, by the triangle inequality, we have $\hat x \in B_{2\deltaA}(\bar x)$. 
By Fermat's rule, we, therefore, have the inclusion: 
$$
0 \in \partial (f + \kappa\|\cdot - x\|)(\hat x) \subseteq \partial f(\hat x) + \kappa \closedball.
$$
If $\hat x \notin \cM$, then $\dist(0, \partial f(\hat x)) > \kappa$, contradicting the above inclusion. 
Therefore, we have $\hat x \in \cM \cap B_{2\deltaA}(\bar x)$.

Turning to the aiming inequality, apply the $(b_{\leq})$-regularity bound~\eqref{eq:bregappendix} to $\hat x$:
\begin{align*}
f(\hat x) \geq f(x) + \dotp{v, \hat x - x} - \varepsilon\|x -\hat x\|
\geq f(\hat x) + \dotp{v,\hat x- x} + (\kappa - \varepsilon)\|x - \hat x\|, 
\end{align*}
where we define $\varepsilon := \sharpc/2$.
Consequently, we have 
\begin{align}\label{eq:aimingprelim}
\dotp{v, x - P_{\cM}(x)} &\geq (\kappa - \varepsilon)\|x - \hat x\| + \dotp{v, \hat x - P_{\cM}(x)} \qquad \text{for all $v \in \partial f(x)$.}
\end{align}
We now bound the term $\dotp{v, \hat x - P_{\cM}(x)}$: By the conclusion of Item~\ref{prop:proof:consequencesofA:item:smoothproj}, we have
\begin{align*}
\|P_{\cM}(\hat x) - P_{\cM}(x) - P_{\tangent(P_{\cM}(x))}(\hat x - x)\| \leq \shape (\|x - \hat x\|^2 + \dist^2(x, \cM)) \leq 2\shape \|x - \hat x\|^2,
\end{align*}
where the second inequality follows since $\hat x \in \cM$.
Thus, we have 
\begin{align*}
|\dotp{v, \hat x - P_{\cM}(x)}|  &\le  |\dotp{v, P_{\tangent(P_{\cM}(x))}(\hat x - x)}| + 2\shape\|v\|  \|x - \hat x\|^2\\
&\leq \|P_{\tangent(P_{\cM}(x))}v\|\|\hat x - x\| + 2\shape L \|x - \hat x\|^2\\
&\leq (\ver\dist(x, \cM) + \beta \|P_{\cM}(x) - \bar x\|)\|\hat x - x\| + 2\shape L \|x - \hat x\|^2 \\
&\leq (\ver\deltaA + 2\beta\deltaA  + 2\shape L \deltaA)\|\hat x - x\|\\
&\leq \varepsilon \|\hat x - x\|.
\end{align*}
where the second inequality follows from Item~\ref{prop:proof:consequencesofA:item:stronga} 
and the third inequality follows from the inclusion $P_{\cM}(x) \in B_{2\deltaA}( \bar x)$.
Therefore, plugging this bound into~\eqref{eq:aimingprelim}, we arrive at 
$$
\dotp{v, x - P_{\cM}(x)} \geq (\kappa - 2\varepsilon)\|x - \hat x\| \geq  \sharpc\dist(x, \cM), 
$$
as desired.

\subsection{Item~\ref{prop:proof:consequencesofA:item:Lipschitzbound}: Bounding subgradients}
Fix $x \in B_{\deltaA}(\bar x)$,  $\sigma \leq \deltaA$, and $g \in \partial_{\sigma} f(x) $. 
By definition of $\partial_{\sigma} f(x)$, there exists a family of coefficients $\lambda_i \in [0, 1]$, points $x_i \in \overline B_{\sigma}(x) \subseteq \overline B_{\delta}(\bar x)$, and subgradients $g_i \in \partial f(x_i)$ indexed by a finite set $i \in I$ such that $\sum_{i\in I} \lambda_i = 1$ and $g =  \sum_{i\in I} \lambda_i g_i$.
Recall that by Lipschitz continuity of $f$ on $B_{\delta}(\bar x)$, we have $\|g_i\| \leq L$ for $i\in I$. 
Therefore, 
$$
\|g\| \leq  \sum_{i\in I} \lambda_i\|g_i\| \leq L,
$$
as desired.

\subsection{Item~\ref{prop:proof:consequencesofA:item:gap}: Bounding the function gap}

Fix a point $x \in B_{\deltaA}(\bar x)$ and recall that $P_{\cM}(x) \in B_{2\deltaA}(\bar x)$. 
Then by Lipschitz continuity of $f$ on $B_{\delta}(\bar x)$, we have 
$$
f(x) - f(P_{\cM}(\bar x)) \leq L\dist(x, \cM). 
$$
Next, arguing as in the proof of Item~\ref{prop:proof:consequencesofA:item:smoothfunction}, we find that $\nabla f_{\cM}(\bar x) = 0$. 
Thus, since $\nabla f_{\cM}$ is $\beta$-Lipschitz on $B_{\delta}(\bar x)$, we have
$$
f(P_{\cM}(x)) - f(\bar x) = f_{\cM}(P_{\cM}(x)) - f(\bar x) \leq \dotp{\nabla f_{\cM}(\bar x), P_{\cM}(x) - \bar x} + \frac{\beta}{2} \|P_{\cM}(x)- \bar x\|^2 = \frac{\beta}{2} \|P_{\cM}(x)- \bar x\|^2.
$$
By putting both bounds together, we have 
\begin{align*}
f(x)- f(\bar x) = f(x) - f(P_{\cM}(x)) + f(P_{\cM}(x)) - f(\bar x) \leq L\dist(x, \cM) + \frac{\beta}{2}\|P_{\cM}(x) - \bar x\|^2, 
\end{align*}
as desired.

\section{Proof of Corollary~\ref{cor:clarkestationarity}} \label{app:cor:clarkestationarity}

{\color{blue}
We begin with the following known Lemma, which immediately follows from~\cite[Proposition 2.8]{goldstein1977optimization}
\begin{lem}\label{lem: clarkecritical}
	Let $f \colon \RR^d \rightarrow \RR$ be a locally Lipschitz function. Suppose that there exists sequences $x_k \rightarrow \bar x$, $\tau_k \rightarrow 0$, and $g_k \in \partial_{\tau_k}f(x_k)$ with $\|g_k\| \rightarrow 0$. Then $\bar x$ is a Clarke critical point.
\end{lem}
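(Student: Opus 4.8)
The goal is to show that $0 \in \partial f(\bar x)$, which by definition makes $\bar x$ a Clarke critical point. The plan is to unwind the definition of the Goldstein subdifferential and then pass to the limit using the outer semicontinuity of the Clarke subdifferential. First I would fix a neighborhood $U$ of $\bar x$ on which $f$ is $L$-Lipschitz; for all large $k$ the balls $\overline B_{\tau_k}(x_k)$ are contained in $U$, since $x_k \to \bar x$ and $\tau_k \to 0$. By definition of $\partial_{\tau_k} f(x_k)$, each $g_k$ is a convex combination of Clarke subgradients evaluated at points of $\overline B_{\tau_k}(x_k)$. Invoking Carath\'eodory's theorem in $\RR^d$, I would write $g_k = \sum_{i=0}^{d} \lambda_i^k v_i^k$, where the $\lambda_i^k \geq 0$ sum to one, $y_i^k \in \overline B_{\tau_k}(x_k)$, and $v_i^k \in \partial f(y_i^k)$.

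Next I would extract a convergent subsequence. Since the coefficient vectors $(\lambda_0^k,\dots,\lambda_d^k)$ lie in the compact unit simplex and each subgradient satisfies $\|v_i^k\| \leq L$ by local Lipschitzness, after passing to a subsequence I may assume $\lambda_i^k \to \lambda_i$ and $v_i^k \to v_i$ for each $i$. Moreover $\|y_i^k - \bar x\| \leq \|y_i^k - x_k\| + \|x_k - \bar x\| \leq \tau_k + \|x_k - \bar x\| \to 0$, so $y_i^k \to \bar x$. The key structural fact is that the Clarke subdifferential of a locally Lipschitz function has closed graph, i.e., it is outer semicontinuous; applying this to $v_i^k \in \partial f(y_i^k)$ with $y_i^k \to \bar x$ and $v_i^k \to v_i$ yields $v_i \in \partial f(\bar x)$ for every $i$.

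Finally I would take limits in the convex-combination identity. On one hand $g_k = \sum_i \lambda_i^k v_i^k \to \sum_i \lambda_i v_i$, while on the other hand $\|g_k\| \to 0$ by hypothesis, so $\sum_i \lambda_i v_i = 0$. Since $\sum_i \lambda_i = 1$, each $\lambda_i \geq 0$, and each $v_i \in \partial f(\bar x)$, and since $\partial f(\bar x)$ is convex, this exhibits $0$ as a convex combination of elements of $\partial f(\bar x)$, whence $0 \in \partial f(\bar x)$. I expect the only delicate point to be coordinating the Carath\'eodory reduction with the compactness argument: one must fix the number of summands at $d+1$ \emph{before} extracting subsequences so that the componentwise limits of the $\lambda_i^k$ and $v_i^k$ are well defined; outer semicontinuity then does the rest, and everything else is routine. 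Alternatively, as noted in the text, the conclusion is immediate from \cite[Proposition 2.8]{goldstein1977optimization}, which records exactly this limiting behavior of the Goldstein subdifferential.
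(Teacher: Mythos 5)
Your proof is correct, and every step checks out: the Carath\'eodory decomposition of $g_k$ into at most $d+1$ Clarke subgradients taken at points $y_i^k \in \overline B_{\tau_k}(x_k)$, the compactness extraction (coefficients in the simplex, subgradients bounded by the local Lipschitz constant $L$), the observation $y_i^k \to \bar x$, the closed-graph (outer semicontinuity) property of $\partial f$ for locally Lipschitz $f$, and the passage to the limit using convexity of $\partial f(\bar x)$ to conclude $0 \in \partial f(\bar x)$. You also correctly flag the one genuinely delicate point, namely fixing the number of summands at $d+1$ \emph{before} extracting subsequences so that componentwise limits make sense. The comparison with the paper is slightly unusual here: the paper offers no argument at all for this lemma, stating only that it ``immediately follows from \cite[Proposition 2.8]{goldstein1977optimization},'' which is exactly the citation you mention in your closing sentence. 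So your writeup is not a different route so much as a self-contained reconstruction of the standard argument behind the cited result; what it buys is independence from Goldstein's paper and an explicit verification that the paper's definition of $\partial_\sigma f$ (convex hull of Clarke subgradients over the closed ball, with no closure operation needed since the union is compact) supports the limiting argument. One cosmetic remark: what you prove is in fact the stronger statement that $\LS_{(x,\tau)\to(\bar x,0^+)} \partial_\tau f(x) \subseteq \partial f(\bar x)$, of which the lemma is the special case where the limit subgradient is $0$; nothing in your argument uses $\|g_k\|\to 0$ until the final line, which is exactly as it should be.
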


Now we turn to the proof of the Corollary.  Since $f$ has bounded initial sublevel set, the following widened sublevel set is bounded:
$$
S: = \{x+ u \colon f(x) \leq f(x_0) \text{ and } u \in \overline B(x)\}.
$$
Thus, there exists $L > 0$ such that $f$ is $L$-Lipschitz on $S$. In addition, $\partial f$ is uniformly bounded by $L$ on $\mathrm{int} \; S$.

We begin with a claim.
	\begin{claim}
		Fix $i > 0$ and define $\tau_i := 2^{-i}$. Let ${\color{blue}s_k :=  \max\{\|g_k\|, \sscale\|g_0\|\}} $ be the trust region parameter used in Algorithm~\ref{alg:linesearch} and define {\color{blue}$\eps_{i, k} :=\sqrt{128}L\tau_i$}. Then  with probability one, the event 
		$$E_k^{(i)} = \left\{\dist(0, \partial_{\tau_i} f(x_k)) > \eps_{i, k} \text{ and }f(x_{k+1}) > f(x_k) - \frac{\tau_i\dist(0,\partial_{\tau_i} f(x_k))}{8}\right\}$$ cannot happen infinitely often, i.e.,
		$$
		P\left(\cap_{T = 1}^{\infty} \cup_{k=T}^{\infty} E_k^{(i)} \right) = 0.
		$$
	\end{claim}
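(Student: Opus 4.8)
The plan is to prove the claim through the first Borel--Cantelli lemma: I will show the failure probabilities $P(E_k^{(i)})$ are summable in $k$, which forces $P\paren{\cap_T \cup_{k \geq T} E_k^{(i)}} = 0$ with no independence required. Fix $i > 0$ and abbreviate $\eps_i := \sqrt{128}L\tau_i$. Two preliminary facts set the stage. First, since $\algname$ is a descent method (Line~\ref{line:trustregion} of Algorithm~\ref{alg:linesearch} ensures $f(x_{k+1}) \leq f(x_k)$), every iterate satisfies $f(x_k) \leq f(x_0)$; as $2\tau_i = 2^{1-i} \leq 1$, this gives $B_{2\tau_i}(x_k) \subseteq S$ for all $k$, so $L$ is a valid Lipschitz constant of $f$ on each such ball. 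Second, $s_k = \max\{\|g_k\|, \sscale\|g_0\|\} \leq L$, since $\sscale \in (0,1]$ and the relevant subgradients lie in $\mathrm{int}\,S$, where $f$ is $L$-Lipschitz.

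Since the event in question depends only on the tail, I would first pass to indices $k \geq K_i$, where $K_i$ is chosen (using $\gridsize_k \to \infty$) so that $\gridsize_k \geq i$; for such $k$ the radius $\tau_i = 2^{-i}$ equals the grid point $\sigma_{j^\ast}$ with $j^\ast := \gridsize_k - i$ tested inside $\linesearch$, and Lemma~\ref{lem:linesearchproperties} gives $v := v_{j^\ast + 1} = \ngoldstein(x_k, u_{j^\ast}, \tau_i, T_k) \in \partial_{\tau_i} f(x_k)$. Working conditionally on the history $\mathcal{F}_k$ up to $x_k$, I would then argue on the $\mathcal{F}_k$-measurable event $\{\dist(0,\partial_{\tau_i}f(x_k)) > \eps_i\}$. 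There $\tau_i < \dist(0,\partial_{\tau_i}f(x_k))/(\sqrt{128}L)$, so the first hypothesis of Lemma~\ref{lemma:normalgold} holds, while the inner-iteration count is bounded by a constant independent of $k$:
$$
\ceil{\frac{64L^2}{\dist^2(0,\partial_{\tau_i}f(x_k))}} \leq \ceil{\frac{1}{2\tau_i^2}} = 2^{2i-1}.
$$
Hence for any $p_k$ with $2^{2i-1}\ceil{2\log(1/p_k)} \leq T_k$, Lemma~\ref{lemma:normalgold} guarantees that $\ngoldstein$ returns $v \neq 0$ with $f\paren{x_k - \tau_i v/\|v\|} \leq f(x_k) - \tau_i \dist(0,\partial_{\tau_i}f(x_k))/8$ with conditional probability at least $1 - p_k$.

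It remains to upgrade this internal descent to descent of $x_{k+1}$ by verifying the trust-region constraint exactly as in Theorem~\ref{thm:sublinear}: on the same event,
$$
\frac{\|v\|}{s_k} \geq \frac{\dist(0,\partial_{\tau_i}f(x_k))}{s_k} \geq \frac{\sqrt{128}L\tau_i}{s_k} \geq \tau_i,
$$
using $s_k \leq L$, so $x_k - \tau_i v/\|v\|$ is an admissible candidate in Line~\ref{line:trustregion} and therefore $f(x_{k+1}) \leq f(x_k) - \tau_i \dist(0,\partial_{\tau_i}f(x_k))/8$. This is the negation of the second defining condition of $E_k^{(i)}$, so $P(E_k^{(i)} \mid \mathcal{F}_k) \leq p_k$ on $\{\dist > \eps_i\}$, whence $P(E_k^{(i)}) \leq p_k$. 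Choosing $p_k = \exp(-c_i T_k)$ for a suitable constant $c_i > 0$ depending only on $i$ (so the budget inequality above holds) and using $T_k \geq k$ gives $P(E_k^{(i)}) \leq \exp(-c_i k)$, which is summable; Borel--Cantelli then finishes the proof.

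I expect the main difficulty to be bookkeeping rather than conceptual. The two points requiring care are (i) confirming that $\tau_i$ is genuinely among the radii tested by $\linesearch$ for all large $k$ — this is exactly where $\gridsize_k \to \infty$ enters — and (ii) setting up the conditioning so that the fresh internal randomness of $\ngoldstein$ at step $k$ yields the clean conditional bound $P(E_k^{(i)} \mid \mathcal{F}_k) \leq p_k$; once the threshold $\eps_i = \sqrt{128}L\tau_i$ is seen to make the inner-loop length $\ceil{64L^2/\dist^2}$ a constant, the passage to summability via $T_k \geq k$ is routine.
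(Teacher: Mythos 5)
Your proposal is correct and follows essentially the same route as the paper's own proof: for large $k$ the radius $\tau_i = \sigma_{G_k - i}$ is among those tested by $\linesearch$, the threshold $\eps_{i,k} = \sqrt{128}L\tau_i$ together with $s_k \leq L$ makes both the hypotheses of Lemma~\ref{lemma:normalgold} and the trust-region constraint hold, so the conditional failure probability is exponentially small in $T_k \geq k$, and Borel--Cantelli finishes. Your write-up is in fact somewhat more explicit than the paper's (the constant $2^{2i-1}$ bounding the inner-loop factor and the explicit choice $p_k = \exp(-c_i T_k)$ are left implicit in the paper's ``there exists $C>0$ depending on $\epsilon_{i,k}$'' step), but the argument is the same.
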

\begin{claimproof}
	We prove that $P(E_k^{(i)})$ is summable in $k$. Indeed, first, note that $P(E_k^{(i)}) = 0$ when $P(\dist(0, \partial_{\tau_i} f(x_k)) > \eps_{i, k}) = 0$. On the other hand, suppose $P(\dist(0, \partial_{\tau_i} f(x_k)) > \eps_{i, k}) > 0$. 
	Now we upper bound $P(E_k^{(i)})$ for all $G_k$ satisfying $G_k \geq i$.
	For such $G: = G_k$,  the radius $\tau_i = \sigma_{G - i}$ is among those considered in Algorithm~\ref{alg:linesearch}. 
	Moreover, since $s_k \le L$ (recall $x_k \in \mathrm{int} (S)$), the radius satisfies the trust region constraint: $\sigma_{G-i} = \tau_i \leq \eps_{i, k}/s_k \leq \dist(0, \partial_{\sigma_{G-i}} f(x))/s_k$.
Therefore, if $\ngoldstein$ terminates with descent at the $(G - i)$-th level in Algorithm~\ref{alg:linesearch}, it follows that 
$$
f(x_{k+1}) > f(x_k) - \frac{\tau_i\dist(0,\partial_{\tau_i} f(x_k))}{8}.
$$
We estimate the probability of this success with Lemma~\ref{lemma:normalgold}: there exist $C > 0$ depending on $\epsilon_{i,k}$ and for all $k \geq i$, we have
	\begin{align*}
		P(E_k^{(i)}) &\le P\left(f(x_{k+1}) > f(x_k) - \frac{\tau_i\dist(0,\partial_{\tau_i} f(x_k))}{8} \middle \vert \dist(0, \partial_{\tau_i} f(x_k)) > \eps_{i, k} \right)\\
		&\le \exp( - Ck).
	\end{align*}
	Therefore, $P(E_k^{(i)})$ is summable in $k$. The result then follows from Borel–Cantelli lemma.
\end{claimproof}

By the claim and a union bound, we know that with probability one, for any fixed $i$, $E_k^{(i)}$ cannot happen infinitely often. Now, suppose that a subsequence $\{x_{k_l}\}$ (where $k_l \geq l$ is strictly increasing in $l$) converges to a point $\bar x$. 
We note that the sequence $\{f(x_k)\}$ is bounded below: Indeed, since $x_{k_l}$ converges and $f$ is continuous, it follows $\{f(x_{k_l})\}$ is bounded below by a constant $c \in \RR$. Consequently, since $\{f(x_{k}))\}$ is nonincreasing and $k_l \geq l$, it follows that $c \leq f(x_{k_l}) \leq f(x_l)$ and for every $l > 0$, as desired. As a result, the following inequalities cannot be valid simultaneously infinitely often:
$$
\dist(0, \partial_{\tau_i} f(x_{k_l})) > \eps_{i, k} \text{ and }f(x_{k_l+1}) \le  f(x_{k_l}) - \frac{\tau_i \dist(0,\partial_{\tau_i} f(x_{k_l}))}{8}.
$$
Therefore, $\dist(0, \partial_{\tau_i} f(x_{k_l})) > \eps_{i, k}$ cannot happen infinitely often. Consequently, we can find a sequence of increasing indices $j_{i}$ such that
$$
\dist(0, \partial_{\tau_i} f(x_{j_i})) \le \eps_{i, k} \qquad \text{and } x_{j_{i}} \rightarrow \bar x.
$$
Since $\eps_{i, k} \rightarrow 0$ as $k \rightarrow \infty$, Lemma~\ref{lem: clarkecritical}, shows that $\bar x$ is Clarke critical. 
}

\section{Proof of Lemma~\ref{lem: approximatereflection2}}\label{appendix:lem: approximatereflection2}

We begin with preliminary notation and bounds. 
We fix $x \in B_{\deltagrid}(\bar x)$ and subgradient $g \in \partial_{\sigma} f(x) \backslash \{0\}$.
We define $y := P_{\cM}(x)$, $T := \tangent(y)$, and $N := \normal(y)$.
We have the following two bounds: First, we have
\begin{align}\label{eq:deltragridbound1}
(\mu+L)\shape (D_1\dist(x, \cM) +\sigma) \leq (\mu+L)\shape \deltagrid(D_1 +1) = \frac{\mu}{8}\shape(\ceight +1)\deltagrid \leq \textcolor{blue}{\frac{\sharpc}{8}}.
\end{align}
Second, we have 
\begin{align}\label{eq:deltragridbound2}
\ver(\dist(x, \cM) + \sigma) + \beta \|y - \bar x\| & \leq  2\ver\deltagrid + 2\beta\deltagrid \leq \frac{\sharpc}{4}.
\end{align}
We now turn to the proof. 

\textcolor{blue}{By Lemma~\ref{claim: approximatereflectionintermediate_lemma} (which is applicable since $x\in B_{\deltaA/2}(\bar x)$ and $\sigma \leq \deltagrid \leq \deltaA/2$), we have} 
\begin{align*}
\dotp{\hat g, \sigma \frac{P_{N}g}{\|g\|}} &\leq -\sigma\mu  \frac{\|P_{N} g\|}{\|g\|} +(\mu + L) \dist(x, \cM) + (\mu+L)\shape (\dist^2(x, \cM) +\sigma^2).
\end{align*}
Rearranging, we find that 
\begin{align*}
\dotp{P_N\hat g, g} &\leq -\sharpc\|P_{N} g\|  + \frac{(\mu + L)\|g\| \dist(x, \cM)}{\sigma}+ \frac{(\mu+L)\|g\|\shape (\dist^2(x, \cM) +\sigma^2)}{\sigma} \\
&\leq -\sharpc\|P_{N} g\|  + \frac{\sharpc}{8}\|g\|+ (\mu+L)\shape (D_1\dist(x, \cM) +\sigma) \cdot \|g\| \\
&\leq -\sharpc\|P_{N} g\|  + \frac{\sharpc}{4}\|g\|,
 \end{align*}
 where the second inequality \textcolor{blue}{follows from the assumption  $\ceight \dist(x,\cM) \le \sigma$ } and the third follows from~\eqref{eq:deltragridbound1}. 
 Now observe that
 $$
 \dotp{P_T \hat g, g} \leq \|P_T\hat g\|\|g\| \leq  (\ver(\dist(x, \cM) + \sigma) + \beta \|y - \bar x\|)\cdot \|g\| \leq \frac{\mu}{4} \|g\|,
 $$
where second inequality follows from~\eqref{eq:consequencestronga} and the third inequality follows from~\eqref{eq:deltragridbound2}.  
Therefore,
 \begin{align*}
\dotp{\hat g, g} = \dotp{P_N \hat g, g} + \dotp{P_T\hat g, g} \leq -\sharpc\|P_{N} g\|  + \frac{\sharpc}{2}\|g\| \leq - \frac{\sharpc}{2}\|g\| + \sharpc\|P_T(g)\|,
 \end{align*}
 as desired.

{\color{blue}\section{Proof that $\mu \leq L$}\label{sec:mulessthanL}
\begin{lem}\label{lem:condition_number_larger}
We have that $\mu \leq L$.
\end{lem}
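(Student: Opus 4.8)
The plan is to unwind the two definitions and then invoke the elementary bound relating subgradient norms to the local Lipschitz modulus. Recall that $\mu = \sharpc = \frac{1}{4}\liminf_{x' \stackrel{\cM^c}{\rightarrow} \bar x} \dist(0, \partial f(x'))$, as defined in Item~\ref{prop:proof:consequencesofA:item:sharpaim} of Proposition~\ref{prop:proof:consequencesofA}, while $L = 2\lip_f(\bar x)$, as in Item~\ref{prop:proof:consequencesofA:item:Lipschitzbound}. Since $\lip_f(\bar x) \geq 0$ (it is a $\limsup$ of nonnegative difference quotients) and $\frac{1}{4} \leq 2$, it suffices to establish the single inequality
$$
\liminf_{x' \stackrel{\cM^c}{\rightarrow} \bar x} \dist(0, \partial f(x')) \leq \lip_f(\bar x);
$$
the desired bound $\mu \leq L$ then follows by chaining $\mu = \frac{1}{4}\liminf \leq \frac{1}{4}\lip_f(\bar x) \leq 2\lip_f(\bar x) = L$.

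The key step is the observation recorded in the Subdifferentials paragraph of Section~\ref{sec:notation}: if $f$ is $\ell$-Lipschitz on a neighborhood $U$, then every $v \in \partial f(x)$ with $x \in U$ satisfies $\|v\| \leq \ell$. Concretely, I would fix an arbitrary $\epsilon > 0$ and, by the definition of $\lip_f(\bar x)$ as a $\limsup$ of difference quotients at $\bar x$, produce a radius $\delta_\epsilon > 0$ such that $f$ is $(\lip_f(\bar x) + \epsilon)$-Lipschitz on $B_{\delta_\epsilon}(\bar x)$. Then for every $x' \in B_{\delta_\epsilon}(\bar x)$ and every $v \in \partial f(x')$ we have $\|v\| \leq \lip_f(\bar x) + \epsilon$, and in particular $\dist(0, \partial f(x')) \leq \lip_f(\bar x) + \epsilon$ since $\partial f(x')$ is nonempty for locally Lipschitz $f$.

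Taking the liminf over $x' \stackrel{\cM^c}{\rightarrow} \bar x$ — a nonempty limiting process, since $\bar x \in \cM$ lies on the lower-dimensional active manifold and hence belongs to the closure of $\cM^c$ — yields $\liminf_{x' \stackrel{\cM^c}{\rightarrow} \bar x} \dist(0, \partial f(x')) \leq \lip_f(\bar x) + \epsilon$, and letting $\epsilon \downarrow 0$ removes the slack. Combined with the reduction above, this completes the proof. There is essentially no obstacle here; the only point requiring a moment's care is the passage from the $\limsup$-of-quotients definition of $\lip_f(\bar x)$ to the existence of a genuine Lipschitz constant $\lip_f(\bar x) + \epsilon$ on a small ball, together with the remark that the subgradient boundedness fact then applies uniformly on that ball regardless of whether $x'$ lies off $\cM$.
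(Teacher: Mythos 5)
Your proof is correct and follows essentially the same route as the paper's: both reduce the claim to the fact that Clarke subgradient norms near $\bar x$ are controlled by the local Lipschitz modulus, so the liminf defining $\mu = \sharpc$ is dominated by a quantity bounded by $L = 2\lip_f(\bar x)$. The paper compresses this into the one-line chain $\mu = \tfrac14\liminf_{x' \stackrel{\cM^c}{\rightarrow} \bar x}\dist(0,\partial f(x')) \leq \limsup_{x\to\bar x}\dist(0,\partial f(x)) \leq L$ by citing Proposition~\ref{prop:proof:consequencesofA}, whereas you unpack the same bound via an $\epsilon$-argument directly from the definition of $\lip_f(\bar x)$ --- a more self-contained (and in fact slightly sharper, yielding $\mu \leq \tfrac14\lip_f(\bar x)$) rendering of the identical idea.
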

\begin{proof}
Indeed,
$$
\mu =  \frac{1}{4}\displaystyle\liminf_{\substack{x' \stackrel{\cM^c}{\rightarrow} \bar x }}\dist(0, \partial f(x)) \leq \limsup_{x \rightarrow \bar x} \dist(0, \partial f(x)) \leq L.
$$
by Proposition~\ref{prop:proof:consequencesofA}. 
\end{proof}}


	\section{Proof of Lemma~\ref{sec:getinneighborhood}}\label{sec:proof:sec:getinneighborhood}

We fix $a > 0$. 
Note that the claimed inclusion is a consequence of the following bound:
\begin{align}\label{eq:convexcaseequivalence}
f(x) - f(\bar x) \geq \frac{\scc}{2}\min\{\delta_A, \|x - \bar x\|\} \|x - \bar x\| \qquad \text{ for all $x \in \RR^d$.}
\end{align}
Here we provide a proof for completeness.

To that end, we remind the reader that Assumption~\ref{assumption:mainfinal} is in force. Consequently, by Item~\ref{prop:proof:consequencesofA:item:quadgrowth} of Proposition~\ref{prop:proof:consequencesofA}, we have:
$$
f(x) - f(\bar x) \geq \frac{\scc}{2}\|x - \bar x\|^2 \qquad \text{for all $x \in \overline B_{\delta_A}(\bar x)$.}
$$
Thus, if $x \in B_{\delta_A}(\bar x)$, bound~\eqref{eq:convexcaseequivalence} is immediate.
On the other hand, suppose that we have $x \in \RR^d \backslash B_{\delta_A}(\bar x)$. Define the curve $x_t \colon t \mapsto (1-t)x + t\bar x$. 
Choose $t_0 \in [0, 1]$ such that $x_{t_0} \in \bdry B_{\delta_A}(\bar x)$.  Then by Jensen's inequality, we have
$$
(1-t_0) f(x)\geq f(x_{t_0}) - t_0f(\bar x) \geq (1-{t_0})f(\bar x) + \frac{\scc}{2}\|x_{t_0} - \bar x\|^2 = (1-{t_0})f(\bar x) + \frac{\scc(1-t_0)}{2}\|x - \bar x\|\|x_{t_0} - \bar x\|.
$$
Consequently, since $\|x_{t_0} - \bar x\| = \delta_A$, we have
$$
f(x) - f(\bar x) \geq \frac{\scc\delta_A}{2}\|x - \bar x\|\geq \frac{\scc}{2}\min\{\delta_A, \|x - \bar x\|\} \|x - \bar x\|, 
$$
as desired. This completes the proof. 

\section{Proof of~\eqref{eq:finalneededbound}}\label{sec:finalneededbound}

Let us expand the left-hand-side of~\eqref{eq:finalneededbound}:
\begin{align*}
\frac{16L'\sqrt{2 \log(2K_1^2/p)}}{K_1^{1/2}} &\leq \underbrace{\frac{16{\color{blue}D}L'\sqrt{2 \log(K_1^2)}}{K_1^{1/2}}}_{=:A} + \underbrace{\frac{16{\color{blue}D}L'\sqrt{2 \log(2/p)}}{K_1^{1/2}}}_{=:B}.
\end{align*}
Note that $B \leq a/4$ by definition of $K_1$. Consequently, the proof will follow if $A\leq a/4.$
To that end, for any $\alpha \in (0, 1)$, we have
\begin{align*}
A = \frac{16{\color{blue}D}L'\sqrt{2 \log(K_1^2)}}{K_1^{1/2}} &= \frac{16{\color{blue}D}L'\sqrt{2 \log(K_1^{2\alpha})/\alpha}}{K_1^{1/2}} \leq\frac{16{\color{blue}D}L'\sqrt{2/\alpha}}{K_1^{(1-\alpha)/2}}, 
\end{align*}
Therefore, we have $A \leq a/4$ whenever
$$
 K_1 \geq \inf_{\alpha \in (0, 1)}\frac{\left(64{\color{blue}D}L'\sqrt{\frac{2}{\alpha}}\right)^{\frac{2}{(1-\alpha)}}}{a^{\frac{2}{(1-\alpha)}}}  = \frac{{\color{blue}D^2}}{a^2}\inf_{\alpha \in (0, 1)}\frac{\left(64L'\sqrt{\frac{2}{\alpha}}\right)^{\frac{2}{(1-\alpha)}}}{(\frac{a}{{\color{blue}D}})^{\frac{2\alpha}{(1-\alpha)}}}   =  \frac{{\color{blue}D^2}}{a^2}b.
$$
This lower bound holds by definition of $K_1$. Consequently $A \leq a/4$. Therefore, the proof is complete.

{\color{blue}
\section{Proof of Lemma~\ref{lem:contractionlowerbound}}\label{sec:contractionlowerbound}
Throughout this section, we use the symbol $a \lesssim b$ to mean that $a \leq \eta b$ for a fixed numerical constant $\eta$ that is independent of $f$.
In addition, we use the bound on the condition number:
$
\kappa \geq 1, 
$
since $\mu \leq L$; see Lemma~\ref{sec:mulessthanL}.

Turning to the bound, we wish to upper bound $q$.
$$
q= \max\left\{\rho, \sqrt{1-\frac{3\sharpc^2}{256\lipf^2}}, \frac{1}{2}\right\}.
$$
First note that 
$$
1- \sqrt{1-\frac{3\sharpc^2}{256\lipf^2}} \gtrsim  \frac{\mu^2}{L^2} \geq  \frac{1}{\kappa^2}.
$$
Next, we upper bound $\rho$. To that end, we must bound the constants $a_1$ and $a_2$, which rely on the somewhat involved constants $\csix$ and $\cseven$.
Thus, we first lower bound $\csix$:
\begin{align*}
\csix &= \min\left\{\frac{\beta }{\ver(1+\deltaA)}, \frac{{\color{blue}\min\left\{\sharpc/\deltaA, \cfour\cfive/\beta\right\}}}{4(1 +  (1+\deltaA)\shape)  (\mu + L))}, {\color{blue}\frac{1}{2}}\right\}\\
&\gtrsim \min \left\{ \frac{\beta}{\ver},\frac{\mu}{L(1+\shape)}, \frac{\gamma^2\mu}{L^2\beta(1+\shape)}\right\} \\
&\geq \frac{1}{\kappa^3(1+\shape)},
\end{align*}
where we use the bounds $\mu \leq L$, $\cfour \gtrsim \gamma^2/L$, and $D_2 \gtrsim \sharpc$. Turning to $C_5$, we have:
\begin{align*}
\cseven &= \min \left\{\frac{\beta}{2\ver}, \frac{\cfour\cfive}{32\ver\beta},\csix, \frac{\ctwo}{4}\right\}\\
&\gtrsim \min \left\{  \frac{\beta}{\ver}, \frac{\gamma^2\mu}{L\ver \beta}, \frac{1}{\kappa^3(1+\shape)}, \frac{\gamma}{\ver}\right\}\\
&\geq \frac{1}{\kappa^3(1+\shape)},
\end{align*}
where we again use $\cfour \gtrsim \gamma^2/L$, and $D_2 \gtrsim \sharpc$.
Therefore, we have the lower bound for $a_2$:
\begin{align*}
a_2 &= \frac{\min\left\{\cone/L, \cseven\right\}}{2} \gtrsim \min \left\{ \frac{\gamma^2}{L^2},\frac{1}{\kappa^3(1+\shape)}\right\} \gtrsim \frac{1}{\kappa^{3}(1+\shape)}.
\end{align*}
In addition, we have the upper bound:
\begin{align*}
a_2 = \frac{\min\left\{\cone/L, \cseven\right\}}{2} \leq \csix/2 \leq 1/4.
\end{align*}
Finally to lower bound $a_1$, we have
\begin{align*}
a_1 &= \min\{\done , \dtwo/L\} \gtrsim \min\left\{\frac{\mu}{L}, \frac{\gamma}{L}\right\} \gtrsim \frac{1}{\kappa}, 
\end{align*}
where we use the bound $\done \gtrsim \mu/L$ and $\dtwo/L \gtrsim \mu/L$.

Now we upper bound $\rho$ by providing a lower bound on $1-\rho$.
\begin{align*}
1-\rho &= \frac{1}{8}\min\left\{\frac{\scc a_2}{8\max\{4La_2^2, \beta\}},  \frac{\sharpc a_1}{4\max\{2L, \beta/a_2^2\}}\right\}\\
&\gtrsim  \min\left\{\frac{\scc}{La_2}, \frac{\gamma a_2}{\beta}, \frac{\mu a_1}{L}, \frac{\mu a_1a_2^2}{\beta} \right\}\\
&\gtrsim  \min\left\{\frac{\scc}{L}, \frac{\gamma}{\kappa^{3}(1+\shape)\beta}, \frac{\mu}{\kappa L}, \frac{\mu}{\beta\kappa^{7}(1+\shape)^2} \right\}\\
&\gtrsim \frac{1}{\kappa^8(1+\shape)^2}
\end{align*}
Putting all these bounds together, we find that: 
$$
1 - q \gtrsim \min \left\{\frac{1}{\kappa^8(1+\shape)^2}, \frac{1}{\kappa^{2}}\right\} \geq \frac{1}{\kappa^8(1+\shape)^2}, 
$$
as desired.}
\end{document}